\newtheorem{thm}{Theorem}
\newtheorem{thm*}{Theorem}[subsection]
\newtheorem{lem}{Lemma}[section]
\newtheorem{prop}{Proposition}[section]
\newtheorem{cor}{Corollary}[section]
\newtheorem{conj}{Conjecture}
\theoremstyle{definition}
\newtheorem{defn}{Definition}[section]
\theoremstyle{remark}
\newtheorem{rem}{Remark}[section]
\makeatletter \@addtoreset{equation}{section}
\newcommand{\thmref}[1]{Theorem~\ref{#1}}
\newcommand{\lemref}[1]{Lemma~\ref{#1}}
\def\a{\alpha}
\def\b{\beta}
\def\p{\partial}
\def\vphi{\varphi}
\def\L{\Lambda}
\def\cL{{\cal L}}
\newcommand{\tr}{\mathrm{tr}}
\def\cL{{\mathcal L}}
\def\and{\quad{\rm and}\quad}
\def\eps{\epsilon}
\let\lra=\longrightarrow
\def\mapright{\xrightarrow}
\def\mapright\#1{\,\smash{\mathop{\lra}\limits^{\#1}}\,}
\def\om{\omega}
\def\Om{\Omega}
\def\tri{\triangle}
\newcommand{\iddbar}{\sqrt{-1}\partial\bar{\partial}}  
\numberwithin{equation}{section}
\newcommand{\Rmnum}[1]{\expandafter\@slowromancap\romannumeral #1@}
\newcommand{\Tr}{\mathrm{tr}}
\newcommand{\Vol}{\mathrm{Vol}}
\newcommand{\rk}{\mathrm{rk}}
\newcommand{\pushright}[1]{\ifmeasuring@#1\else\omit\hfill$\displaystyle#1$\fi\ignorespaces}  
\newcommand{\pushleft}[1]{\ifmeasuring@#1\else\omit$\displaystyle#1$\hfill\fi\ignorespaces}
\title[{\tiny Construction of cscK cone metrics}]{Construction of constant scalar curvature K\"ahler cone metrics}
 \author{Julien Keller} \address{Julien Keller \\ Aix Marseille Universit\'e, CNRS, Centrale Marseille, Institut de Math\'ematiques
de Marseille, UMR 7373, 13453 Marseille, France}
 \email{julien.keller@univ-amu.fr}
\author{Kai Zheng}
  \address{Kai Zheng \\ Mathematics Institute, University of Warwick, Coventry, CV4 7AL, UK}
  \email{K.Zheng@warwick.ac.uk}
\date{\today}
\keywords{constant scalar curvature K\"ahler metrics, cone singularities, Hermitian-Einstein, projective bundles, log K-stability}
\begin{document}

\begin{abstract}
Over a compact K\"ahler manifold, we provide a Fredholm alternative result for the Lichnerowicz operator associated to a K\"ahler metric with conic singularities along a divisor. We deduce several existence results of constant scalar curvature K\"ahler metrics with conic singularities: existence result under small deformations of K\"ahler classes, existence result over a Fano manifold, existence result over certain ruled manifolds. In this last case, we consider the projectivisation of a parabolic stable holomorphic bundle. This leads us to prove that the existing Hermitian-Einstein metric on this bundle enjoys a regularity property along the divisor on the base. 
\end{abstract}
\maketitle

\section{Introduction}

In this paper we investigate the construction of constant scalar curvature K\"ahler metrics (cscK in short) with conical singularities over a smooth compact K\"ahler manifold and provide several existence results. 
Starting with a model metric $\omega_D$ with conical singularity along a smooth divisor $D$ of the compact K\"ahler manifold $X$ 
(see definition in Section \ref{Model cone metric}), we are interested in cscK cone K\"ahler metrics $\om$, i.e metrics of the form $\om=\om_D+i\p\bar\p\vphi$ such that \begin{itemize}
\item $\om$ is a K\"ahler cone metric,
\item $\om$ has constant scalar curvature over the regular part $M:=X\setminus D$.
\end{itemize}
In our study, we will consider the linearization of the constant scalar curvature equation. This leads to consider the Lichnerowicz operator over functions $u$ defined by
\begin{align*}
{{\mathbb{L}\mathrm{ic}}}_{\om}(u)=\tri_{\om}^2 u + u^{i\bar j}R_{i\bar j}(\om)
\end{align*}
and the associated {\it Lichnerowicz equation}
\begin{align}\label{Liceqn}
{{\mathbb{L}\mathrm{ic}}}_{\om}(u)=f
\end{align}
for $f\in C^{,\a,\b}$ with $\int_M f\om^n=0$, $n$ being the complex dimension of the manifold and $\om$ defined as above. Note that our study will require to work with certain H\"older spaces adapted to the singularities, the spaces $C^{.,\a ,\b}$, that are described in details in Section  \ref{Model cone metric}.  In particular, we say a K\"ahler potential $\vphi$ is $C^{2,\a,\b}$  cscK cone potential (resp. 
$C^{4,\a,\b}$ cscK cone potential) if $\om=\om_D+i\p\bar\p\vphi$ is a cscK cone metric and additionally $\vphi\in C^{2,\a,\b} $ (resp. $C^{4,\a,\b}$). In the sequel when we speak of cscK cone metric, its potential is at least $C^{2,\a,\b}$ as in \cite[Definition 2.9]{LiL}.

We will need a certain restriction on the cone angle $2\pi\b$ and the H\"older exponent $\alpha$, namely that
\begin{align}\label{anglerestriction} \tag{C}
0<\b<\frac{1}{2};\quad \a\b<1-2\b.
\end{align}
This restriction appeared in previous works, e.g. in \cites{MR3144178,LZ2} and is required to have regularity results.
\medskip

Our first theorem is an analytic result Fredholm alternative  type. It provides  a solution to the Lichnerowicz equation \eqref{Liceqn} over the H\"older spaces $C^{4,\a,\b}$ for $C^{,\a,\b}$ data.
\begin{thm}[Linear theory]\label{Fredholm}
Let $X$ be a compact K\"ahler manifold, $D\subset X$ a smooth divisor, $\omega$ a cscK cone metric with $C^{2,\a,\b}$ potential such that the cone angle $2\pi\beta$  and the H\"older exponent $\alpha$ satisfy Condition \eqref{anglerestriction}. Assume that $f\in C^{,\a,\b}$ with normalisation condition $\int_M f\om^n =0$. Then one of the following holds:
\begin{itemize}
\item Either the Lichnerowicz equation ${{\mathbb{L}\mathrm{ic}}}_{\om}(u)=f$ has a unique $C^{4,\a,\b}$ solution.
\item Or the kernel of ${{\mathbb{L}\mathrm{ic}}}_{\om}(u)$ generates a holomorphic vector field tangent to $D$.
\end{itemize}
\end{thm}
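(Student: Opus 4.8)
The plan is to exploit the self-adjoint, variational structure of the Lichnerowicz operator and combine it with the linear Schauder theory on cone metrics to run a Fredholm alternative, splitting the argument into a factorization step, a Fredholmness step, and the geometric identification of the kernel.

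\emph{Step 1: self-adjoint factorization.} Since $\om$ has constant scalar curvature on $M$, the first-order terms in $\mathbb{L}\mathrm{ic}_\om$ cancel and the operator factors as $\mathbb{L}\mathrm{ic}_\om = \mathcal{D}^*\mathcal{D}$, where $\mathcal{D}u = \bar\partial(\nabla^{1,0}u)$ sends a function to the $\bar\partial$ of its $(1,0)$-gradient and $\mathcal{D}^*$ is its formal adjoint with respect to $\om$. In particular $\mathbb{L}\mathrm{ic}_\om$ is formally self-adjoint and nonnegative, with $\int_M u\,\mathbb{L}\mathrm{ic}_\om u\,\om^n = \int_M |\mathcal{D}u|^2\,\om^n$ for sufficiently decaying $u$. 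The subtle point is the validity of this integration by parts over the open manifold $M=X\setminus D$: one must check that the boundary contributions as one approaches $D$ vanish for $u\in C^{4,\a,\b}$. This is exactly where the growth encoded in the Hölder spaces and the angle restriction \eqref{anglerestriction} enter, ensuring no boundary term survives and that $u\in\ker\mathbb{L}\mathrm{ic}_\om$ is equivalent to $\mathcal{D}u=0$, i.e.\ to $\nabla^{1,0}u$ being a holomorphic vector field. Constants always lie in this kernel, which is why we work modulo constants and impose $\int_M f\,\om^n=0$.

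\emph{Step 2: Fredholm property.} Next I would establish the a priori Schauder estimate
\[
\|u\|_{C^{4,\a,\b}} \le C\bigl(\|\mathbb{L}\mathrm{ic}_\om u\|_{C^{,\a,\b}} + \|u\|_{C^0}\bigr)
\]
for the fourth-order operator acting between the cone Hölder spaces. Away from $D$ this is the interior Schauder estimate for a biharmonic-type elliptic operator; near $D$ it reduces to the model-operator analysis on the cone, where one solves the model equation explicitly and verifies that condition \eqref{anglerestriction} keeps the relevant indicial roots away from the values that would obstruct the estimate. Combined with the compact embedding of $C^{4,\a,\b}$ into a lower-order Hölder space, this shows that $\mathbb{L}\mathrm{ic}_\om\colon C^{4,\a,\b}\to C^{,\a,\b}$, restricted to the mean-zero subspaces, has finite-dimensional kernel and closed range, hence is Fredholm. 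Because $\mathbb{L}\mathrm{ic}_\om$ is formally self-adjoint, identifying its cokernel with its kernel through the $L^2$-pairing (and an elliptic regularity argument promoting $L^2$ solutions of the adjoint equation to $C^{4,\a,\b}$) forces the index to be zero, so $\dim\ker=\dim\coker$.

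\emph{Step 3: the dichotomy.} With index zero and self-adjointness in hand, two cases arise. If $\ker\mathbb{L}\mathrm{ic}_\om$ consists only of constants, then on the mean-zero subspaces the operator is injective, hence an isomorphism, and $\mathbb{L}\mathrm{ic}_\om u=f$ has a unique $C^{4,\a,\b}$ solution for every mean-zero $f$. Otherwise the kernel is strictly larger than the constants; choosing a nonconstant $u\in\ker\mathbb{L}\mathrm{ic}_\om$, Step 1 produces a nonzero holomorphic vector field $\nabla^{1,0}u$. It remains to show this field is tangent to $D$, and this is the main obstacle: one must analyse the asymptotics of $\nabla^{1,0}u$ near $D$ using the cone geometry of $\om$ and the membership $u\in C^{4,\a,\b}$, showing that the normal component of the holomorphic field is forced to vanish along $D$, since a transverse component would violate the admissible growth dictated by the Hölder spaces and condition \eqref{anglerestriction}. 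Establishing this tangency, together with the regularity step upgrading weak kernel elements to genuine $C^{4,\a,\b}$ functions, is where the cone case genuinely departs from the classical smooth theory and is the part I expect to require the most care.
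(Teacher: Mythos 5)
Your proposal is correct in outline, but it takes a genuinely different route from the paper's proof. You run the classical direct Fredholm argument: factor ${\mathbb{L}\mathrm{ic}}_\omega$ as $\mathcal{D}^*\mathcal{D}$, prove a fourth-order Schauder estimate with a $C^0$ remainder, deduce finite-dimensional kernel and closed range from the compact embedding, and force index zero by formal self-adjointness. The paper never computes an index. Instead it perturbs the operator to ${\mathbb{L}\mathrm{ic}}^K_\omega={\mathbb{L}\mathrm{ic}}_\omega-K\triangle_\omega$ with $K$ large, solves the $K$-bi-Laplacian equation $\triangle^2_\omega u-K\triangle_\omega u=f$ weakly by Lax--Milgram (Proposition \ref{weaksolutionexistence}; the point is that this bilinear form is coercive once $K>C_P+1$, whereas the form attached to ${\mathbb{L}\mathrm{ic}}_\omega$ itself is not), upgrades the weak solution to $C^{4,\alpha,\beta}$ by the cone Schauder estimate (Proposition \ref{Schauder}), inverts ${\mathbb{L}\mathrm{ic}}^K_\omega$ by a continuity method whose closedness rests on the uniform estimate $|u|_{C^{4,\alpha,\beta}}\leq C_1|L^K_tu|_{C^{,\alpha,\beta}}$ of Theorem \ref{closedness}, and finally obtains the dichotomy from Riesz--Schauder theory, rewriting the Lichnerowicz equation as a compact perturbation of the identity by means of $({\mathbb{L}\mathrm{ic}}^K_\omega)^{-1}$. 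The comparison is instructive: what the paper's detour buys is exactly the step you compress into a parenthesis. On $M=X\setminus D$, ``cokernel equals kernel'' is not a formal consequence of self-adjointness; it amounts to an existence-plus-regularity theory for weak solutions near the cone singularity, in cone Sobolev spaces, with integration by parts justified by cutoff arguments (Lemma \ref{IbPRic}) under Condition \eqref{anglerestriction}. The coercive perturbation makes all of this constructive, so the alternative then comes for free from compact-operator theory; your route would have to build the same machinery to justify the index-zero claim, so it is a reorganization rather than a shortcut. Finally, for the tangency to $D$ of the holomorphic field generated by a kernel element, the paper does not perform the asymptotic analysis you anticipate: it quotes the reductivity theorem of \cite{LZ2}, which provides the one-to-one correspondence between the kernel of the Lichnerowicz operator and holomorphic vector fields tangential to the divisor; your proposed growth analysis near $D$ is a plausible substitute but is the one substantive step your plan leaves unproven.
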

Note that the solution furnished by the theorem can be extended continuously to the whole manifold $X$.

As in the smooth situation (see for instance Lebrun-Simanca's results in \cite[Corollary 2]{MR1274118}), this result provides an existence theorem by small deformations. To derive it, we just use the implicit function theorem together with the one-one correspondence between the kernel of the Lichnerowicz operator and the holomorphic vector fields on the manifold tangential to the divisor, see \cite[Section 4]{LZ2}. We introduce some notations. Set $Aut(X)$ the group of holomorphic transformations of $X$ given by diffeomorphisms of $X$ that preserve the complex structure. We consider the subgroup $Aut_D(X,[\omega])\subset Aut(X)$ as the identity component of the automorphisms group that preserve the K\"ahler class $[\omega]$ and fix the divisor $D$. Then, $Lie(Aut_D(X,[\omega]))$ consists in the Lie algebra of holomorphic vector fields tangential to $D$ with holomorphy potential. Recall that a holomorphy potential is a function whose complex gradient, with respect to the metric $\omega$  is a holomorphic vector field. 

\begin{cor}[CscK cone metric by deformation]\label{cor1}
Consider $(X,\omega)$ compact K\"ahler manifold endowed with $\omega$ a cscK cone metric along $D\subset X$,  smooth divisor, with angle $\beta$ satisfying Condition (\ref{anglerestriction}). Assume that the Lie algebra $Lie(Aut_D(X,[\omega_B]))$ is trivial. Then the set of all K\"ahler classes around $[\omega]$ containing a cscK metric with cone singularities is non-empty and open.
\end{cor}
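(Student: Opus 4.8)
The plan is to deduce the corollary from the implicit function theorem in Banach spaces, using \thmref{Fredholm} to invert the linearised operator; this is the conical counterpart of the deformation argument of Lebrun--Simanca. First I would parametrise the nearby K\"ahler classes. Choose smooth closed real $(1,1)$-forms $\theta_1,\dots,\theta_k$ whose classes span a complement of $\RR[\om]$ in $H^{1,1}(X,\RR)$, so that every class close to $[\om]$ is of the form $[\om_s]:=[\om]+\sum_j s_j[\theta_j]$ with $s=(s_j)$ in a small ball $B\subset\RR^k$. Since the $\theta_j$ are smooth, for $|s|$ small the form $\om_s:=\om+\sum_j s_j\theta_j$ is again a K\"ahler cone metric with the same cone angle $2\pi\b$ along $D$ and $C^{2,\a,\b}$ potential: the conic part is unchanged, the added forms are smooth, and positivity is an open condition. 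I then consider the nonlinear map
\begin{align*}
\mathcal{S}(s,\vphi):=S\bl\om_s+i\p\bar\p\vphi\br-\underline{S}(s),
\end{align*}
defined for $(s,\vphi)\in B\times\{\vphi\in C^{4,\a,\b}:\int_M\vphi\,\om^n=0\}$ and taking values in $C^{,\a,\b}$, where $S(\cdot)$ is the scalar curvature and $\underline{S}(s)$ the cohomological average in the class $[\om_s]$. By hypothesis $\om$ is cscK, hence $\mathcal{S}(0,0)=0$.

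Second, I would compute the partial differential of $\mathcal{S}$ in the $\vphi$ variable at $(0,0)$. The standard variation formula for the scalar curvature gives $D_\vphi\mathcal{S}\big|_{(0,0)}(\psi)=-{{\mathbb{L}\mathrm{ic}}}_{\om}(\psi)$, the first-order term in $\p S$ dropping out because $\om$ has constant scalar curvature. Now the assumption that $Lie(Aut_D(X,[\om]))$ is trivial means, via the one-to-one correspondence between $\ker{{\mathbb{L}\mathrm{ic}}}_{\om}$ and the holomorphic vector fields tangential to $D$ with holomorphy potential \cite[Section 4]{LZ2}, that $\ker{{\mathbb{L}\mathrm{ic}}}_{\om}$ consists of the constants only. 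Consequently the second alternative of \thmref{Fredholm} cannot occur, so the first one holds and ${{\mathbb{L}\mathrm{ic}}}_{\om}$ is an isomorphism from the zero-average subspace of $C^{4,\a,\b}$ onto the zero-average subspace of $C^{,\a,\b}$. Hence $D_\vphi\mathcal{S}\big|_{(0,0)}$ is invertible.

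Finally, I would invoke the implicit function theorem. Granting that $\mathcal{S}$ is a $C^1$ map between the indicated conic H\"older spaces, there exist $\delta>0$ and a $C^1$ map $s\mapsto\vphi(s)$ on $\{|s|<\delta\}$ with $\vphi(0)=0$ and $\mathcal{S}(s,\vphi(s))=0$. For $|s|$ small the $(1,1)$-form $\om_s+i\p\bar\p\vphi(s)$ stays positive, so it is a cscK cone metric in the class $[\om_s]$; as $s$ ranges over $\{|s|<\delta\}$ these classes fill a neighbourhood of $[\om]$, which is therefore non-empty (it contains $[\om]$) and open. The main obstacle is the analytic input needed to apply the implicit function theorem: one must check that the scalar-curvature operator $\vphi\mapsto S(\om_s+i\p\bar\p\vphi)$ is well defined and $C^1$ from $C^{4,\a,\b}$ into $C^{,\a,\b}$, uniformly near the cone. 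This rests on the Schauder theory for the model cone metric developed in Section~\ref{Model cone metric} and on Condition \eqref{anglerestriction}, which is precisely what ensures the required regularity. Two further points to verify are the preservation of the cone angle and of positivity under the perturbation, and the bookkeeping of the normalising constants $\underline{S}(s)$ so that the target of $\mathcal{S}$ is a fixed space; both are routine, handled respectively by the openness of positivity and by projecting off the constants in the $L^2(\om^n)$ pairing.
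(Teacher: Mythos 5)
Your proposal is correct and follows essentially the same route as the paper: the paper derives Corollary~\ref{cor1} exactly by combining the implicit function theorem with Theorem~\ref{Fredholm}, using the one-to-one correspondence between $\ker{{\mathbb{L}\mathrm{ic}}}_{\om}$ and holomorphic vector fields tangential to $D$ (so that triviality of $Lie(Aut_D(X,[\omega]))$ forces the first alternative and makes the linearised operator invertible on zero-average spaces). Your write-up merely fills in the bookkeeping (parametrisation of nearby classes, normalising constants, positivity) that the paper leaves implicit, in the spirit of the Lebrun--Simanca argument it cites.
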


A direct application of this last result  is the existence of cscK cone metrics close to K\"ahler-Einstein cone metrics on Fano manifolds. Before stating the result, we refer to \cites{MR2484031,MR3107540} for a definition the $\alpha$-invariant for general polarization and its relation with log-canonical thresholds. The next corollary is obtained from the results of Berman \cite{MR3107540} and Li-Sun \cite[Corollaries 2.19 and 2.21]{MR3248054} on existence of a K\"ahler-Einstein cone metric over a Fano manifold and the non existence of holomorphic vector field tangent to $D$ (when the parameter $\lambda$ below is greater or equal to $1$). The regularity of the K\"ahler-Einstein cone metric is also sufficient to apply Theorem \ref{Fredholm} (the regularity issue is discussed  in \cite{LZ}, see also references therein). 
\begin{cor}[CscK cone metrics for Fano manifolds]
Assume that $\Om_0=c_1(-K_X)$ and $D$ is a smooth divisor which is $\mathbb{Q}$-linearly equivalent to $-\lambda K_X$, where $\lambda\in \mathbb{Q}_+^*$. Denote $L_D$ the line bundle associated to $D$. 
\begin{enumerate}
 \item[(i)] If $\lambda\geq 1$, then there is a constant $\delta$ such that in the nearby class $\Om$ satisfying $|\Om-\Om_0|<\delta$ there exists a constant scalar curvature K\"ahler cone metric in $\Om$ with cone angle $2\pi\beta$ satisfying \begin{align} \label{cond} 0<\b<\min \left(\frac{1}{2}, \left(1-\frac{1}{\lambda}\right)+\frac{n+1}{n}\min\left(\frac{1}{\lambda}\alpha(-K_X),\alpha({L_D}_{\vert D})\vert_D\right)\right).\end{align}
\item[(ii)] If $\frac{2n}{2n+1}<\lambda <1$, $Lie(Aut_D(X,\Omega_0))$ is trivial, 
then the same conclusion as in (i) holds for angle $2\pi\beta$ satisfying \eqref{cond} and the extra condition $\beta>n\left(\frac{1}{\lambda}-1\right).$
\end{enumerate}
\end{cor}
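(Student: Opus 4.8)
The plan is to produce a Kähler–Einstein cone metric in the class $\Omega_0 = c_1(-K_X)$, observe that it is in particular a cscK cone metric, and then let \corref{cor1} propagate this to all nearby classes. The two cases (i) and (ii) will differ only in the way the triviality of $Lie(Aut_D(X,\Omega_0))$ is secured.

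First I would invoke the existence theorems of Berman \cite{MR3107540} and Li–Sun \cite{MR3248054} for Kähler–Einstein cone metrics on a Fano manifold. A KE cone metric $\omega$ in $\Omega_0$ with angle $2\pi\beta$ along $D$ solves $\mathrm{Ric}(\omega) = \mu\,\omega + (1-\beta)[D]$; passing to cohomology classes and using $[D] = \lambda\, c_1(-K_X)$ forces $\mu = 1 - (1-\beta)\lambda$. The hypothesis that $\beta$ obeys \eqref{cond}, expressed through the $\alpha$-invariants $\alpha(-K_X)$ and $\alpha(L_D|_D)$ (equivalently, log-canonical thresholds), is precisely the sufficient condition under which \cite{MR3107540} and \cite[Corollaries 2.19 and 2.21]{MR3248054} furnish such a metric. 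Since a Kähler–Einstein metric has constant scalar curvature, $\omega$ is a cscK cone metric in $\Omega_0$.

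To feed $\omega$ into the deformation machinery I must check the two hypotheses of \corref{cor1}: sufficient regularity of the potential and triviality of $Lie(Aut_D(X,\Omega_0))$. The regularity — that the KE cone potential is at least $C^{2,\a,\b}$, hence fits the Hölder framework underlying \thmref{Fredholm} — is supplied by \cite{LZ}. For the automorphisms, in case (i) the range $\lambda \geq 1$ is exactly where \cite[Corollaries 2.19 and 2.21]{MR3248054} rule out any holomorphic vector field tangent to $D$, so $Lie(Aut_D(X,\Omega_0))$ is automatically trivial. In case (ii) this triviality is imposed as a hypothesis, and the extra constraint $\beta > n(\tfrac{1}{\lambda}-1)$, needed for the existence theorem when $\lambda<1$, is compatible with $\beta < \tfrac12$ exactly when $\tfrac{1}{\lambda}-1 < \tfrac{1}{2n}$, i.e. when $\lambda > \tfrac{2n}{2n+1}$; this is the origin of the lower bound on $\lambda$.

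With a cscK cone metric $\omega$ in $\Omega_0$ of $C^{2,\a,\b}$ regularity and $Lie(Aut_D(X,[\omega]))$ trivial, \corref{cor1} asserts that the set of Kähler classes near $[\omega] = \Omega_0$ carrying a cscK cone metric with the same angle $2\pi\beta$ along $D$ is open and non-empty. In particular there is $\delta>0$ such that every $\Omega$ with $|\Omega - \Omega_0| < \delta$ contains such a metric, which is the assertion. The main obstacle I anticipate is not the formal gluing of the cited results but the careful verification, in the correct Hölder scale, that the analytically produced Kähler–Einstein cone metric genuinely possesses the $C^{2,\a,\b}$ regularity demanded by \thmref{Fredholm}; the remaining steps are bookkeeping with the cohomological normalisation and the $\alpha$-invariant inequalities in \eqref{cond}.
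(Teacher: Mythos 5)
Your proposal is correct and follows essentially the same route as the paper: the corollary is obtained by combining the Berman and Li--Sun existence results for K\"ahler--Einstein cone metrics (with the angle bound \eqref{cond} coming from the $\alpha$-invariants), the regularity from \cite{LZ} needed for \thmref{Fredholm}, the triviality of $Lie(Aut_D(X,\Omega_0))$ (automatic for $\lambda\geq 1$ by Li--Sun, hypothesised in case (ii)), and then \corref{cor1}. Your added remarks on the cohomological normalisation $\nu=1-(1-\beta)\lambda$ and on why $\lambda>\tfrac{2n}{2n+1}$ is forced by compatibility of $\beta>n(\tfrac1\lambda-1)$ with $\beta<\tfrac12$ are consistent with the paper's statement.
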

Note that the upped bound in \eqref{cond} may not be optimal but has the advantage of being effective and calculable, we also refer to \cite{MR3470713} on that point.

Our next main result is a construction theorem of  cscK metrics with cone singularities in K\"ahler classes (that may not be integral) over projective bundles, which generalizes the main result of \cite{Hong2}. It is also an application of Theorem \ref{Fredholm} but requires much more work. The notion of parabolic stability is explained in Sections \ref{parabolicstab} and \ref{coneHE}. 
\begin{thm}[CscK cone metric for projective bundles]\label{thm1}
Let $B$ be a base compact K\"ahler manifold endowed with a cscK metric $\omega_B$ with cone singularities  along $D\subset B$, smooth divisor with trivial Lie algebra $Lie(Aut_D(B,[\omega_B]))$. Assume the H\"older exponent $\a$ and the angle $2\pi\b$ of $\omega_B$ satisfy Condition \eqref{anglerestriction}. Let $E$ be a parabolic stable vector bundle over $B$ with respect to $\omega_B$. \\
Then, for $k\in\mathbb{N}^*$ large enough, there exists  a cscK metric with cone singularities on $X:=\mathbb{P}E^*$ in the class $$\Om=[k\pi^*\omega_B+\hat{\omega}_E]$$
where $\pi:X\rightarrow B$ and $\hat{\omega}_E$ represents the first Chern class of $\mathcal{O}_{\mathbb{P}E^*}(1)$. This cscK metric has its cone singularities along $\mathcal{D}:=\pi^{-1}(D)$ with $C^{4,\a,\b}$ potential.
\end{thm}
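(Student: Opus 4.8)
The plan is to adapt Hong's adiabatic construction \cite{Hong2} to the conical setting, using the linear theory of \thmref{Fredholm} in place of the classical Fredholm alternative for the Lichnerowicz operator. The geometric input is a parabolic Hermitian--Einstein metric $h$ on $E$, whose existence follows from parabolic stability (the parabolic Donaldson--Uhlenbeck--Yau correspondence, Sections~\ref{parabolicstab} and \ref{coneHE}) and whose regularity along $D$ will be essential. The Chern connection of $h$ equips $X=\mathbb{P}E^*$ with a horizontal distribution together with a fibrewise Fubini--Study form representing $c_1(\mathcal{O}_{\mathbb{P}E^*}(1))$; combining it with the pulled-back base metric I form
\[
\omega_k = k\,\pi^*\omega_B + \hat\omega_E ,
\]
which for $k$ large is a genuine K\"ahler cone metric on $X$ with cone angle $2\pi\beta$ along $\mathcal{D}=\pi^{-1}(D)$ inherited from the base. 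The regularity of $h$ along $D$, together with that of $\omega_B$, is exactly what places $\omega_k$ inside the conical H\"older spaces of \thmref{Fredholm}.

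First I would expand the scalar curvature $S(\omega_k)$ as $k\to\infty$. In this adiabatic regime the fibres keep a fixed size while the base is dilated by $k$, so the leading contributions are the constant fibrewise scalar curvature of the Fubini--Study fibre and $k^{-1}$ times the pullback of the constant scalar curvature of $\omega_B$; the first genuinely new term is governed by the Hermitian--Einstein tensor of $h$, and the Hermitian--Einstein equation is exactly what forces it to be fibrewise constant, so $S(\omega_k)$ deviates from its average only at higher order in $k^{-1}$. I would then improve the approximation iteratively: adding potentials $\varphi_j\in C^{4,\alpha,\beta}$ and replacing $\omega_k$ by $\omega_k+i\p\bar\p\sum_{j\ge 1}k^{-j}\varphi_j$, one cancels the deviation order by order. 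Each step solves two linear problems --- a fibrewise equation on the projective fibre $\mathbb{P}^{r}$, handled by the explicit Fubini--Study spectral decomposition, and a base equation on $(B,\omega_B)$, solvable because $\omega_B$ is cscK and $Lie(Aut_D(B,[\omega_B]))$ is trivial --- yielding, for each $N$, a cone metric $\omega_k^{(N)}$ whose scalar curvature differs from a constant by $O(k^{-N})$ in the conical H\"older norm.

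It remains to solve the equation exactly. I would set up the nonlinear map $\varphi\mapsto S(\omega_k^{(N)}+i\p\bar\p\varphi)-\underline S$ between $C^{4,\alpha,\beta}$ and $C^{\cdot,\alpha,\beta}$; its linearisation at $\varphi=0$ is, up to sign and an $O(k^{-N})$ gradient term arising from the non-constancy of $S(\omega_k^{(N)})$, the Lichnerowicz operator $\mathbb{L}\mathrm{ic}_{\omega_k^{(N)}}$. Its nonnegative self-adjoint leading part has kernel exactly the holomorphy potentials of holomorphic vector fields tangent to $\mathcal{D}$; on $X$ there are no nontrivial such fields, since those on the base are excluded by the triviality of $Lie(Aut_D(B,[\omega_B]))$ and those along the fibres by the simplicity of the parabolic stable bundle $E$. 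Hence this leading part is strictly positive on the complement of the constants, and, combined with the Fredholm and Schauder theory underlying \thmref{Fredholm}, a quantitative inverse function theorem fed with the $O(k^{-N})$ smallness of the error produces an exact $C^{4,\alpha,\beta}$ cscK cone potential for all $k$ large.

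The main obstacle is twofold. First, one must establish enough regularity of the parabolic Hermitian--Einstein metric $h$ along $D$ for $\omega_k$ to belong to $C^{2,\alpha,\beta}$ --- a genuinely new analytic point, separate from the mere existence of $h$, and the content promised for Section~\ref{coneHE}. Second, and more delicate, the lowest nonzero eigenvalue of the self-adjoint leading part collapses at a polynomial rate as $k\to\infty$, so the inverse of $\mathbb{L}\mathrm{ic}_{\omega_k^{(N)}}$ grows like a controlled power of $k$; one must quantify this growth in the conic H\"older norms and check that the order $N$ of the approximate solution beats it, so that the perturbation argument closes. Carrying out this adiabatic spectral analysis in the singular setting, where the usual Schauder estimates are replaced by their conic analogues, is the technical heart of the proof.
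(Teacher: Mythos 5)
Your overall architecture --- Hermitian-Einstein cone metric from parabolic stability (Theorem \ref{HEsol}), the adiabatic expansion of $S(\omega_k)$, an iterative improvement of the approximate solution, vanishing of log-tangent holomorphic vector fields on $X$, then a perturbation argument through the conical Fredholm theory --- is indeed the paper's (Hong-style) strategy, and your closing remark about the $k$-dependence of the inverse of the linearized operator flags a real issue that the paper's final step treats only implicitly. However, your iterative step has a genuine gap: you propose to cancel the error at each order by adding potentials only, splitting each correction into ``a fibrewise equation'' and ``a base equation''. This two-piece decomposition is not sufficient. The leading vertical operator in the adiabatic limit is $L_V=\Delta_V(\Delta_V-r)$, where $r=\rk(E)$ is precisely the first nonzero eigenvalue of the fibrewise Fubini-Study Laplacian; its kernel on fibrewise zero-mean functions therefore consists exactly of the first-eigenvalue eigenfunctions, i.e.\ the sections of the bundle $W$ of \lemref{associate}. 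The error terms at order $k^{-j}$ generically have nontrivial $W$-components; these are annihilated by $(\Delta_V-r)$, so no fibrewise potential can remove them, and they are not pullbacks of base functions, so the base Lichnerowicz equation cannot reach them either. As written, your iteration gets stuck at the first order at which such a component appears.

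The paper's resolution (Proposition \ref{prop1}) is to enlarge the deformation space: besides the potentials $\phi_{k,p}$ on $X$, one deforms the base form $\omega_B$ by $\eta_{k,p}$ and, crucially, the Hermitian-Einstein metric itself, $h_E\mapsto h_E(Id_E+\Phi_{k,p})$, by trace-free Hermitian endomorphisms. \lemref{associate} identifies sections of $W$ with trace-free Hermitian endomorphisms of $E$, and \lemref{isomorphismtangent} shows that the coupled linearized operator $DA_1(\eta,\Phi)$ --- mixing the base Lichnerowicz operator, the term $[\Lambda_{\omega_B}\bar{\partial}\partial\Phi]^0$, and a curvature coupling $\Lambda_{\omega_B}^2(F_{h_E}\wedge \sqrt{-1}\partial\bar{\partial}\eta)$ --- is an isomorphism onto base functions plus trace-free endomorphisms; this uses \thmref{Fredholm} on the base, the triviality of $Lie(Aut_D(B,[\omega_B]))$, and the simplicity of $E$ (\lemref{lem1}), which is exactly where parabolic stability enters the iteration beyond furnishing $h_E$. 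Only the remaining part of the error, orthogonal to the constants and to $W$, is handled by inverting $L_V$ on $\mathcal{Z}^{4,\a,\b}_0$ (Proposition \ref{isomorphismvertical}), which in the conical setting requires the Kodaira-type family argument of Lemmas \ref{familyestimate}--\ref{familyc1} rather than a bare spectral decomposition. So your two linear problems must be replaced by three, and the middle one forces you to move the bundle metric, not just the K\"ahler potential.
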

\begin{rem}
 In Theorem \ref{thm1}  the assumption on $E$ could be replaced by saying that it is an indecomposable holomorphic vector bundle equipped with a parabolic structure and a Hermitian-Einstein cone metric compatible with this structure, providing a purely differential geometric statement.
\end{rem}

Let's do now some brief comments. CscK cone metrics constitute a natural generalization of K\"ahler-Einstein metrics with conical singularities (see  Section \ref{recentprogress}) 
. The importance of the notion of K\"ahler-Einstein cone metric is now well established from Chen-Donaldson-Sun's breakthrough for the celebrated Yau-Tian-Donaldson conjecture, when one restricts attention to Fano manifolds and the anticanonical class, see for instance the pioneering paper \cite{MR2975584} or the survey \cite{MR3522175} and references therein. One may expect that the study of cscK cone metrics may lead to new progress on Yau-Tian-Donaldson conjecture for general polarizations or may have applications for construction of moduli spaces or Chern number inequalities. 
Furthermore, a logarithmic version of Yau-Tian-Donaldson conjecture is expected to be also true in the context of cscK cone metrics. Nevertheless, as far as we know, only very few examples of cscK cone metrics that are not K\"ahler-Einstein appeared in the literature. CscK cone metrics are far from being well understood and for instance uniqueness results have only appeared very recently, cf. \cites{LZ2,LZ3}. \\
From the point of view of existence, the case of curves has been studied by R.C. McOwen, M. Troyanov and F. Luo--G.Tian in the late eighties. In higher dimension, Y. Hashimoto \cite{Hasfutaki} has recently obtained momentum-constructed cscK cone metrics on the projective completion of a pluricanonical line bundle over a product of K\"ahler-Einstein Fano manifolds. This enabled him to give first evidence of the log Yau-Tian-Donaldson conjecture. Note that his definition of K\"ahler cone metrics is more restricted than the general usual definition that we consider here. A more general setup has been studied in \cite{Ke1} where it is shown morally that the notion of cscK cone metric is the most natural notion of K\"ahler metrics with special curvature properties for projective bundles over a curve, when the holomorphic bundle is irreducible and not Mumford stable (otherwise, the ``right'' notion would be the classical notions of smooth extremal/cscK metric).  A related work for extremal K\"ahler cone metric, still on the projective completion of a line bundle over admissible manifolds, can also be found in \cite{LiH}.

Our results provide an effective method to construct plenty of cscK cone metrics on various manifolds and partially generalize previous results op. cit. We also expect that Theorem \ref{Fredholm} will have many applications in a long range, including for studying log-K-stability (see Section \ref{remarksection}). 

We shall now explain the structure of the paper. In Section \ref{firstSection}, we introduce the notion of metrics with singularities, together with the adapted H\"older spaces and recall some results about regularity of cscK cone metrics. Among other things we prove the vanishing of the log-Futaki invariant of a K\"ahler class endowed with a cscK cone metric. In Section \ref{Section2}, we introduce weighted Sobolev spaces and obtain Schauder type estimates for Laplacian equation associated to a K\"ahler cone metric under half angle condition (Proposition \ref{Schauder}) or without half angle condition but with weaker regularity  (Proposition \ref{Schauderweakomega}). This allows us to see that a weak solution $u$ to the bi-Laplacian equation $\Delta^2u - K\Delta u =f$ is actually $C^{4,\a,\b}$. Using this result and a continuity method, we are able to prove Theorem \ref{Fredholm} by proving the key estimate (Theorem \ref{closedness}) showing closeness.
In Section \ref{Section 3}, we introduce the notion of {\it Hermitian-Einstein cone metrics}, that are hermitian metrics over a parabolic vector bundle that satisfy the Einstein equation (with respect to a K\"ahler cone metric) together with a certain regularity property. Theorem \ref{HEsol} shows that a parabolic stable vector bundle can be equipped with a Hermitian-Einstein cone metric, refining results of Simpson \cites{Sim,MR1040197} and Li \cite{LiJ}. Using this result, we adapt the work of Hong \cites{Hong2,Hong3} for smooth cscK metrics to the conical setting and construct inductively almost cscK cone metrics (Proposition \ref{prop1}). Using now Theorem \ref{Fredholm} and taking the adiabatic limit, we can deduce Theorem \ref{thm1} in Section \ref{proofThm1}.  In Section \ref{KET}, we explain that the existence of a K\"ahler-Einstein cone metric on a manifold provides a Hermitian-Einstein cone metric on its tangent bundle, generalizing a well-known result in the smooth case. This could be used to provide extra concrete examples of applications of our Theorem \ref{thm1}. Eventually, in Section \ref{remarksection}, we discuss natural generalizations of our work and some possible applications to other geometric questions. In the particular case of the projectivisation of a parabolic vector bundle over a curve, we formulate a conjecture between existence of cscK cone metric, log K-stability and parabolic stability.

\medskip

\tableofcontents
\section{CscK metrics with cone singularities}\label{firstSection}
Let $(X,\om_0)$ be a K\"ahler manifold. We denote $[\om_0]$ the K\"ahler class containing the smooth K\"ahler metric $\om_0$. 
We let $D$ be a smooth divisor in $X$ with $0 <\b <\frac{1}{2}$.

Given a point $p$ in $D$, let $\{z^1,\dots z^k\}$
be the local defining functions of the hypersurfaces where $p$ locates label. The local chart $(U_p, z^i)$ centered at $p$
is called \emph{cone chart} at $p$.

\begin{defn}\label{defn: cone metrics inside introduction}
	
	A \emph{K\"ahler cone metric} $\om$ of cone angle $2\pi\b$ along $D$,
	is a closed positive $(1,1)$ current, which is also a smooth K\"ahler metric on the regular part $$M:=X\setminus D.$$
	{In a local cone chart $U_p$, the} K\"ahler form is quasi-isometric to the standard cone flat metric, which is

	\begin{align}\label{flat cone}
	\om_{cone}
	&:=\frac{\sqrt{-1}}{2}\left(\b^2|z^1|^{2(\b-1)}dz^1\wedge
	dz^{\bar 1}
	+\sum_{2\leq j\leq n}dz^j\wedge dz^{\bar j}\right)\, .
	\end{align}
\end{defn}

The standard cone metric has nice properties.
The Christoffel symbols of the connection of $\om_{cone}$ under the holomorphic coordinate $\{z^1,\dots z^n\}$ are for all $2\leq i,j,k\leq n$,
$$\Gamma^1_{1k}(\om_{cone})=\Gamma^i_{11}(\om_{cone})=\Gamma^1_{jk}(\om_{cone})=\Gamma^i_{1k}(\om_{cone})=\Gamma^i_{jk}(\om_{cone})=0,$$
except $\Gamma^1_{11}(\om_{cone})=-\frac{1-\b}{z^1}.$ Also, the Riemannian curvature of $\om_{cone}$ is identical to zero.

\subsection{H\"older spaces in cone charts}\label{holderspace}

In this section, we start by recalling the definition of Donaldson's H\"older spaces \cite{MR2975584}, see also \cite{MR3144178}.\\
A quasi-isometric mapping $W$ is well defined in the cone chart $U_p$ as follows,
\begin{equation}\label{wcoordiante}
W (z^1, \cdots, \, z^{n})  :=
(w^1=|z^1|^{\b-1}z^1,z^2,\, \cdots, \, \, z^{n})\; .
\end{equation}
We let $v(w^1,\cdots,z^n)=u(z^1,\cdots,z^n)$.
A function $u(z): U_p\rightarrow \mathbb R$ 
is said to be $C^{,\a,\b}$, if $v(w^1,\cdots,z^n)$ is a $C^\a$ H\"older function in the classical sense. The space $C^{,\a,\b}_{\{0\}}$ contains all functions $f\in C^{,\a,\b}$ such that $$f(0, z^2,\cdots, z^n) =0.$$ In the cone charts $U_p$, the H\"older semi-norm $[u]_{C^{,\a,\b}(U_p)}$ is defined to be $[v]_{C^{\a}(W(U_p))}$ and then the H\"older norm $|u|_{C^{,\a,\b}(U_p)}$ is $\sup_{U_p} |u|+[u]_{C^{,\a,\b}(U_p)}$ in the usual sense. The global semi-norm or norm on the whole manifold $X$ is defined by using a partition of unity of $X$, since in the charts away from $D$, everything is defined in the classical sense. Together with the H\"older norm, $C^{,\a,\b}$ becomes a Banach space. A $(1,1)$-form $\sigma$ is said to be $C^{,\a,\b}$, if for any $2\leq i,j\leq n$,
\begin{equation}\label{2ndderivativescone}
\left\{
\begin{aligned}
&\sigma(\frac{\p}{\p z^i},\frac{\p}{\p z^{\bar j}})\in C^{,\a,\b}, &|z^1|^{2-2\b}\sigma(\frac{\p}{\p z^1},\frac{\p}{\p z^{\bar 1}})\in C^{,\a,\b},\\
&|z^1|^{1-\b}\sigma(\frac{\p}{\p z^1},\frac{\p}{\p z^{\bar j}})\in C^{,\a,\b}, &|z^1|^{1-\b}\sigma(\frac{\p}{\p z^i},\frac{\p}{\p z^{\bar 1}}) \in C^{,\a,\b}.
\end{aligned}
\right.
\end{equation} 
Similarly, we could define $C^{,\a,\b}$ of higher order tensors.
The H\"older space $C^{2,\a,\b}$ is defined by
\begin{align*}
C^{2,\a,\b}
= \{u\; \vert \;  u, \p u, \sqrt{-1}\p\bar\p u\in C^{,\a,\b}\}\; .
\end{align*}

Note that the spaces $C^{,\a,\b}$ and $C^{2,\a,\b}$ are independent of the choice of the background K\"ahler cone metrics which are equivalent. But we can see that the higher order spaces are more complicated, since the geometry of the background metric is involved.
The H\"older space $C^{3,\a,\b}$ and $C^{4,\a,\b}$ are introduced in \cites{MR3405866} and further detailed computations can be found in \cite{LZ2}.
The idea is that we first define the local model H\"older spaces in the cone charts, and then extend it to the whole manifold by using a global background K\"ahler cone metric.

We identify $\tilde U$ in the complex Euclidean space as the image of the cone chart $U\subset X$ under the cone chart, i.e a quasi-isometry $\rho: U\rightarrow \tilde U\subset \mathbb C^n$. We call $\tilde U$ an image cone chart. Then we have $C^{,\a,\b}(\tilde U)$ and $C^{2,\a,\b}(\tilde U)$ defined in $\tilde U$ as above with respect to $\om_{cone}$.

\begin{defn}\label{defn: 3dri}
	The H\"older space $C^{3,\a,\b}(\tilde U)$ is defined as the set of function
	$u\in C^{2,\a,\b}(\tilde U)$ such that its 3rd order covariant derivatives with respect to $\nabla^{cone}$ associated to the metric $\om_{cone}$ are $C^{,\a,\b}(\tilde U)$ in an image cone chart $\tilde U$. 
	More precisely, written down with respect to  the standard cone metric $\om_{cone}$, the following covariant derivatives are required to be $C^{,\a,\b}(\tilde U)$,{\small
	\begin{equation}\label{3rdderivativescone}
	\left\{
	\begin{aligned}
	&\nabla^{cone}_{i} \nabla^{cone}_{{\bar l}} \nabla^{cone}_{k}u,
	|z^1|^{3-3\b}\nabla^{cone}_{1} \nabla^{cone}_{{\bar 1}} \nabla^{cone}_{1}u \in C^{,\a,\b}(\tilde U),\\
	&|z^1|^{1-\b}\nabla^{cone}_{i} \nabla^{cone}_{{\bar 1}} \nabla^{cone}_{k}u ,
	|z^1|^{1-\b}\nabla^{cone}_{1} \nabla^{cone}_{{\bar l}} \nabla^{cone}_{k}u \in C^{,\a,\b}(\tilde U),\\
	&|z^1|^{2-2\b}\nabla^{cone}_{i} \nabla^{cone}_{{\bar 1}} \nabla^{cone}_{1}u ,
	|z^1|^{2-2\b}\nabla^{cone}_{1} \nabla^{cone}_{{\bar l}} \nabla^{cone}_{1}u \in C^{,\a,\b}(\tilde U).
	\end{aligned}
	\right.
	\end{equation}} 
\end{defn}
The $C^{4,\a,\b}(\tilde U)$ is defined in the similar way as follows.
\begin{defn}
	We say a $C^{3,\a,\b}(\tilde U)$ function $u$ is $C^{4,\a,\b}(\tilde U)$ if 
	it satisfies for any $2\leq i,j,k,l\leq n$, in the image cone chart $\tilde U$,
	{\small
	\begin{align}\label{4thderivativescone}
	\left\{
	\begin{aligned}
	&|z^1|^{2-2\b}\nabla^{cone}_{\bar l}\nabla^{cone}_k\nabla^{cone}_{\bar j} \nabla^{cone}_i u
	=\frac{\p^4 u}{\p z^{\bar j}\p z^i\p z^{\bar l}\p z^k}\in C^{,\a,\b}(\tilde U),\\
	&|z^1|^{2-2\b}\nabla^{cone}_{\bar l}\nabla^{cone}_1\nabla^{cone}_{\bar j} \nabla^{cone}_i u
	=|z^1|^{1-\b}\frac{\p^4 u}{\p z^{\bar j}\p z^i\p z^{\bar l}\p z^1}\in C^{,\a,\b}(\tilde U),\\
	&|z^1|^{2-2\b}\nabla^{cone}_{\bar 1}\nabla^{cone}_1\nabla^{cone}_{\bar j} \nabla^{cone}_iu
	=|z^1|^{2-2\b}\frac{\p^4 u}{\p z^{\bar j}\p z^i\p z^{\bar 1}\p z^1}\in C^{,\a,\b}(\tilde U),\\
	&|z^1|^{2-2\b}\nabla^{cone}_{\bar l} \nabla^{cone}_1\nabla^{cone}_{\bar j}\nabla^{cone}_1u\\
	&\pushright{=|z^1|^{2-2\b}[\frac{\p^4 u}{\p z^{\bar l}\p z^1\p z^{\bar j}\p z^1}+\frac{1-\b}{z^1}\frac{\p^3 u}{\p z^{\bar l}\p z^1\p z^{\bar j}}]\in C^{,\a,\b}(\tilde U),}\\
	&|z^1|^{3-3\b}\nabla^{cone}_{\bar 1}\nabla^{cone}_1\nabla^{cone}_{\bar 1} \nabla^{cone}_i u\\
	&\pushright{=|z^1|^{3-3\b}[\frac{\p^4 u}{\p z^{\bar 1}\p z^1\p z^{\bar 1}\p z^i}+\frac{1-\b}{z^{\bar 1}}\frac{\p^3 u}{ \p z^1 \p z^{\bar 1}\p z^{ i}}]\in C^{,\a,\b}(\tilde U),}\\
	&|z^1|^{4-4\b}\nabla^{cone}_{\bar 1}\nabla^{cone}_1 \nabla^{cone}_{\bar 1}\nabla^{cone}_1 u\\
	&\pushright{=|z^1|^{4-4\b}[\frac{\p^4 u}{\p z^{\bar 1}\p z^1\p z^{\bar 1}\p z^1}
	+\frac{1-\b}{z^{\bar 1}}\frac{\p^3 u}{\p z^1\p z^{\bar 1} \p z^1}
	+\frac{1-\b}{z^{1}}\frac{\p^3 u}{\p z^1\p z^{\bar 1} \p z^{\bar 1}}}\\
	&\pushright{+\frac{(1-\b)^2}{|z^1|^2}\frac{\p^2 u}{\p z^1\p z^{\bar 1}}]\in C^{,\a,\b}(\tilde U).}
	\end{aligned}
	\right.
	\end{align} }
\end{defn}
We notice from the definitions above that when the same derivative $\p z^1$ or $\p z^{\bar 1}$ appear twice, we need extra lower order derivatives to adjust the normal derivatives.


\subsection{Model cone metric}\label{Model cone metric}
Let $\mathcal H_{\b}(X,D)$ be the space of K\"ahler cone metrics of cone angle $2\pi\b$ along $D$ in the K\"ahler class $[\om_0]$. We denote by $\mathcal H^{2,\alpha,\b}(X,D)$ the space of $\om_0$-plurisubharmonic functions which are $C^{2,\a,\b}(X)$. Let $s$ be a section of associated line bundle of $D$.
It is explained by Donaldson in \cite{MR2975584} that
for sufficiently small $\delta>0$,
\begin{align}\label{model cone}
\om_D=\om_0+\delta \frac{\sqrt{-1}}{2}\p\bar\p|s|^{2\beta}_{h_\L}
\end{align}
is a K\"ahler cone metric and independent of the choices of $\om_0$, $h_\L$, $\delta$ up to quasi-isometry. We call such $\om_D$ the \textbf{model metric} with  conical singularity.
The model metric $\om_D$ has rich geometric information, that we list now. The detailed computation could be found in \cites{MR3144178, MR3405866}.

\begin{lem}\label{geometryg}
	Assume that $0<\b<\frac{1}{2}$.
	The following properties of the model metric $\om_D$ hold in the cone charts.
	\begin{itemize}

		\item For any $2\leq i,j,k\leq n$,
		\begin{equation}\label{bgdmetric}
		\left\{
		\begin{aligned}
		&\frac{\p g_{k\bar l}}{\p z^i}\in C^{,\a,\b},\quad
		|z^1|^{1-\b}\frac{\p g_{1\bar l}}{\p z^i}\in C^{,\a,\b}_{\{0\}},\quad 
		|z^1|^{2-2\b}\frac{\p g_{1\bar 1}}{\p z^i}\in C^{,\a,\b},\\
		&|z^1|^{2-2\b}\nabla_1^{cone}g_{1\bar l}
		=|z^1|^{2-2\b}[\frac{\p g_{1\bar l}}{\p z^1}+\frac{1-\b}{z^1}g_{1\bar l}]\in C^{,\a,\b}_{\{0\}},\\
		&|z^1|^{3-3\b}\nabla_1^{cone}g_{1\bar 1}
		=|z^1|^{3-3\b}[\frac{\p g_{1\bar 1}}{\p z^1}+\frac{1-\b}{z^1}g_{1\bar 1}]\in C^{,\a,\b}_{\{0\}}.
		\end{aligned}
		\right.
		\end{equation} 
		
		\item
		The Christoffel symbols of the connection of $\om_D$ satisfy for any $2\leq i,j,k\leq n$,
		\begin{equation}\label{bgconnection}
		\left\{
		\begin{aligned}
		&\Gamma^{i}_{jk}\in C^{,\a,\b}, \quad 
		|z^1|^{1-\b}\Gamma^{i}_{j1}\in C^{,\a,\b}_{\{0\}}, \quad
		|z^1|^{\b-1}\Gamma^{1}_{jk}\in C^{,\a,\b}_{\{0\}},\\
		&\Gamma^{1}_{j1}\in C^{,\a,\b}, \quad
		|z^1|^{2-2\b}\Gamma^{i}_{11}\in C^{,\a,\b}_{\{0\}},\quad \\
		&|z^1|^{1-\b}\left(\Gamma^{1}_{11}+\frac{1-\b}{z^1}\right)\in C^{,\a,\b}_{\{0\}}.
		\end{aligned}
		\right.
		\end{equation} 
		
		\item For any $2\leq i,j,l\leq n$, the second order covariant derivatives of the model metric $\om_D$ are
		\begin{equation}\label{bgddmetric}
		\left\{
		\begin{aligned}
		&\frac{\p^2 g_{k\bar l}}{\p z^i\p z^{\bar j}}\in C^{,\a,\b},
		|z^1|^{1-\b}\frac{\p^2 g_{1\bar l}}{\p z^i\p z^{\bar j}}\in C^{,\a,\b}_{\{0\}},\\
		&|z^1|^{2-2\b}\frac{\p^2 g_{1\bar 1}}{\p z^i\p z^{\bar j}}\in C^{,\a,\b},\\
		&|z^1|^{2-2\b}\nabla^{cone}_{\bar j}\nabla^{cone}_1 g_{1\bar l}\in C^{,\a,\b}_{\{0\}},\\
		&|z^1|^{3-3\b}\nabla^{cone}_{\bar 1}\nabla^{cone}_i g_{1\bar 1}\in C^{,\a,\b}_{\{0\}},\\
		&|z^1|^{4-4\b}\nabla^{cone}_{\bar 1}\nabla^{cone}_1 g_{1\bar 1}\in C^{,\a,\b}_{\{0\}}.
		\end{aligned}
		\right.
		\end{equation} 
		
	\end{itemize}
\end{lem}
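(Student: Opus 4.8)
The plan is to reduce everything to an explicit computation in a single cone chart, since on $M=X\setminus D$ the metric $\om_D$ is smooth and all the assertions are classical there. Fix $p\in D$ and cone coordinates with $D=\{z^1=0\}$, and write $|s|^2_{h_\Lambda}=H|z^1|^2$ with $H=H(z,\bar z)$ smooth and strictly positive. By \eqref{model cone} the entire singular part of the potential is then $\delta H^\b|z^1|^{2\b}$, the background $\om_0$ contributing only $C^\infty$ terms; hence each component is $g_{i\bar j}=(\om_0)_{i\bar j}+\delta\,\p_i\p_{\bar j}(H^\b|z^1|^{2\b})$ with $(\om_0)_{i\bar j}$ smooth. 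I would first record the pure model $H\equiv 1$, where $\p_1\p_{\bar 1}|z^1|^{2\b}=\b^2|z^1|^{2(\b-1)}$ reproduces the leading $g_{1\bar 1}$ of \eqref{flat cone}, while $\p_i\p_{\bar j}|z^1|^{2\b}=0$ and $\p_1\p_{\bar j}|z^1|^{2\b}=0$ for $i,j\ge 2$, and then reinstate the smooth factor $H^\b$ by Leibniz' rule.

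The bookkeeping principle I would isolate and use throughout is an order count in $|z^1|$: a normal derivative $\p_1$ or $\p_{\bar 1}$ lowers the $|z^1|$-order of a term by one unit, whereas a tangential derivative $\p_i,\p_{\bar j}$ ($i,j\ge 2$), as well as any derivative falling on the smooth factor $H^\b$, preserves it. The weights $|z^1|^{1-\b}$, $|z^1|^{2-2\b}$, $|z^1|^{3-3\b}$, $|z^1|^{4-4\b}$ appearing in \eqref{bgdmetric}--\eqref{bgddmetric} are precisely those that restore each quantity to nonnegative $|z^1|$-order. Every term produced is a product of a power of $|z^1|^{2\b}$, integer powers of $z^1$ and $\bar z^1$, and a smooth factor; after the stated weight is applied the total modulus-order is $\ge 0$, and I would then verify membership in $C^{,\a,\b}$ straight from the definition by composing with the quasi-isometry $W$ of \eqref{wcoordiante}. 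Under $W$ one has $|z^1|^\b=|w^1|$ and $\arg w^1=\arg z^1$, so $\bar z^1/|z^1|=\bar w^1/|w^1|$; each weighted term thus becomes a nonnegative power of $|w^1|$ times a bounded smooth factor, which is classically $C^\a$ for every $\a\in(0,1)$ using only $0<\b<\tfrac12$, and which vanishes on $\{w^1=0\}$ exactly when that power is positive. This last dichotomy is what distinguishes $C^{,\a,\b}$ from $C^{,\a,\b}_{\{0\}}$ in the statement.

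For the Christoffel symbols \eqref{bgconnection} I would first compute the inverse metric, noting that $g^{1\bar 1}\sim\b^{-2}H^{-\b}|z^1|^{2(1-\b)}$ while the remaining entries stay bounded, and then substitute into $\Gamma^k_{ij}=g^{k\bar l}\p_i g_{j\bar l}$, pairing the weighted first-derivative estimates just obtained against the appropriately weighted inverse entries to read off the stated orders. The one genuinely delicate entry is $\Gamma^1_{11}=g^{1\bar l}\p_1 g_{1\bar l}$: its leading singularity is $-(1-\b)/z^1$, coming from $g^{1\bar 1}\p_1 g_{1\bar 1}$ via $\p_1|z^1|^{2\b-2}=(\b-1)|z^1|^{2\b-2}/z^1$, so that it matches $\Gamma^1_{11}(\om_{cone})$ recorded above; the weight $|z^1|^{1-\b}$ then renders $\Gamma^1_{11}+(1-\b)/z^1$ a genuine $C^{,\a,\b}_{\{0\}}$ remainder.

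Finally, for the second covariant derivatives \eqref{bgddmetric} I would apply $\nabla^{cone}$ to the first-order expressions, using that the connection of $\om_{cone}$ has all Christoffel symbols zero except $\Gamma^1_{11}(\om_{cone})=-(1-\b)/z^1$, so that for instance $\nabla^{cone}_1 g_{1\bar l}=\p_1 g_{1\bar l}+\tfrac{1-\b}{z^1}g_{1\bar l}$ exactly as written. These covariant combinations are organized so that the $1/z^1$ correction terms cancel the leading singularities produced by $\p_1$, after which the order count above applies. The main obstacle, and the only step that is not mechanical, is controlling these cancellations for the most singular entry $|z^1|^{4-4\b}\nabla^{cone}_{\bar 1}\nabla^{cone}_1 g_{1\bar 1}$: here two normal covariant derivatives act, the naive modulus-order lies well below zero, and one must expand $g_{1\bar 1}=\b^2\delta H^\b|z^1|^{2(\b-1)}+O(1)$ far enough --- while also tracking the interaction of the bounded $\om_0$-part with the singular $1/z^1$ connection terms --- to see that every contribution of negative order cancels, leaving a bounded remainder vanishing on $D$. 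This detailed expansion is exactly what is carried out in \cite{MR3144178} and \cite{MR3405866}, to which the statement refers.
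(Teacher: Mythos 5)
The paper offers no proof of this lemma at all (it defers the computation to \cite{MR3144178} and \cite{MR3405866}), so your proposal has to stand on its own, and as a self-contained argument it has a genuine gap: the bookkeeping principle that carries almost every entry is false. You assert that after applying the stated weight each term becomes ``a nonnegative power of $|w^1|$ times a bounded smooth factor, which is classically $C^\a$ for every $\a\in(0,1)$ using only $0<\b<\tfrac12$.'' Neither half of this survives inspection. First, a positive fractional power of $|w^1|$ is not $C^\a$ for every $\a$. Take the entry $|z^1|^{2-2\b}\nabla^{cone}_1 g_{1\bar l}$ of \eqref{bgdmetric}: writing $g_{1\bar l}=(\om_0)_{1\bar l}+\delta\,\p_1\p_{\bar l}(H^\b|z^1|^{2\b})$, the $|z^1|^{2\b}(z^1)^{-2}$ singularities cancel inside the covariant combination as you say, but the smooth background leaves the uncancelled term $(1-\b)(\om_0)_{1\bar l}\,|z^1|^{2-2\b}/z^1$, which under $W$ becomes a Lipschitz factor times $\bar w^1|w^1|^{1/\b-3}$. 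This is homogeneous of positive degree $1/\b-2$, so it does vanish on $D$, but its H\"older exponent is exactly $\min\bigl(1,\tfrac1\b-2\bigr)$, which is strictly less than $1$ whenever $\tfrac13<\b<\tfrac12$. Your argument would thus ``prove'' membership in $C^{,\a,\b}_{\{0\}}$ for all $\a\in(0,1)$, which is false; the actual requirement $\a\le \tfrac1\b-2$, i.e.\ $\a\b\le 1-2\b$, is exactly where Condition \eqref{anglerestriction} comes from, and your order count can never produce it.

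Second, nonnegative modulus-order does not even give continuity, because what is left after extracting $|w^1|^\gamma$ can be a pure phase rather than a smooth factor, and this already bites for the Christoffel symbols, which you propose to handle by mechanically ``pairing'' weighted orders. In $|z^1|^{2-2\b}\Gamma^i_{11}$ ($i\ge 2$), with $\Gamma^i_{11}=\sum_l g^{i\bar l}\p_1 g_{1\bar l}$, the contribution $g^{i\bar l}\,\delta\b(\b-1)\p_{\bar l}(H^\b)\,|z^1|^{2\b}(z^1)^{-2}$ for $l\ge 2$ has weighted order $0$ but carries the phase $\bar z^1/z^1=\bar w^1/w^1$: bounded, discontinuous, and nowhere vanishing on $D$, hence incompatible with the asserted $C^{,\a,\b}_{\{0\}}$ membership. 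The statement is saved only because this piece cancels identically against the matching piece of $g^{i\bar 1}\p_1 g_{1\bar 1}$ (for $n=2$ one checks that the numerator $-g_{1\bar 2}\p_1 g_{1\bar 1}+g_{1\bar 1}\p_1 g_{1\bar 2}$ has no term of order $4\b-4$); and after that cancellation the surviving worst term is again of the form $(\text{Lipschitz})\cdot\bar z^1|z^1|^{-2\b}$, with the same exponent restriction as above. So exact cancellations of the kind you confine to the single entry $|z^1|^{4-4\b}\nabla^{cone}_{\bar 1}\nabla^{cone}_1 g_{1\bar 1}$, and outsource to the references, are needed already throughout \eqref{bgconnection} and \eqref{bgdmetric}. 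A correct write-up must record for each entry the exact form $(w^1)^a(\bar w^1)^b|w^1|^\sigma\times(\text{Lipschitz})$, exhibit the cancellations that remove negative exponents and unpaired phases, and only then read off the admissible $\a$; the two middle paragraphs of your proposal, as written, prove a statement that is false for $\b>\tfrac13$.
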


We could define the $C^{3,\a,\b}$ and $C^{4,\a,\b} $ spaces with respect to $\om_D$ on the whole manifold $X$ via replacing the metric $\om_{cone}$ with $\om_D$ in both definitions \eqref{3rdderivativescone} and \eqref{4thderivativescone}. In fact, we have a more general property.
\begin{prop}\label{3rdderivativeD}
	Assume that $\om$ is a K\"ahler cone metric and its connection satisfies \eqref{bgconnection}. 
	Then the local $C^{3,\a,\b}(\tilde U)$ function $u$ could be extended to be global. Precisely, its 3rd order covariant derivatives belong to $C^{,\a,\b}(X)$, i.e. letting $\nabla$ denote the covariant derivative regarding to the K\"ahler cone  metric $\om$, for any $2\leq i,j,k,l\leq n$, 
	\begin{equation}\label{3rdderivativesany}
	\left\{
	\begin{aligned}
	&\nabla_{k} \nabla_{{\bar j}} \nabla_{i}u, &	|z^1|^{3-3\b}\nabla_{1} \nabla_{{\bar 1}} \nabla_{1}u,\\
	&|z^1|^{1-\b}\nabla_{k} \nabla_{{\bar 1}} \nabla_{i}u, &|z^1|^{1-\b}\nabla_{k} \nabla_{{\bar j}} \nabla_{1}u,\\
	&|z^1|^{2-2\b}\nabla_{k} \nabla_{{\bar 1}} \nabla_{1}u, &	|z^1|^{2-2\b}\nabla_{1} \nabla_{{\bar j}} \nabla_{1}u
	\end{aligned}
	\right.
	\end{equation}  belong to  $C^{,\a,\b}(X)$.
\end{prop}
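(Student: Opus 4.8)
The plan is to reduce everything to the model cone connection $\nabla^{cone}$, for which the hypothesis $u\in C^{3,\a,\b}(\tilde U)$ already controls all the relevant weighted third derivatives, and then to absorb the discrepancy between $\nabla$ and $\nabla^{cone}$ into correction terms built from Christoffel symbols. Since $u$ is a function and both connections are Chern connections, a covariant derivative in an anti-holomorphic direction of a tensor carrying holomorphic indices produces no Christoffel contribution; hence the third covariant derivative of $u$ has a particularly simple shape, in which only the single inner holomorphic index of the mixed Hessian $u_{i\bar j}:=\p_i\p_{\bar j}u$ is contracted against a connection coefficient.

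Concretely, I would first record the identity, valid for all $1\le i,j,k\le n$,
\[
\nabla_k\nabla_{\bar j}\nabla_i u=\nabla^{cone}_k\nabla^{cone}_{\bar j}\nabla^{cone}_i u-\bigl[\Gamma^p_{ki}(\om)-\Gamma^p_{ki}(\om_{cone})\bigr]u_{p\bar j},
\]
obtained by expanding each side as $\p_k u_{i\bar j}-\Gamma^p_{ki}u_{p\bar j}$ (summation over $1\le p\le n$) and subtracting. Here $\Gamma^p_{ki}(\om_{cone})$ vanishes unless $p=k=i=1$, in which case it equals $-\tfrac{1-\b}{z^1}$; thus the bracket coincides with $\Gamma^p_{ki}(\om)$ except in the purely normal direction, where it becomes $\Gamma^1_{11}(\om)+\tfrac{1-\b}{z^1}$, precisely the combination whose regularity is guaranteed by the connection bounds \eqref{bgconnection}.

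The heart of the argument is then a weight count. Multiplying the identity by the prescribed power of $|z^1|$ attached to each of the six types in \eqref{3rdderivativesany}, I would split the sum over $p$ into $p\ge2$ and $p=1$, and estimate each product using \eqref{bgconnection} together with the second-order bounds \eqref{2ndderivativescone} coming from $u\in C^{2,\a,\b}$. In every case the exponents of $|z^1|$ sum to zero, so each product is a bounded product of $C^{,\a,\b}$ factors and hence lies in $C^{,\a,\b}$. For the tangential derivative $\nabla_k\nabla_{\bar j}\nabla_i u$ (with $i,j,k\ge2$), for example, the $p=1$ term is $\Gamma^1_{ki}(\om)\,u_{1\bar j}=\bigl(|z^1|^{1-\b}C^{,\a,\b}_{\{0\}}\bigr)\bigl(|z^1|^{\b-1}C^{,\a,\b}\bigr)\in C^{,\a,\b}$; for the fully normal derivative the diagonal correction is $|z^1|^{3-3\b}\bigl(\Gamma^1_{11}(\om)+\tfrac{1-\b}{z^1}\bigr)u_{1\bar1}=|z^1|^{3-3\b}\bigl(|z^1|^{\b-1}C^{,\a,\b}_{\{0\}}\bigr)\bigl(|z^1|^{2\b-2}C^{,\a,\b}\bigr)\in C^{,\a,\b}$, while the residual $p\ge2$ pieces $|z^1|^{3-3\b}\Gamma^p_{11}(\om)u_{p\bar1}$ again cancel their weights identically. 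Since the $\nabla^{cone}$-terms are $C^{,\a,\b}(\tilde U)$ by hypothesis and the corrections are $C^{,\a,\b}(\tilde U)$, the claim holds in each image cone chart; the remaining four types follow by the same bookkeeping.

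Finally, passing from $\tilde U$ to all of $X$ is routine: away from $D$ the metric $\om$ is smooth, so covariant derivatives of a $C^{3,\a}$ function are $C^{\a}$ in the ordinary sense, and a partition of unity—exactly as in the definition of the global spaces $C^{,\a,\b}(X)$—glues the local estimates. I expect the only genuine difficulty to be organizational: tracking the $|z^1|$-exponents through every index configuration, isolating the singular piece $-\tfrac{1-\b}{z^1}$ of $\Gamma^1_{11}(\om_{cone})$ and pairing it with the adjusted bound in \eqref{bgconnection}, and checking that whenever a second derivative $u_{p\bar j}$ carries a negative power of $|z^1|$ it is multiplied by a Christoffel coefficient supplying the compensating positive power. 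There is no analytic subtlety beyond the fact that products of bounded $C^{,\a,\b}$ functions remain $C^{,\a,\b}$.
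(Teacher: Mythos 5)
Your proof is correct. Note that this paper never actually proves Proposition \ref{3rdderivativeD}: it explicitly defers the proofs of Propositions \ref{3rdderivativeD} and \ref{4thderivativeD} to \cite{LZ2}, so there is no in-paper argument to compare against; your write-up supplies the missing argument, and it is the natural one. The key identity $\nabla_k\nabla_{\bar j}\nabla_i u=\nabla^{cone}_k\nabla^{cone}_{\bar j}\nabla^{cone}_i u-\bigl[\Gamma^p_{ki}(\om)-\Gamma^p_{ki}(\om_{cone})\bigr]u_{p\bar j}$ is valid exactly as you say (both connections are K\"ahler, so the anti-holomorphic derivative contributes no Christoffel term and only the inner holomorphic index of the Hessian is contracted), and the exponent bookkeeping closes in all six configurations of \eqref{3rdderivativesany}: in each case the powers of $|z^1|$ attached to the Christoffel bound from \eqref{bgconnection} and to the Hessian bound from \eqref{2ndderivativescone} sum to zero, with the singular part $-\tfrac{1-\b}{z^1}$ of $\Gamma^1_{11}(\om_{cone})$ absorbed precisely by the adjusted combination $\Gamma^{1}_{11}(\om)+\tfrac{1-\b}{z^1}$ controlled in \eqref{bgconnection}. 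The globalization step is indeed routine, since the global $C^{,\a,\b}(X)$ norm is defined via a partition of unity and $\om$ is smooth away from $D$.
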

Thus we can define the $C^{3,\a,\b}(X;\om)$ norm of a function $u$ on the whole manifold $X$ as 
\begin{align*}
|u|_{C^{3,\a,\b}(X;\om)}=&|u|_{C^{2,\a,\b}(X)} &\\
&+\sum_{2\leq i,j,l\leq n}[&\pushleft{|\nabla_{k} \nabla_{{\bar j}} \nabla_{i}u|_{C^{,\a,\b}(X)} +|\nabla_{k} \nabla_{{\bar 1}} \nabla_{i}u |_{C^{,\a,\b}(X)}}\vspace{-0.4cm}\\
& &\pushleft{+|\nabla_{k} \nabla_{{\bar j}} \nabla_{1}u |_{C^{,\a,\b}(X)}+|\nabla_{k} \nabla_{{\bar 1}} \nabla_{1}u |_{C^{,\a,\b}(X)}}\\
& &+|\nabla_{1} \nabla_{{\bar j}} \nabla_{1}u |_{C^{,\a,\b}(X)}+|\nabla_{1} \nabla_{{\bar 1}} \nabla_{1}u |_{C^{,\a,\b}(X)}].
\end{align*}

\begin{prop}\label{4thderivativeD}
	Assume that $\om$ is a K\"ahler cone metric and satisfies \eqref{bgconnection}, \eqref{bgdmetric} and \eqref{bgddmetric}. 
	Then the local $C^{4,\a,\b}(\tilde U)$ function $u$ could be extended to be global, i.e. its 4th order covariant derivatives are all $C^{,\a,\b}(X)$, i.e. letting $\nabla$ denote the covariant derivative regarding to the K\"ahler cone  metric $\om$, for any $2\leq i,j,k,l\leq n$,
	\begin{equation*}
	\left\{
	\begin{aligned}
	&\nabla_{\bar l}\nabla_k\nabla_{\bar j} \nabla_i u,
	|z^1|^{1-\b}\nabla_{\bar l}\nabla_1\nabla_{\bar j} \nabla_i u,\\
	&|z^1|^{2-2\b}\nabla_{\bar 1}\nabla_1\nabla_{\bar j} \nabla_i u,
	|z^1|^{2-2\b}\nabla_{\bar l}\nabla_{1} \nabla_{{\bar j}} \nabla_{1}u,\\
	&|z^1|^{3-3\b}\nabla_{\bar 1}\nabla_1\nabla_{\bar 1} \nabla_i  u,
	|z^1|^{4-4\b}\nabla_{\bar 1}\nabla_1 \nabla_{\bar 1}\nabla_1 u
	\end{aligned}
	\right.
	\end{equation*} belong to $C^{,\a,\b}(X)$. 
\end{prop}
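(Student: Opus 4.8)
The plan is to follow the strategy of the third-order statement, Proposition \ref{3rdderivativeD}, raised by one order. Write $\nabla$ for the connection of $\om$ and $\nabla^{cone}$ for that of $\om_{cone}$. The two connections differ only through their Christoffel symbols, and those of $\om_{cone}$ are explicit --- all vanishing except $\Gamma^1_{11}(\om_{cone})=-\tfrac{1-\b}{z^1}$ --- so passing from $\nabla^{cone}$ to $\nabla$ amounts to inserting the Christoffel symbols of $\om$, whose weighted regularity is exactly the content of \eqref{bgconnection}. The local hypothesis $u\in C^{4,\a,\b}(\tilde U)$ controls, through \eqref{4thderivativescone}, the fourth covariant derivatives of $u$ computed with $\nabla^{cone}$, and the object of the proof is to pass from these to the $\nabla$-derivatives appearing in the statement. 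First I would expand each of the six weighted fourth derivatives directly in holomorphic coordinates. Because every index pattern occurring in the statement alternates between barred and unbarred indices (reading innermost to outermost: holomorphic, anti-holomorphic, holomorphic, anti-holomorphic), the expansion produces only three kinds of terms: the plain fourth $\nabla^{cone}$-derivative of $u$; terms in which one Christoffel symbol of $\om$, or a product of two, multiplies a lower-order $\p$-derivative of $u$; and a single kind of Christoffel-derivative term, a mixed derivative (a barred derivative of a holomorphic Christoffel symbol) such as $\p_{\bar l}\Gamma^m_{ki}(\om)$ times $\p_m\p_{\bar j}u$, together with its analogues for the other patterns.

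The decisive structural point is that this last, highest-order term is a barred derivative of a holomorphic Christoffel symbol, hence, up to sign, a component of the curvature of $\om$ --- a second covariant derivative of the metric. The alternation of the indices is precisely what prevents any same-type derivative of a Christoffel symbol, which would be a genuine third derivative of $g$ beyond the reach of the hypotheses, from ever appearing. Consequently every coefficient entering the correction terms is one whose weighted H\"older behaviour is asserted in the hypotheses: the Christoffel symbols through \eqref{bgconnection}, the first derivatives of the metric through \eqref{bgdmetric}, and the curvature through the second covariant derivatives of $g$ in \eqref{bgddmetric}. Since the curvature of $\om_{cone}$ vanishes identically, no compensating $\om_{cone}$-curvature term is present and the curvature of $\om$ appears on its own, so the accounting is clean.

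It then remains to verify, case by case for the six index types, that after multiplication by the prescribed power of $|z^1|$ each correction term lies in $C^{,\a,\b}(X)$. I would organise this exactly as in the derivation of the right-hand sides displayed in \eqref{4thderivativescone}: whenever the normal index $1$ or $\bar 1$ is repeated, the singular Christoffel symbol, split as $\Gamma^1_{11}(\om)=-\tfrac{1-\b}{z^1}+\bigl(\Gamma^1_{11}(\om)+\tfrac{1-\b}{z^1}\bigr)$, is combined with the lower-order adjustment terms already visible on the right of \eqref{4thderivativescone}, so that the powers $|z^1|^{k(1-\b)}$ match. I expect the weight bookkeeping to be the main obstacle: each singular factor coming from $\Gamma^1_{11}(\om)$ and from the normal-direction curvature must be absorbed, and this is exactly where one uses that the relevant quantities in \eqref{bgconnection}, \eqref{bgdmetric} and \eqref{bgddmetric} lie in the vanishing subspace $C^{,\a,\b}_{\{0\}}$, whose extra order of vanishing along $\{z^1=0\}$ compensates the extra factor of $|z^1|^{-1}$. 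Once each of the six quantities is matched in this way on an image cone chart $\tilde U$, the extension to all of $X$ follows by a partition of unity as in \S\ref{holderspace}, the charts away from $D$ being classical.
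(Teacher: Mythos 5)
First, a point of comparison: the paper does not actually prove this proposition. Immediately after stating Propositions \ref{3rdderivativeD} and \ref{4thderivativeD} it says the proofs ``are carried out in \cite{LZ2}'', so there is no in-paper argument to measure your proposal against; one can only judge it on its own terms and against what the cited reference does. On those terms your strategy is the natural one and its central structural observation is correct and is indeed the crux: because every pattern in the statement alternates barred and unbarred indices, the only derivatives of Christoffel symbols that the expansion can produce are mixed ones $\p_{\bar a}\Gamma^m_{bc}(\om)$, i.e.\ curvature components, which are exactly what \eqref{bgddmetric} (together with \eqref{bgdmetric}, \eqref{bgconnection} and quasi-isometry) controls; a non-alternating pattern would generate an unbarred derivative of a Christoffel symbol, hence a genuine third derivative of $g$ beyond the hypotheses, and this is why only alternating patterns appear in the statement. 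Since $\om_{cone}$ is flat, the comparison with the cone expansion introduces no uncontrolled curvature term, and the final globalization by partition of unity is the standard and correct last step.

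One claim in your weight bookkeeping is overstated, and it matters because it is where Condition \eqref{anglerestriction} actually enters. A function $f\in C^{,\a,\b}_{\{0\}}$ satisfies only $|f|\lesssim |z^1|^{\a\b}$ (its pullback under $W$ is $C^{\a}$ and vanishes at $w^1=0$), and since $\a\b<1-2\b<1$ this decay can never ``compensate the extra factor of $|z^1|^{-1}$''. The genuine $1/z^1$ singularities, coming from the singular part of $\Gamma^1_{11}(\om)$, must cancel \emph{exactly} against the adjustment terms $\frac{1-\b}{z^1}\times(\text{lower order derivatives})$ built into \eqref{4thderivativescone}; your splitting of $\Gamma^1_{11}(\om)$ does achieve this, so that part of your argument is fine, but it is a cancellation, not an absorption by vanishing. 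What the $C^{,\a,\b}_{\{0\}}$ condition is really needed for is to keep products H\"older after multiplication by the bounded but discontinuous factors of the form $|z^1|^{1-2\b}\,|z^1|/z^1$ that arise when converting between weighted partial derivatives and cone-covariant derivatives: the vanishing of the H\"older factor neutralizes the phase $|z^1|/z^1$, while the inequality $\a\b<1-2\b$ is precisely what makes the residual power $|z^1|^{1-2\b}$ itself lie in $C^{,\a,\b}$. With the mechanism stated this way, what remains is the six-case verification you defer, which is in fact the bulk of the proof carried out in \cite{LZ2}.
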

The $C^{4,\a,\b}(X;\om)$ norm of a function $u$ is defined in the same way
\begin{align*}
|u|_{C^{4,\a,\b}(X;\om)}=&|u|_{C^{3,\a,\b}(X;\om)} &\\
&\pushleft{+\sum_{2\leq i,j,k,l\leq n}\hspace{-0.15cm}[}& \pushleft{\hspace{-0.5cm}|\nabla_{\bar l}\nabla_k\nabla_{\bar j} \nabla_i u|_{C^{,\a,\b}(X)}+|\nabla_{\bar l}\nabla_1\nabla_{\bar j} \nabla_i u |_{C^{,\a,\b}(X)}}\vspace{-0.4cm}\\
& & \pushleft{ \hspace{-0.5cm}+|\nabla_{\bar 1}\nabla_1\nabla_{\bar j} \nabla_iu |_{C^{,\a,\b}(X)}+|\nabla_{\bar l}\nabla_{1} \nabla_{{\bar j}} \nabla_{1}u|_{C^{,\a,\b}(X)}}\\
& &\pushleft{\hspace{-0.5cm}+|\nabla_{\bar 1}\nabla_1\nabla_{\bar 1} \nabla_i  u|_{C^{,\a,\b}(X)}+|\nabla_{\bar 1}\nabla_1 \nabla_{\bar 1}\nabla_1  |_{C^{,\a,\b}(X)}].}
\end{align*}

The proofs of Propositions \ref{4thderivativeD} and \ref{3rdderivativeD} are carried out in \cite{LZ2}. Moreover both spaces $C^{3,\a,\b}(X;\om)$ and $C^{4,\a,\b}(X;\om)$ are Banach spaces, as proved in \cite[Section 5]{LZ2}.

\begin{rem}
	It is natural to use the model metric $\om_D$ in both  Propositions \ref{3rdderivativeD} and \ref{4thderivativeD}, since it satisfies all required conditions. The same scheme of ideas allows  to define without difficulty higher order function spaces $C^{k,\a,\b}$ for any $k\geq 5$. 
\end{rem}

Next we consider the general K\"ahler cone metric $$\om_\vphi=\om_D+\sqrt{-1}\p\bar\p \vphi.$$
When we have a K\"ahler potential $\vphi\in C^{4,\a,\b}(X;\om_D)$, we get $\log\om^n_\vphi\in C^{2,\a,\b}(X)$, and the Ricci curvature $-\sqrt{-1}\p\bar\p\log\om^n_\vphi\in C^{,\a,\b}(X)$,
according to the definition above and  \lemref{geometryg}, under the restriction \eqref{anglerestriction}. Furthermore, we have the following information for the connection and curvature of $\om_\vphi$.

\begin{cor}\label{cor4ab}
	Assume that the potential function $\vphi$ of a K\"ahler cone metric belongs to $C^{4,\a,\b}(X;\om_D)$. Then the properties \eqref{bgdmetric}, \eqref{bgconnection} and \eqref{bgddmetric} hold for $g_\vphi$. Actually, denoting for simplicity $g=g_\vphi$ as the Riemannian metric associated to $\om_\vphi$, we have that all following elements are in $C^{,\a,\b}$,
	\begin{itemize}
		\item for any $2\leq i,j,k\leq n$,
		\begin{equation*}
		\left\{
		\begin{aligned}
		&\frac{\p g_{k\bar l}}{\p z^i},\quad
		|z^1|^{1-\b}\frac{\p g_{1\bar l}}{\p z^i},\quad
		|z^1|^{2-2\b}\frac{\p g_{1\bar 1}}{\p z^i},\\
		&|z^1|^{2-2\b}\nabla^{cone}_1g_{1\bar l},\quad
		|z^1|^{3-3\b}\nabla^{cone}_1g_{1\bar 1};
		\end{aligned}
		\right.
		\end{equation*} 
		
		\item
		the Christoffel symbols of the connection of $g_\vphi$ for any $2\leq i,j,k\leq n$,
		\begin{equation*}
		\left\{
		\begin{aligned}
		&\Gamma^{i}_{jk}, \quad 
		|z^1|^{1-\b}\Gamma^{i}_{j1},\quad 
		|z^1|^{\b-1}\Gamma^{1}_{jk},\\
		&\Gamma^{1}_{j1}, \quad 
		|z^1|^{2-2\b}\Gamma^{i}_{11}, \quad
		|z^1|^{1-\b}(\Gamma^{1}_{11}+\frac{1-\b}{z^1});
		\end{aligned}
		\right.
		\end{equation*} 
		
		\item for any $2\leq i,j,l\leq n$, the 2nd order covariant derivatives of $g_\vphi$ 
		\begin{equation*}
		\left\{
		\begin{aligned}
		&\frac{\p^2 g_{k\bar l}}{\p z^i\p z^{\bar j}},\quad 
		|z^1|^{1-\b}\frac{\p^2 g_{1\bar l}}{\p z^i\p z^{\bar j}},\quad 
		|z^1|^{2-2\b}\frac{\p^2 g_{1\bar 1}}{\p z^i\p z^{\bar j}},\\
		&|z^1|^{2-2\b}\nabla^{cone}_{\bar j}\nabla^{cone}_1 g_{1\bar l},\quad 
		|z^1|^{3-3\b}\nabla^{cone}_{\bar 1}\nabla^{cone}_i g_{1\bar 1},\\
		&|z^1|^{4-4\b}\nabla^{cone}_{\bar 1}\nabla^{cone}_1 g_{1\bar 1}.
		\end{aligned}
		\right.
		\end{equation*} 
	\end{itemize}
	
\end{cor}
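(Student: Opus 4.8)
The plan is to read the corollary as the statement that the regularity structure of the model metric $\om_D$ recorded in \lemref{geometryg} is preserved under adding the complex Hessian of a $C^{4,\a,\b}$ potential. Write $g_\vphi=g_D+\vphi_{i\bar j}$, where $\vphi_{i\bar j}=\frac{\p^2\vphi}{\p z^i\p z^{\bar j}}$ are the components of $\sqrt{-1}\p\bar\p\vphi$. Since $\vphi\in C^{4,\a,\b}(X;\om_D)\subset C^{2,\a,\b}(X)$, the $(1,1)$-form $\sqrt{-1}\p\bar\p\vphi$ is $C^{,\a,\b}$, so its components obey the weighted bounds \eqref{2ndderivativescone}; these are exactly the weights carried by $g_{D,i\bar j}$ in a cone chart. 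Hence $g_{\vphi,i\bar j}$ has the same block structure as $\om_{cone}$, and since $\om_\vphi$ is assumed to be a K\"ahler cone metric it is quasi-isometric to $\om_{cone}$. First I would record, by Cramer's rule applied to this quasi-isometric matrix, the dual weighted bounds for the inverse $g_\vphi^{i\bar j}$ (namely $g_\vphi^{i\bar j}\in C^{,\a,\b}$ for $i,j\ge 2$, $|z^1|^{\b-1}g_\vphi^{1\bar j},\,|z^1|^{\b-1}g_\vphi^{i\bar 1}\in C^{,\a,\b}$, and $|z^1|^{2\b-2}g_\vphi^{1\bar 1}\in C^{,\a,\b}$), exactly as for $\om_{cone}$ and $\om_D$.

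For the metric-derivative properties \eqref{bgdmetric} and \eqref{bgddmetric} the argument is additive. Each quantity in \eqref{bgdmetric} for $g_\vphi$ splits as the corresponding quantity for $g_D$, which lies in the claimed class by \lemref{geometryg}, plus a third derivative of $\vphi$; each quantity in \eqref{bgddmetric} splits as the $g_D$ part plus a fourth derivative of $\vphi$. The key observation is that when one rewrites the weighted cone-covariant derivatives $\nabla^{cone}_1 g_{\vphi,1\bar l}$, $\nabla^{cone}_{\bar 1}\nabla^{cone}_1 g_{\vphi,1\bar 1}$, etc., in terms of ordinary partials, the only nonzero Christoffel symbol of $\om_{cone}$ is $\Gamma^1_{11}=-\frac{1-\b}{z^1}$, so the derivatives of $\vphi_{i\bar j}$ that appear are precisely the cone-covariant third and fourth derivatives of $\vphi$ listed in \eqref{3rdderivativescone} and \eqref{4thderivativescone}, including the lower-order correction terms $\frac{1-\b}{z^1}\p^3\vphi$ and $\frac{(1-\b)^2}{|z^1|^2}\p^2\vphi$. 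Because $\vphi\in C^{4,\a,\b}(X;\om_D)$, these belong to $C^{,\a,\b}$ by Definition \ref{defn: 3dri} and the definition of $C^{4,\a,\b}$. Matching weights term by term yields \eqref{bgdmetric} and \eqref{bgddmetric} for $g_\vphi$.

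Finally, the Christoffel property \eqref{bgconnection} is purely algebraic once the previous steps are in place: write $\Gamma^p_{ij}(\om_\vphi)=g_\vphi^{p\bar q}\,\p_i g_{\vphi,j\bar q}$ and substitute the inverse-metric bounds from the first step together with the first-derivative bounds \eqref{bgdmetric}. For each index combination one multiplies the weight of $g_\vphi^{p\bar q}$ by that of $\p_i g_{\vphi,j\bar q}$ and checks that it matches the weight demanded in \eqref{bgconnection}; the only subtle entry is $\Gamma^1_{11}$, whose singular leading term $-\frac{1-\b}{z^1}$ must be separated exactly as in the model computation, so that $|z^1|^{1-\b}\bigl(\Gamma^1_{11}+\frac{1-\b}{z^1}\bigr)\in C^{,\a,\b}_{\{0\}}$.

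I expect the main obstacle to be precisely the bookkeeping of the covariant-to-partial conversion in the repeated $z^1$-directions: when the singular direction is differentiated more than once the correction terms generated by $\Gamma^1_{11}$ are themselves singular, and one must confirm that the correction terms produced on the $g_\vphi$ side coincide with those already built into the definitions \eqref{3rdderivativescone} and \eqref{4thderivativescone} and into \lemref{geometryg}. Since the same correction structure appears on both sides, the matching is automatic and no estimate beyond the $C^{4,\a,\b}$ hypothesis is needed; this is why the corollary follows from \lemref{geometryg} and Propositions \ref{3rdderivativeD} and \ref{4thderivativeD} without further analysis. Under the angle restriction \eqref{anglerestriction} all the weighted quantities are genuinely H\"older rather than merely bounded, which is what makes the additive splitting land in $C^{,\a,\b}$.
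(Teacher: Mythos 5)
Your proof is correct and takes essentially the same route as the paper: the paper's (very terse) official proof consists precisely in specializing Propositions \ref{3rdderivativeD} and \ref{4thderivativeD} to $\om=\om_{cone}$ — which is exactly your observation that the cone-covariant derivatives of $\vphi$ up to fourth order, including the $\frac{1-\b}{z^1}$ correction terms, lie in $C^{,\a,\b}$ — and in combining this with Lemma~\ref{geometryg} through the additive splitting $g_\vphi = g_D + \vphi_{i\bar j}$. Your explicit bookkeeping of the inverse-metric weights and of the $\Gamma^1_{11}+\frac{1-\b}{z^1}$ normalization simply spells out what the paper leaves implicit.
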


\begin{proof}
	We choose $\om$ to be $\om_{cone}$ in both Proposition \ref{3rdderivativeD} and Proposition \ref{4thderivativeD}. Then the conclusion follows by applying \lemref{geometryg}.
\end{proof}

In addition, if we weaken the condition on the cone potential $\vphi$, we have the following bound.
\begin{cor}\label{cor3ab}
	Assume that the potential function $\vphi$ of a K\"ahler cone metric belongs to $C^{3,\b}(X;\om_D)$. Then the properties \eqref{bgdmetric} and \eqref{bgconnection} hold for $g_\vphi$. Actually, denoting for simplicity $g=g_\vphi$ as the Riemannian metric associated to $\om_\vphi$, we have that all following elements are bounded,
	\begin{itemize}
		\item for any $2\leq i,j,k\leq n$,
		\begin{equation*}
		\left\{
		\begin{aligned}
		&\frac{\p g_{k\bar l}}{\p z^i},\quad
		|z^1|^{1-\b}\frac{\p g_{1\bar l}}{\p z^i},\quad
		|z^1|^{2-2\b}\frac{\p g_{1\bar 1}}{\p z^i},\\
		&|z^1|^{2-2\b}\nabla^{cone}_1g_{1\bar l},\quad
		|z^1|^{3-3\b}\nabla^{cone}_1g_{1\bar 1};
		\end{aligned}
		\right.
		\end{equation*} 
		
		\item
		the Christoffel symbols of the connection of $g_\vphi$ for any $2\leq i,j,k\leq n$,
		\begin{equation*}
		\left\{
		\begin{aligned}
		&\Gamma^{i}_{jk}, \quad 
		|z^1|^{1-\b}\Gamma^{i}_{j1},\quad 
		|z^1|^{\b-1}\Gamma^{1}_{jk},\\
		&\Gamma^{1}_{j1}, \quad 
		|z^1|^{2-2\b}\Gamma^{i}_{11}, \quad
		|z^1|^{1-\b}(\Gamma^{1}_{11}+\frac{1-\b}{z^1});
		\end{aligned}
		\right.
		\end{equation*}

	\end{itemize}
	
\end{cor}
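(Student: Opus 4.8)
The plan is to follow verbatim the strategy of the proof of \corref{cor4ab}, keeping only the third-order layer of information and replacing every ``$C^{,\a,\b}$'' conclusion by mere boundedness. The starting point is the decomposition of the components of the perturbed metric, $(g_\vphi)_{i\bar j}=(g_D)_{i\bar j}+\vphi_{i\bar j}$, under which each quantity appearing in the statement splits into a model part, coming from $\om_D$, and a potential part, coming from $\vphi$.

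First I would treat the model part: by \lemref{geometryg} the first derivatives of $g_D$ and the Christoffel symbols of $\om_D$ obey \eqref{bgdmetric} and \eqref{bgconnection}, hence lie in $C^{,\a,\b}$ (or $C^{,\a,\b}_{\{0\}}$) with exactly the weights displayed in the corollary, and are in particular bounded. It then remains to bound the potential part. Since $\om_{cone}$ is flat its covariant derivatives commute, so $\vphi_{k\bar l}=\nabla^{cone}_k\nabla^{cone}_{\bar l}\vphi$ and the relevant first derivatives of $g_\vphi$, namely $\p_i g_{k\bar l}$, $\nabla^{cone}_1 g_{1\bar l}$ and $\nabla^{cone}_1 g_{1\bar 1}$, are, up to the harmless reordering permitted by flatness, precisely the weighted third covariant derivatives of $\vphi$ listed in \eqref{3rdderivativesany}. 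The key point is that the weights $|z^1|^{1-\b}$, $|z^1|^{2-2\b}$, $|z^1|^{3-3\b}$ attached to $\p g_{1\bar l}/\p z^i$, $\p g_{1\bar 1}/\p z^i$, $\nabla^{cone}_1 g_{1\bar l}$ and $\nabla^{cone}_1 g_{1\bar 1}$ match, term by term, those carried by the corresponding derivatives $\nabla^{cone}_k\nabla^{cone}_{\bar j}\nabla^{cone}_1\vphi$, $\nabla^{cone}_1\nabla^{cone}_{\bar j}\nabla^{cone}_1\vphi$ and $\nabla^{cone}_1\nabla^{cone}_{\bar 1}\nabla^{cone}_1\vphi$; these are bounded because $\vphi\in C^{3,\b}(X;\om_D)$, by the boundedness analogue of Proposition \ref{3rdderivativeD} applied with $\om=\om_{cone}$.

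For the Christoffel symbols I would write $\Gamma^{i}_{jk}(g_\vphi)=g_\vphi^{i\bar l}\,\p_j (g_\vphi)_{k\bar l}$ and combine the bounds just obtained on $\p_j(g_\vphi)_{k\bar l}$ with control of the inverse metric. Because $\om_\vphi$ is a K\"ahler cone metric it is quasi-isometric to $\om_{cone}$, so $g_\vphi^{i\bar l}$ is bounded with the reciprocal cone weights, the only anisotropic entry being $g_\vphi^{1\bar 1}\sim|z^1|^{2-2\b}$. Tracking these weights against the weighted bounds on the first derivatives of $g_\vphi$ reproduces each of $\Gamma^{i}_{jk}$, $\Gamma^{i}_{j1}$, $\Gamma^{1}_{jk}$, $\Gamma^{i}_{11}$ and $\Gamma^{1}_{11}+\tfrac{1-\b}{z^1}$ with the powers of $|z^1|$ asserted in \eqref{bgconnection}; the shift $\tfrac{1-\b}{z^1}$ singled out in the last symbol is exactly what converts the coordinate derivative $\p_1 g_{1\bar 1}$ into the covariant derivative $\nabla^{cone}_1 g_{1\bar 1}$, which is the bounded quantity.

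The hard part is not conceptual but lies in verifying that the weakened hypothesis still suffices: $C^{3,\b}$ supplies only $L^\infty$ bounds on the weighted third derivatives, with no H\"older seminorm, so one cannot simply quote Proposition \ref{3rdderivativeD} (stated for $C^{3,\a,\b}$ and delivering $C^{,\a,\b}$ output). However, since every estimate above uses only sup-norms together with the quasi-isometry of $\om_\vphi$ and $\om_{cone}$, and never a H\"older difference quotient, the weight-matching argument of \cite{LZ2} underlying Proposition \ref{3rdderivativeD} carries over unchanged at the level of boundedness, which gives the stated bounds.
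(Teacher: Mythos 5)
Your proposal is correct and follows essentially the same route as the paper: the paper proves Corollary \ref{cor3ab} implicitly by the same two-line argument it gives for Corollary \ref{cor4ab} (take $\om=\om_{cone}$ in Proposition \ref{3rdderivativeD}, now at the level of boundedness rather than H\"older continuity, and combine with Lemma \ref{geometryg} via the decomposition $g_\vphi=g_D+\p\bar\p\vphi$), which is exactly your strategy. Your additional weight-tracking for $\Gamma^{a}_{bc}=g^{a\bar l}\p_b g_{c\bar l}$ using the quasi-isometry with $\om_{cone}$, including the observation that the shift $\tfrac{1-\b}{z^1}$ converts $\p_1 g_{1\bar 1}$ into $\nabla^{cone}_1 g_{1\bar 1}$, just makes explicit the computation the paper delegates to \cite{LZ2}.
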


\begin{rem}
In the previous corollary, the same results  hold if we replace $\nabla^{cone}$ by $\nabla^D$.
\end{rem}


\subsection{Second order elliptic equations with conical singularities}\label{Second order elliptic equations with conical singularities}

We first quote a proposition of the general linear elliptic equation which essentially uses Donaldson's estimates \cite{MR2975584} (see also Brendle \cite{MR3144178}, Calamai-Zheng \cite{MR3405866}).
Consider the boundary value problem
\begin{equation}\label{linear equ ge}
\mathbb Lu:=g^{i\bar j}u_{i\bar j}+b^iu_i+cu=f+\p_ih^i\text{ in } M=X\setminus D.
\end{equation}
Here $g^{i\bar j}$ is the inverse matrix of a K\"ahler cone metric $\om$ in $C^{,\a,\b}$.
We also denote the vector field $h^i\p_i$ to be $\mathbf h$ and $b^i\p_i$ to be $\mathbf b$.
Moreover, we are given the following data.
\begin{align}\label{linear equ ge co}
\mathbf h\in C^{1,\a,\b} \qquad \text{ and }\qquad  \mathbf b,c, f\in C^{,\a,\b} \;.
\end{align}

\begin{prop}[\cites{MR3405866,MR2975584,MR3144178}]\label{linearestimate}
	Fix $\a$ with $0<\a<\frac{1}{b}-1$.
	Then there is a constant $C$ depending on $\b$, $n$, $\a$, $|\mathbf b|_{C^{,\a,\b}}$, $|c|_{C^{,\a,\b}}$
	such that for all the functions $f\in C^{,\a,\b}$ and $\mathbf h\in C^{1,\a,\b}$, we have Schauder estimate of the weak solution
	of equation \eqref{linear equ ge},
	$$
	|u|_{C^{2,\a,\b}}\leq C(\|u\|_{L^{\infty}}+|f|_{C^{,\a,\b}}+|\mathbf h|_{C^{1,\a,\b}})\;.
	$$
\end{prop}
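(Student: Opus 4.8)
The estimate is local in nature and, away from $D$, reduces to classical theory, so the plan is to localise on $X$ and patch. First I would fix a finite cover of $X$ by ordinary coordinate balls contained in $M=X\setminus D$ and by cone charts $U_p$ meeting $D$ as in \eqref{wcoordiante}, together with a subordinate partition of unity. On an ordinary ball the operator in \eqref{linear equ ge} is uniformly elliptic with $C^{,\a}$ principal coefficients $g^{i\bar j}$, lower-order data $\mathbf{b},c,f\in C^{,\a}$ and $\mathbf{h}\in C^{1,\a}$, and there $C^{,\a,\b}$ agrees with ordinary $C^{,\a}$; so the classical interior Schauder estimate for second-order elliptic equations, allowing an inhomogeneity $f+\p_i h^i$ of divergence type, applies verbatim.

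The substance is in a cone chart. After passing to the quasi-isometric coordinate $w$ of \eqref{wcoordiante}, in which $\om_{cone}$ is comparable to the Euclidean metric and $C^{,\a,\b}$ is, by definition, ordinary $C^\a$ regularity of $v(w)=u(z)$, I would run the standard frozen-coefficient argument adapted to the cone. Writing $g^{i\bar j}=g^{i\bar j}(p)+\bigl(g^{i\bar j}-g^{i\bar j}(p)\bigr)$, the operator $g^{i\bar j}(p)\,\p_i\p_{\bar j}$ is a fixed elliptic combination of the weighted $\om_{cone}$-second derivatives listed in \eqref{2ndderivativescone}, i.e. the model cone Laplacian up to a constant linear change. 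The one genuinely nontrivial input is Donaldson's Schauder estimate for this model operator, which holds precisely under the hypothesis $0<\a<\tfrac1\b-1$ (equivalently $\a\b<1-\b$): it yields $u\in C^{2,\a,\b}$ with the expected bound whenever the right-hand side lies in $C^{,\a,\b}$. I would apply it with $F=f+\p_i h^i-b^i u_i-cu-\bigl(g^{i\bar j}-g^{i\bar j}(p)\bigr)u_{i\bar j}$, keeping $\p_i h^i$ in divergence form so that only $|\mathbf{h}|_{C^{1,\a,\b}}$ enters; its bare coordinate derivative $\p_1 h^1$ carries the singular $z^1$-weights and need not sit in $C^{,\a,\b}$ on its own.

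It then remains to absorb the error terms. By Hölder continuity of $g^{i\bar j}$ the frozen-coefficient remainder satisfies $\bigl|\bigl(g^{i\bar j}-g^{i\bar j}(p)\bigr)u_{i\bar j}\bigr|_{C^{,\a,\b}(U_p)}\le\varepsilon\,|u|_{C^{2,\a,\b}(U_p)}$ on a chart of small enough radius, so this term is moved to the left-hand side; the lower-order contributions of $\mathbf{b}$ and $c$ are handled by interpolation inequalities among $C^{,\a,\b}$, $C^{1,\a,\b}$, $C^{2,\a,\b}$ and $L^\infty$, which is what produces the additive $\|u\|_{L^\infty}$ and makes $C$ depend only on $\b,n,\a,|\mathbf{b}|_{C^{,\a,\b}},|c|_{C^{,\a,\b}}$. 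Summing the finitely many local bounds through the partition of unity and re-absorbing the intermediate norms by interpolation would give the global estimate.

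The hard part is not this bookkeeping but the model cone estimate itself: proving interior $C^{2,\a,\b}$ control for the flat cone Laplacian requires a delicate analysis of the anisotropic behaviour in the singular direction $z^1$, where the indicial roots of the cone dictate the threshold $\a<\tfrac1\b-1$. This is exactly the ingredient furnished by \cites{MR2975584,MR3144178,MR3405866}, and the role of the present argument is only to reduce the variable-coefficient, lower-order, divergence-form problem to it.
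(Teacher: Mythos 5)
The paper contains no proof of this proposition for you to be compared against: it is quoted as a known result, and the citations in its header (Donaldson \cite{MR2975584}, Brendle \cite{MR3144178}, Calamai--Zheng \cite{MR3405866}) \emph{are} the proof. What can be judged is whether your reconstruction is faithful to what those references do, and it is: localizing, reducing by frozen coefficients to Donaldson's Schauder estimate for $\tri_{\om_{cone}}$ (the sole place where the threshold $0<\a<\frac{1}{\b}-1$, i.e. $\a\b<1-\b$, enters), absorbing the coefficient error by smallness of the chart and the lower-order terms by interpolation against $\|u\|_{L^\infty}$, and patching with a partition of unity is exactly the scheme of \cite{MR3144178} and \cite{MR3405866} built on top of the model estimate of \cite{MR2975584}.

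Two points your sketch leaves implicit, and which a complete write-up would have to supply. First, for the frozen operator $g^{i\bar j}(p)\,\p_i\p_{\bar j}$ at $p\in D$ to be a constant linear image of $\tri_{\om_{cone}}$, you need the weighted mixed entries $|z^1|^{1-\b}g_{1\bar j}$ to \emph{vanish} on $D$, not merely to lie in $C^{,\a,\b}$: a nonzero limit leaves cross terms $|z^1|^{\b-1}\p_1\p_{\bar j}$ that no linear change of coordinates compatible with the product cone structure removes. This vanishing is a structural fact about K\"ahler cone metrics (forced by H\"older continuity in the coordinate \eqref{wcoordiante} together with equivariance in the phase of $z^1$; compare the $C^{,\a,\b}_{\{0\}}$ classes in Lemma \ref{geometryg}), but it is an input, not a triviality. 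Second, the divergence term: Donaldson's model estimate concerns $\tri u=\rho$ with $\rho\in C^{,\a,\b}$, and, as you yourself note, $\p_1h^1$ need not lie in $C^{,\a,\b}$; so ``keeping $\p_ih^i$ in divergence form'' presupposes a model Schauder estimate with divergence-type right-hand side, which is a separate potential-theoretic statement. That extension is precisely the contribution of \cite{MR3405866}, where the proposition in this exact form, with $\mathbf h$, originates. Neither point breaks your strategy --- they are the substance of the quoted references --- but they are where the real work lies beyond the bookkeeping you describe.
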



\subsection{Some properties of cscK cone metrics}\label{recentprogress}
In this section we review some recent progress on the theory of cscK cone metrics and show some extra properties.

Recall the definition of the cscK cone metrics in \cite{zhengcscKcone}.
\begin{defn}\label{zhengcscKcone}
	We say that $\om_{cscK}$ is a cscK metric with conical singularities if
	\begin{itemize}
		\item
		$\om_{cscK}$ is a cscK metric on the regular part $M$;
		\item
		$\om_{cscK}$ is quasi isometric to the model metric $\om_{cone}$;
		\item the potential of $\om_{cscK}$ lies in $C^{2,\a,\b}$.
	\end{itemize}
\end{defn}

From the definition, the cscK cone metric satisfies the equation on the regular part $M$,
\begin{align}\label{4th equ}
S(\om_{cscK})=\underline S_\b.
\end{align}
\begin{rem}
	We only require the second order behavior of the cscK cone metric in this definition.
	There are different ways to define cscK metrics with conical singularities and different notions are compared in \cite{LiL}. However, a crucial issue is the question of higher regularities of such metrics.
	 
\end{rem}

We write the cscK cone metric $\om_{cscK}$ using $\om_D$-potentials i.e $$\om_{cscK}:=\om_D+i\p\bar\p \vphi_{cscK}.$$

Because $\vphi_{cscK}$ is $C^{2,\a,\b}$, the 4th order equation \eqref{4th equ} could be re-written as the couple system of two second order elliptic equations
\begin{equation}\label{2nd equ}
\left\{
\begin{aligned}
\frac{\om_{cscK}^n}{\om_D^n}&=e^P,\\
\tri_{cscK} P&=g_{cscK}^{i\bar j} R_{i\bar j}(\om_D)-\underline S_\b.
\end{aligned}
\right.
\end{equation}

The following higher regularity theorem is proved in \cite{LZ2}.
\begin{thm*}[\cite{LZ2}]\label{cscKconeregularity}
	Assume that $\vphi_{cscK}$ is the potential of a cscK cone metric satisfying \eqref{2nd equ} with $\vphi_{cscK}$ is $C^{2,\a,\b}(X)$. Assume that the angle $2\pi\beta$ and the H\"older exponent $\alpha$ satisfy Condition \eqref{anglerestriction}. Then $\vphi_{cscK}$ is actually in $C^{4,\a,\b}(X;\om_D)$ and the Ricci curvature of $\om_{cscK}$ is $C^{,\a,\b}(X)$.
\end{thm*}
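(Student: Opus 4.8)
The plan is to exploit the decoupling \eqref{2nd equ} of the fourth order equation \eqref{4th equ} into two second order equations, and then run a bootstrap argument alternating between them, using the Schauder estimate of Proposition~\ref{linearestimate} as the workhorse. Throughout, the geometry of the background model metric recorded in Lemma~\ref{geometryg} (in particular that $R_{i\bar j}(\om_D)\in C^{,\a,\b}$) and the structure Propositions~\ref{3rdderivativeD} and \ref{4thderivativeD} (which upgrade local model regularity to global $C^{3,\a,\b}(X;\om)$ and $C^{4,\a,\b}(X;\om)$ regularity) are the ingredients that let us pass between local weighted derivative bounds and the global H\"older spaces.

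First I would establish the base of the bootstrap. Since $\vphi_{cscK}\in C^{2,\a,\b}(X)$, the metric $\om_{cscK}=\om_D+\sqrt{-1}\p\bar\p\vphi_{cscK}$ has coefficients $g^{i\bar j}_{cscK}\in C^{,\a,\b}$ and is quasi-isometric to $\om_{cone}$, so from the first equation of \eqref{2nd equ} we read off $P=\log(\om_{cscK}^n/\om_D^n)\in C^{,\a,\b}(X)$. Feeding this into the second equation of \eqref{2nd equ}, the right hand side $g^{i\bar j}_{cscK}R_{i\bar j}(\om_D)-\underline{S}_\b$ is a product of a $C^{,\a,\b}$ tensor with the $C^{,\a,\b}$ Ricci tensor of $\om_D$, hence lies in $C^{,\a,\b}(X)$. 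The equation is then a linear Laplace-type equation $\tri_{cscK}P=f$ with $C^{,\a,\b}$ coefficients and $C^{,\a,\b}$ data, so Proposition~\ref{linearestimate} yields $P\in C^{2,\a,\b}(X)$.

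The second half of the bootstrap improves $\vphi_{cscK}$ itself using the Monge--Amp\`ere equation $\log\det g_{cscK}=\log\det g_D+P$ with the now-known $P\in C^{2,\a,\b}$. Differentiating this identity in a tangential direction $\p_k$, $k\geq 2$, produces a linear elliptic equation
\begin{align*}
g^{i\bar j}_{cscK}\p_i\p_{\bar j}(\p_k\vphi_{cscK})=\p_kP-\bl g^{i\bar j}_{cscK}-g^{i\bar j}_D\br\p_k g_{D,i\bar j}
\end{align*}
for $\p_k\vphi_{cscK}$, whose right hand side lies in $C^{,\a,\b}$ by \eqref{bgdmetric} and the regularity of $P$; Proposition~\ref{linearestimate} then gives $\p_k\vphi_{cscK}\in C^{2,\a,\b}$, i.e. control of all covariant derivatives of \eqref{3rdderivativescone} and \eqref{4thderivativescone} in which the singular index $1$ does not appear. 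The remaining, genuinely conical, components are obtained by working with the weighted quantities (such as $|z^1|^{1-\b}\p_1\vphi_{cscK}$ and the higher weighted combinations appearing in \eqref{3rdderivativescone}--\eqref{4thderivativescone}), differentiating the Monge--Amp\`ere equation in the normal direction, and commuting the singular Christoffel term $\Gamma^1_{11}=-(1-\b)/z^1$ past the derivatives using \eqref{bgconnection}. Once every weighted covariant-derivative combination of \eqref{3rdderivativescone} and \eqref{4thderivativescone} is shown to lie in $C^{,\a,\b}(\tilde U)$, Propositions~\ref{3rdderivativeD} and \ref{4thderivativeD} promote this to $\vphi_{cscK}\in C^{4,\a,\b}(X;\om_D)$; the statement on the Ricci curvature then follows immediately, since $\vphi_{cscK}\in C^{4,\a,\b}$ gives $\log\om_{cscK}^n\in C^{2,\a,\b}(X)$ and hence $\mathrm{Ric}(\om_{cscK})=-\sqrt{-1}\p\bar\p\log\om_{cscK}^n\in C^{,\a,\b}(X)$.

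The hard part will be the normal-direction estimates. When the derivative $\p_1$ (or $\p_{\bar 1}$) is applied twice, the weights $|z^1|^{2-2\b}$, $|z^1|^{3-3\b}$, $|z^1|^{4-4\b}$ and the singular connection coefficient force one to absorb lower-order correction terms exactly as in the definition \eqref{4thderivativescone}, and closing the Schauder loop in these weighted spaces is precisely where the angle restriction \eqref{anglerestriction}, namely $\a\b<1-2\b$, is needed: it guarantees that multiplication by the relevant powers of $|z^1|$ preserves $C^{,\a,\b}$ and that the error terms produced by the commutators decay fast enough to be reabsorbed. I would therefore isolate this weighted normal-direction estimate as the key lemma and treat the tangential directions and the final globalisation via Propositions~\ref{3rdderivativeD} and \ref{4thderivativeD} as comparatively routine consequences.
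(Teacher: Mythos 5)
First, a remark on the ground truth here: the paper itself contains no proof of this statement. It is quoted from \cite{LZ2} (the theorem header carries that citation, and the surrounding text says the result "is proved in \cite{LZ2}"), so the only meaningful comparison is with the proof in \cite{LZ2} and with how the present paper consumes that proof. Your architecture --- read $P\in C^{,\a,\b}$ off the first equation of \eqref{2nd equ}, feed it into the second equation and apply Donaldson's Schauder estimate (Proposition \ref{linearestimate}) to get $P\in C^{2,\a,\b}$, then bootstrap the Monge--Amp\`ere equation by differentiation, treating tangential directions as routine and the weighted normal directions as the crux, and finally globalise via Propositions \ref{3rdderivativeD} and \ref{4thderivativeD} --- is indeed the right skeleton and is consistent with that proof. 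Your heuristic for where Condition \eqref{anglerestriction} enters (it is equivalent to $|z^1|^{1-2\b}=|w^1|^{(1-2\b)/\b}$ being $C^{,\a,\b}$) is also correct.

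The genuine gap is that your "key lemma" is not a lemma you prove but the entire content of the theorem, and it is exactly the black box this paper imports from \cite{LZ2} elsewhere (the estimate invoked as "Proposition 4.3 in \cite{LZ2}" in the proofs of Proposition \ref{Schauder} and Theorem \ref{closedness}): so your argument reduces the statement to the main technical result of the paper being cited, rather than proving it. To see concretely why the normal direction cannot be dispatched by "commuting and reabsorbing": $P\in C^{2,\a,\b}$ controls only tangential derivatives $\p_kP$ and the mixed Hessian $i\p\bar\p P$, so you must work with $v=|z^1|^{1-\b}\p_1\vphi_{cscK}$, and the commutator of $|z^1|^{1-\b}\p_1$ with $g^{i\bar j}_{cscK}\p_i\p_{\bar j}$ produces, among others, the term $g^{1\bar 1}_{cscK}\,\p_1\bigl(|z^1|^{1-\b}\bigr)\,\p_{\bar 1}\p_1\vphi_{cscK}\sim |z^1|^{-\b}\bigl[|z^1|^{2-2\b}\p_1\p_{\bar 1}\vphi_{cscK}\bigr]$, and the bracket is bounded but in general non-vanishing on $D$ (quasi-isometry forces it to be comparable to $1$), so this term actually blows up like $|z^1|^{-\b}$; no power-counting with Condition \eqref{anglerestriction} absorbs it. This is precisely why the spaces $C^{3,\a,\b}$ and $C^{4,\a,\b}$ are built from covariant derivatives of $\om_{cone}$ with the correction terms $\frac{1-\b}{z^1}\p(\cdot)$ visible in \eqref{3rdderivativescone} and \eqref{4thderivativescone}: the estimates must be organised so that these singular pieces cancel, and carrying that out is the bulk of \cite{LZ2}. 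Two smaller, repairable inaccuracies: (i) one tangential differentiation only yields third-order control, not "all components of \eqref{4thderivativescone} without the index $1$" --- the doubly-mixed fourth derivatives require a second differentiation in a conjugate direction $\p_{\bar l}$, which in turn needs the third-order output of the first round because the coefficients $g^{i\bar j}_{cscK}$ are a priori only $C^{,\a,\b}$; (ii) the Ricci assertion does not need $\vphi_{cscK}\in C^{4,\a,\b}$ at all: it already follows from $P\in C^{2,\a,\b}$ and the identity $Ric(\om_{cscK})=Ric(\om_D)-i\p\bar\p P$ on $M$, together with $Ric(\om_D)\in C^{,\a,\b}$ under Condition \eqref{anglerestriction}.
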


Next lemma appeared in \cite{MR3313212} in the case of K\"ahler-Einstein cone metrics.

\begin{prop}\label{averagecsck}Suppose that $\om_{cscK}\in [\omega]$ is a cscK cone metric with $C^{2,\a,\b}$ potential under Condition \eqref{anglerestriction}. The average of the scalar average of the cscK cone metric is
	\begin{align}
	\underline S_\b=n\frac{2\pi c_1(X)\cup [\omega]^{n-1}}{[\omega]^n}-n(1-\beta)\frac{2\pi c_1(D)\cup [\omega]^{n-1}}{[\omega]^n}.
	\end{align}
\end{prop}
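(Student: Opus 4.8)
The plan is to reduce the statement to a cohomological pairing, the only genuine difficulty being the justification of an integration by parts across the divisor $D$. Since $\om_{cscK}$ satisfies $S(\om_{cscK})=\underline S_\b$ on the regular part $M$ by \eqref{4th equ}, I would integrate the pointwise K\"ahler identity $S(\om_{cscK})\,\om_{cscK}^n=n\,\mathrm{Ric}(\om_{cscK})\wedge\om_{cscK}^{n-1}$ over $M$, where $\mathrm{Ric}(\om_{cscK})=-\sqrt{-1}\p\bar\p\log\om_{cscK}^n$ is the genuine smooth Ricci form on $M$. Using that a cone metric with $0<\b<\tfrac12$ has finite volume equal to $[\omega]^n$, this gives
\begin{align*}
\underline S_\b\,[\omega]^n=\underline S_\b\int_M\om_{cscK}^n=n\int_M \mathrm{Ric}(\om_{cscK})\wedge\om_{cscK}^{n-1}.
\end{align*}
The higher regularity theorem of \cite{LZ2} guarantees $\vphi_{cscK}\in C^{4,\a,\b}$ and $\mathrm{Ric}(\om_{cscK})\in C^{,\a,\b}(X)$, so the right-hand integral is absolutely convergent and it suffices to evaluate it.

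Next I would pin down the cohomology class carried by $\mathrm{Ric}(\om_{cscK})$. Fixing a smooth background metric $\om_0\in[\omega]$ and the defining section $s$ of $L_D$ with its metric $h_\L$ as in \eqref{model cone}, the cone structure forces $\om_{cscK}^n=|s|^{2(\b-1)}_{h_\L}e^{F}\om_0^n$ for a bounded function $F$ whose derivatives are controlled by Corollary \ref{cor4ab}. Writing $\log(\om_{cscK}^n/\om_0^n)=(\b-1)\log|s|^2_{h_\L}+F$ and using $\sqrt{-1}\p\bar\p\log|s|^2_{h_\L}=-2\pi c_1(L_D,h_\L)$ on $M$ (the distributional Poincar\'e--Lelong term $2\pi[D]$ is supported on $D$, hence invisible on $M$), I obtain on $M$
\begin{align*}
\mathrm{Ric}(\om_{cscK})=\mathrm{Ric}(\om_0)-2\pi(1-\b)\,c_1(L_D,h_\L)-\sqrt{-1}\p\bar\p F.
\end{align*}
Thus the closed smooth part $\Theta:=\mathrm{Ric}(\om_0)-2\pi(1-\b)c_1(L_D,h_\L)$ represents $2\pi c_1(X)-2\pi(1-\b)c_1(D)$ in $H^2(X,\RR)$, recalling $c_1(L_D)=c_1(D)$.

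It then remains to show that the regular-part integral reproduces the cohomological product, namely
\begin{align*}
\int_M \mathrm{Ric}(\om_{cscK})\wedge\om_{cscK}^{n-1}=\Big(2\pi c_1(X)-2\pi(1-\b)c_1(D)\Big)\cup[\omega]^{n-1},
\end{align*}
which, divided by $[\omega]^n$, yields the asserted formula. Since $\om_{cscK}$ is closed, $\om_{cscK}^{n-1}$ is a closed $L^1$ current in the class $[\omega]^{n-1}$ that does not charge $D$, so pairing it with the smooth closed representative $\Theta$ gives exactly $[\Theta]\cup[\omega]^{n-1}$; the remaining contribution is $-\int_M\sqrt{-1}\p\bar\p F\wedge\om_{cscK}^{n-1}$, which I expect to vanish because $\p\bar\p\,\om_{cscK}^{n-1}=0$. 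The main obstacle is precisely this last vanishing: Stokes' theorem cannot be applied directly on the singular space. I would handle it by excising a tubular neighborhood $\{|s|_{h_\L}<\epsilon\}$ of $D$, applying Stokes on the complement, and estimating the boundary integral over $\{|s|_{h_\L}=\epsilon\}$; the bounds of Corollary \ref{cor4ab} on $F$ and on the metric, together with the collapse of the $\om_{cscK}$-area of the tube as $\epsilon\to0$ (a consequence of $0<\b<\tfrac12$), should force the boundary terms to zero, exactly as in the K\"ahler--Einstein cone case treated in \cite{MR3313212}.
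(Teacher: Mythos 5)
Your proposal is correct, and its skeleton coincides with the paper's: the same Poincar\'e--Lelong decomposition $\om_{cscK}^n=|s|^{2\b-2}e^{\psi}\om_0^n$, the same identification of the first two terms with the cohomological pairing, the same appeal to the regularity theorem of \cite{LZ2}, and the same reduction of everything to the vanishing of $\int_M \sqrt{-1}\p\bar\p\psi\wedge\om_{cscK}^{n-1}$. Where you genuinely diverge is in how that vanishing is proved. The paper never touches derivatives of $\psi$: it uses the two-sided curvature bounds coming from $C^{4,\a,\b}$ regularity to show $C_2\,\om_{cscK}+\sqrt{-1}\p\bar\p\psi\geq 0$, observes that $\vphi_{cscK}$ and $\vphi_{cscK}+\tfrac{1}{C_2}\psi$ are then globally bounded $\om_0$-psh functions, and concludes by citing the pluripotential integration-by-parts formula of Boucksom--Eyssidieux--Guedj--Zeriahi \cite[Theorem 1.14]{MR2746347}. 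You instead excise the tube $\{|s|_{h_\L}<\epsilon\}$, apply Stokes on the complement, and estimate the boundary term; this works, since the surviving boundary contributions involve either $\p_{\bar 1}\psi\, d\bar z^1$ (of size $\epsilon^{\b-1}\cdot\epsilon$) or bounded tangential derivatives of $\psi$ against the $g_{1\bar k}$ components of $\om_{cscK}^{n-1}$ (again of size $\epsilon^{\b-1}\cdot\epsilon$), so the boundary integral is $O(\epsilon^{\b})\to 0$. Note two small corrections to your sketch: the needed weighted first-derivative bound $|\p_1\psi|\lesssim |z^1|^{\b-1}$ comes from $\log\om_{\vphi}^n\in C^{2,\a,\b}$ (a consequence of $\vphi_{cscK}\in C^{4,\a,\b}$ together with Lemma \ref{geometryg}), rather than from Corollary \ref{cor4ab} itself, which concerns the metric coefficients; and the decay of the boundary term only uses $\b>0$, not $\b<\tfrac12$ --- the half-angle restriction enters solely through the regularity theorem. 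The trade-off between the two routes: yours is elementary and self-contained, making the cancellation mechanism at $D$ explicit, but it consumes the full first-derivative control of $\psi$; the paper's argument delegates the analysis to pluripotential theory and in principle survives with weaker information (only boundedness of $\psi$ plus the quasi-psh inequality), which is why it is the more robust formulation.
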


\begin{proof} Since $\om_{cscK}$ has $C^{2,\a,\b}$ potential, 
$\om_{cscK}^n=|s|_h^{2\beta-2}\om_0^n e^\psi$ for some $\psi$
in $C^{,\alpha,\b}$ and $h$ a smooth hermitian metric on $\mathcal{O}(D)$. Then we have \begin{align}\label{Riccicsck}
Ric(\om_{cscK})=Ric(\omega_0)-(1-\beta)\Theta(h)+2\pi(1-\beta)[D]-i\p\bar\p\psi,
\end{align}
as an equality of closed currents. 
By definition the scalar curvature is trace of Ricci curvature, and so
\begin{align*}
\int_{M}S(\om_{cscK})\frac{\om_{cscK}^n}{n!}=& \int_{M}Ric(\om_{cscK})\wedge\frac{\om_{cscK}^{n-1}}{(n-1)!},\\
=&
\int_{M}Ric(\omega_0)\wedge\frac{\om_{cscK}^{n-1}}{(n-1)!}-(1-\beta)\int_{M}\Theta(h)\wedge\frac{\om_{cscK}^{n-1}}{(n-1)!}\\
&-\int_{M}i\p\bar\p\psi\wedge\frac{\om_{cscK}^{n-1}}{(n-1)!}.
\end{align*}
The first two terms are what we need, i.e equal to 
\begin{align*}
\frac{2\pi c_1(X)\cup [\om_{cscK}]^{n-1}-2\pi(1-\beta) c_1(D)\cup [\om_{cscK}]^{n-1}}{(n-1)!}.
\end{align*}
We claim that the third term vanishes.
From regularity Theorem \ref{cscKconeregularity}, the potential of our cscK cone metric is actually $C^{4,\a,\b}$. Hence, we have
$Ric(\om_{cscK})\leq C_0\cdot \om_{cscK}$ for some constant $C_0>1$. Moreover, there is a constant $C_1>1$ such that $Ric(\omega_0)-(1-\beta)\Theta(h)\geq -C_1\cdot \om_{cscK}$ on $X$. We set $C_2=C_0+C_1$. Using now \eqref{Riccicsck},  we obtain $$C_2\cdot \om_{cscK}+i\p\bar \p\psi\geq0.$$ We notice that $\vphi_{cscK}$ and $\vphi_{cscK}+\frac{1}{C_2}\psi$ are both $\om_0$-psh functions which are globally bounded on $X$. We only need to check the vanishing along the direction $z^1$. According to the integration by part formula in \cite[Theorem 1.14]{MR2746347}, we conclude the claim.
\end{proof}

\begin{prop} 
 Consider $X$ a smooth Fano manifold. Suppose that $D$ is a smooth divisor which is $\mathbb{Q}$-linearly equivalent to $-\lambda K_X$, with $\lambda\in \mathbb{Q}^*_+$.  
Consider $\om=\om_{cscK}$ is a cscK cone metric in the class $2\pi c_1(X)$ along $D$ with angle $2\pi\beta$ and H\"older exponent $\alpha$. Assume that $(\alpha,\beta)$ satisfy Condition \eqref{anglerestriction}. Then $\om$ is actually a K\"ahler-Einstein cone metric satisfying the equation
 $$Ric(\om)=\nu \om + 2\pi(1-\beta)[D],$$ with $\nu=1-(1-\beta)\lambda$.
Conversely, such a K\"ahler-Einstein cone metric is also a cscK cone metric.
\end{prop}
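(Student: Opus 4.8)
The plan is to adapt the classical fact that a cscK metric in the anticanonical class of a Fano manifold is automatically K\"ahler--Einstein, carried out on the regular part $M$ and supplemented by an integration-by-parts argument across $D$ of exactly the type used in the proof of Proposition~\ref{averagecsck}. First I would fix the value of the scalar curvature. Since $[\om]=2\pi c_1(X)$ and $D\sim_{\mathbb Q}-\lambda K_X$ gives $c_1(L_D)=\lambda c_1(X)$, plugging these classes into Proposition~\ref{averagecsck} yields $\underline{S}_\b = n - n(1-\b)\lambda = n\nu$ with $\nu=1-(1-\b)\lambda$. Thus the cscK equation reads $S(\om)=n\nu$ on $M$.

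Next I would introduce the Ricci potential. Using \eqref{Riccicsck}, namely $Ric(\om)=Ric(\om_0)-(1-\b)\Theta(h)+2\pi(1-\b)[D]-i\p\bar\p\psi$ with $\om^n=|s|_h^{2\b-2}\om_0^n e^\psi$, the smooth form $Ric(\om_0)-(1-\b)\Theta(h)-\nu\om_0$ represents $2\pi c_1(X)-2\pi(1-\b)\lambda c_1(X)-2\pi\nu c_1(X)=0$, so by the $\p\bar\p$-lemma it equals $i\p\bar\p\rho$ for a smooth $\rho$. Writing $\om-\om_0=\delta\, i\p\bar\p|s|^{2\b}_{h_\L}+i\p\bar\p\vphi_{cscK}$, I get the global identity $Ric(\om)-\nu\om-2\pi(1-\b)[D]=i\p\bar\p f$ with $f=\rho-\nu\delta|s|^{2\b}_{h_\L}-\nu\vphi_{cscK}-\psi$. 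By the higher regularity theorem (Theorem in \cite{LZ2}) one has $\vphi_{cscK}\in C^{4,\a,\b}$ and hence $\psi\in C^{2,\a,\b}$, so $f$ is bounded with bounded $\om$-gradient. On $M$ the divisor term drops and $Ric(\om)-\nu\om=i\p\bar\p f$; taking the trace with respect to $\om$ gives $\tri_\om f = S(\om)-n\nu = 0$.

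It then remains to show that $f$ is constant. Since $\tri_\om f=0$ on $M$ we have $i\p\bar\p f\wedge\om^{n-1}=0$ there, so that the integration-by-parts identity $\int_M i\p f\wedge\bar\p f\wedge\om^{n-1} = -\int_M f\, i\p\bar\p f\wedge\om^{n-1}=0$ forces $\int_M|\p f|_\om^2\,\om^n=0$, whence $f$ is constant on the connected manifold $M$. Consequently $Ric(\om)=\nu\om$ on $M$, that is $Ric(\om)=\nu\om+2\pi(1-\b)[D]$ as currents on $X$, which is the asserted equation. The converse is immediate: if $Ric(\om)=\nu\om+2\pi(1-\b)[D]$, then on $M$ the current $[D]$ vanishes and $S(\om)=\tr_\om(\nu\om)=n\nu$ is constant, so $\om$ meets Definition~\ref{zhengcscKcone} and is a cscK cone metric.

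The main obstacle is precisely the integration by parts in the last step: one must verify that no boundary contribution appears along the cone divisor $D$. I expect to handle this exactly as in Proposition~\ref{averagecsck}, by decomposing $f$ into bounded $\om_0$-plurisubharmonic pieces and applying the integration-by-parts formula \cite[Theorem 1.14]{MR2746347}. Alternatively, one may insert a logarithmic cutoff $\chi_\ep$ vanishing near $D$, use that for $0<\b<\tfrac12$ the cone volume of $\{|z^1|<\ep\}$ together with the Dirichlet energy $\int_M|\p\chi_\ep|_\om^2\,\om^n$ tends to $0$, and pass to the limit; the boundedness of $f$ and of $|\p f|_\om$ then guarantees that the cross term $\int_M f\, i\p\chi_\ep\wedge\bar\p f\wedge\om^{n-1}$ vanishes, yielding $\int_M|\p f|_\om^2\,\om^n=0$.
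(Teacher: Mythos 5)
Your proposal is correct and follows the same skeleton as the paper's proof: express the Ricci current via \eqref{Riccicsck}, observe that $Ric(\om)-\nu\om-2\pi(1-\b)[D]$ is $i\p\bar\p$-exact, take the trace, and use the cscK condition to make the potential $f$ harmonic, hence constant. The differences are worth recording. The paper's first move is the pointwise vanishing $\tr_\om[D]=0$ (the delta function is of order zero while $g^{1\bar 1}\sim|z^1|^{2-2\b}\to 0$ near $D$), which lets it take the trace of the full current equation on all of $X$; you avoid this entirely by tracing only on $M$, where $[D]$ does not appear, and you instead pin down the constant $S(\om)=n\nu$ by quoting Proposition \ref{averagecsck} — a step the paper uses only implicitly. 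More importantly, the paper simply asserts ``Thus $f$ is constant'' from $\tri_\om f=0$, which is not automatic since $M$ is noncompact and $f$ is not smooth across $D$; your cutoff/BEGZ integration-by-parts argument, resting on the boundedness of $f$ and of $|\p f|_\om$ (which you correctly extract from the higher regularity theorem, in contrast to the paper's inaccurate claim that $f$ is smooth), supplies exactly the missing justification, and it is consistent with the techniques the paper itself uses elsewhere (Lemma \ref{IbPRic}, Proposition \ref{averagecsck}). So your route is the same in outline but more careful and more complete at the two points where the paper is terse, and it trades the paper's key observation $\tr_\om[D]=0$ for a boundary analysis along the divisor at the final step.
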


\begin{proof} Using same notations as above, the function $z^1$ is the local defining function of the divisor $D$. The Poincar\'e-Lelong equation tells us that
$$2\pi[D]=\sqrt{-1}\partial\bar\partial \log |z^1|^2,$$
so the trace reduces to $g^{1\bar 1}\delta_{\{z^1=0\}}$. Note that $\om$ is $C^{,\a,\b}$ and quasi-isometric to the standard cone metric
$$\omega_0=|z^1|^{2\beta-2}\sqrt{-1}dz^1\wedge d\bar z^1+\sqrt{-1}\sum_{i=2}^n dz^i\wedge d\bar z^i.$$ Since $\delta$ function is a generalized function of order 0 (i.e. its action can be continuously extended to $C^0$ functions), this implies $g^{1\bar 1}\delta_{\{z^1=0\}}=0$. Consequently, we have 
\begin{align}\label{vanish}tr_\omega [D]=0.\end{align}
 From Equation \eqref{Riccicsck}, $Ric(\om)$ is actually a representative of $2\pi c_1(X)-2\pi (1-\beta)c_1(D)=\nu c_1(X)$.
By considering cohomology classes, we can find a smooth real valued function $f$ such that
\begin{align}\label{eqRic}Ric(\omega)-\nu\omega -2\pi(1-\beta)[D]=\sqrt{-1}\partial\bar\partial  f.\end{align}
Taking trace with respect to $\omega$ and using \eqref{vanish}, we have
$$\triangle_\omega f=S(\omega)-n\nu.$$
Now, using the cscK condition, we obtain $\triangle_\omega f=0$. Thus $f$ is constant and we can conclude using again \eqref{eqRic}.
\end{proof}

Consider $V_f$ a holomorphic vector field on $X$ with holomorphy potential $f\in C^{\infty}(X,\mathbb{C})$ i.e $\iota_{V_f^{1,0}}\omega = -\bar{\partial}f$. Given the K\"ahler class $[\omega]$ and the vector field $V_f$, one can define the Futaki invariant as
$$Fut_{[\omega]}(V_f)=\frac{1}{2\pi}\int_X f\left(S(\omega) -\underline{S}\right) \frac{\omega^n}{n!}$$
where $\underline{S}$ is the average of the scalar curvature of any K\"ahler form in the class $[\omega]$ and
the log-Futaki invariant for vector fields $V_f$ that are furthermore tangent to $D$ as
\begin{align*}
Fut_{D,\beta,{[\omega]}}(V_f)=&\frac{1}{2\pi}\int_X f\left(S(\omega) - n\frac{2\pi c_1(X)\cup [\omega]^{n-1}}{[\omega]^n}\right) \frac{\omega^n}{n!} \\
&- (1-\beta)\left(\int_D f \frac{\omega^{n-1}}{(n-1)!} - n\frac{c_1(D)\cup [\omega]^{n-1}}{[\omega]^n}\int_X f \frac{\omega^n}{n!}\right),\\
=&Fut_{[\omega]}(V_f) - (1-\beta)\left(\int_D f \frac{\omega^{n-1}}{(n-1)!} - \frac{\Vol_{[\omega]} (D)}{\Vol_{[\omega]} (X)}\int_X f \frac{\omega^n}{n!}\right).
\end{align*}
Both Futaki invariants depend only on the class $[\omega]$.
As pointed out in \cite{Hasfutaki}, the log-Futaki invariant is the differential-geometric interpretation of the algebraic log Donaldson-Futaki invariant that can be defined using test configurations, see \cite{MR2975584} and \cite{MR3403730}.

Next corollary is known for K\"ahler-Einstein cone metrics on Fano manifolds, see for instance \cite{MR3470713}, or in the smooth case for the classical Futaki invariant. Consider $\omega_{cscK}\in [\omega]$ a cscK cone metric. It satisfies globally in the sense of distributions $$S(\omega_{cscK})=\underline{S}_\beta + 2\pi(1-\beta) \tr_{\omega_{cscK}} [D].$$ Applying  Proposition \eqref{averagecsck}, we obtain the following result.
\begin{cor} \label{logFutvanish}
Under assumption \eqref{anglerestriction}, the log-Futaki invariant $Fut_{D,\beta,[\omega]}$ vanishes on K\"ahler classes $[\omega]$ which contain cscK cone metric with cone singularities along $D$ with cone angle $2\pi \beta$.
\end{cor}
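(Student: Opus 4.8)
The plan is to use the fact, already recorded just above the statement, that $Fut_{D,\beta,[\om]}$ depends only on the class $[\om]$; this lets me evaluate it on the distinguished representative $\om=\om_{cscK}$ itself. The whole argument then amounts to feeding in the two structural facts at hand: the global distributional identity $S(\om_{cscK})=\underline S_\b+2\pi(1-\b)\tr_{\om_{cscK}}[D]$, and the cohomological value of $\underline S_\b$ given by Proposition \ref{averagecsck}. No new analytic input should be needed beyond the regularity already established.

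First I would rewrite the bracket in the first integral of $Fut_{D,\beta,[\om]}(V_f)$. Using Proposition \ref{averagecsck} to replace $n\,2\pi c_1(X)\cup[\om]^{n-1}/[\om]^n$ by $\underline S_\b+n(1-\b)\,2\pi c_1(D)\cup[\om]^{n-1}/[\om]^n$, and then substituting $S(\om_{cscK})-\underline S_\b=2\pi(1-\b)\tr_{\om_{cscK}}[D]$, the first integral becomes
\begin{align*}
(1-\b)\int_X f\,\tr_{\om_{cscK}}[D]\,\frac{\om^n}{n!}-(1-\b)\,n\frac{c_1(D)\cup[\om]^{n-1}}{[\om]^n}\int_X f\,\frac{\om^n}{n!}.
\end{align*}
The second step is the pointwise trace-to-wedge identity $(\tr_\om\eta)\,\om^n/n!=\eta\wedge\om^{n-1}/(n-1)!$ for a $(1,1)$-current $\eta$, applied with $\eta=[D]$, which turns the first term above into $(1-\b)\int_X f\,[D]\wedge\om^{n-1}/(n-1)!=(1-\b)\int_D f\,\om^{n-1}/(n-1)!$. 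Comparing the resulting expression with the second line in the definition of the log-Futaki invariant, one sees a term-by-term cancellation, whence $Fut_{D,\beta,[\om]}(V_f)=0$.

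The only delicate point is the legitimacy of these manipulations across the divisor: both the identity $S(\om_{cscK})=\underline S_\b+2\pi(1-\b)\tr_{\om_{cscK}}[D]$ and the passage $\int_X f\,\tr_{\om_{cscK}}[D]\,\om^n/n!=\int_D f\,\om^{n-1}/(n-1)!$ must be read in the sense of currents. This is precisely where the higher regularity of $\om_{cscK}$ enters: Theorem \ref{cscKconeregularity} promotes $\vphi_{cscK}$ to $C^{4,\a,\b}(X;\om_D)$, so $S(\om_{cscK})$ equals the constant $\underline S_\b$ off $D$ and the divisorial contribution is captured exactly as in the Poincar\'e--Lelong computation and integration-by-parts argument already used in the proof of Proposition \ref{averagecsck} via \cite[Theorem 1.14]{MR2746347}. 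Granting this, the remaining computation is purely formal and the vanishing is immediate.
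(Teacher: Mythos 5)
Your proposal is correct and is essentially the paper's own argument: the paper likewise evaluates $Fut_{D,\beta,[\omega]}$ on the cscK cone representative, invokes the global distributional identity $S(\om_{cscK})=\underline S_\b+2\pi(1-\b)\tr_{\om_{cscK}}[D]$ together with Proposition \ref{averagecsck}, and lets the resulting divisor terms cancel against those in the definition of the log-Futaki invariant. Your explicit trace-to-wedge manipulation and the appeal to Theorem \ref{cscKconeregularity} and the integration-by-parts argument of Proposition \ref{averagecsck} merely spell out what the paper leaves implicit.
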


\begin{rem}[Expansion close to the divisor] For any cone angle $0<\b<1$, general expansion formulas for K\"ahler-Einstein cone metrics  appear in \cite{YZ}. They come from the study of a singular Monge-Amp\`ere equation. We expect that similar expansion formulas hold for the cscK cone metrics.
\end{rem}
\begin{rem}[Uniqueness of cscK cone metrics] The study of uniqueness of cscK cone metrics has been initiated in \cites{zhengcscKcone,LZ,LZ2,LZ3}.
\end{rem}

\section{Lichnerowicz equations with conical singularities}\label{Section2}
The Lichnerowicz operator at a cscK metric $\om_{cscK}$ is defined on functions $u$,
\begin{align}\label{Lic}
{\mathbb{L}\mathrm{ic}}_{cscK}(u)=\tri_{cscK}^2 u +  u^{i\bar j}R_{i\bar j}(\om_{cscK}).
\end{align}
We remark that when a K\"ahler metric has constant scalar curvature, the first variation of the scalar curvature is given by the Lichnerowicz operator.

We say that a K\"ahler cone metric $\om$ has bounded 	Christoffel symbols of the connection, if for any $2\leq i,j,k\leq n$, the following items are bounded,
\begin{equation*}
\left\{
\begin{aligned}
&\Gamma^{i}_{jk}, \quad 
|z^1|^{1-\b}\Gamma^{i}_{j1},\quad 
|z^1|^{\b-1}\Gamma^{1}_{jk},\\
&\Gamma^{1}_{j1}, \quad 
|z^1|^{2-2\b}\Gamma^{i}_{11}, \quad
|z^1|^{1-\b}(\Gamma^{1}_{11}+\frac{1-\b}{z^1});
\end{aligned}
\right.
\end{equation*}

We say that a K\"ahler potential $\vphi$ is a {\it $C^{4,\a,\b}$ (or $C^{3,\b}$) cscK potential} if $\om=\om_D+i\p\bar\p\vphi$ is a cscK metric and also $\vphi\in C^{4,\a,\b}$ (or $C^{3,\b}$ respectively).

We also consider the following operator on functions $u$,
\begin{align}\label{Licgeneral-def}
{\mathbb{L}\mathrm{ic}}_{\om}(u)=\tri_{\om}^2 u + u^{i\bar j}R_{i\bar j}(\om),
\end{align}
under the assumption that the coefficient metric $\om=\om_D+i\p\bar\p\vphi$ has $C^{4,\a,\b}$ cscK potential $\vphi$.

We are going to solve the equation for $f\in C^{,\a,\b}$
\begin{align}\label{Licgeneral}
{{\mathbb{L}\mathrm{ic}}}_{\om}(u)=f,
\end{align}
with solution $u\in C^{4,\a,\b}$ and prove the Fredholm alternative of the Lichnerowicz operator \thmref{Fredholm}.

\subsection{Sobolev spaces for cone metrics}
Since the volume element of the reference cone metric $\om$ is an $L^p$ function (for some $p\geq 1$) with respect to the Euclidean metric, it  gives rise to a measure $\om^n$ on $M$. Thus, we can introduce the following Sobolev spaces with respect to $\om^n$. We shall use the following complete Banach spaces on the whole manifold $X$.
\begin{defn}[Sobolev spaces $W^{1,p,\b}(\om)$] 
	For a K\"ahler cone metric $\omega$,  the Sobolev spaces $W^{1,p,\b}(\om)$ for $p\geq 1$
	are defined with respect to the reference K\"ahler cone metric $\om$. The $W^{1,p,\b}(\om)$ norm is 
	\begin{align*}
	\Vert u \Vert_{W^{1,p,\b}(\om)}&=\left(\int_M \vert u \vert^p + |\nabla u|_\om^p\om^n\right)^{1/p}.
	\end{align*}
\end{defn}

\begin{defn}[Sobolev spaces $W^{2,p,\b}(\om)$]\label{Sobolevspace}
	For a K\"ahler cone metric $\omega$ with bounded Christoffel symbols of the connection, the Sobolev spaces $W^{2,p,\b}(\om)$ for $p\geq 1$ are defined to be the completion of the space of smooth functions with finite $W^{2,p,\b}(\om)$ norm which is the combination of $W^{1,p,\b}(\om)$ norm and $W^{2,p,\b}(\om)$ semi-norm. The $W^{2,p,\b}(\om)$ semi-norm is defined with respect to  the reference K\"ahler cone metric $\om$ with cone angle $\beta$,
	
	\begin{align}\label{wapbseminorm}
	[ u ]_{W^{2,p,\b}(\om)}
	=&\sum_{1\leq a,b\leq n}\Vert \p_a\p_{\bar b}u\Vert_{L^p(\om)}
	+\sum_{2\leq j\leq n}\Vert \p_1\p_j u\Vert_{L^p(\om)}\\
	&+\sum_{2\leq j,k \leq n}\Vert \p_j\p_k u\Vert_{L^p(\om)}\nonumber.
	\end{align}
\end{defn}

\begin{defn}[Strong Sobolev spaces $W_{\bf s}^{2,p,\b}(\om)$]\label{strongSobolevspace}
	For a K\"ahler cone metric $\omega$ with bounded Christoffel symbols of the connection, the Sobolev spaces $W_{\bf s}^{2,p,\b}(\om)$ for $p\geq 1$ are defined to be the completion of the space of smooth functions with finite $W^{1,p,\b}(\om)$ norm and $W_{\bf s}^{2,p,\b}(\om)$ semi-norm. The $W_{\bf s}^{2,p,\b}(\om)$ semi-norm is defined with respect to the reference K\"ahler cone metric $\om$,
	\begin{align}\label{swapbseminorm}
	[ u ]_{W_{\bf s}^{2,p,\b}(\om)}&=(\int_M |\p\bar\p u|_\om^p+ |\p\p u|_\om^p\om^n)^{\frac{1}{p}}.
	\end{align}
	In both \eqref{wapbseminorm} and \eqref{swapbseminorm}, the second order pure covariant derivatives mean, for any $1\leq a,b\leq n$,
	\begin{align}\label{nablanabla}
	\p_a \p_b u:=\nabla_a \nabla_b u
	&=\frac{\p^2 u}{\p z^a\p z^b}-\sum_{c=1}^n \Gamma_{ab}^c(\om) \frac{\p u}{\p z^c}.
	\end{align}
	The Christoffel symbols $ \Gamma_{ab}^c(\om)$ of the connection  satisfy the properties of Corollary \ref{cor4ab}. 
\end{defn}

\begin{rem}
	From \eqref{nablanabla}, we could see clearly that why the bounded Christoffel symbols of the connection of the background metric $\om$ are required in the global definitions of the higher order Sobolev spaces. 
\end{rem}

\begin{defn}
	We define the Sobolev space $H^{2,\b}:=W^{2,2,\b}(\om)$ and $H_{0}^{2,\b}=\{u\in H^{2,\b}\vert \int_M u \om^n=0\}$. The Sobolev norm remains the same. The strong spaces $H_{\bf s}^{2,\b}:=W_{\bf s}^{2,2,\b}(\om)$ and $H_{{\bf s},0}^{2,\b}$ are defined in a similar way.
\end{defn}

\begin{lem}\label{w1pbSobolev imbedding theorem}
	Assume that $u\in W^{1,p,\b}(\om)$. If $p< 2n$, then there exists a constant $C$ such that
	\begin{align*}
	\Vert u\Vert_{L^{q}(\om)}\leq C\Vert u\Vert_{W^{1,p,\b}(\om)},
	\end{align*}for any $q\leq \frac{2np}{2n-p}$.
\end{lem}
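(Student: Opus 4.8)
The plan is to reduce the weighted inequality to the classical Sobolev embedding in real dimension $2n$ by transporting everything through the quasi-isometry $W$ of \eqref{wcoordiante}, combined with a partition of unity on the compact manifold $X$. First I would fix a finite cover of $X$: charts disjoint from $D$, on which $\om$ is uniformly equivalent to a smooth metric and the classical embedding $W^{1,p}\hookrightarrow L^{q}$ holds verbatim, together with cone charts $U_p$ meeting $D$. Thus the only real content is local, on a cone chart near the divisor.

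On such a $U_p$ I would pass to the coordinates $(w^1,z^2,\dots,z^n)$ and set $v=u\circ W^{-1}$ on the image $\tilde U=W(U_p)$, exactly as in the definition of $C^{,\a,\b}$. Writing $z^1=re^{i\theta}$ one has $w^1=r^{\b}e^{i\theta}$, so the radial stretching $r\mapsto r^{\b}$ resolves the cone point and $\tilde U$ is an ordinary Lipschitz domain in $\CC^n$ equipped with the Euclidean metric. The key computation is that under $W$ the flat cone metric becomes uniformly equivalent to the Euclidean one: in the $z^1$-plane $\om_{cone}$ reads $\b^2 r^{2\b-2}(dr^2+r^2\,d\theta^2)=d\rho^2+\b^2\rho^2\,d\theta^2$ with $\rho=r^{\b}=|w^1|$, whereas the Euclidean metric in $w^1$ is $d\rho^2+\rho^2\,d\theta^2$; these agree up to the fixed factor $\b^2<1$ in the angular direction. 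Since $\om$ is quasi-isometric to $\om_{cone}$, the pushforward $W_*\om$ is uniformly equivalent to the Euclidean metric on $\tilde U$, with constants depending only on $\b$ and $n$.

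From this I would extract two comparisons, with constants independent of $u$. The volume forms match, $\om^n\asymp dV_{\mathrm{eucl}}$ in the $w$-coordinates, so $\int_{U_p}|u|^{q}\,\om^n\asymp\int_{\tilde U}|v|^{q}\,dV_{\mathrm{eucl}}$. And, applying the (non-holomorphic) chain rule to $w^1=|z^1|^{\b-1}z^1$, one finds $|\p w^1/\p z^1|\asymp|z^1|^{\b-1}$, so that $g^{1\bar 1}|\p_1 u|^2\asymp|z^1|^{2-2\b}|\p_1 u|^2\asymp|\nabla_{w^1}v|^2$, and similarly in the remaining directions; hence $\int_{U_p}|\nabla u|_\om^{p}\,\om^n\asymp\int_{\tilde U}|\nabla v|^{p}\,dV_{\mathrm{eucl}}$. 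Thus the local $W^{1,p,\b}(\om)$ norm on $U_p$ is comparable to the ordinary $W^{1,p}(\tilde U)$ norm of $v$. I would then invoke the classical Sobolev embedding $W^{1,p}(\tilde U)\hookrightarrow L^{q}(\tilde U)$ on the Lipschitz domain $\tilde U\subset\RR^{2n}$, valid for $p<2n$ and the critical exponent $q=\frac{2np}{2n-p}$, and transfer the result back through $W$. Summing over the finitely many charts against a subordinate partition of unity (whose cut-offs have $\om$-bounded gradient) gives the global estimate at the critical $q$, and since $X$ has finite $\om$-volume a final application of H\"older's inequality covers the whole range $q\le\frac{2np}{2n-p}$.

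The step I expect to be the main obstacle is precisely the norm comparison of the previous paragraph. Because $W$ is only a homeomorphism and not a diffeomorphism, one cannot simply invoke invariance of Sobolev spaces under smooth changes of variable; instead one has to check directly, via the non-holomorphic chain rule, that the weighted first-order quantities built from $\om$ transform into the flat $W^{1,p}$ seminorm of $v$. This is where the weights $|z^1|^{1-\b}$ appearing throughout the cone structure are exactly designed to cancel the singular Jacobian of $W$, and verifying that the cancellation is uniform down to the divisor is the heart of the argument. For first derivatives this is elementary; the same non-smoothness would be a genuine difficulty for the higher-order spaces, which is why those require the more elaborate covariant definitions recalled earlier.
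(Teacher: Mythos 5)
Your proposal is correct and follows essentially the same route as the paper's (very terse) proof: localize with a partition of unity, transport the problem through the quasi-isometric map $W$ of \eqref{wcoordiante} to the image cone chart, and apply the classical Euclidean Sobolev inequality there, with H\"older's inequality covering the subcritical exponents. The explicit verification that $W_*\om_{cone}$ is uniformly equivalent to the Euclidean metric (so that volume forms and weighted gradients transform correctly) is exactly the content the paper leaves implicit, and you have filled it in accurately.
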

\begin{proof}
	We consider the function which is supported in a cone chart, we use the map $W$ defined in \eqref{wcoordiante} and the Sobolev inequality in Euclidean space to obtain the desired inequality. The general case follows from a partition of unity. 
\end{proof}

\begin{lem}[Sobolev embedding theorem]\label{Sobolev imbedding theorem}
	Assume that $u\in W_{\bf s}^{2,p,\b}(\om)$, $p<2n$ and $q\leq\frac{2np}{2n-p}$.
	There exists a constant $C$ independent of $u$ such that
	\begin{align}
	\Vert u\Vert_{W^{1,q,\b}(\om)}\leq C\Vert u\Vert_{W_{\bf s}^{2,p,\b}(\om)}.
	\end{align}
\end{lem}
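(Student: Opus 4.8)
The plan is to bootstrap from the first-order embedding already established in \lemref{w1pbSobolev imbedding theorem}, applying it twice: once to $u$ itself and once to the scalar function $|\nabla u|_\om$. The key point is that the strong semi-norm $[u]_{W_{\bf s}^{2,p,\b}(\om)}$ controls the \emph{full} complex Hessian of $u$ in $L^p(\om)$, and a Kato-type inequality then converts this into control of $|\nabla u|_\om$ in $W^{1,p,\b}(\om)$. By definition $W_{\bf s}^{2,p,\b}(\om)$ is the completion of smooth functions, so I would first reduce to $u$ smooth on $M$ and let the final estimate pass to the limit.

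The first step is to bound the full Hessian by the strong semi-norm. Writing $\nabla^2u$ for the second covariant derivative of $u$ with respect to $\om$, its components fall into the K\"ahler types $\nabla_i\nabla_{\bar j}u$, $\nabla_i\nabla_j u$ and their conjugates. Since $\om$ is K\"ahler the mixed Christoffel symbols vanish, so $\nabla_i\nabla_{\bar j}u=\p_i\p_{\bar j}u$, $\nabla_{\bar i}\nabla_j u=\nabla_j\nabla_{\bar i}u$, and $\nabla_{\bar i}\nabla_{\bar j}u=\overline{\nabla_i\nabla_j u}$; no curvature (lower-order) terms appear because we differentiate the gradient of a function. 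Hence pointwise on $M$
\[
|\nabla^2 u|_\om^2\le C\bigl(|\p\bar\p u|_\om^2+|\p\p u|_\om^2\bigr),
\]
where the pure second covariant derivatives $\nabla_a\nabla_b u$ are exactly those of \eqref{nablanabla}, whose Christoffel coefficients are bounded by the hypothesis on $\om$ (cf. \corref{cor4ab}). Integrating in $\om^n$ gives $\int_M|\nabla^2 u|_\om^p\,\om^n\le C\,[u]_{W_{\bf s}^{2,p,\b}(\om)}^p$. The second step is Kato's inequality: away from the critical set of $u$ one has, a.e.\ on $M$ and distributionally,
\[
\bigl|\nabla|\nabla u|_\om\bigr|_\om\le|\nabla^2 u|_\om .
\]
Together with the first step, and noting that $\||\nabla u|_\om\|_{L^p(\om)}$ is already part of $\|u\|_{W^{1,p,\b}(\om)}$, this yields $|\nabla u|_\om\in W^{1,p,\b}(\om)$ with $\bigl\||\nabla u|_\om\bigr\|_{W^{1,p,\b}(\om)}\le C\,\|u\|_{W_{\bf s}^{2,p,\b}(\om)}$.

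To conclude, I would apply \lemref{w1pbSobolev imbedding theorem} to the two $W^{1,p,\b}(\om)$ functions $u$ and $|\nabla u|_\om$. Since $p<2n$ and $q\le\frac{2np}{2n-p}$, this gives $\|u\|_{L^q(\om)}\le C\|u\|_{W^{1,p,\b}(\om)}$ and $\||\nabla u|_\om\|_{L^q(\om)}\le C\||\nabla u|_\om\|_{W^{1,p,\b}(\om)}$; adding the two bounds produces
\[
\|u\|_{W^{1,q,\b}(\om)}^q=\int_M\bigl(|u|^q+|\nabla u|_\om^q\bigr)\om^n\le C\,\|u\|_{W_{\bf s}^{2,p,\b}(\om)}^q .
\]
The step I expect to be the main obstacle is the rigorous justification of Kato's inequality and of the distributional gradient estimate \emph{globally on $X$} rather than only on $M$, i.e.\ verifying that no singular contribution is concentrated along $D$. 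This is precisely where the bounded-Christoffel-symbols hypothesis and the $L^p$-integrability of $\om^n$ enter. I would handle it as in the proof of \lemref{w1pbSobolev imbedding theorem}: localize with a partition of unity and, within each cone chart, pass to the quasi-isometric $w$-coordinates of \eqref{wcoordiante}, where the gradient, the Hessian and the volume become comparable to their Euclidean counterparts, so that the Euclidean $W^{2,p}\hookrightarrow W^{1,q}$ theory applies cleanly and there is no boundary term on $\{z^1=0\}$.
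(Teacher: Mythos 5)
Your proposal is correct and follows essentially the same route as the paper's proof: apply the first-order embedding (Lemma~\ref{w1pbSobolev imbedding theorem}) to $|\nabla u|_\om$, control $\Vert\nabla|\nabla u|\Vert_{L^p(\om)}$ via the classical Kato inequality, and bound the resulting Hessian term by the strong semi-norm $\Vert u\Vert_{W_{\bf s}^{2,p,\b}(\om)}$. Your added care about the type decomposition of the Hessian and the localization near $D$ is sound but goes beyond what the paper records.
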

\begin{proof}
	From the lemma above,
	\begin{align*}
	\Vert\nabla u\Vert_{L^{q}(\om)}\leq C(\Vert\nabla|\nabla u|\Vert_{L^{p}(\om)}+\Vert\nabla u\Vert_{L^{p}(\om)}).
	\end{align*}
	So the conclusion follows from applying classical Kato inequality that gives
	$
	\Vert\nabla|\nabla u|\Vert_{L^{p}(\om)}\leq \Vert\nabla\nabla u\Vert_{L^{p}(\om)}.
	$ The R.H.S is controlled by the term $\Vert u\Vert_{W_{\bf s}^{2,p,\b}(\om)}$.
\end{proof}

\begin{lem}[Kondrakov compactness theorem]\label{compactness}
	Assume that $u\in W_{\bf s}^{2,p,\b}(\om)$, $p<2n$ and $q<\frac{2np}{2n-p}$.
	The Sobolev embedding $$W_{\bf s}^{2,p,\b}(\om)\subset W^{1,q,\b}(\om)$$ is compact.
\end{lem}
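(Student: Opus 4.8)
The plan is to deduce the Kondrakov-type compactness of the embedding $W_{\bf s}^{2,p,\b}(\om)\subset W^{1,q,\b}(\om)$ from the two results already in hand: the Sobolev embedding of \lemref{Sobolev imbedding theorem} (which gives continuity into $W^{1,q,\b}(\om)$ for the \emph{critical} exponent $q^*=\frac{2np}{2n-p}$) and the coordinate change $W$ of \eqref{wcoordiante} that converts cone-H\"older/Sobolev statements into classical Euclidean ones. The overall strategy mirrors the standard proof that the Rellich--Kondrachov embedding is compact strictly below the critical exponent: bounded sequences are precompact because the strictly subcritical exponent $q<q^*$ leaves ``room'' for an interpolation argument combined with the classical compactness theorem on Euclidean domains.

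First I would take a sequence $\{u_m\}$ bounded in $W_{\bf s}^{2,p,\b}(\om)$. By \lemref{Sobolev imbedding theorem}, $\{u_m\}$ is bounded in $W^{1,q^*,\b}(\om)$, hence in particular bounded in $W^{1,p,\b}(\om)$. Using a finite partition of unity subordinate to a cover of $X$ by cone charts (near $D$) and ordinary charts (away from $D$), it suffices to prove precompactness in $W^{1,q,\b}$ for each localized piece $\chi_\ell u_m$. On the charts away from $D$ everything is classical, so the content is in a cone chart $U_p$. There I pull back via the quasi-isometry $W$ of \eqref{wcoordiante}, so that the cone-weighted norms become the \emph{classical} $W^{1,\cdot}$ and $W^{2,\cdot}$ norms of the transformed functions $v_m(w^1,z^2,\dots,z^n)$ on the Euclidean image $W(U_p)$; this is exactly the identification used in the proofs of \lemref{w1pbSobolev imbedding theorem} and \lemref{Sobolev imbedding theorem}. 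Under this change the hypotheses become: $\{v_m\}$ bounded in the classical $W^{1,q^*}$ (in fact controlled by a classical second-order Sobolev quantity), and the goal becomes precompactness in classical $W^{1,q}$ with $q<q^*$.

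Next I would invoke the classical Rellich--Kondrachov theorem on the bounded Euclidean domain $W(U_p)$: a sequence bounded in the second-order Sobolev space is precompact in $W^{1,q}$ for every $q$ strictly below the critical exponent $q^*=\frac{2np}{2n-p}$ (here the ambient real dimension is $2n$). This yields a subsequence converging strongly in classical $W^{1,q}$ on each chart; a diagonal argument over the finitely many charts of the partition of unity produces a single subsequence converging in $W^{1,q,\b}(\om)$ globally, establishing compactness. The translation between the weighted gradient norm $|\nabla u|_\om$ and the classical gradient of $v$ is exactly the Kato/quasi-isometry bookkeeping already justified in \lemref{w1pbSobolev imbedding theorem} and \lemref{Sobolev imbedding theorem}, so I would cite those rather than redo it.

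I expect the main obstacle to be ensuring that the pullback by $W$ genuinely carries \emph{second-order} Sobolev control into a form to which classical Rellich--Kondrachov applies. The map $W$ is only a quasi-isometry and is singular along $\{z^1=0\}$, so the second covariant derivatives $\p_a\p_b u=\nabla_a\nabla_b u$ of \eqref{nablanabla} involve the unbounded-looking Christoffel symbols; one must check that the weights built into the $W_{\bf s}^{2,p,\b}(\om)$ semi-norm \eqref{swapbseminorm} are precisely the ones making the transformed second derivatives lie in the classical $L^p(W(U_p))$, so that the classical embedding theorem is applicable with the correct critical exponent. Since the boundedness of the connection coefficients is built into \defnref{strongSobolevspace} via Corollary \ref{cor4ab}, this bookkeeping goes through, but it is the one place where the conical geometry must be handled with care rather than quoted directly; away from that, the argument is the routine ``subcritical exponent'' upgrade of a continuous embedding to a compact one.
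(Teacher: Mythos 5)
Your proposal is correct and follows essentially the same route as the paper's proof: cover $X$ by finitely many charts, localize with a smooth partition of unity, transport the cone charts to Euclidean domains via the map $W$ of \eqref{wcoordiante}, invoke classical precompactness (Rellich--Kondrachov) on each transformed piece, and extract a single convergent subsequence over the finitely many charts. The paper's own argument is in fact terser than yours---it neither records the preliminary bound via \lemref{Sobolev imbedding theorem} nor the transfer-of-second-derivative bookkeeping under $W$ that you flag---so your write-up is simply a more careful rendering of the same proof.
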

\begin{proof}
	We cover the manifold $X$ by a finite number of coordinates charts $\{U_i,\psi;1\leq i\leq N\}$ and let $\rho_i$ be the smooth partition of unity subordinate to $\{U_i\}$. Let $f_m$ be a bounded sequence in $W_{\bf s}^{2,p,\b}(\om)$. In the charts which do not intersect with the divisor, we let $\tilde f_m=(\rho_i f_m) \circ \psi_i^{-1}$. While, in the cone chart $U$ among $\{U_i\}$ we let $\tilde f_m=(\rho_i f_m) \circ \psi_i^{-1}\circ W^{-1}$. Then we are able to pick Cauchy subsequence of $f_m$ in each charts for $i=1,2,...,N$ because of precompactness of $\tilde f_m$ in each $U_i$. \end{proof}

\begin{prop}[Interpolation inequality]
	\label{lpinterpolation}
	Suppose that $\eps>0$ and $1<p<\infty$. There exists a constant $C$ such that for all $u\in W_{\bf s}^{2,p,\b}$, we have
	\begin{align*}
	\Vert u\Vert_{W^{1,p,\b}(\om)}\leq \eps \Vert u\Vert_{W_{\bf s}^{2,p,\b}(\om)}+ C\Vert u\Vert_{L^{p}(\om)}.
	\end{align*}
\end{prop}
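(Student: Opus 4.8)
The plan is to prove the interpolation inequality
\begin{align*}
\Vert u\Vert_{W^{1,p,\b}(\om)}\leq \eps \Vert u\Vert_{W_{\bf s}^{2,p,\b}(\om)}+ C\Vert u\Vert_{L^{p}(\om)}
\end{align*}
by a standard contradiction-and-compactness argument, transplanted to the conical setting via the machinery already assembled in this section. First I would argue by contradiction: suppose the inequality fails for some fixed $\eps>0$. Then there is a sequence $u_m\in W_{\bf s}^{2,p,\b}$ violating it, and after normalising by $\Vert u_m\Vert_{W^{1,p,\b}(\om)}=1$ we would obtain $\eps\Vert u_m\Vert_{W_{\bf s}^{2,p,\b}(\om)}+C_m\Vert u_m\Vert_{L^p(\om)}<1$ with constants $C_m\to\infty$. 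This forces $\Vert u_m\Vert_{L^p(\om)}\to 0$ while keeping the full $W^{1,p,\b}$ norm equal to $1$, so the gradient part carries all the mass; simultaneously the normalisation combined with the failure bounds the strong second-order seminorm, so $\{u_m\}$ is bounded in $W_{\bf s}^{2,p,\b}(\om)$.

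Next I would invoke the Kondrakov compactness theorem, Lemma \ref{compactness}, which gives a compact embedding $W_{\bf s}^{2,p,\b}(\om)\subset W^{1,q,\b}(\om)$ for $q<\tfrac{2np}{2n-p}$; taking $q=p$ (legitimate since $p<\tfrac{2np}{2n-p}$ whenever $p<2n$, and the subcritical range is where compactness holds) extracts a subsequence converging strongly in $W^{1,p,\b}(\om)$ to some limit $u_\infty$. On one hand $\Vert u_\infty\Vert_{W^{1,p,\b}(\om)}=1$ by continuity of the norm. On the other hand, $\Vert u_m\Vert_{L^p(\om)}\to0$ forces $u_\infty=0$ in $L^p$, hence $u_\infty\equiv 0$, so $\Vert u_\infty\Vert_{W^{1,p,\b}(\om)}=0$. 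This contradiction establishes the inequality. The only point requiring care is the range restriction $p<2n$ needed for compactness; for $p\geq 2n$ one first applies the result on a lower exponent and bootstraps, or equivalently covers this range by a direct localisation since large $p$ is controlled by the same charts.

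The genuinely delicate step, and the one I expect to be the main obstacle, is ensuring that the abstract contradiction argument is faithful to the conical geometry rather than to the smooth ambient metric. The quantities $\Vert\nabla\nabla u\Vert_{L^p(\om)}$ and $\Vert\nabla u\Vert_{L^p(\om)}$ are all measured with respect to $\om$ and its connection with the singular Christoffel symbols governed by Corollary \ref{cor4ab}, and the compactness in Lemma \ref{compactness} was proved precisely by pulling back through the quasi-isometry $W$ of \eqref{wcoordiante} to a Euclidean chart. One must therefore verify that the normalisation and the weak-limit extraction are consistent under this transplantation: the measure $\om^n$ is only $L^p$ with respect to the Euclidean measure, so the weak limit must be taken in the conical $W_{\bf s}^{2,p,\b}(\om)$ topology and the strong limit in $W^{1,p,\b}(\om)$, exactly the two spaces for which the embeddings in Lemmas \ref{Sobolev imbedding theorem} and \ref{compactness} are established. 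Once one commits to working throughout in these intrinsically-defined cone Sobolev spaces, the classical Ehrling-type argument goes through verbatim, with the partition of unity absorbing the away-from-$D$ charts where everything is the standard smooth interpolation inequality.
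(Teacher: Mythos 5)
Your proposal is correct and follows essentially the same route as the paper: a standard Ehrling-type contradiction argument, using the Kondrakov compactness of $W_{\bf s}^{2,p,\b}(\om)\subset W^{1,p,\b}(\om)$ (Lemma \ref{compactness}) to extract a strongly convergent subsequence whose limit must be both of unit (or at least $\eps$) norm in $W^{1,p,\b}(\om)$ and zero in $L^p(\om)$. The only cosmetic difference is your normalisation $\Vert u_m\Vert_{W^{1,p,\b}(\om)}=1$ versus the paper's $\Vert u_i\Vert_{W_{\bf s}^{2,p,\b}(\om)}=1$, which changes nothing; note also that the paper's own proof, like yours, relies on the restriction $p<2n$ inherited from Lemma \ref{compactness}.
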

\begin{proof}
	It follows from Lemma \ref{compactness} by using standard contradiction argument. We assume that the conclusion fails and for each $C_i=i$, there exists a $u_i\in W_{\bf s}^{2,p,\b}$ with $\Vert u_i\Vert_{W_{\bf s}^{2,p,\b}(\om)}=1$ such that
	\begin{align*}
	\Vert u_i\Vert_{W^{1,p,\b}(\om)}> \eps + i 
	\cdot \Vert u_i\Vert_{L^{p}(\om)}.
	\end{align*}Thus
	$\Vert u_i\Vert_{W^{1,p,\b}(\om)}\geq\eps>0$
	and $\Vert u_i\Vert_{L^{p}(\om)}\rightarrow 0$, as $i\rightarrow +\infty$. On one hand, from Kondrakov compactness (Lemma \ref{compactness}), after taking a subsequence, $u_i$ converges to $u_\infty$ in $W^{1,p,\b}(\om)$ norm. Also $\Vert u_\infty\Vert_{W^{1,p,\b}(\om)}\geq\eps>0$. On the other hand, from Sobolev embedding (Lemma \ref{Sobolev imbedding theorem}), $\Vert u_i\Vert_{W^{1,p,\b}(\om)}$ is uniformly bounded and then $u_i$ converges to zero in $L^p(\om)$ as $i\rightarrow \infty$. Thus $u_\infty=0$, contradiction!

\end{proof}

\begin{rem}
	
	One may wonder whether we could replace the $W_{\bf s}^{2,p,\b}$ with the partial norm $W^{2,p,\b}$ in all Lemma \ref{Sobolev imbedding theorem}, Lemma \ref{compactness} and Proposition \ref{lpinterpolation}. But, after examining the proof of Lemma \ref{Sobolev imbedding theorem}, it is obvious that a different Kato inequality would be needed.
	
\end{rem}

\begin{lem}[Poincar\'e inequality]\label{Poincareinequality}
	There is a constant $C_P$ such that for any $u\in H_{0}^{1,\b}=\{u\in H^{1,\b}\vert \int_M u \om^n=0\}$,
	\begin{align}
	\Vert u\Vert_{L^2(\om)} \leq C_P \Vert \nabla u\Vert_{L^2(\om)}. 
	\end{align}
\end{lem}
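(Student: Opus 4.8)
The plan is to argue by contradiction, using a first-order Rellich--Kondrakov type compact embedding together with the mean-zero normalisation. First I would suppose the asserted inequality fails for every constant. Then for each $i$ there is $u_i\in H_0^{1,\b}$ with $\Vert u_i\Vert_{L^2(\om)}=1$ while $\Vert\nabla u_i\Vert_{L^2(\om)}\le 1/i$. In particular the sequence $(u_i)$ is bounded in $H^{1,\b}=W^{1,2,\b}(\om)$.

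The key ingredient is the compact embedding $W^{1,2,\b}(\om)\hookrightarrow L^2(\om)$. This is established exactly as in \lemref{compactness}: cover $X$ by finitely many coordinate charts with a subordinate partition of unity, and pull back each localised piece $\rho_i u_m$ through the chart maps, composing on the cone chart with the quasi-isometry $W$ of \eqref{wcoordiante}. In the resulting $w$-coordinates the $W^{1,2,\b}(\om)$ norm is comparable to the Euclidean $W^{1,2}$ norm---this is the quasi-isometry comparison underlying the proof of \lemref{w1pbSobolev imbedding theorem}---so the classical Rellich--Kondrakov compactness of $W^{1,2}\hookrightarrow L^2$ on bounded Euclidean domains furnishes a convergent subsequence in each chart. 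Applying this to $(u_i)$ and passing to a subsequence, $u_i\to u_\infty$ strongly in $L^2(\om)$.

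Next I would identify $u_\infty$. Since $\Vert\nabla u_i\Vert_{L^2(\om)}\to 0$, the gradients converge to $0$ in $L^2(\om)$; as $u_i\to u_\infty$ in $L^2(\om)$ as well, the distributional gradient of $u_\infty$ equals the limit of $\nabla u_i$ and hence vanishes. Because $M=X\setminus D$ is connected (the smooth divisor $D$ has real codimension two, so its removal does not disconnect $X$), a function with vanishing gradient is almost everywhere constant, so $u_\infty\equiv c$. Strong $L^2$ convergence gives $\Vert u_\infty\Vert_{L^2(\om)}=\lim_i\Vert u_i\Vert_{L^2(\om)}=1$, whence $c\neq 0$. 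On the other hand the normalisation $\int_M u_i\,\om^n=0$ passes to the limit: $\om^n$ is a finite measure and $u_i\to u_\infty$ in $L^2(\om)\subset L^1(\om)$, so $\int_M u_\infty\,\om^n=0$, forcing $c\,\Vol_{[\om]}(X)=0$ and thus $c=0$. This contradiction proves the inequality.

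I expect the only genuine point requiring care is the compact embedding $W^{1,2,\b}(\om)\hookrightarrow L^2(\om)$; once the quasi-isometry comparison already used in \lemref{w1pbSobolev imbedding theorem} and \lemref{compactness} is in place, reducing the statement to the Euclidean Rellich--Kondrakov theorem in the $w$-coordinates, the remainder is the standard compactness proof of Poincar\'e's inequality. A secondary point to confirm is the connectedness of $M$, which guarantees that a function with vanishing distributional gradient is globally constant rather than merely locally constant; this holds because $D$ is a smooth divisor and therefore of real codimension two.
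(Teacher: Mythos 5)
Your proof is correct and follows essentially the same route as the paper: the paper also derives the inequality from the compactness of the first-order conical Sobolev embedding $W^{1,p,\b}(\om)\subset L^{q}(\om)$ (proved by localization and the quasi-isometry $W$, as in \lemref{compactness}), and then runs the standard limiting argument on a normalized sequence, phrased there as a minimizing sequence realizing a positive infimum rather than as your explicit contradiction. The two formulations are the same compactness proof, and your added remarks (connectedness of $M$, passage of the mean-zero condition to the limit) merely spell out what the paper leaves implicit.
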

\begin{proof}
The Poincar\'e inequality follows from the compactness theorem i.e. the inclusion $W^{1,p,\b}(\om)\subset L^{q}(\om)$ with $q<\frac{2np}{2n-p}$ is compact. Actually, any minimizing sequence $u_i$ of $\Vert \nabla u\Vert_{L^2(\om)}$ over $H=\{u\in H_{0}^{1,\b} \text{ s.t }\Vert u\Vert_{L^2(\om)}=1\}$, converges strongly in $L^2$ and weakly in $H^{1,\b}_0$, to a limit $v$. So $\inf_{u\in H}\Vert \nabla u\Vert_{L^2(\om)}$ is realized by $v$ and has to be positive.	
\end{proof}

\subsection{A partial $L^p$ estimate}
In \cite{chenwang} (Definition 2.1), it is defined  a local Sobolev space $W_{loc}^{2,p,\b}(\mathtt{B};\om_{cone})$ over a ball $\mathtt{B}$ contained in a cone chart $U$. It contains the functions  $u\in W^{1,p,\b}(\mathtt{B};\om_{cone})$ such that for all $2\leq i,j\leq n$, 
\begin{itemize}
	\item 
	$|z^1|^{2(1-\b)}\frac{\p^2 u}{\p z^1\p z^{\bar 1}}\in L^p(\mathtt{B};\om_{cone})$;
	\item
	$|z^1|^{1-\b}\frac{\p^2 u}{\p z^1\p x^j}\in L^p(\mathtt{B};\om_{cone})$, for all $2\leq j\leq 2n$, with $z^i=x^i+\sqrt{-1}x^{n+i}$, for all $2\leq i\leq n$;
	\item
	$\frac{\p^2 u}{\p x^j \p x^k}\in L^p(\mathtt{B};\om_{cone})$, for all $2\leq j,k \leq 2n$;
	\item $u\in W^{2,2}(\mathtt{B}\setminus \mathtt{N}_\epsilon;\om)$ for any $\mathtt{N}_\epsilon\subset \mathtt{B}$, $\eps$-tubular neighbourhood of the divisor $D\cap \mathtt{B}$.
\end{itemize}
The semi-norm is defined to be
\begin{align}\label{wapbseminormloc}
[u]_{W^{2,p,\b}_{loc}(\mathtt{B},\om_{cone})}
=&\left\Vert |z^1|^{2(1-\b)}\frac{\p^2 u}{\p z^1\p z^{\bar 1}}\right\Vert_{L^p(\mathtt{B};\om_{cone})}\\
&+\sum_{2\leq j\leq 2n}\left\Vert|z^1|^{1-\b}\frac{\p^2 u}{\p z^1\p x^j}\right\Vert_{L^p(\mathtt{B};\om_{cone})} \nonumber \\
&+\sum_{2\leq j,k \leq 2n}\left\Vert\frac{\p^2 u}{\p x^j \p x^k}\right\Vert_{L^p(\mathtt{B};\om_{cone})}.\nonumber
\end{align}

It is also proved in \cite[Theorem 4.1]{chenwang}  a $L^p$ estimate over $U$ with respect to the flat metric $\om_{cone}$. Note that we define $\mathtt{B}_r$ the balls of radius $r$ times a small radii $r_0$ with respect to the cone metric $\om_{cone}$.

\begin{lem}[\cite{chenwang}, Theorem 4.1]\label{lppurelocal}
	Assume that $u\in W_{loc}^{2,p,\b}(\mathtt{B}_1;\om_{cone})$ for $2\leq p<\infty$, and $\tri_{\om_{cone}} u\in L^p(\mathtt{B}_2;\om_{cone})$. 
	Then there exists a constant $C$ depending on $n,p,\b$ such that
	\begin{align*}
	[u]_{W_{loc}^{2,p,\b}(\mathtt{B}_1;\om_{cone})}\leq C\cdot\Vert \tri_{\om_{cone}} u\Vert_{L^p(\mathtt{B}_2;\om_{cone})}.
	\end{align*}
	
\end{lem}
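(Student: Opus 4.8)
The statement is a Calderón--Zygmund type estimate for the degenerate Laplacian $\tri_{\om_{cone}}$, so the plan is to combine an $L^2$ identity coming from the flatness of the model cone with singular-integral theory adapted to the cone geometry. First I would exploit the two symmetries of the model. Writing $\tri_{\om_{cone}}=\b^{-2}|z^1|^{2(1-\b)}\p_1\p_{\bar 1}+\sum_{j\geq 2}\p_j\p_{\bar j}$, the operator is translation invariant in the directions $z^2,\dots,z^n$ and homogeneous under the anisotropic dilations $\delta_\lambda(z^1,z^2,\dots,z^n)=(\lambda^{1/\b}z^1,\lambda z^2,\dots,\lambda z^n)$, which scale the cone distance to the tip $\{z^1=0\}$ by $\lambda$. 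Each of the three quantities in the semi-norm $[u]_{W^{2,p,\b}_{loc}}$ of \eqref{wapbseminormloc} and the right-hand side $\Vert\tri_{\om_{cone}}u\Vert_{L^p}$ transform by the same power under $\delta_\lambda$, so the inequality is scale invariant; this reduces matters to a unit-scale model estimate and is the backbone for a Calderón--Zygmund decomposition. One checks at the outset that $(\mathtt{B};\om_{cone})$ is a space of homogeneous type: the measure $\om_{cone}^n$ is Ahlfors $2n$-regular and doubling for the cone distance.

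The case $p=2$ I would prove first, directly by integration by parts. Since the Riemannian curvature of $\om_{cone}$ vanishes identically, on the regular part $M$ covariant derivatives commute and the Bochner--Kodaira identity degenerates to $\sum_{a,b}\int|\nabla_a\nabla_{\bar b}u|^2\,\om_{cone}^n=\int|\tri_{\om_{cone}}u|^2\,\om_{cone}^n$ and the analogous identity for $\sum_{a,b}\int|\nabla_a\nabla_b u|^2\,\om_{cone}^n$, the left-hand sides being equivalent to the adapted second derivatives of \eqref{wapbseminormloc} measured in the cone frame. The only subtlety is the vanishing of the boundary terms at the tip; this is supplied by the last defining property of $W^{2,p,\b}_{loc}$, namely $u\in W^{2,2}(\mathtt{B}\setminus\mathtt{N}_\eps)$, together with a cutoff supported in an $\eps$-tubular neighbourhood $\mathtt{N}_\eps$ of $\{z^1=0\}$ whose gradient contribution tends to zero as $\eps\to 0$ because the co-area of the tube decays (here $\b<1$ is used). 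This yields the $L^2$ semi-norm bound, equivalently the $L^2$-boundedness of the solution operator $T\colon f\mapsto\nabla^2\tri_{\om_{cone}}^{-1}f$.

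To pass from $p=2$ to $2\leq p<\infty$ I would represent the adapted second derivatives as singular integral operators and invoke Calderón--Zygmund theory on the space of homogeneous type above. Using the product structure $C(S^1_{2\pi\b})\times\CC^{n-1}$ one expands in Fourier modes in the angle $\theta$ of $z^1$ and computes the Green's function of $\tri_{\om_{cone}}$ mode by mode (each mode solving an explicit modified Bessel ODE in the radial variable), then sums. Differentiating this kernel and inserting the weights $|z^1|^{2(1-\b)}$ and $|z^1|^{1-\b}$ produces the kernels $K(x,y)$ of the operators appearing in the three terms of the semi-norm; I would verify that each satisfies the standard size bound $|K(x,y)|\leq C/V(B(x,d(x,y)))$ and the Hörmander regularity estimate with respect to the cone distance $d$ and the doubling measure. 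Combined with the $L^2$ boundedness of the previous step, the Calderón--Zygmund theorem on spaces of homogeneous type gives weak $(1,1)$ bounds and hence $L^p$ boundedness for all $1<p<\infty$, with constant depending only on $n,p,\b$; a final cutoff localising from $\mathtt{B}_2$ to $\mathtt{B}_1$, with the lower-order commutator terms absorbed into $\Vert\tri_{\om_{cone}}u\Vert_{L^p(\mathtt{B}_2)}$, completes the argument.

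The main obstacle is the verification of the Calderón--Zygmund kernel estimates uniformly up to the tip, above all for the most degenerate term $f\mapsto|z^1|^{2(1-\b)}\p_1\p_{\bar 1}\tri_{\om_{cone}}^{-1}f$. Across $\{z^1=0\}$ the summed Bessel kernel is not smooth in the ambient Euclidean sense, so the Hörmander smoothness condition must be established intrinsically in the cone geometry, and it is precisely here that $\b<1$ and the way the weights balance the singular coefficient $|z^1|^{2(1-\b)}$ enter. Establishing the required decay and regularity of that kernel near the tip is the technical heart of the proof and the source of the $\b$-dependence of the constant $C$.
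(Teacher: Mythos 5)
First, a point of reference: the paper contains no proof of this lemma at all --- it is quoted, with attribution, as Theorem 4.1 of Chen--Wang \cite{chenwang} and used as a black box. So your attempt can only be measured against the literature, not against the text. Your blueprint (anisotropic scaling and the homogeneous-space structure of the flat cone, an $L^2$ identity coming from flatness, then Calder\'on--Zygmund theory with kernels obtained by Fourier--Bessel expansion of the Green's function) is the standard route for model-cone estimates and is consistent in spirit with the way Donaldson and Chen--Wang analyse $\om_{cone}$. But as written it is a programme rather than a proof: you yourself defer the verification of the H\"ormander kernel condition uniformly up to $\{z^1=0\}$, and that verification is the entire analytic content of the theorem --- everything before it (scaling, doubling, the $L^2$ identity modulo boundary terms) is soft.

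Beyond the deferral, one step is genuinely wrong rather than merely incomplete: the final localisation. The cutoff commutators involve $\nabla u$ and $u$, and they cannot be ``absorbed into $\Vert \tri_{\om_{cone}}u\Vert_{L^p(\mathtt{B}_2)}$'': an interior estimate with no lower-order term on the right fails for functions that are not compactly supported. Indeed, take $u=\mathrm{Re}\bigl((z^2)^2\bigr)$; then $\tri_{\om_{cone}}u\equiv 0$, while the third sum in \eqref{wapbseminormloc} is positive, so the left-hand side of the claimed inequality is positive and the right-hand side vanishes. What Calder\'on--Zygmund theory naturally yields is the estimate for $u$ compactly supported in the chart, and that is exactly how the lemma is used in the paper: in the proof of Proposition \ref{lppureestimate} it is applied to $\rho_i u$, with the commutators $2(\p\rho_i,\p u)_{\om}+u\,\tri_\om\rho_i$ kept on the right and ultimately controlled by $\Vert u\Vert_{W^{1,p,\b}(\om)}$ --- which is why that proposition carries a lower-order term that the lemma, as quoted, does not. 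So you should either prove the estimate for compactly supported $u$ and let the localisation (with its unavoidable lower-order terms) be done by the user, or add $\Vert u\Vert_{W^{1,p,\b}(\mathtt{B}_2;\om_{cone})}$ to the right-hand side. Two smaller gaps: your $L^2$ identity is asserted for all pure second derivatives, including $\nabla_1\nabla_1 u$, precisely the component that the space $W^{2,p,\b}_{loc}$ deliberately omits; you only need the components appearing in \eqref{wapbseminormloc}, and the vanishing of boundary terms at the tip needs a genuine argument (a logarithmic-type cutoff adapted to the codimension-two singular set together with the local $W^{2,2}$ hypothesis), not just the tube-volume decay you invoke. Also, weak $(1,1)$ interpolated with $L^2$ only covers $1<p\le 2$; the range $2\le p<\infty$ asserted in the lemma requires a duality argument, hence H\"ormander regularity of the kernel in the second variable as well.
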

In order to extend the local definition to the global manifold, we need the following lemmas.
We could see that if we restrict the semi-norm $W^{2,p,\b}$ defined by \eqref{wapbseminorm} over $U$, it is controlled by patching up these local $W_{loc}^{2,p,\b}$ semi-norms on coverings.

\begin{lem}\label{lppure}Let $\om=\om_D+i\p\bar\p\vphi$ be a K\"ahler cone metric with bounded Christoffel symbols of the associated connection.
	There exists a constant $C>0$ such that for all function $u\in W_{loc}^{2,p,\b}(\mathtt{B}_1;\om_{cone})$, it holds
	\begin{align*}
	[ u ]_{W^{2,p,\b}(\mathtt{B}_1;\om)}\leq C\cdot [ u]_{W_{loc}^{2,p,\b}(\mathtt{B}_1;\om_{cone})}.
	\end{align*}
\end{lem}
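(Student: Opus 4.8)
The goal is to compare two semi-norms on a ball $\mathtt{B}_1$ inside a cone chart: the global-type semi-norm $[u]_{W^{2,p,\b}(\mathtt{B}_1;\om)}$ built from the covariant second derivatives with respect to the genuine K\"ahler cone metric $\om=\om_D+i\p\bar\p\vphi$ (as in \eqref{wapbseminorm} and \eqref{nablanabla}), and the local Chen--Wang semi-norm $[u]_{W_{loc}^{2,p,\b}(\mathtt{B}_1;\om_{cone})}$ built from weighted \emph{ordinary} partial derivatives with respect to the flat cone metric $\om_{cone}$, as in \eqref{wapbseminormloc}.

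\begin{proof}[Proof plan]
The plan is to expand each covariant derivative appearing in \eqref{wapbseminorm} via \eqref{nablanabla}, namely $\nabla_a\nabla_b u=\p_a\p_b u-\sum_c\Gamma_{ab}^c(\om)\p_c u$ for the pure terms and $\p_a\p_{\bar b}u$ for the mixed terms, and to bound each resulting piece by the three groups of weighted partials in \eqref{wapbseminormloc} plus the $W^{1,p,\b}$ gradient term, which is already controlled since $u\in W_{loc}^{2,p,\b}(\mathtt{B}_1;\om_{cone})\subset W^{1,p,\b}(\mathtt{B}_1;\om_{cone})$. First I would note that the two volume measures $\om^n$ and $\om_{cone}^n$ are mutually bounded by the quasi-isometry hypothesis, so all $L^p(\om)$ and $L^p(\om_{cone})$ norms over $\mathtt{B}_1$ are comparable up to a uniform constant; this lets me pass freely between the two reference metrics inside the estimates.

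The bulk of the work is a case-by-case bookkeeping exercise keyed to which indices equal $1$. For the mixed second derivatives $\p_a\p_{\bar b}u$ I would split into $(a,b)=(i,\bar j)$ with $i,j\geq 2$, the two half-weighted cases $(1,\bar j)$ and $(i,\bar 1)$, and the doubly weighted case $(1,\bar 1)$; the weights $|z^1|^{1-\b}$ and $|z^1|^{2(1-\b)}$ are exactly those appearing in \eqref{wapbseminormloc}, and since the $\om$-norm of a $(1,1)$-tensor weighs components precisely by these powers (compare \eqref{2ndderivativescone}), the mixed part of $[u]_{W^{2,p,\b}(\mathtt{B}_1;\om)}$ matches the first and second lines of \eqref{wapbseminormloc} directly. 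For the pure terms $\p_j\p_k u$ ($j,k\geq 2$) and $\p_1\p_j u$ ($j\geq 2$) I would use \eqref{nablanabla}: the leading partials $\frac{\p^2 u}{\p z^j\p z^k}$ and $\frac{\p^2 u}{\p z^1\p z^j}$ convert (rewriting holomorphic derivatives in the real coordinates $z^i=x^i+\sqrt{-1}x^{n+i}$) into the unweighted and half-weighted terms of \eqref{wapbseminormloc}, and the Christoffel correction $\sum_c\Gamma_{ab}^c\p_c u$ is handled using the boundedness hypothesis on the connection of $\om$.

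The main obstacle, and the place requiring care, is controlling the Christoffel correction terms: by the bounded-connection assumption the symbols $\Gamma^c_{ab}(\om)$ individually blow up like negative powers of $|z^1|$ (for instance $|z^1|^{\b-1}\Gamma^1_{jk}$ and $|z^1|^{-1}$-type behavior of $\Gamma^1_{11}$ are only bounded \emph{after} multiplication by the stated weights, cf. the list preceding Definition \ref{strongSobolevspace} and Corollary \ref{cor4ab}). The key point to verify is that in each product $\Gamma^c_{ab}\p_c u$ the singular weight on $\Gamma$ is compensated so that what remains is $|z^1|^{(\b-1)}\cdot(\text{first derivative})$ or a bounded multiple of a first derivative, and that $|z^1|^{\b-1}|\p_1 u|$ is exactly the $\om$-size of $\nabla u$ in the $z^1$-direction, hence controlled by the $W^{1,p,\b}(\om)\sim W^{1,p,\b}(\om_{cone})$ gradient term. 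Once every correction term is absorbed either into the first-derivative norm or into one of the three weighted second-derivative groups of \eqref{wapbseminormloc}, summing the finitely many cases and integrating in $L^p$ over $\mathtt{B}_1$ yields the claimed inequality with a constant $C$ depending only on $n,p,\b$ and the uniform bounds on the Christoffel symbols and the quasi-isometry constant.
\end{proof}
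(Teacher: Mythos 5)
Your proposal is correct and takes essentially the same route as the paper's proof: a term-by-term comparison in the cone chart using the quasi-isometry of $\om$ and $\om_{cone}$, rewriting the pure covariant derivatives via \eqref{nablanabla} as weighted partials from \eqref{wapbseminormloc} plus Christoffel corrections, which the bounded-connection hypothesis lets you absorb into the gradient term. (One exponent typo to fix: the compensated correction terms and the $z^1$-component of $|\nabla u|_\om$ carry the weight $|z^1|^{1-\b}|\p_1 u|$, not $|z^1|^{\b-1}|\p_1 u|$, since $g^{1\bar 1}\sim |z^1|^{2-2\b}$; with that sign corrected your absorption step is exactly the paper's.)
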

\begin{proof}
	Recall the definition
	\begin{align*}
	[ u ]_{W^{2,p,\b}(\mathtt{B}_1;\om)}=&\sum_{1\leq a,b\leq n}\Vert \p_a\p_{\bar b}u\Vert_{L^p(\mathtt{B}_1;\om)}
	+\sum_{j=2}^n\Vert \p_1\p_j u\Vert_{L^p(\mathtt{B}_1;\om)}\\
	&+\sum_{2\leq j,k \leq n}\Vert \p_j\p_k u\Vert_{L^p(\mathtt{B}_1;\om)}.
	\end{align*}
	In the cone charts, $\om$ is equivalent to $\om_{cone}$. We then examine term by term. $\p_1\p_1 u$ is already in \eqref{wapbseminormloc}. Then $$\p_1\p_{\bar j}u=\frac{1}{2}(\frac{\p^2 u}{\p z^1\p x^j}-i \frac{\p^2 u}{\p z^1\p x^{n+j}}),$$ $\p_k\p_{\bar 1}$ and $\p_k\p_{\bar j}$ are also $L^p(\om_{cone})$.
	Meanwhile, the second term $$\p_1 \p_j  u=\frac{1}{2}(\frac{\p^2 u}{\p z^1\p x^j}-i \frac{\p^2 u}{\p z^1\p x^{n+j}})-\sum_{c=1}^n \Gamma_{1j}^c(\om) \frac{\p u}{\p z^c}.$$ From  assumption, the Christoffel symbols $\Gamma_{1j}^1(\om)$ and $|z^1|^{1-\b}\Gamma_{1j}^c(\om)$ for $2\leq c\leq n$ are all bounded. Therefore $\p_1 \p_j  u$ is $L^p(\om_{cone})$ and so is the third term $\p_j \p_k  u$.
\end{proof}
Then we consider the $W^{2,p,\b}(\om)$ solution of the linear equation on $M$,
\begin{align}\label{linerequation} 
\tri_\om u= f,
\end{align}
where $f\in L^p(\om)$.
\begin{prop}\label{lppureestimate}
	Let $\om$ be a K\"ahler cone metric with bounded Christoffel symbols of the associated connection. Suppose that $u\in W^{2,p,\b}(\om)$ for $2\leq p<\infty$ is a classical solution of  Equation \eqref{linerequation} for $f\in L^p(\om)$.
	Then there exists a constant $C$ depending on $n,p,\b,M,\om$ such that
	\begin{align*}
	\Vert u\Vert_{W^{2,p,\b}(\om)}\leq C(\Vert f\Vert_{L^p(\om)}+\Vert u\Vert_{W^{1,p,\b}(\om)}).
	\end{align*}
\end{prop}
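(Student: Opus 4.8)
The plan is to prove the global $W^{2,p,\b}(\om)$ estimate by localizing with a partition of unity and reducing to two kinds of local estimates: interior estimates on charts disjoint from $D$, and estimates on cone charts meeting $D$, where the local result of Chen--Wang (\lemref{lppurelocal}) together with \lemref{lppure} is available. First I would cover $X$ by finitely many charts $\{U_i\}$ with a subordinate partition of unity $\{\rho_i\}$, so that it suffices to bound $[\rho_i u]_{W^{2,p,\b}(\om)}$ on each $U_i$, the commutator terms coming from derivatives of $\rho_i$ being first order and hence absorbable into $\Vert u\Vert_{W^{1,p,\b}(\om)}$. On a chart $U_i$ disjoint from $D$ the operator $\triangle_\om=g^{i\bar j}\p_i\p_{\bar j}$ is uniformly elliptic with bounded coefficients, so the classical Calder\'on--Zygmund interior $L^p$ estimate gives $\Vert u\Vert_{W^{2,p}(U_i')}\le C(\Vert f\Vert_{L^p}+\Vert u\Vert_{W^{1,p}})$ on an interior subdomain $U_i'$.

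Near the divisor the main device is the pointwise identity on $M$,
\[
\triangle_{\om_{cone}}u=\triangle_\om u+\bigl(g_{cone}^{i\bar j}-g^{i\bar j}\bigr)\p_i\p_{\bar j}u=f+\bigl(g_{cone}^{i\bar j}-g^{i\bar j}\bigr)\p_i\p_{\bar j}u .
\]
Since $u\in W^{2,p,\b}(\om)$, the right-hand side already lies in $L^p(\mathtt{B}_2;\om_{cone})$, so \lemref{lppurelocal} applies and yields $[u]_{W_{loc}^{2,p,\b}(\mathtt{B}_1;\om_{cone})}\le C\Vert\triangle_{\om_{cone}}u\Vert_{L^p(\mathtt{B}_2;\om_{cone})}$; feeding this into \lemref{lppure} converts the left-hand side into the genuine global seminorm $[u]_{W^{2,p,\b}(\mathtt{B}_1;\om)}$. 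It then remains to dominate the perturbation term $\Vert(g_{cone}^{i\bar j}-g^{i\bar j})\p_i\p_{\bar j}u\Vert_{L^p(\mathtt{B}_2;\om_{cone})}$ by $\Vert f\Vert_{L^p}$ plus lower-order contributions.

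The crux, and the step I expect to be the main obstacle, is absorbing this second-order perturbation into the left-hand side. Because $\om$ is quasi-isometric to $\om_{cone}$ and, by \corref{cor3ab}, has bounded Christoffel symbols and controlled metric coefficients, the inverse metrics $g^{i\bar j}$ and $g_{cone}^{i\bar j}$ share the same leading singular behaviour along $D$. Expressing the difference in the quasi-isometric $w$-coordinate and weighting it exactly as the derivatives are weighted in \eqref{wapbseminormloc}, one obtains bounded coefficients whose oscillation on a cone ball tends to zero as the ball shrinks. Thus, covering the cone chart by balls $\mathtt{B}_r$ of small cone radius and freezing coefficients at their centres (where the frozen operator is, up to a linear change of the transverse variables, the flat cone Laplacian), the weighted sup-norm of $(g_{cone}^{i\bar j}-g^{i\bar j})$ can be made smaller than any prescribed $\eps$, giving
$\Vert(g_{cone}^{i\bar j}-g^{i\bar j})\p_i\p_{\bar j}u\Vert_{L^p(\mathtt{B}_r;\om_{cone})}\le\eps\,[u]_{W_{loc}^{2,p,\b}(\mathtt{B}_r;\om_{cone})}+C_\eps\Vert u\Vert_{W^{1,p,\b}(\om)}$.
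Choosing $\eps$ small and absorbing the first term on the right yields the desired local estimate on each small ball; this is precisely where the structural hypotheses (quasi-isometry together with the bounded-geometry conclusions of \corref{cor3ab}) are indispensable.

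Finally I would sum the finitely many local estimates coming from the covering of each cone chart and of each interior chart, using the equivalence of $\om$ and $\om_{cone}$ on cone charts to pass freely between the two measures, and collect every genuinely first-order contribution, namely the Christoffel terms arising from \eqref{nablanabla} and the cut-off commutators, into $\Vert u\Vert_{W^{1,p,\b}(\om)}$ (here boundedness of the Christoffel symbols is what keeps these terms controlled). Reassembling gives the global bound $\Vert u\Vert_{W^{2,p,\b}(\om)}\le C(\Vert f\Vert_{L^p(\om)}+\Vert u\Vert_{W^{1,p,\b}(\om)})$ with $C$ depending only on $n,p,\b,M,\om$, as claimed.
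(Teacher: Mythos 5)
Your proposal is correct and follows the same global strategy as the paper's proof: a finite cover with a partition of unity $\{\rho_i\}$, classical interior $L^p$ estimates on charts away from $D$, and on cone charts the Chen--Wang estimate (\lemref{lppurelocal}) combined with \lemref{lppure} to pass back to the $W^{2,p,\b}(\om)$ seminorm, with all cut-off commutators and Christoffel contributions swept into $\Vert u\Vert_{W^{1,p,\b}(\om)}$. The genuine difference lies in how the two arguments treat the second-order perturbation: the paper writes $\tri_{\om_{cone}}(\rho_i u)=(\tri_{\om_{cone}}-\tri_{\om})(\rho_i u)+\tri_\om\rho_i\, u+\rho_i\tri_\om u+2(\p\rho_i,\p u)_{\om}$, applies \lemref{lppurelocal} to this right-hand side, and then bounds everything by $\Vert f\Vert_{L^p(\om)}+\Vert u\Vert_{W^{1,p,\b}(\om)}$ in one stroke --- thereby leaving implicit exactly the term $(g_{cone}^{a\bar b}-g^{a\bar b})\p_a\p_{\bar b}(\rho_i u)$ that you identify as the crux, and which is of the same order as the quantity being estimated. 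Your coefficient-freezing argument (small weighted oscillation on small cone balls, coming from the bounded-Christoffel hypothesis via Lipschitz control in the $w$-coordinates, followed by absorption of $\eps[u]_{W^{2,p,\b}_{loc}}$) is the standard way to justify that step, so your write-up is, if anything, more complete than the paper's at the key point. Two minor remarks: you cite \corref{cor3ab}, which presupposes a $C^{3,\b}$ potential, whereas the proposition assumes only bounded Christoffel symbols --- but that weaker hypothesis is precisely what your argument uses, so the citation is superfluous rather than harmful; and when freezing coefficients at a point of $D$ you should be aware that quasi-isometry only forces the weighted mixed components $|z^1|^{1-\b}g_{1\bar j}$ to be bounded, not to vanish on $D$, so reducing the frozen operator to the flat cone Laplacian requires a linear normalization that handles these mixed terms as well, not merely a change of the transverse variables.
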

\begin{proof}
	We let the manifold $B$ be covered by a finite number of coordinates charts $\{U_i,\psi;1\leq i\leq N\}$. We let $\rho_i$ be the smooth partition of unity, subordinate to $\{U_i\}$ and  supported on $\mathtt{B}_1\subset \mathtt{B}_{3}\subset U_i$ for each $i$. We compute that
	\begin{align*}
	\tri_\om (\rho_i u)
	=\tri_\om \rho_i u
	+ \rho_i \tri_\om u
	+2(\p \rho_i,\p u)_{\om}.
	\end{align*}
	Thus we apply \lemref{lppurelocal} (over $\mathtt{B}_{1}$ and $\mathtt{B}_{2}$) to 
	\begin{align*}
	\tri_{\om_{cone}} (\rho_i u)=(\tri_{\om_{cone}} (\rho_i u)-\tri_{\om} (\rho_i u))+\tri_\om \rho_i u
	+ \rho_i \tri_\om u
	+2(\p \rho_i,\p u)_{\om}:=f
	\end{align*} to obtain,
	\begin{align*}
	[ \textstyle\sum_{i}\rho_i u]_{W_{loc}^{2,p,\b}(\mathtt{B}_1;\om_{cone})}
	&\leq C\Vert f \Vert_{L^{p}(\mathtt{B}_2;\om_{cone})}
	\end{align*}
	and put together all estimates over each $U_i$ with Lemma \ref{lppure},
	\begin{align*}
	\Vert u\Vert_{W^{2,p,\b}(\om)}&=\left\Vert \textstyle\sum_{i}\rho_i u\right\Vert_{W^{2,p,\b}(\om)}\\
	&\leq C(\left\Vert \textstyle\sum_{i}\rho_i u\right\Vert_{W^{1,p,\b}(\om)}+[\textstyle\sum_{i}\rho_i u]_{W_{loc}^{2,p,\b}(\om_{cone})})\\
	&\leq C(\left\Vert \textstyle\sum_{i}\rho_i u\right\Vert_{W^{1,p,\b}(\om)}
	+\Vert f\Vert_{L^{p}(\om_{cone})})\\
	&\leq C(\Vert u\Vert_{W^{1,p,\b}(\om)}+\Vert  f\Vert_{L^{p}(\om)}).
	\end{align*}
	Then the conclusion follows.

\end{proof}


\subsection{Weak solutions to bi-Laplacian equations: existence}

For a constant $K>0$, we are looking for weak solutions to the following $K$-bi-Laplacian equation 
in $H_{0}^{2,\b}(\om)$,
\begin{align}\label{KbiLaplacian equation}
\tri_{\om}^2 u -K\tri_\om u=f.
\end{align}
The positive constant $K$ will be determined later in this section.

We recall the definition of the semi-norm $$ [ u ]_{H^{2,\b}(\om)}=\sum_{1\leq a,b\leq n}\Vert \p_a\p_{\bar b}u\Vert_{L^2(\om)}+\sum_{2\leq j\leq n}\Vert \p_1\p_j u\Vert_{L^2(\om)}+\sum_{2\leq j,k \leq n}\Vert \p_j\p_k u\Vert_{L^2(\om)},$$
which does not involve the term $\Vert \p_1\p_1 u\Vert_{L^2(\om)}$.
We introduce the following bilinear form.
\begin{defn}Define the bilinear form on $H_{0}^{2,\b}(\om)$ given by
	\begin{align*}
	\mathcal{B}^K(u,\eta):=\int_M[ \tri_\om u \tri_\om\eta+K g^{i\bar j}u_i\eta_{\bar j}]\om^n
	\end{align*}
	for all $u,\eta\in H_{0}^{2,\b}(\om)$.
\end{defn}
Then we introduce the weak solution to \eqref{KbiLaplacian equation}
, whose leading coefficients are conical.
\begin{defn}[Weak solution]
	We say that $u$ is the $H_{0}^{2,\b}(\om)$-weak solution of the $K$-bi-Laplacian equation \eqref{KbiLaplacian equation}, if it satisfies the following identity for all $\eta\in H_{0}^{2,\b}(\om)$,
	\begin{align*}
	\mathcal{B}^K(u,\eta)=\int_M f\eta\om^n.
	\end{align*}
\end{defn}

\begin{lem}\label{coercive}
	The bilinear form $\mathcal{B}^K$ is bounded and coercive for $K>C_P+1$. Here $C_P$ is the Poincar\'e constant of $\om$.
\end{lem}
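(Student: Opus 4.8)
The plan is to verify separately the two defining properties of a bounded coercive bilinear form: boundedness will be a direct Cauchy--Schwarz estimate, while coercivity will combine the elliptic $L^p$-estimate of Proposition \ref{lppureestimate} (at $p=2$) with the Poincar\'e inequality of Lemma \ref{Poincareinequality}.

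For boundedness I would estimate the two summands of $\mathcal{B}^K(u,\eta)$ separately. Since $\tri_\om u=g^{i\bar j}u_{i\bar j}$ and the coefficients $g^{i\bar j}$ of a cone metric with bounded Christoffel symbols (Corollary \ref{cor4ab}) are bounded, one has pointwise $|\tri_\om u|\le C\sum_{1\le a,b\le n}|\p_a\p_{\bar b}u|$, and hence $\Vert\tri_\om u\Vert_{L^2(\om)}\le C[u]_{H^{2,\b}(\om)}$. Cauchy--Schwarz then gives $\int_M \tri_\om u\,\tri_\om\eta\,\om^n\le \Vert\tri_\om u\Vert_{L^2(\om)}\Vert\tri_\om\eta\Vert_{L^2(\om)}\le C[u]_{H^{2,\b}(\om)}[\eta]_{H^{2,\b}(\om)}$, while the gradient term is bounded by $K\int_M g^{i\bar j}u_i\eta_{\bar j}\,\om^n\le K\Vert\nabla u\Vert_{L^2(\om)}\Vert\nabla\eta\Vert_{L^2(\om)}$. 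Summing yields $|\mathcal{B}^K(u,\eta)|\le C\Vert u\Vert_{H^{2,\b}(\om)}\Vert\eta\Vert_{H^{2,\b}(\om)}$.

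For coercivity, observe first that $\mathcal{B}^K(u,u)=\Vert\tri_\om u\Vert_{L^2(\om)}^2+K\Vert\nabla u\Vert_{L^2(\om)}^2$, so the task is to bound the full norm $\Vert u\Vert_{H^{2,\b}(\om)}^2=\Vert u\Vert_{W^{1,2,\b}(\om)}^2+[u]_{H^{2,\b}(\om)}^2$ from above by this quantity. Since $u\in H_0^{2,\b}(\om)$ has zero average, Lemma \ref{Poincareinequality} gives $\Vert u\Vert_{L^2(\om)}^2\le C_P^2\Vert\nabla u\Vert_{L^2(\om)}^2$, hence $\Vert u\Vert_{W^{1,2,\b}(\om)}^2\le (1+C_P^2)\Vert\nabla u\Vert_{L^2(\om)}^2$. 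Next, applying Proposition \ref{lppureestimate} with $p=2$ to $f:=\tri_\om u\in L^2(\om)$ controls the second-order part: $\Vert u\Vert_{H^{2,\b}(\om)}^2=\Vert u\Vert_{W^{2,2,\b}(\om)}^2\le C(\Vert\tri_\om u\Vert_{L^2(\om)}^2+\Vert u\Vert_{W^{1,2,\b}(\om)}^2)$. Combining and absorbing the $L^2$ term into the gradient via Poincar\'e gives $\Vert u\Vert_{H^{2,\b}(\om)}^2\le C(1+C_P^2)(\Vert\tri_\om u\Vert_{L^2(\om)}^2+\Vert\nabla u\Vert_{L^2(\om)}^2)$. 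Since $\Vert\tri_\om u\Vert_{L^2(\om)}^2+\Vert\nabla u\Vert_{L^2(\om)}^2\le\mathcal{B}^K(u,u)$ once $K$ is large, we conclude $\mathcal{B}^K(u,u)\ge c\Vert u\Vert_{H^{2,\b}(\om)}^2$ for some $c>0$; the stated threshold $K>C_P+1$ is a convenient sufficient choice guaranteeing positivity after the Poincar\'e absorption.

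The routine part is boundedness. The step I expect to be the genuine obstacle is the second-order control $[u]_{H^{2,\b}(\om)}\le C(\Vert\tri_\om u\Vert_{L^2(\om)}+\Vert u\Vert_{W^{1,2,\b}(\om)})$ across the divisor, that is, legitimately invoking Proposition \ref{lppureestimate} for an arbitrary weak element of $H_0^{2,\b}(\om)$ rather than a classical solution. This requires a density argument (smooth functions are dense in $W^{2,2,\b}(\om)$ by Definition \ref{Sobolevspace}) together with the fact that $\tri_\om u$ is a well-defined element of $L^2(\om)$ for every $u\in W^{2,2,\b}(\om)$, so that the a priori estimate passes to the limit; all the delicate conic integration-by-parts issues are already encapsulated in Proposition \ref{lppureestimate} and Lemma \ref{Poincareinequality}.
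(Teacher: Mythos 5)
Your proposal is correct and takes essentially the same route as the paper: boundedness by Cauchy--Schwarz, and coercivity by combining the partial $L^2$ estimate of Proposition \ref{lppureestimate} (at $p=2$) with the Poincar\'e inequality of Lemma \ref{Poincareinequality} for zero-average functions. The only difference is bookkeeping: you absorb the $\Vert u\Vert_{L^2(\om)}^2$ term into the gradient via Poincar\'e before invoking the elliptic estimate (so that $K\geq 1$ already suffices in your arrangement), whereas the paper first inserts $\mathcal{B}^K(u,u)$ and then kills the leftover $(1-K+C_P)\Vert\nabla u\Vert_{L^2(\om)}^2$ using the threshold $K>C_P+1$; both arrangements prove the stated lemma.
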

\begin{proof}
	For all $u,\eta\in H_{0}^{2,\b}$, the boundedness follows from using Cauchy-Schwarz inequality,
	\begin{align*}
	\mathcal{B}^K(u,\eta)
	&\leq \Vert u\Vert_{H^{2,\b}(\om)}\Vert \eta\Vert_{H^{2,\b}(\om)}
	+K \Vert u\Vert_{H^{1,\b}(\om)}\Vert \eta\Vert_{H^{1,\b}(\om)}.
	\end{align*}
	Then we prove coercivity of the bilinear form. We first use the partial $L^2$ estimate of the cone metrics for the standard linear operator $\tri_\om $ (Proposition \ref{lppureestimate}), i.e. that there exists a constant $C>0$ such that,
	\begin{align*}
	\Vert u\Vert ^2_{H^{2,\b}(\om)}
	\leq C \int_M(|\tri_\om u|^2+|\nabla u|_\om^2+|u|^2)\om^n.
	\end{align*}
	Then, using the definition of the bilinear form, the R.H.S above is
	\begin{align*}
	&C\left( \mathcal{B}^K(u,u)+\int_M[(1-K)|\nabla u|_\om^2+|u|^2]\om^n\right).
	\end{align*}
	We use Poincar\'e inequality (\lemref{Poincareinequality}) and denoting the Poincar\'e constant by $C_P$,
	\begin{align*}
	\Vert u\Vert ^2_{H^{2,\b}(\om)}\leq  C \cdot (\mathcal{B}^K(u,u)+(1-K+C_P)\Vert \nabla u\Vert_{L^2(\om)}^2).
	\end{align*}
	Thus choosing $K>C_P+1$, we have
	\begin{align*}
	\mathcal{B}^K(u,u)\geq \frac{1}{C}\Vert u\Vert ^2_{H^{2,\b}(\om)}.
	\end{align*}
\end{proof}

The next proposition proves the existence of weak solution.
\begin{prop}\label{weaksolutionexistence}
	Let $\om$ be a K\"ahler cone metric with bounded Christoffel symbols of the connection. Suppose that $K> C_P+1$ and $f$ is in the dual space $ (H_{0}^{2,\b}(\om))^\ast$.
	Then the $K$-bi-Laplacian equation \eqref{KbiLaplacian equation} has a unique weak solution $u\in H_0^{2,\b}(\om)$.
\end{prop}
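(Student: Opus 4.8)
The plan is to recognize the weak formulation of \eqref{KbiLaplacian equation} as an abstract variational problem and invoke the Lax--Milgram theorem. First I would observe that $H_0^{2,\b}(\om)$ is a Hilbert space: it is the kernel of the bounded linear functional $u\mapsto\int_M u\,\om^n$ inside $H^{2,\b}(\om)$, and the latter is complete for the norm built from the $H^{1,\b}$ norm and the strong second-order semi-norm (here the bounded-Christoffel-symbols hypothesis is what makes that global norm well defined). Thus $H_0^{2,\b}(\om)$ is a closed subspace and hence a Hilbert space in its own right, and this is the ambient space on which the entire argument runs.

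Next I would note that, since $f\in(H_0^{2,\b}(\om))^\ast$ by hypothesis, the map $\eta\mapsto\langle f,\eta\rangle$ (which reduces to $\int_M f\eta\,\om^n$ when $f\in L^2$) is by definition a bounded linear functional on $H_0^{2,\b}(\om)$. The bilinear form $\mathcal{B}^K$ is, by \lemref{coercive}, both bounded and coercive exactly under the standing assumption $K>C_P+1$: boundedness gives an estimate of the shape $\mathcal{B}^K(u,\eta)\le C\,\Vert u\Vert_{H^{2,\b}(\om)}\,\Vert\eta\Vert_{H^{2,\b}(\om)}$, while coercivity gives $\mathcal{B}^K(u,u)\ge \frac{1}{C}\Vert u\Vert^2_{H^{2,\b}(\om)}$. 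These are precisely the three hypotheses of Lax--Milgram (a Hilbert space, a bounded coercive form, a bounded functional), so there is a unique $u\in H_0^{2,\b}(\om)$ with $\mathcal{B}^K(u,\eta)=\langle f,\eta\rangle$ for every $\eta\in H_0^{2,\b}(\om)$, which is exactly the asserted weak solution.

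The points that require genuine care, as opposed to being purely formal, all sit upstream of Lax--Milgram and have already been established: the completeness of $H_0^{2,\b}(\om)$, and the coercivity estimate of \lemref{coercive}, whose proof leans on the partial $L^2$ estimate (Proposition \ref{lppureestimate}) together with the Poincar\'e inequality (\lemref{Poincareinequality}) to recover control of the full $H^{2,\b}$ norm from $\mathcal{B}^K(u,u)$ and the $L^2$ norm. Granting these, the proposition is a direct and essentially immediate consequence of the abstract theorem. The one thing I would explicitly verify is that the normalisation $\int_M u\,\om^n=0$ is respected, i.e. that the produced solution genuinely lives in $H_0^{2,\b}(\om)$ rather than merely in $H^{2,\b}(\om)$; this is automatic, since Lax--Milgram returns the solution inside the chosen Hilbert space, so I expect no real obstacle beyond correctly citing the already-proven boundedness and coercivity.
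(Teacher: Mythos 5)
Your proposal is correct and follows essentially the same route as the paper, whose proof likewise consists of citing \lemref{coercive} for boundedness and coercivity of $\mathcal{B}^K$ and then invoking the Lax--Milgram theorem on $H_0^{2,\b}(\om)$. Your additional verifications (completeness of the space, boundedness of the functional, membership of the solution in the normalised subspace) are exactly the routine checks the paper leaves implicit.
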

\begin{proof}
	According to \lemref{coercive}, the bilinear form $\mathcal{B}^K$ is bounded and coercive. Then the Lax-Milgram theorem tells us that there is a unique weak solution $v\in H_{0}^{2,\b}$ to equation \eqref{KbiLaplacian equation}. 
	
\end{proof}

	We could define another 2nd Sobolev space $H^{2,\b}_{\mathbf{w} }$ with semi-norm $$[ u ]_{H^{2,\b}_{\mathbf{w} }(\om)}=\sum_{1\leq a,b\leq n}\Vert \p_a\p_{\bar b}u\Vert_{L^2(\om)}$$ and norm $$|| u ||_{H^{2,\b}_{\mathbf{w} }(\om)}=|| u ||_{H^{1,\b}(\om)}+[ u ]_{H^{2,\b}_{\mathbf{w} }(\om)}.$$ Then following the same argument as above, we get a ``very weak''  solution, that lies in $H^{2,\b}_{\mathbf{w} ,0}$, the space of functions of $H^{2,\b}_{\mathbf{w} }$ with vanishing integral assuming that $\om$ is merely a K\"ahler cone metric.
\begin{prop}\label{weakersolution}
	Assume that $\om$ is a K\"ahler cone metric. Suppose that $K> C_P+1$ and $f$ is in the dual space $ (H_{\mathbf{w},0}^{2,\b}(\om))^\ast$.
	Then the $K$-bi-Laplacian equation \eqref{KbiLaplacian equation} has a unique weak solution $u\in H_{\mathbf{w},0}^{2,\b}(\om)$.
\end{prop}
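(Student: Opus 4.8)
The plan is to mimic the proof of Proposition~\ref{weaksolutionexistence}, replacing the Sobolev space $H_0^{2,\b}(\om)$ by the weaker space $H_{\mathbf{w},0}^{2,\b}(\om)$ and applying the Lax--Milgram theorem to the same bilinear form $\mathcal{B}^K$. The key observation is that the only place where the \emph{strong} (full) second-order control was genuinely needed was in the coercivity argument of \lemref{coercive}, where the partial $L^2$ estimate (Proposition~\ref{lppureestimate}) was invoked to bound $\Vert u\Vert_{H^{2,\b}(\om)}$ by $\mathcal{B}^K(u,u)$ plus lower-order terms, and that estimate in turn required the Christoffel symbols of $\om$ to be bounded. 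Since here $\om$ is merely a K\"ahler cone metric with no control on the connection, I cannot reproduce that step, so I must instead establish coercivity directly in the weaker norm $\Vert\cdot\Vert_{H^{2,\b}_{\mathbf{w}}(\om)}$.

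First I would check that $\mathcal{B}^K$ is well defined and bounded on $H^{2,\b}_{\mathbf{w},0}(\om)$: the term $\int_M \tri_\om u\,\tri_\om\eta\,\om^n$ is controlled by the $H^{2,\b}_{\mathbf{w}}$ semi-norm since $\tri_\om u=g^{i\bar j}u_{i\bar j}$ involves only the mixed derivatives $\p_a\p_{\bar b}u$ that appear in $[u]_{H^{2,\b}_{\mathbf{w}}(\om)}$, and the gradient term is controlled by the $H^{1,\b}$ part of the norm; boundedness then follows from Cauchy--Schwarz exactly as before. Next, for coercivity I would argue that
\begin{align*}
\Vert u\Vert^2_{H^{2,\b}_{\mathbf{w}}(\om)}\leq C\left(\int_M |\tri_\om u|^2\om^n+\Vert u\Vert^2_{H^{1,\b}(\om)}\right),
\end{align*}
where the crucial point is that the full mixed-derivative semi-norm $\sum_{a,b}\Vert\p_a\p_{\bar b}u\Vert_{L^2(\om)}^2$ is controlled by $\Vert\tri_\om u\Vert_{L^2(\om)}^2$ up to lower-order terms. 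This is a standard integration-by-parts (Bochner--Kodaira) identity valid already on the regular part $M$ for cone metrics: one writes $\int_M|\p\bar\p u|_\om^2\om^n$ in terms of $\int_M|\tri_\om u|^2\om^n$ plus curvature and boundary contributions, and the boundary terms near $D$ vanish by the same distributional integration-by-parts mechanism (e.g. \cite[Theorem~1.14]{MR2746347}) used in the proof of Proposition~\ref{averagecsck}. Combining this with Poincar\'e (\lemref{Poincareinequality}) and choosing $K>C_P+1$ yields $\mathcal{B}^K(u,u)\geq \frac{1}{C}\Vert u\Vert^2_{H^{2,\b}_{\mathbf{w}}(\om)}$, and Lax--Milgram gives the unique weak solution.

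The main obstacle will be establishing the integration-by-parts identity controlling $\Vert\p\bar\p u\Vert_{L^2(\om)}$ by $\Vert\tri_\om u\Vert_{L^2(\om)}$ \emph{without} assuming bounded Christoffel symbols, since the naive Bochner computation produces terms involving the connection and curvature of $\om$ that are a priori uncontrolled near $D$. The resolution I would pursue is to perform the integration by parts first on $M_\epsilon:=X\setminus \mathtt{N}_\epsilon$ (the complement of a tubular neighborhood of $D$), where $\om$ is smooth and all manipulations are classical, obtaining the identity with explicit boundary integrals over $\p\mathtt{N}_\epsilon$, and then to show these boundary terms vanish as $\epsilon\to 0$ using the quasi-isometry of $\om$ to $\om_{cone}$ together with the fact that the $(n-1)$-dimensional volume of $\p\mathtt{N}_\epsilon$ measured in the cone metric shrinks; alternatively one can observe that for the coercivity estimate it suffices to control the mixed second derivatives in an integrated sense, which the Lax--Milgram framework only needs on the dense subspace of smooth functions where the identity is unambiguous. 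This is precisely the subtlety flagged in the \textbf{Remark} following Proposition~\ref{lpinterpolation}, namely that the weaker norm $W^{2,p,\b}$ cannot be handled by the same Kato-inequality route, so the argument must stay at the level of the mixed Laplacian and avoid invoking full second-order regularity.
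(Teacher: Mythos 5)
Your proposal is correct and takes essentially the same route as the paper, which disposes of this proposition by observing that the Lax--Milgram argument of Proposition~\ref{weaksolutionexistence} runs verbatim on $H_{\mathbf{w},0}^{2,\b}(\om)$: the weak semi-norm involves only the mixed derivatives $\p_a\p_{\bar b}u$, so neither bounded Christoffel symbols nor the partial $L^2$ estimate of Proposition~\ref{lppureestimate} are needed, and coercivity in the weak norm follows from integration by parts exactly as you describe. One simplification of your argument: your worry about curvature terms is unfounded, since for any K\"ahler metric the identity $\int_M|\p\bar\p u|^2_\om\,\om^n=\int_M|\tri_\om u|^2\,\om^n$ holds with \emph{no} curvature correction (the commutations $[\nabla_i,\nabla_q]=0$ and $[\nabla_{\bar i},\nabla_{\bar q}]=0$ used in the computation are automatic because the curvature of a K\"ahler metric is of type $(1,1)$), so the only genuine issue is the vanishing of the boundary terms near $D$, which your cutoff argument on $X\setminus \mathtt{N}_\epsilon$ together with density of smooth functions handles correctly.
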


\subsection{Weak solutions to bi-Laplacian equations: regularity}\label{Weak solutions of bi-Laplacian equations: regularity}

According to the existence theorem (Proposition \ref{weaksolutionexistence}), we now already have a weak solution $$u\in H_{0}^{2,\b}(\om). $$ We shall see we can obtain  that the weak solution is actually $C^{4,\a,\b}$ if additionally we impose more regularity on $f$.

\begin{prop}[Schauder estimate]\label{Schauder}
	With previous notations with $(\alpha,\beta)$ satisfying Condition \eqref{anglerestriction} and $f\in C^{,\a,\b}$, the weak solution $u\in H^{2,\b}_{0}$ to  equation \eqref{KbiLaplacian equation} is actually $C^{4,\a,\b}$. Moreover, there exists a constant $C$ such that
	\begin{align*}
	|\tri_\om u|_{C^{2,\a,\b}}\leq C(\Vert u\Vert_{H_{0}^{2,\b}(\om)}+|f|_{C^{,\a,\b}}).
	\end{align*}
\end{prop}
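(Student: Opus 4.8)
The plan is to reduce the fourth-order equation to a second-order one for $w:=\tri_\om u$ and then bootstrap regularity. First I would integrate by parts inside the bilinear form: for every $\eta\in H^{2,\b}_0(\om)$ the K\"ahler identity $\int_M g^{i\bar j}u_i\eta_{\bar j}\,\om^n=-\int_M(\tri_\om u)\,\eta\,\om^n$ lets me rewrite the weak formulation $\mathcal B^K(u,\eta)=\int_M f\eta\,\om^n$ as $\int_M \tri_\om u\,(\tri_\om\eta-K\eta)\,\om^n=\int_M f\eta\,\om^n$. Since $\tri_\om$ is self-adjoint, this says exactly that $w=\tri_\om u\in L^2(\om)$ is a distributional solution of the second-order equation $\tri_\om w-Kw=f$. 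The cone integration-by-parts producing no contribution along $D$ is justified exactly as in the proof of Proposition \ref{averagecsck}, using that $\om$ has bounded Christoffel symbols under Condition \eqref{anglerestriction} (Corollary \ref{cor4ab}, Lemma \ref{geometryg}).

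Next I would bootstrap $w$. The first gain $w\in L^2\Rightarrow w\in W^{2,2,\b}(\om)$ is the delicate point, because Proposition \ref{lppureestimate} is an \emph{a priori} estimate presupposing membership in $W^{2,p,\b}$. I would break the circularity by the construct-and-match device: the operator $\tri_\om-K$ is coercive for $K>0$, so $\tri_\om v-Kv=f$ admits a solution $v$ which one regularizes by $L^p$ theory; and since $\tri_\om h-Kh=0$ forces $h=0$ (test against $h$: $\int_M|\nabla h|_\om^2+K\int_M h^2=0$), uniqueness of distributional solutions gives $w=v$. Alternatively one regularizes $f$ and passes to the limit using the uniform $L^2$ bound. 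Once $w\in W^{2,2,\b}(\om)$, I iterate the embeddings of Lemmas \ref{w1pbSobolev imbedding theorem} and \ref{Sobolev imbedding theorem} together with Proposition \ref{lppureestimate}: writing $\tri_\om w=Kw+f$ with right-hand side successively in $L^{q_0}\subset L^{q_1}\subset\cdots$, each cycle raises the integrability exponent through the map $q\mapsto \tfrac{2nq}{2n-q}$, so after finitely many steps $w\in W^{2,p,\b}(\om)$ for some $p>2n$, whence $w$ is bounded and H\"older.

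With $w$ bounded and $Kw+f\in C^{,\a,\b}$, I would apply the second-order Schauder estimate of Proposition \ref{linearestimate} to $\tri_\om w-Kw=f$ and conclude $w=\tri_\om u\in C^{2,\a,\b}$ with $|\tri_\om u|_{C^{2,\a,\b}}\le C(\Vert w\Vert_{L^\infty}+|f|_{C^{,\a,\b}})$. Tracking the constants through the bootstrap and using $\Vert w\Vert_{L^2}\le \Vert u\Vert_{H^{2,\b}_0(\om)}$ then yields the asserted bound in terms of $\Vert u\Vert_{H^{2,\b}_0(\om)}$ and $|f|_{C^{,\a,\b}}$. Finally, from $\tri_\om u=w\in C^{2,\a,\b}$ I would invoke the higher-order regularity theory for the conic Laplacian from \cite{LZ2}, recalled in Propositions \ref{3rdderivativeD}--\ref{4thderivativeD} (whose hypotheses hold for $\om$ by Corollary \ref{cor4ab}), to gain two more weighted derivatives and obtain $u\in C^{4,\a,\b}$.

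I expect two steps to be the genuine obstacles. The first is the initial gain $L^2\to W^{2,2,\b}(\om)$, where the a-priori nature of the $L^p$ estimate must be circumvented with care in the conic setting rather than by a naive application. The second, and more essential, is the last passage $\tri_\om u\in C^{2,\a,\b}\Rightarrow u\in C^{4,\a,\b}$: this is a genuinely weighted statement in which the correction terms in the definition \eqref{4thderivativescone} of $C^{4,\a,\b}$ and the half-angle restriction \eqref{anglerestriction} are indispensable, so it is precisely here that the fine structure of the cone H\"older spaces, and not soft elliptic theory, carries the argument.
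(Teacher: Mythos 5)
Your proposal is correct and follows essentially the same route as the paper: rewrite the equation as $(\tri_\om-K)\tri_\om u=f$, apply the second-order conic Schauder estimate (Proposition \ref{linearestimate}) to conclude $\tri_\om u\in C^{2,\a,\b}$, and then invoke the weighted regularity result of Proposition 4.3 in \cite{LZ2}, which is exactly where the angle restriction \eqref{anglerestriction} enters, to pass from $\tri_\om u\in C^{2,\a,\b}$ to $u\in C^{4,\a,\b}$. The paper's own proof is precisely this three-step argument stated tersely; your additional construct-and-match uniqueness device and the $L^2$-to-$L^\infty$ bootstrap merely make explicit the passage from the $H^{2,\b}_{0}$ weak solution to a function to which the Schauder estimate applies, a point the paper leaves implicit.
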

\begin{proof}
	We rewrite \eqref{KbiLaplacian equation} as 
	\begin{align}\label{triv}
	(\tri_\om-K) \tri_\om u= f.
	\end{align}
	According to the Schauder estimate for second order equation (see Section \ref{Second order elliptic equations with conical singularities}), we have proved that $\tri_\om u$ is in $C^{2,\a,\b}$ from \eqref{triv}. 
	Then we could use the angle restriction to conclude that $u\in C^{4,\a,\b}$, according to Proposition 4.3 in \cite{LZ2}.

\end{proof}

	Furthermore, we actually could weaken the condition on $\om$ using Proposition \ref{weakersolution}, however we do not use it in this paper.
\begin{prop}[Schauder estimate]\label{Schauderweakomega}
	Let $\om=\om_D+i\p\bar\p\vphi$ be a K\"ahler cone metric with $\vphi\in C^{2,\a,\b}$, $f\in C^{,\a,\b}$ and the H\"older exponent satisfy $\a\b<1-\b$, the weak solution $u\in H^{2,\b}_{\mathbf{w},0}$ to  equation \eqref{KbiLaplacian equation} is actually $C^{2,\a,\b}$. Moreover, there exists a constant $C$ such that
	\begin{align*}
	|u|_{C^{2,\a,\b}}\leq C(\Vert u\Vert_{H_{\mathbf{w},0}^{2,\b}(\om)}+|f|_{C^{,\a,\b}}).
	\end{align*}
\end{prop}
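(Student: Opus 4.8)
The plan is to reduce the fourth-order problem \eqref{KbiLaplacian equation} to a scalar second-order equation and then feed it into the conic Schauder theory of Proposition \ref{linearestimate}. Since $\vphi\in C^{2,\a,\b}$, the inverse metric coefficients $g^{i\bar j}$ lie in $C^{,\a,\b}$, so $\tri_\om$ has exactly the type of leading coefficients allowed in Proposition \ref{linearestimate}; moreover the hypothesis $\a\b<1-\b$ is precisely the admissibility range $0<\a<\frac1\b-1$ required there. The existence of a very weak solution $u\in H^{2,\b}_{\mathbf{w},0}$ is provided by Proposition \ref{weakersolution}, and by definition of $H^{2,\b}_{\mathbf{w}}$ the function $w:=\tri_\om u=g^{i\bar j}\p_i\p_{\bar j}u$ belongs to $L^2(\om)$.

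First I would establish that $w$ is a distributional solution of the second-order equation $\tri_\om w-Kw=f$. Writing out the weak formulation $\mathcal{B}^K(u,\eta)=\int_M f\eta\,\om^n$ and integrating the gradient term by parts gives $\int_M g^{i\bar j}u_i\eta_{\bar j}\,\om^n=-\int_M(\tri_\om u)\eta\,\om^n$, so that $\int_M w\,\tri_\om\eta\,\om^n-K\int_M w\eta\,\om^n=\int_M f\eta\,\om^n$ for all admissible $\eta$. Testing against smooth compactly supported functions and using the self-adjointness of $\tri_\om$ then yields $\tri_\om w-Kw=f$ in the sense of distributions on $M$. The vanishing of the boundary contributions along $D$ in the integration by parts is justified exactly as in Proposition \ref{averagecsck}, via the conic integration-by-parts formula of \cite[Theorem 1.14]{MR2746347}.

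Next I would raise the integrability of $w$ from $L^2$ to $L^\infty$ by an iteration. Rewriting the equation as $\tri_\om w=Kw+f$ with right-hand side in $L^2$ to begin with, the scheme alternates the local $L^p$ estimate for the conic Laplacian (Lemma \ref{lppurelocal}) with the Sobolev embeddings of Lemmas \ref{w1pbSobolev imbedding theorem} and \ref{Sobolev imbedding theorem}: if $w\in L^{p}$ then $\tri_\om w\in L^p$, the local $L^p$ bound gives $w\in W^{2,p,\b}_{loc}$, and Sobolev embedding upgrades this to $w\in L^{2np/(2n-p)}$; after finitely many steps the integrability exponent exceeds $2n$, whence a Morrey-type embedding gives $w\in L^\infty$. \emph{This bootstrap is the main obstacle.} Because $\vphi$ is only $C^{2,\a,\b}$, the metric $\om$ need not have bounded Christoffel symbols, so Proposition \ref{lppureestimate} cannot be invoked verbatim; instead I would run the iteration locally in the $W$-coordinates of \eqref{wcoordiante}, comparing $\tri_\om$ with the flat conic Laplacian $\tri_{\om_{cone}}$ via $\tri_{\om_{cone}}w=\tri_\om w+(g^{i\bar j}_{cone}-g^{i\bar j}_\om)\p_i\p_{\bar j}w$. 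The difference of leading coefficients lies in $C^{,\a,\b}$, hence is of small oscillation on small cone-balls, so it can be absorbed as a perturbation of the model operator to which Lemma \ref{lppurelocal} applies, yielding the required gain at each stage.

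Finally, once $w\in L^\infty$, Proposition \ref{linearestimate} applied to $\tri_\om w-Kw=f$ (with $\mathbf b=0$, $c=-K$, $\mathbf h=0$) shows $w\in C^{2,\a,\b}$, in particular $w\in C^{,\a,\b}$, with $|w|_{C^{2,\a,\b}}\leq C(\|w\|_{L^\infty}+|f|_{C^{,\a,\b}})$. A second, shorter bootstrap applied to $\tri_\om u=w\in L^\infty$ gives $u\in L^\infty$, and then Proposition \ref{linearestimate} applied to $\tri_\om u=w$ yields $u\in C^{2,\a,\b}$ together with $|u|_{C^{2,\a,\b}}\leq C(\|u\|_{L^\infty}+|w|_{C^{,\a,\b}})$. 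Chaining these estimates with the quantitative bounds of the bootstrap, which control $\|w\|_{L^\infty}$ and $\|u\|_{L^\infty}$ by $\|w\|_{L^2}$, $\|u\|_{L^2}$ and $|f|_{C^{,\a,\b}}$, and noting $\|w\|_{L^2}=\|\tri_\om u\|_{L^2}\leq\|u\|_{H^{2,\b}_{\mathbf{w},0}(\om)}$, produces the stated inequality $|u|_{C^{2,\a,\b}}\leq C(\|u\|_{H^{2,\b}_{\mathbf{w},0}(\om)}+|f|_{C^{,\a,\b}})$.
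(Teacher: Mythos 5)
Your proposal is correct and takes essentially the same route as the paper: rewrite \eqref{KbiLaplacian equation} as $(\tri_\om-K)\tri_\om u=f$ and apply the conic second-order Schauder theory of Proposition \ref{linearestimate} twice, first to $w=\tri_\om u$ and then to $u$. The paper's own proof is exactly this two-step citation of Donaldson's second-order theory; your extra material (the distributional identification of the equation satisfied by $w$, and the $L^2\to L^\infty$ bootstrap that makes the Schauder estimate applicable to the very weak solution) fills in steps the paper leaves implicit rather than following a different strategy.
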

\begin{proof}
	We use \eqref{triv} again, we only need $\om$ to be K\"ahler cone metric to conclude $\tri_\om u$ is in $C^{\a,\b}$. 
	Then, 2nd order linear elliptic theory \cite{MR2975584} tells us that $u\in C^{2,\a,\b}$.	
\end{proof}

\subsection{Fredholm alternative for Lichnerowicz operator}\label{Continuity method}

We now use continuity method in the H\"older spaces. We define the operator $${{\mathbb{L}\mathrm{ic}}}^K_{\omega}(u)={{\mathbb{L}\mathrm{ic}}}_{\om}(u)- K \tri_\om u$$ and then define the continuity path $L_t^K: C^{4,\a,\b}\rightarrow C^{,\a,\b}$ with $0\leq t\leq 1$,
\begin{align}\label{continuitypath}
L_t^Ku
&= t {{\mathbb{L}\mathrm{ic}}}^K_{\omega}(u) +(1-t)(\tri^2_\om u-K\tri_\om u)\nonumber\\
&=\tri^2_\om u+ t  u^{i\bar j}R_{i\bar j}(\om)-K\tri_\om u.
\end{align}
Multiplying the equation with $u$ and integrating over the manifold $M$, we obtain the bilinear form 
\begin{align*}
\mathcal{B}^K_t(u,u)=\int_M u L_t^K  u \om^n=\int_M[|\tri_\om u|^2+ t u^{i\bar j}R_{i\bar j}(\om)u +K|\nabla u|_\om^2]\om^n.
\end{align*}

We will need the following lemma. 
\begin{lem}\label{IbPRic}
	Assume that $u\in C^{2,\a,\b}$ and $Ric(\om)$ is bounded. Then it holds
	\begin{align*}
	\int_M u^{i\bar j}R_{i\bar j}(\om)u \om^n
	=-\int_M u^{i}R_{i\bar j}(\om)u^{\bar j} \om^n.
	\end{align*}
\end{lem}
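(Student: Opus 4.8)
The plan is to establish the identity by an integration by parts on the regular locus $M=X\setminus D$, viewing $u^{i\bar j}R_{i\bar j}$ as the genuine Kähler contraction $R^{p\bar q}u_{p\bar q}$, where $u_{p\bar q}:=\nabla_p\nabla_{\bar q}u=\partial_p\partial_{\bar q}u$ and $R^{p\bar q}:=g^{p\bar j}g^{i\bar q}R_{i\bar j}$. All the usual Kähler identities are available pointwise on $M$ since $\omega$ is an honest smooth Kähler metric there; the only real difficulty is justifying that the integration by parts produces no contribution from the conical locus.

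Formally, I would integrate the divergence of the vector field $V^p=R^{p\bar q}\,u\,\nabla_{\bar q}u$ over $M$. Expanding
$$\nabla_p V^p=(\nabla_p R^{p\bar q})\,u\,\nabla_{\bar q}u+R^{p\bar q}(\nabla_p u)(\nabla_{\bar q}u)+R^{p\bar q}\,u\,u_{p\bar q},$$
and using that the integral of a divergence vanishes (modulo the boundary analysis below), I obtain
$$\int_M R^{p\bar q}\,u\,u_{p\bar q}\,\omega^n=-\int_M(\nabla_p R^{p\bar q})\,u\,\nabla_{\bar q}u\,\omega^n-\int_M R^{p\bar q}(\nabla_p u)(\nabla_{\bar q}u)\,\omega^n.$$
The second integral on the right is exactly $-\int_M u^iR_{i\bar j}u^{\bar j}\,\omega^n$, so the claim reduces to the vanishing of the first one. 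Here I would invoke the contracted Bianchi identity on a Kähler manifold, $\nabla_p R^{p\bar q}=\nabla^{\bar q}S$, together with the standing assumption of this section that $\omega$ is a cscK cone metric, so that $S\equiv\underline{S}_\beta$ is constant on $M$ and the term drops out identically.

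The main obstacle is the justification of the integration by parts, since $M$ is non-compact and $\omega$ degenerates along $D$. I would introduce cutoffs $\chi_\epsilon$ depending only on $|z^1|$ in each cone chart, with $\chi_\epsilon\equiv0$ for $|z^1|\le\epsilon$ and $\chi_\epsilon\equiv1$ for $|z^1|\ge 2\epsilon$; applying the divergence identity to $\chi_\epsilon V^p$ is legitimate and leaves an error term of the shape $\int_M R^{p\bar q}(\nabla_p\chi_\epsilon)\,u\,\nabla_{\bar q}u\,\omega^n$. The hypotheses furnish precisely the bounds needed to kill it: $u\in C^{2,\alpha,\beta}$ gives $|u|$ and $|\nabla u|_\omega$ bounded and $u_{p\bar q}$ (including the weighted $1\bar1$ component) bounded in $\omega$-norm, $Ric(\omega)$ bounded gives $|R^{p\bar q}|_\omega$ bounded, and $\int_M\omega^n<\infty$ makes every integrand in the identity integrable. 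For the error one estimates, on the support of $\nabla\chi_\epsilon$, $|\nabla\chi_\epsilon|_\omega^2\approx g^{1\bar1}\,\epsilon^{-2}\approx|z^1|^{2-2\beta}\epsilon^{-2}\approx\epsilon^{-2\beta}$, while the shell $\{\epsilon<|z^1|<2\epsilon\}$ has $\omega$-volume $\lesssim\int_\epsilon^{2\epsilon}r^{2\beta-1}\,dr\lesssim\epsilon^{2\beta}$ (using $0<\beta<\tfrac12$). Hence the error is $O(\epsilon^{\beta})\to0$, and taking $\epsilon\to0$ yields the stated identity. The delicate point is exactly this balancing of the blow-up of $|\nabla\chi_\epsilon|_\omega$ against the decay of the tubular volume; everything else is routine. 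Alternatively, one may bypass the cutoff computation by appealing to the integration-by-parts result for bounded potentials already used in the proof of Proposition \ref{averagecsck} (see \cite[Theorem 1.14]{MR2746347}), applied to the bounded quantities above, to reach the same conclusion.
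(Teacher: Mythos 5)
Your proof is correct and follows essentially the same route as the paper's: a cutoff $\chi_\epsilon$ near $D$, the same integration by parts (your divergence of $V^p=R^{p\bar q}\,u\,\nabla_{\bar q}u$ is exactly the paper's step of moving $\nabla_{\bar j}$ off $u^{i\bar j}$), the vanishing of the Ricci-divergence term --- which the paper records as ``$\nabla Ric=0$ on $M$'' and which you justify more carefully via the contracted Bianchi identity together with the standing cscK assumption of this section --- and a verification that the cutoff error tends to zero. The only cosmetic difference is in that last step: you bound the error by an explicit shell-volume computation giving $O(\epsilon^{\beta})$, whereas the paper invokes dominated convergence together with pointwise bounds of the form $\epsilon\cdot\mathrm{o}(\rho^{-\beta})$; both are valid.
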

\begin{proof}
	We apply the cutoff function $\chi_\eps$ which has been fully discussed in \cite{LZ}. Then the argument is essentially Lemma 4.10 in \cite{LZ2}. By dominated convergence theorem, we have
	\begin{align*}
	\lim_{\eps\rightarrow 0}\int_M u^{i\bar j}R_{i\bar j}(\om)u\chi_\eps \om^n
	=\int_M u^{i\bar j}R_{i\bar j}(\om)u \om^n.
	\end{align*}
	On the other hand, using $\nabla Ric=0$ on $M$,
	\begin{align*}
	\int_M u^{i\bar j}R_{i\bar j}(\om)u\chi_\eps \om^n
	&=-\int_M u^{i}R_{i\bar j}(\om)(u \chi_\eps)^{\bar j} \om^n\\
	&=-\int_M u^{i}R_{i\bar j}(\om)u^{\bar j} \chi_\eps \om^n
	-\int_M u^{i}R_{i\bar j}(\om)\chi_\eps^{\bar j} u \om^n
	\end{align*}
	The first term converges under the assumption on $u$ and $Ric(\om)$.
	The second term also converges, since for $2\leq i,j\leq n$,  
	\begin{align*}
	&u^{1}R_{1\bar 1}(\om)\chi_\eps^{\bar 1}=\eps. \mathrm{o}(\rho^{-\b}),\quad u^{1}R_{1\bar j}(\om)\chi_\eps^{\bar j}=\eps. \mathrm{o}(1),\\
	&u^{i}R_{i\bar 1}(\om)\chi_\eps^{\bar 1}=\eps. \mathrm{o}(\rho^{-\b}),\quad u^{i}R_{i\bar j}(\om)\chi_\eps^{\bar j}=\eps. \mathrm{o}(1).
	\end{align*}
\end{proof}

When $t=0$, $L_0 u=\tri^2_\om u-K\tri_\om u$. We could solve $L_0 u=f$ for any $f\in C^{,\a,\b}$ and obtain an solution $u\in C^{4,\a,\b}$ thanks to Propositions \ref{weaksolutionexistence} and  \ref{Schauder}. 

In order to apply the continuity method to our linear PDE (see e.g. Theorem 5.2 in \cite{MR1814364} for Banach space setting), we need to prove the following key estimate.
\begin{thm}\label{closedness}
Assume $\omega$ is a cscK cone metric with $C^{4,\a,\b}$ potential, $(\alpha,\beta)$ satisfy the condition \eqref{anglerestriction}. Assume that $K>0$ is large enough, i.e. $K>1+2 \Vert Ric(\om)\Vert_{L^\infty}+3C_P$.
There is constant $C_1$ such that for any $u\in C^{4,\a,\b}$ along the continuity path \eqref{continuitypath} with $0\leq t\leq 1$, we have
\begin{align*}
|u|_{C^{4,\a,\b}}\leq C_1 |L^K_t u|_{C^{,\a,\b}}.
\end{align*}
\end{thm}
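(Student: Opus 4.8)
The plan is to establish the a priori estimate
\begin{align*}
|u|_{C^{4,\a,\b}}\leq C_1 |L^K_t u|_{C^{,\a,\b}}
\end{align*}
in two stages: first a global $C^0$ (or $L^\infty$) bound on $u$, and then an upgrade to the full $C^{4,\a,\b}$ bound via the Schauder estimate of Proposition \ref{Schauder}. Write $f_t:=L^K_t u$. The heart of the matter is the $L^\infty$ estimate, and the natural route is an energy argument using the bilinear form $\mathcal{B}^K_t(u,u)=\int_M u L^K_t u\,\om^n$. First I would expand this using the integration-by-parts identity of Lemma \ref{IbPRic}, giving
\begin{align*}
\mathcal{B}^K_t(u,u)=\int_M\left(|\tri_\om u|^2 - t\, u^{i}R_{i\bar j}(\om)u^{\bar j}+K|\nabla u|_\om^2\right)\om^n.
\end{align*}
Since $Ric(\om)$ is bounded (here I use that $\om$ has $C^{4,\a,\b}$ potential, so by Corollary \ref{cor4ab} and the regularity theorem the Ricci curvature is in $C^{,\a,\b}$, hence in $L^\infty$), the middle term is bounded below by $-t\|Ric(\om)\|_{L^\infty}\|\nabla u\|_{L^2(\om)}^2\geq -\|Ric(\om)\|_{L^\infty}\|\nabla u\|_{L^2(\om)}^2$. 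Choosing $K>1+2\|Ric(\om)\|_{L^\infty}+3C_P$ as in the hypothesis, the combination $(K-\|Ric(\om)\|_{L^\infty})|\nabla u|_\om^2$ remains strongly positive, so the quadratic form is coercive uniformly in $t\in[0,1]$:
\begin{align*}
\mathcal{B}^K_t(u,u)\geq \frac{1}{C}\,\|u\|_{H_0^{2,\b}(\om)}^2,
\end{align*}
exactly paralleling the computation in Lemma \ref{coercive} but now with the extra Ricci term absorbed. On the other hand, $\mathcal{B}^K_t(u,u)=\int_M u\,f_t\,\om^n\leq \|u\|_{L^2(\om)}\|f_t\|_{L^2(\om)}$, and by Poincar\'e (Lemma \ref{Poincareinequality}) the right side is controlled by $\|u\|_{H_0^{2,\b}(\om)}\|f_t\|_{L^2(\om)}$. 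Combining these yields $\|u\|_{H_0^{2,\b}(\om)}\leq C\|f_t\|_{L^2(\om)}\leq C'|f_t|_{C^{,\a,\b}}$, a uniform-in-$t$ Sobolev bound on $u$.

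Next I would convert this Sobolev bound into the Schauder bound. The operator equation reads $\tri_\om^2 u - K\tri_\om u = f_t - t\,u^{i\bar j}R_{i\bar j}(\om)$; regarding $u$ as a weak solution of the $K$-bi-Laplacian equation \eqref{KbiLaplacian equation} with right-hand side $\tilde f_t:=f_t - t\,u^{i\bar j}R_{i\bar j}(\om)$, I would verify $\tilde f_t\in C^{,\a,\b}$ with norm controlled by $|f_t|_{C^{,\a,\b}}+\|u\|_{C^{2,\a,\b}}\cdot\|Ric(\om)\|_{C^{,\a,\b}}$, and then invoke Proposition \ref{Schauder} to obtain
\begin{align*}
|\tri_\om u|_{C^{2,\a,\b}}\leq C\left(\|u\|_{H_0^{2,\b}(\om)}+|\tilde f_t|_{C^{,\a,\b}}\right),
\end{align*}
followed by the $C^{4,\a,\b}$ conclusion of that same proposition. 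The appearance of $\|u\|_{C^{2,\a,\b}}$ inside $|\tilde f_t|_{C^{,\a,\b}}$ is a lower-order term that must be absorbed, which I would handle with an interpolation inequality between $C^{2,\a,\b}$ and the already-controlled $H_0^{2,\b}$ norm (the absorbing constant being harmless since the leading coefficient is fixed). Assembling the two stages gives $|u|_{C^{4,\a,\b}}\leq C_1|f_t|_{C^{,\a,\b}}$ with $C_1$ independent of $t$, as required.

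The main obstacle I anticipate is ensuring that all the integration-by-parts and coercivity manipulations are legitimate across the conical divisor, rather than merely formal. Concretely, Lemma \ref{IbPRic} already supplies the key boundary-term vanishing for the Ricci pairing using the cutoff $\chi_\eps$, so the delicate point is the matching integration by parts producing $\int|\tri_\om u|^2$ and $\int|\nabla u|_\om^2$ from $\int u\,\tri_\om^2 u$ and $-\int u\,\tri_\om u$ near $D$. I would control these by the same cutoff-and-dominated-convergence scheme, relying on $u\in C^{4,\a,\b}$ (so the third covariant derivatives controlled in Proposition \ref{3rdderivativeD} are $C^{,\a,\b}$, hence bounded) to show the error terms decay like $\eps\cdot\mathrm{o}(1)$ exactly as in Lemma \ref{IbPRic}. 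A secondary subtlety is the uniformity of all constants in $t$; this is automatic because the Ricci term enters with coefficient $t\in[0,1]$ and is dominated once for the worst case $t=1$. Given these ingredients, the estimate then feeds directly into the continuity method to prove Theorem \ref{Fredholm}.
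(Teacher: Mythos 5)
Your proposal is correct and follows essentially the same approach as the paper's proof: a G\aa rding-type coercivity estimate for the bilinear form $\mathcal{B}^K_t$ (via the integration by parts of Lemma \ref{IbPRic}, the Poincar\'e inequality, and the largeness of $K$), combined with the bi-Laplacian Schauder theory in which the Ricci term is treated as a lower-order perturbation absorbed by $\eps$-interpolation against the already-controlled $L^2$/Sobolev bound. The only deviations are cosmetic: you run the energy stage first and invoke Proposition \ref{Schauder} rather than applying the estimate of \cite{LZ2} directly to $\tri^2_\om u$, and you omit the paper's extra step controlling $\int_M |\p\p u|_\om^2\,\om^n$ through the identity $\int_M u\,{{\mathbb{L}\mathrm{ic}}}_{\om}(u)\,\om^n=\int_M |\p\p u|_\om^2\,\om^n$, which is indeed not needed under the stated hypothesis on $K$.
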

\begin{proof}
Applying Proposition 4.3 in \cite{LZ2} to the equation
\begin{align*}
\tri^2_\om u=L_t^Ku- t  u^{i\bar j}R_{i\bar j}(\om)+K\tri_\om u,
\end{align*}
we have
\begin{align*}
|u|_{C^{4,\a,\b}}\leq C_2 \left(|L^K_t u-t  u^{i\bar j}R_{i\bar j}(\om)+K\tri_\om u|_{C^{,\a,\b}}+|u|_{C^{,\a,\b}}\right).
\end{align*}
Note that $Ric(\om)$ is $C^{,\a,\b}$ and we apply the $\eps$-interpolation inequality of the H\"older spaces to the 2nd and 3rd terms on the right hand side, and so
\begin{align}
|u|_{C^{4,\a,\b}}\leq C_3 \left(|L^K_t u|_{C^{,\a,\b}}+|u|_{C^{,\a,\b}}\right).
\end{align}
We use the $\eps$-interpolation inequality of the $L^p$ spaces to  the 2nd term on the right hand side (Proposition \ref{lpinterpolation}, choosing $\eps$ small enough),
\begin{align}\label{path4ab}
|u|_{C^{4,\a,\b}}\leq C_4 \left(|L^K_t u|_{C^{,\a,\b}}+||u||_{L^2(\om)}\right).
\end{align}
Since $u\in C^{4,\a,\b}$, we are able to apply the integration by parts to obtain a G\"arding inequality as following.
We first use the $L^2$ estimate of the cone metrics to the standard linear operator $\tri_\om u$ (Proposition \ref{lppureestimate}), i.e. there exists a constant $C_5>0$ such that,
\begin{align*}
\Vert u\Vert ^2_{H^{2,\b}(\om)}
\leq C_5 \int_M(|\tri_\om u|^2+|\nabla u|_\om^2+|u|^2)\om^n.
\end{align*}
Here the integration  by parts works since $u\in C^{4,\a,\b}$. 
Using the bilinear form $\mathcal{B}^K_t(u,u)$, the R.H.S of previous inequality is
\begin{align*}
&C_5\left(\mathcal{B}^K_t(u,u)+\int_M[- t u^{i\bar j}R_{i\bar j}(\om)u +(1-K)|\nabla u|_\om^2+|u|^2]\om^n\right).
\end{align*}
Then we use the Cauchy-Schwarz inequality to $\mathcal{B}^K_t(u,u)=\int_M u L_t^K  u \om^n$,
\begin{align*}
\Vert u\Vert ^2_{H^{2,\b}(\om)} \leq C_5&\left( \Vert L^K_t u\Vert_{L^2(\om)}^2\right.\\
                                 &\pushright{\qquad\left.+\int_M[- t u^{i\bar j}R_{i\bar j}(\om)u +(1-K)|\nabla u|_\om^2+2|u|^2]\om^n\} \right)}.
\end{align*}
Integrating by parts the term containing $Ric(\om)$ using \lemref{IbPRic}, the R.H.S of last inequality becomes
\begin{align*}
C_5\left(\Vert L^K_t u\Vert_{L^2(\om)}^2+\int_M[ t u^{i}R_{i\bar j}(\om)u^{\bar j} +(1-K)|\nabla u|_\om^2+2|u|^2]\om^n\right).
\end{align*}
Thus,
\begin{align*}
\Vert u\Vert ^2_{H^{2,\b}(\om)} \leq C_5 &\left( \Vert L^K_t u\Vert_{L^2(\om)}^2\right. \\
&\left.\qquad+\int_M[(1-K+\Vert Ric(\om)\Vert_{L^\infty})|\nabla u|_\om^2+2 |u|^2]\om^n\right).
\end{align*}
Then we apply the Poincar\'e inequality (Lemma \ref{Poincareinequality}) to the 3rd term again and set $K_0=1-K+\Vert Ric(\om)\Vert_{L^\infty}+2C_P$. We obtain
\begin{align}\label{pureL2}
\Vert u\Vert ^2_{H^{2,\b}(\om)} &\leq C_5\left( \Vert L^K_t u\Vert_{L^2(\om)}^2+K_0\Vert\nabla u\Vert_{L^2(\om)}^2\right).
\end{align}
Now we use the special form of the Lichnerowicz operator to estimate the term $\p\p u$ (since $u\in C^{4,\a,\b}$), i.e.
\begin{align*}
\int_M u {{\mathbb{L}\mathrm{ic}}}_{\omega}(u) \om^n=\int_M |\p\p u|_\om^2 \om^n.
\end{align*}
Thus we use \eqref{continuitypath}, integration by parts (\lemref{IbPRic}) and Cauchy-Schwarz inequality as before,
\begin{align*}
\int_M |\p\p u|_\om^2 \om^n&=\int_M u [L^K_t u+(1-t) u^{i\bar j}R_{i\bar j}(\om)+K\tri_\om u]\om^n\nonumber\\
&\leq  \int_M(|L^K_t u|^2+ |u|^2+\Vert Ric(\om)\Vert_{L^\infty} |\nabla u|_\om^2-K|\nabla u|_\om^2)\om^n.
\end{align*}
We apply the Poincar\'e inequality (Lemma \ref{Poincareinequality}) to the 2nd term again and set $K_1=C_P + \Vert Ric(\om)\Vert_{L^\infty}-K$,
\begin{align}\label{singL2}
\int_M |\p\p u|_\om^2  \leq   \int_M(|L^K_t u|^2+K_1|\nabla u|_\om^2)\om^n.
\end{align}
Thus we add \eqref{pureL2} and $C_5$ times \eqref{singL2} together and have that
\begin{align*}
\Vert u\Vert ^2_{H_{\bf s}^{2,\b}(\om)}\leq C_5\left( 2 \Vert L^K_t u\Vert_{L^2(\om)}^2+ (K_0+K_1)\Vert\nabla u\Vert_{L^2(\om)}^2\right).
\end{align*} 
We further choose $K_0+K_1<0$ i.e. $2K>1+2 \Vert Ric(\om)\Vert_{L^\infty}+3C_P$, then we have
\begin{align*}
\Vert u\Vert ^2_{H_{\bf s}^{2,\b}(\om)}\leq 2C_5\Vert L_t^K u\Vert_{L^2(\om)}^2.
\end{align*} 
Together with \eqref{path4ab}, this allows us to conclude that
\begin{align}
|u|_{C^{4,\a,\b}}\leq C_1 |L^K_t u|_{C^{,\a,\b}}.
\end{align}
\end{proof}

\begin{proof}[Proof of Theorem \ref{Fredholm}]
We have just solved $$L_1^Ku={{\mathbb{L}\mathrm{ic}}}^K_{\omega}(u)={{\mathbb{L}\mathrm{ic}}}_{\omega}(u) -K u=f$$  and seen that the inverse map $({{\mathbb{L}\mathrm{ic}}}^K_{\om})^{-1}: C^{4,\a,\b}\rightarrow C^{4,\a,\b}$ is compact. 
Now, we can solve \eqref{Liceqn}, i.e
\begin{align}
{{\mathbb{L}\mathrm{ic}}}_{\omega}(u)={{\mathbb{L}\mathrm{ic}}}_\om^{K}(u)+K u=f.
\end{align}
Actually, this is equivalent, after taking $({{\mathbb{L}\mathrm{ic}}}^K_{\om})^{-1}$, to
\begin{align}
u+K  ({{\mathbb{L}\mathrm{ic}}}_\om^{K})^{-1} u= ({{\mathbb{L}\mathrm{ic}}}_\om^{K})^{-1}f.
\end{align}
Since $\mathfrak{T}:=-K  ({{\mathbb{L}\mathrm{ic}}}^K_{\om})^{-1}: C^{4,\a,\b}\rightarrow C^{4,\a,\b}$ is compact, we can apply classical results of functional analysis and Riesz-Schauder theory (see \cite[Theorem 5.3]{MR1814364}) to the Lichnerowicz operator which is self-adjoint. Furthermore the reductivity of the automorphisms group of a K\"ahler manifold admitting cscK cone metric with $C^{4,\a,\b}$ potential are proved in \cite{LZ2}, showing the one-one correspondence between the
kernel of the Lichnerowicz operator and the holomorphic vector fields tangential to the divisor.
\end{proof}

\section{Hermitian-Einstein metrics with conical singularities}\label{Section 3}


\subsection{Stable parabolic structures}\label{parabolicstab}
From now, we consider $E\to B$ a holomorphic vector bundle over a base $B$, compact K\"ahler manifold endowed with a smooth K\"ahler metric $\omega_0$.
Let $D=\sum^m_{i=1} D_i $ be a simple normal crossings divisor of $B$. 

\begin{defn}\label{defparab}
 A parabolic structure on $E$ with respect to $D$ consists of:
\begin{itemize}
 \item a filtration of $E_{\vert D_i}$ for $1\leq i \leq m$ such that
$$E_{\vert D_i} = \mathcal{F}_i^1 \varsupsetneq .. \varsupsetneq \mathcal{F}_i^{l_i}\varsupsetneq \{0\}$$
with $\mathcal{F}_i^{p+1}$ proper subbundle of $\mathcal{F}_i^p$ and the flags satisfy a natural compatibility condition: for every $I=(i_1,...,i_q)$, the restrictions $\{{\mathcal{F}_{i_l}^p}_{\vert D_{i_1}...D_{i_q}} , 1\leq l\leq q, 1\leq p \leq l_{i_l}\}$ to
$D_{i_1}...D_{i_q}$ yield to a flag of $E_{\vert D_{i_1}...D_{i_q}}$ which is a refined flag of $\{{\mathcal{F}_{i_l}^p}_{\vert D_{i_1}...D_{i_q}}, 1\leq p \leq l_{i_l}\}$ for every  $1\leq l\leq q$.
\item some real weights $\alpha_i^1, ... , \alpha_i^{l_i}$ attached to $\mathcal{F}_i^p$, $1\leq p \leq l_i$ satisfying the inequalities $0\leq \alpha_i^1<... < \alpha_i^{l_i}<1$.
\end{itemize}
\end{defn}

We recall a classical definition.
\begin{defn}
 Given $E$ a parabolic structure, one can define its parabolic degree with respect to $\omega_0$ as
$$\mathrm{par}\deg(E)=\deg(E)+ \sum_{i=1}^m \sum_{p=1}^{l_i} \rk(\mathcal{F}_i^p/\mathcal{F}_i^{p+1}) \alpha_i^p \deg(D_i)$$
and its parabolic slope as $\mathrm{par}\mu(E)=\mathrm{par}\deg(E)/\rk(E)$. Here the degree $\deg(E)$ is computed in the usual sense using the K\"ahler {cone} metric $\omega_0$ and depends only on the K\"ahler class. This definition extends to coherent subsheaves endowed with parabolic structures.
\end{defn}

 There exists a notion of stability for parabolic structures modeled on the notion of Mumford-Takemoto stability.
\begin{defn}
 Given a proper coherent subsheaf $F$ of a parabolic vector bundle $E$ along $D$, one can consider the induced parabolic structure for $F$. The only difficulty is to choose correctly the weights $\alpha_i^p(F)$. This is done by taking the maximum weights among the $\alpha_i^{p'}(E)$ that respect the flag structure, i.e $\mathcal{F}_{i}^p(F)\subset \mathcal{F}_{i}^{p'}(E)$. We say that $E$ is \textbf{parabolic stable} if for all proper coherent subsheaf $F$ of $E$, we have
$$\mathrm{par}\mu(F)< \mathrm{par}\mu(E).$$
\end{defn}
\subsection{H\"older spaces for bundle endomorphisms}\label{holderspacebundle}

Let $V$ be a holomorphic vector bundle over the base manifold $B$. Let us fix a holomorphic frame $F_V=\{e_\varsigma;1\leq \varsigma\leq \rk(V)\}$ and a finite covering $(U_i)_{i=0,..,N}$ of $B$ composed of local cone charts around $D$. Consider a partition of unity $\{\psi_i\}_{i=0,..,N}$ where $\psi_i \in C^{\infty}(U_i)$ have compact support,  associated to the finite covering $(U_i)_{i=0,..,N}$. Using the notations of subsection \ref{holderspace}, 
we define the space $C^{,\a,\b}(V)$ to be the space of sections $s$ of $V$ such that in $F_V$ the decomposition of $s$ is given by $\rk(V)$ functions $s_1,..,s_{\rk(V)}$ that lie in the space $C^{,\a,\b}$. More precisely, $s$ is a section in the H\"older space $C^{,\a}(V)$ such that if the local frame is defined close to the divisor $D$, over a cone chart $U_i$, each $\rk(V)$ complex valued functions defining $s$ lie in the space $C^{,\a,\b}(U_i)$. 
The advantage of fixing a frame and a partition of unity is that we can define now a norm $\Vert . \Vert_{C^{,\a,\b}}$ by $$\Vert s \Vert_{C^{,\a,\b}}= \sum_{i=0}^N \sum_{j=1}^{\rk(V)}\Vert \psi_i s_j \Vert_{C^{,\a,\b}(U_i)}.$$
Note that the space of sections with bounded $\Vert . \Vert_{C^{,\a,\b}}$ norm is independent of the covering, the partition of unity and the holomorphic frame.

Similarly to subsection \ref{holderspace}, we can define the vector spaces $C^{2,\a,\b}(V)$, $C^{3,\a,\b}(V)$, $C^{4,\a,\b}(V)$ and the associated norms by considering the analogue conditions on the decomposition of $s$. Eventually, all the spaces equipped with their natural norms are Banach spaces. 
With the previous reasoning, we can define this way the H\"older spaces of $End(E)$ that are denoted $C^{k,\a,\b}(End(E))$. 

The definition also applies to the space $\mathcal {H}^+(E)  $ of hermitian metrics on the bundle $E$ seen as sections of the frame bundle and for $Herm(E,h)$ the space of hermitian endomorphisms of $E$ (over $M\setminus D$) with respect to the hermitian metric $h$.

\subsection{Existence of Hermitian-Einstein cone metrics}\label{coneHE}
Given $\omega_0$ and the divisor $D$, we can consider a model metric $\omega_D$ as in Section \ref{Model cone metric}. Moreover, we will denote $\omega_B$ a K\"ahler cone metric in the same class and $C^{2,\a,\b}$ potential, which is quasi-isometric to $\omega_D$.

We introduce now the Hermitian-Einstein equation and express it in coordinates. Let $H=\{H_{\varsigma\bar{\tau}}\}$ be the hermitian matrix induced by $h$ in a local holomorphic frame $\{e_\varsigma;1\leq \varsigma\leq r\}$ for $E$ of rank $r$. 

\begin{defn}
We say a hermitian metric $h$ is $C^{k,\a,\b}$ for $k\in \mathbb Z^+=\{0,1,2,3,\cdots\}$, if the associated hermitian matrix $H=\{H_{\varsigma\bar{\tau}}\}$ is $C^{k,\a,\b}$, i.e. all its components $H_{\varsigma\bar{\tau}}$, $1\leq \varsigma,\tau\leq r$ are all $C^{k,\a,\b}$. 
\end{defn}

The curvature is given as a 2 form $$\frac{\sqrt{-1}}{2\pi}{F_{\varsigma k \bar  j}^{\tau}}dz^k \wedge d\bar{z}^j=\frac{\sqrt{-1}}{2\pi}\sum_{j,k} F_{k,\bar  j}dz^k\wedge d\bar{z}^j$$ with explicitly  
 \begin{align*}{F_{\varsigma k \bar  j}^{\tau}}=-\partial_{\bar j}(H^{\tau\bar{\gamma}}\partial_k H_{{\gamma} \bar{\varsigma}}).
 \end{align*}
 Lowing down the index, we have
  \begin{align*}
  {F}_{\varsigma\bar\tau k\bar j}
 &=\sum H_{\gamma \tau} {F_{\varsigma k \bar  j}^{\gamma}}\\
  &=-\p_{\bar j}\p_k H_{\varsigma\bar\tau}+\sum_{\gamma,\nu=1}^r H^{\gamma\nu}\p_k H_{\varsigma\bar\nu} \p_{\bar j} H_{\gamma\bar\tau}.
 \end{align*}
Fix  $\omega_B=\frac{\sqrt{-1}}{2}(g_B)_{k \bar j}dz^k \wedge d\bar{z}^{j}$ a K\"ahler metric on the base $B$. The Hermitian-Einstein equation $$\frac{\sqrt{-1}}{2\pi}\Lambda_{\omega_B} F_h= Cst \times Id_E$$ reads in coordinates 
   \begin{align*}
   Const&=g_B^{k\bar j}F_{\varsigma\bar\tau k \bar  j}\\ 
   &=-\tri_{\om_B} H_{\varsigma\bar\tau}+g_B^{k\bar j}\sum_{\gamma,\nu=1}^r H^{\gamma\bar\nu}\p_k H_{\varsigma\bar\nu} \p_{\bar j} H_{\gamma\bar\tau}.
    \end{align*} 
 
 \begin{defn}[Compatible metric with respect to parabolic structure]
  Let $h$ be a metric on the parabolic bundle $E$. We say that $h$ is compatible with the parabolic structure if the following holds. Given the parabolic structure, it is constructed by Li in \cite{LiJ} a model metric $h_{0}$ on $E$ over $B\setminus D$, such that
\begin{align}\label{modelmetric}\vert \Lambda_{\omega_D} F_{h_{0}}\vert_{h_{0}} \in L^{\infty}(B\setminus D),\quad \quad \vert  F_{h_{0}}\vert_{h_{0}} \in L^{p}(B\setminus D), p>1.\end{align} 
 This model metric on the bundle is natural. In a nutshell, the norm of a local section of $\mathcal{F}_i$ with respect to $h_0$ restricted to $D$ has growth controlled by the weights of the filtration. To be compatible for $h$ metric on $E$ means that the 2 following conditions hold:
 \begin{itemize}
  \item $h,h_0$ are mutually bounded;
  \item $\vert \bar{\partial} (h_0^{-1}h)\vert_{h_0} \in L^2(B,\omega_D)$. 
  \end{itemize}
 \end{defn}

 Given $\omega_D$ the model metric and $h$ a metric on $E$, compatible with respect to the parabolic structure, it is possible to compute the analytic degree of $E$. It is given by the differential geometry as
 $$\widetilde{\deg}(E)=\int_{B\setminus D} \tr\left(\frac{\sqrt{-1}}{2\pi}\Lambda_{\omega_D}F_h\right)\frac{\omega_D^n}{n!},$$
 and a similar formula applies for the proper coherent subsheaves of $E$. In \cite{LiJ} it is checked that $\widetilde{\deg}(E)$ is actually proportional to the parabolic degree of $\mathrm{par}\deg(E)$ and is an invariant of the space of hermitian metrics compatible with the parabolic structure. In other words, the bundle $E$ is parabolic stable if and only if it is stable with respect to the notion of slope induced by the analytic degree. \\
 The same property holds if we replace $\omega_D$ by $\omega_B$ as we assumed it has $C^{2,\a,\b}$ potential, and we have $\tilde{\mu}(E)=c \times \mathrm{par}\deg(E)$ for a certain constant $c>0$. The property of compatibility can also be defined using the K\"ahler cone metric $\omega_B$. In conclusion, we can speak of {\it parabolic stability of the parabolic bundle $E$ with respect to $\omega_B$} by using the analytic degree. 
 
 \medskip
 
We are ready to present a theorem of C. Simpson improved by J. Li.

\begin{thm*}[\,{\cite[Theorem 6.3]{LiJ}}, \cite{Sim}]\label{Simpson-Li}
Let $B$ be a base compact K\"ahler manifold endowed with a K\"ahler metric $\omega_B$ with conical singularities  along $D\subset B$, smooth divisor. Let $E$ a parabolic stable vector bundle over $B$ with respect to $\omega_B$. There exists $\delta_0>0$ such that if the angle $2\pi\beta$ of $\omega_B$ satisfies $0<\beta_i\leq\delta_0$, then there exists a Hermitian-Einstein metric $h_E$ on $E$ compatible with the parabolic structure over $D$. It
satisfies outside $D$ the Hermitian-Einstein equation, \begin{equation}
\frac{\sqrt{-1}}{2\pi}\Lambda_{\omega_B}F_{h_E}= \frac{\tilde{\mu}(E)}{\Vol} Id_E. \label{HEeqn}
\end{equation}
Here $Id_E$ is the identity endomorphism of $E_{\vert B\setminus D}$ and $\Vol$ the total volume of $B$ with respect to $\omega_B$.
\end{thm*}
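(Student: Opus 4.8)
The plan is to follow the continuity/heat-flow scheme of Donaldson and Uhlenbeck--Yau, adapted to the parabolic situation by Simpson \cite{Sim} and refined to the present conical setting by Li \cite{LiJ}. First I would fix the model metric $h_{0}$ on $E$ over $B\setminus D$ already furnished by Li's construction, which satisfies the integrability bounds \eqref{modelmetric}, and search for the desired metric in the form $h=h_{0}H$ with $H$ a positive $h_{0}$-self-adjoint endomorphism. In this gauge the Hermitian--Einstein equation becomes a nonlinear second order elliptic equation for $H$, and compatibility with the parabolic structure is encoded by requiring $H,H^{-1}$ to be bounded and $\bar\partial H\in L^{2}$. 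Setting $\lambda=\tilde{\mu}(E)/\Vol$, I would study Donaldson's evolution equation $H^{-1}\partial_{t}H=-2(\sqrt{-1}\Lambda_{\omega_B}F_{H}-\lambda\,Id_E)$ with initial data $H=Id_E$, working in the conical H\"older and Sobolev framework of Section~\ref{holderspacebundle}.

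Short-time existence follows from standard parabolic theory once the function spaces near $D$ are set up. The heart of the argument is long-time existence together with a uniform $C^{0}$ bound on $s=\log H$ along the flow. I would control the Donaldson functional $M(h_{0},h)$, which is non-increasing along the flow and is bounded below precisely when $E$ is stable; its monotonicity yields an $L^{1}$ bound on $\sqrt{-1}\Lambda_{\omega_B}F_{H}-\lambda\,Id_E$. The standard Weitzenb\"ock/maximum-principle estimate then upgrades this integral bound to pointwise control of $\Lambda F$, provided the geometry near the cone is tame enough. This is exactly where the small angle hypothesis $0<\beta_{i}\le\delta_{0}$ enters: it guarantees that the Sobolev constants and the integration-by-parts identities used to define the analytic degree $\widetilde{\deg}$ remain valid across $D$.

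The main obstacle, as in the smooth theory, is the a priori $C^{0}$ bound on $s$, i.e.\ ruling out that $\sup_{M}|s|\to\infty$ along the flow. Here I would argue by contradiction following Uhlenbeck--Yau and Simpson: a blow-up of $|s|$ produces, after rescaling and passing to a weak limit, a nontrivial $L^{2}_{1}$ projection $\pi$ with $\pi=\pi^{*}=\pi^{2}$ and $(Id-\pi)\bar\partial\pi=0$, i.e.\ a weakly holomorphic subbundle of $E$ on $B\setminus D$. By the regularity theory for such projections this defines a coherent subsheaf $F\subset E$, and a careful accounting of its parabolic weights near $D$ yields $\mathrm{par}\mu(F)\ge\mathrm{par}\mu(E)$, contradicting parabolic stability. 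The delicate point specific to the conical setting is showing that this weakly holomorphic projection extends across $D$ with a well-defined parabolic structure and that the boundary terms in the degree computation are controlled; it is precisely for this extension and boundary analysis that the constraint on the cone angles is imposed, and I expect this to be the hardest technical step.

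Finally, with the uniform estimates in hand, parabolic regularity supplies higher-order bounds away from $D$ together with the conical H\"older estimates near $D$, so the flow converges as $t\to\infty$ to a metric $h_E$ solving \eqref{HEeqn}. By construction $h_E$ remains mutually bounded with $h_{0}$ and satisfies $\bar\partial(h_{0}^{-1}h_E)\in L^{2}$, hence is compatible with the parabolic structure, which completes the proof.
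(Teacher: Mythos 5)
Your proposal is a correct reconstruction of the Simpson--Li argument, but note first that the paper does not prove this statement at all: it is quoted from \cite{Sim} and \cite[Theorem 6.3]{LiJ}, and the paper's own analytic work is the proof of its refinement, Theorem \ref{HEsol}, which re-runs the existence construction in order to extract conical regularity. Comparing your route with that proof, the differences are real. You run the heat flow directly on the noncompact space $B\setminus D$ and obtain the crucial $C^{0}$ bound on $\log H$ by the Uhlenbeck--Yau blow-up mechanism (rescale, extract a weakly holomorphic projection $\pi$, produce a destabilizing coherent subsheaf, contradict parabolic stability); this is indeed how Simpson and Li argue, and as you say the extension of the destabilizing subsheaf across $D$ with correct parabolic weights is the hardest step. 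The paper instead sets up Donaldson's \emph{Dirichlet} problem \eqref{deltaflow} on the exhausting domains $U_\delta=U\setminus D_\delta$ (where global existence and convergence hold irrespective of stability, by \cite{MR1165874}), lets $\delta\to 0$, and never redoes the blow-up argument: stability enters only through the quoted Simpson-type inequality \eqref{logTr} bounding $\Vert\log \Tr H\Vert_{L^1}$ by the Donaldson functional $M_D$, whose monotonicity \eqref{monotonicity} then gives a uniform $L^1$ bound, upgraded to $L^\infty$ by the De Giorgi--Nash--Moser iteration for K\"ahler cone metrics of \cite{LZ}. What your approach buys is self-containedness on the stability-to-$C^0$ implication; what the paper's approach buys is that the genuinely new content -- the $W^{2,p}$, $C^{1,\a,\b}$, then Donaldson--Schauder bootstrap to $C^{2,\a,\b}$ and $C^{4,\a,\b}$ regularity across $D$, plus the verification that the limit is a global $W^{1,2}$ weak solution (vanishing boundary terms) -- is cleanly isolated, with the stability mechanism taken as a black box. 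Two small imprecisions in your sketch, neither fatal: the pointwise bound on $|\Lambda_{\omega_B}F|$ comes from the maximum principle along the flow alone, not from the functional's monotonicity (which only yields a time-integrated $L^2$ bound on $\Lambda F-\lambda\,Id_E$); and in \cite{LiJ} the threshold $\delta_0$ is tied to the parabolic weights of $E$ (so that the model metric $h_0$ satisfies \eqref{modelmetric} and the analytic degree computes the parabolic degree), not only to Sobolev constants.
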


We introduce the following definition of Hermitian-Einstein cone metric. 
 
 \begin{defn}\label{HEc-def}
 	As above, let $B$ be a base compact K\"ahler manifold endowed with a K\"ahler metric $\omega_B$ with conical singularities  along $D$, smooth divisor. Let $E$ a parabolic vector bundle with respect to  $D$ and $h_E$ a hermitian metric on $E_{\vert B\setminus D}$. We say that $h_E$ is a \textit{Hermitian-Einstein cone metric}, if $h_E$ satisfies the  Hermitian-Einstein equation \eqref{HEeqn} pointwisely over $B\setminus D$, $h_E$ is compatible with the parabolic structure and $h_E$ lies in  $C^{2,\a,\b}(\mathcal {H}^+(E))$.
 \end{defn}

\begin{thm}\label{HEsol}
Under same assumptions as in Theorem \ref{Simpson-Li}, the Hermitian-Einstein metric $h_E$ is actually a Hermitian-Einstein cone metric in the sense of Definition \ref{HEc-def}. \\
Moreover, if  $\alpha$ and $\beta$ satisfy the Condition (\ref{anglerestriction}), then $h_E\in C^{4,\a,\b}(\mathcal {H}^+(E)  )$.
\end{thm}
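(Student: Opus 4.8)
The metric $h_E$ furnished by the theorem of Simpson and Li is a priori only known to be smooth on $M=B\setminus D$, mutually bounded with the model metric $h_0$, and to satisfy $|\bar\partial(h_0^{-1}h_E)|_{h_0}\in L^2$. Everything left to prove is a regularity statement across $D$, and the plan is to bootstrap the Hermitian--Einstein equation \eqref{HEeqn}, viewed as a second order elliptic system for the matrix $H=\{H_{\varsigma\bar\tau}\}$, using the cone-adapted $L^p$ and Schauder theory of Section \ref{Section2}. First I would fix a local holomorphic frame near a point of $D$ and record, as in the computation preceding Definition \ref{HEc-def}, that \eqref{HEeqn} reads componentwise
\begin{equation*}
\tri_{\om_B}H_{\varsigma\bar\tau}=g_B^{k\bar j}\sum_{\gamma,\nu}H^{\gamma\bar\nu}\,\p_kH_{\varsigma\bar\nu}\,\p_{\bar j}H_{\gamma\bar\tau}-c\,H_{\varsigma\bar\tau},
\end{equation*}
with $c$ a constant proportional to $\tilde\mu(E)/\Vol$. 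Since $h_E$ and $h_0$ are mutually bounded, $\det H$ stays away from $0$, so $H^{-1}=\{H^{\gamma\bar\nu}\}$ is well defined and bounded; the right-hand side is thus a bounded zeroth order term plus a term quadratic in the first derivatives of $H$ contracted against $g_B^{k\bar j}$.

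The crucial and, I expect, hardest step is to promote the weak solution to cone-Hölder continuity of its first derivatives: $H,H^{-1}\in C^{1,\a_0,\b}$ for some $\a_0\in(0,1)$. The usable input is that, by \eqref{HEeqn}, the mean curvature $\Lambda_{\om_B}F_{h_E}=g_B^{k\bar j}\p_{\bar j}(H^{-1}\p_kH)$ is a constant multiple of the identity, hence bounded, which puts the equation in divergence form. The difficulty is that compatibility only furnishes the connection form $H^{-1}\p H$ in $L^2$, whereas the local $L^p$ theory for the cone Laplacian (Lemma \ref{lppurelocal} and Proposition \ref{lppureestimate}) needs the quadratic term in $L^p$ with $p>2n$ in order to reach, through the cone Sobolev/Morrey embedding built on Lemmas \ref{w1pbSobolev imbedding theorem} and \ref{Sobolev imbedding theorem}, a Hölder space. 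Bridging this gap is the analytic heart of the argument: I would run a Moser-type iteration for $|H^{-1}\p H|$ exploiting the boundedness of the mean curvature together with the cone Sobolev inequality to upgrade the gradient to $L^{2p}$ for some $p>2n$, so that the quadratic term lies in $L^p$; the local estimate of Lemma \ref{lppurelocal} then yields $H\in W^{2,p,\b}_{loc}$, and the embedding gives $H\in C^{1,\a_0,\b}$, with the same regularity for $H^{-1}$ following by Cramer's rule.

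Once continuity of the gradient is established the bootstrap is essentially a reprise of Section \ref{Section2}. Treating the now-Hölder quadratic term and the zeroth order term as data, the cone-Schauder estimate for second order equations (Proposition \ref{linearestimate}) applied to each component promotes $H$ to $C^{2,\a,\b}$; together with the compatibility and the pointwise validity of \eqref{HEeqn} already known, this is exactly Definition \ref{HEc-def}, so $h_E$ is a Hermitian--Einstein cone metric. For the last assertion I would differentiate the equation in the tangential directions $\p_k$, $k\geq2$, and iterate: with $H\in C^{2,\a,\b}$ one checks, using the weights $|z^1|^{2-2\b}$ occurring in the contraction with $g_B^{1\bar1}$ as in Corollary \ref{cor4ab}, that the right-hand side gains regularity, so that the higher-order cone estimates of Propositions \ref{3rdderivativeD} and \ref{4thderivativeD} together with \cite[Proposition 4.3]{LZ2} push the solution first to $C^{3,\a,\b}$ and then to $C^{4,\a,\b}$. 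As in the regularity theorem for cscK cone metrics of \cite{LZ2}, Condition \eqref{anglerestriction} is precisely what makes this last propagation legitimate, yielding $h_E\in C^{4,\a,\b}(\mathcal{H}^+(E))$.

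The genuine obstacle is the initial step: the a priori control inherited from Simpson--Li is only $L^2$ on the connection form, and crossing to the supercritical integrability near $D$ required by the cone $L^p$ theory is where the real work lies. It is also here that the smallness of the cone angle assumed in the theorem of Simpson and Li enters, whereas Condition \eqref{anglerestriction} is needed only for the final passage to $C^{4,\a,\b}$.
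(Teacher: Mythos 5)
Your final bootstrap (once $H\in C^{1,\a,\b}$ the quadratic term is $C^{,\a,\b}$, each component then satisfies an independent scalar equation, Donaldson's Schauder estimate gives $C^{2,\a,\b}$, and Condition \eqref{anglerestriction} pushes this to $C^{4,\a,\b}$) agrees with the paper's last step. But your route to $C^{1,\a,\b}$ has a genuine gap, and you have put your finger on it yourself: the passage from the $L^2$ bound on $h_0^{-1}\p h_E$ furnished by compatibility to supercritical integrability of the gradient near $D$. The ``Moser-type iteration for $|H^{-1}\p H|$'' is asserted, not constructed. Moser iteration applies to scalar subsolutions; to obtain a differential inequality for $|H^{-1}\p H|$ you must differentiate the Hermitian--Einstein system (a Bochner-type computation), which brings in the full curvature tensor of the cone metric $\om_B$ --- not bounded near $D$ for a metric with merely $C^{2,\a,\b}$ potential --- together with quadratic gradient terms on the right-hand side. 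Critical quadratic gradient nonlinearities are exactly the setting (weakly harmonic maps, removable-singularity theorems for Yang--Mills) where unconditional gradient iteration fails and one needs small energy or a compactness argument. Your closing claim that the smallness of the cone angle rescues this step is likewise unsupported: in the paper that smallness enters only through Li's construction of the model metric $h_0$ and the existence theorem, not through any gradient estimate.

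The paper sidesteps the problem entirely: rather than bootstrapping the already-existing Simpson--Li metric, it reconstructs the solution by running Donaldson's flow with Dirichlet data $h_0$ on the exhausting domains $U_\delta=U\setminus D_\delta$. Along this flow $\sup|\Lambda_{\om_B}F_h|_h$ is non-increasing by the maximum principle; the $C^0$ estimate comes from Donaldson's distance function and the maximum principle; the gradient estimate is obtained by Simpson's compactness/contradiction argument \cite[Lemma 6.4]{MR1040197} --- not by any iteration on the gradient; and $W^{2,p}$ bounds then follow from the linear $L^p$ theory (Proposition \ref{lppureestimate}) because the equation has bounded right-hand side. Stability enters through the monotonicity of the Donaldson functional combined with Simpson's estimate $\Vert\log\Tr H\Vert_{L^1(\om_B)}^2\leq C_1+C_2M_D(h_0,h)$, and the Moser iteration actually used (\cite[Section 4]{LZ}) is applied to the scalar quantity $\log\Tr H$, which satisfies $\tri_{\om_B}\log\Tr H\geq -f$ with $f$ bounded --- a legitimate scalar setting. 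To keep your strategy of directly bootstrapping the Simpson--Li metric you would have to prove the gradient upgrade, which as you have framed it is an open analytic problem; the paper's flow construction is precisely the device that replaces it by uniform a priori estimates.
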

\begin{rem}\label{remHE}
 Note that the converse is true and constitutes the easy sense of the correspondence: an indecomposable parabolic vector bundle equipped with a Hermitian-Einstein cone metric compatible with its parabolic structure is actually parabolic stable. We refer \cite[Theorem 6.3]{LiJ}, \cite[Proposition 3.3]{Sim}.
\end{rem}

\begin{proof}
We start the proof by noticing that we could take a partition of unity $\{\theta_p\}$ of the base manifold $B$ subordinate to an open cover $\{U_p\}$ and construct the Hermitian-Einstein metric on each trivialization of the holomorphic vector bundle $E$ over each cover. It suffices to consider the cone chart $U$, which intersects with the divisor $D$, since far from the divisor all the arguments are the same to \cites{MR1165874,MR765366,MR885784,MR861491,MR1215276}.

 The proof is divided in several steps. We first use Dirichlet problem for Donaldson's flow to produce the weak solution $H$ to the Hermitian-Einstein equation away from the divisor and then prove the regularity of the  Hermitian-Einstein limit metric. Of course, the new improvement with our theorem is the regularity of the weak Hermitian-Einstein metric. Note that we do not try to improve the regularity of the flow itself, which is a parabolic system and the Schauder estimate is not yet known in this case. But instead, we use the limit equation and improve the regularity by observing that the nonlinear term itself is H\"older. As a result, each equation in the system is independent, and we are able to apply the elliptic regularity theorem for second order equations with conical singularities to each single equation of the system. 
 
 Using $h_0$ the model metric fixed by Li and which satisfies \eqref{modelmetric}, we write the endomorphism $$H=h {h_{0}}^{-1}.$$ We let $s$ be a section of $L_D$, $h_L$ be a smooth positively curved hermitian metric on the line bundle $L_D$. We denote by $D_\delta$ a $\delta$ tubular neighbourhood of the divisor $D$ for small $0<\delta\leq 1$. We also use $$\lambda=\frac{\tilde{\mu}(E)}{\Vol}.$$ Eventually, we omit the factor $\frac{\sqrt{-1}}{2\pi}$ in from of the contraction operator $\Lambda_\omega$ to ease notations. 
 
According to Donaldson \cite{MR1165874},
the Dirichlet problem for the following flow $h=h_t$,
\begin{equation}\label{deltaflow}
  \left\{
   \begin{aligned}
\dot hh^{-1}&=-( \L_{\omega_B} F_h-\lambda Id_E) \text{ over } U_\delta=U\setminus D_\delta,\\
h(x,0)&=h_0, \quad x\in U_\delta\\
h(x,t)&=h_0,\quad x\in \p U_\delta,  t\geq 0 \\
   \end{aligned}
  \right.
\end{equation}
has a unique global solution for $0\leq t< +\infty$.
We keep in mind that, we now obtain a sequence of solutions to Donaldson's flow on $\delta$, and denote the solutions by $h_\delta$. As further shown in \cite{MR1165874}, the convergence of \eqref{deltaflow} is irrelevant of the delicate conditions of stability. 
We will need that the approximation flow $h_\delta$ converges to a limit flow as $\delta\rightarrow 0$, the limit flow converges to a Hermitian-Einstein metric $h_\infty$ with conical singularities as $t\rightarrow+\infty$ and this limit metric $h_\infty$ has higher order regularity across the divisor $D$.
In order to achieves these goals, we need the following a priori estimates.

\textbf{Step: Uniform bound of $A(x,t):=|\L_{\omega_B} F_{h}|_h$.}
We have along the flow \eqref{deltaflow},
\begin{equation}\label{deltaflow2}
  \left\{
   \begin{aligned}
(\p_t-\tri_{\om_B})A(x,t)&\leq 0 \text{ in } U_\delta,\\
 A(x,0)&= |\L_{\omega_B} F_{h_0}|_{h_0}^2, \quad x\in U_\delta\\
 A(x,t)&= |\L_{\omega_B} F_{h_0}|_{h_0}^2,\quad x\in \p U_\delta,  t\geq 0.\\
   \end{aligned}
  \right.
\end{equation}
Let $A(t)=\sup_{U_\delta}A(x,t)$, then we apply the maximum principle, 
\begin{align}
\p_tA(t)\leq 0 .
\end{align}
So we prove that $\sup_{U_\delta}|\L_{\omega_B} F_h|_h$ is non-increasing along the flow, and also it is uniformly bounded by the initial given data  $|\L_{\omega_B} F_{h_0}|_{h_0}$ and independent of $\delta$ and $t$.

\textbf{Step: Zero order estimate.}
We are now aiming to prove that 
$h(t)$ converges to $h(T)$ in $C^0$ norm, for any finite time $T< \infty$. 
Donaldson's distance function between two Hermitian metrics is used,
\begin{align}
\sigma(h,k)=\tr h^{-1} k+\tr k^{-1} h -2 \rk(E).
\end{align}
It is known that for any two flows $h(t)$ and $k(t)$,
\begin{align}\label{decreasingdistancefunction}
\left(\frac{\p}{\p t}-\tri_{\om_B}\right)\sigma(h(t),k(t))\leq 0.
\end{align}
For any $\eps>0$, we choose $\kappa>0$ such that in the $\kappa$-neighbourhood of $t=0$, i.e.  for all $0\leq s,\tau<\kappa$, $$\sup_{U_\delta}\sigma(h_\delta(s),h_\delta(\tau))<\eps.$$
We now let $k_\delta(t)=h_\delta(t+s)$ in the inequality above, we see that $\sigma(h_\delta(t),k_\delta(t))$ is always zero on the boundary of the domain $U_\delta$ along the flow.
From maximum principle, we see that in the $\kappa$-neighbourhood of $T$, i.e. when $T-\kappa<s',\tau'<T$, $$\sup_{U_\delta}\sigma(h_\delta(s'),h_\delta(\tau'))\leq \eps.$$ Thus $h_t$ is a uniform Cauchy sequence and converges in $C^0$ norm to $h_T$.

\textbf{Step: Gradient estimate.}  It follows from the contradiction method, see  \cite[Lemma 6.4]{MR1040197}, that $h_t$ are bounded in $C^1$.

\textbf{Step: $W^{2,p}(\om_B)$ estimate.}   
From the gradient estimates above, $|H_{\varsigma\bar\tau}|_{C^1}$ is bounded and also is $\L_{\omega_B} F_h$.  So $-\tri_{\om_B} H_{\varsigma\bar\tau}$ is bounded by using the equation
\begin{align}\label{curvatureequ}
(\Lambda_{\omega_B} F_h)_{\varsigma\bar\tau}=-\tri_{\om_B} H_{\varsigma\bar\tau}+g_B^{k\bar j}\sum_{\gamma,\nu=1}^r H^{\gamma\bar\nu}\p_k H_{\varsigma\bar\nu} \p_{\bar j} H_{\gamma\bar\tau}.
\end{align} After applying the interior $L^p$ theory of the linear equation in $U_\delta$, we have for any $\varsigma,\tau$, $H_{\varsigma\bar\tau}\in W_{\bf s}^{2,p}(\om_B)(K)$ for any $K\subset\subset U_\delta$. And Proposition \ref{lppureestimate} for weaker Sobolev spaces tells us $H_{\varsigma\bar\tau}\in W^{2,p}(\om_B)(M)$. Thus $F_h$ is bounded in $L^p(M)$ norm for any $1\leq p<\infty$. Note that $W_{\bf s}^{2,p}$ is the strong Sobolev spaces and $W^{2,p}$ is the weaker one, see Definitions \ref{strongSobolevspace} and \ref{Sobolevspace}.

\textbf{Step: Long time existence.}
In each $U_\delta$, we can see deduce from the estimates above that the solution to the approximation equation \eqref{deltaflow} 
has long time existence. 
For any compact subset $K\subset\subset B\setminus D$, we could choose a small enough $\delta_K$ such that for any $\delta<\delta_K$, $K\subset\subset U_\delta$. 
Since the metrics $h(x,t)$ have uniform $W^{2,p}(\om_B)(M)$ estimates for any $p$, and independently of $\delta$, $h_\delta$ converges to a limit flow $h=\lim_{\delta\rightarrow 0} h_\delta$ in $W^{2,p}(\om_B)(M)$-norm for any $p\geq 1$ and furthermore $h$ solves
\begin{equation}\label{limitflow}
  \left\{
   \begin{aligned}
\dot hh^{-1}&=-(\L_{\omega_B} F_h-\lambda I) \text{ in } B\setminus D,\\
h(x,0)&=h_0, \quad x\in B\setminus D.
   \end{aligned}
  \right.
\end{equation}
 
\textbf{Step: Convergence.}
We need a subspace of the space of Hermitian metrics,
\begin{align*}
{\mathcal H}_{bounded}(E)=\Big\{ &h \text{ is a Hermitian metric on $E$ over $B\setminus D$ such that} \\
&\sup_{B\setminus D}|h|<+\infty\text{ and }\sup_{B\setminus D}\vert \Lambda_{\omega_B} F_{h}\vert_{h} < 2\sup_{B\setminus D}\vert\Lambda_{\omega_B} F_{h_{0}}\vert_{h_{0}} \Big\}.
\end{align*} 
Obviously, $h_0\in {\mathcal H}_{bounded}(E)$. Furthermore, for any $|h-h_0|_{C^{2,\a,\b}}\leq \eps$, we still have $ \sup_{B\setminus D}\vert\Lambda_{\omega_B} F_{h}\vert_{h} < 2\sup_{B\setminus D}\vert\Lambda_{\omega_B} F_{h_{0}}\vert_{h_{0}}$, provided that $\eps$ is small enough. Under the topology induced by the $C^{2,\a,\b}$-topology, we consider the path-connected branch of $h_0$, denoted by ${\mathcal H}_{bounded,h_0}(E)$, i.e. the set of metrics that be connected to $h_0$ by a path $$\{h_s,0\leq s\leq 1\}\subset {\mathcal H}_{bounded}(E).$$

Donaldson's functional for the cone version is well-defined on the space of Hermitian metrics ${\mathcal H}_{bounded,h_0}(E)$ with suitable asymptotic behavior near the divisor,
\begin{align*}
M_D(h_0,h)
=&\int_{0}^1 ds \int_B  \tr (\dot h_s h_s^{-1} \cdot F_{h_s}) \frac{ \omega_B^{n-1} }{(n-1)!}\\
&- \lambda\int_B \log \det(h_0 h^{-1})\frac{\omega_B^n}{n!},\\
=&\int_{0}^1 ds \int_B \tr (\dot h_s h_s^{-1} \cdot \L_{\omega_B} F_{h_s}) \frac{ \omega_B^{n} }{n!}\\
&- \lambda\int_B \log \det(h_0 h^{-1})\frac{\omega_B^n}{n!},
\end{align*}
where $h_s\in {\mathcal H}_{bounded,h_0}(E)$ is path connecting $h_0$ and $h$.
The definition is independent of the choice of the path. Actually, one can adapt to our setting the proof of the classical smooth case. The proof consists in showing that the variation of $M_D(h_0,.)$ is a closed 1-form. It requires to study the term $\phi_h:=\tr (h^{-1} \tilde{d}h\cdot F_{h})$ where $$h:\{(t,s),a\leq t\leq b,0\leq s\leq 1\}=\Delta\to {\mathcal H}_{bounded}(E)$$ is a smooth map and $\tilde{d}=(\partial_s)ds+(\partial_t)dt$ is the exterior differentiation on the domain $\Delta$. But the 1-form $\phi_h$ is well defined from our assumption on ${\mathcal H}_{bounded}(E)$ and one can apply Stokes theorem $\int_{\partial\Delta} \phi_h = \int_\Delta \tilde{d}\phi_h$. Then one can follow word by word  the proof of \cite[Lemma 3.6]{Kob}. Alternatively, one can show that the curvature of $h$ is a moment map for the action of the Gauge group on the space of Chern connections associated to ${\mathcal H}_{bounded}(E)$, see for instance \cite{MR2039989} and also \cite[Lemma 7.2]{Sim}. Then, as a classical result of the moment map theory, $M_D$ is the associated integral to this moment map and is consequently independent of the choice of the path.

We have by the arguments of \cite[Proposition 5.3]{Sim}, that there are two constants $C_1$ and $C_2$ such that for any $h\in {\mathcal H}_{bounded}(E)$
\begin{align}\label{logTr}
\Vert\log \Tr H\Vert_{L^1(\om_B)}^2\leq C_1+C_2 M_D(h_0,h).
\end{align}

Since $\L_{\omega_B} F_h$ and $h_0h^{-1}$ are both bounded, the following functional is  well-defined along the flow $h_t$,
\begin{align*}
M_D(h_0,h_t)
=&\int_{0}^t d\tau \int_B \tr (\dot h_\tau h_\tau^{-1} \cdot \L_{\omega_B} F_{h_\tau}) \frac{ \omega_B^{n} }{n!}\\
&- \lambda\int_B \log \det(h_0 h_t^{-1})\frac{\omega_B^n}{n!}.
\end{align*}

We need its first variation formula along the flow,
\begin{align}\label{monotonicity}
\frac{d}{dt}M_D(h_0,h_t)
=& \int_B \tr (\dot h_t h_t^{-1} \cdot \L_{\omega_B} F_{h_t}) \frac{ \omega_B^{n} }{n!}
- \lambda\int_B tr(\dot h_t h_t^{-1})\frac{\omega_B^n}{n!}, \nonumber\\
=&-\int_B |\L_{\omega_B} F_{h_t}-\lambda Id_E|^2 \frac{\omega_B^n}{n!}.
\end{align}
Thus the functional $M_D$ is non-increasing along the flow. This leads to a uniform upper bound to $\Vert\log \Tr H\Vert_{L^1(\om_B)}$ from \eqref{logTr}.

Now, we wish to apply De Giorgi-Nash-Moser iteration method for K\"ahler cone metrics of \cite[Section 4]{LZ} to the bounded $\log \Tr H$ in the following functional inequality
\begin{align}\label{logtrh}
\tri_{\om_B}\log \Tr H\geq -(|\L_{\omega_B} F_{h_0}|+|\L_{\omega_B} F_{h}|):=-f.
\end{align}
In order to do so, we need to examine the conditions of Proposition 4.8 in \cite{LZ}.
Firstly, the following Sobolev inequality with respect to $\om_B$ holds, i.e. for any $w \in W^{1,2}(\om_B)$, there is a Sobolev constant $C_S(\om_B)<+\infty$ such that
$$\|w\|^2_{L^{2^\ast}(\om_B)}\leq C_S(\om_B)(\|\nabla w\|^2_{L^2(\om_B)}+\|w\|^2_{L^2(\om_B)}),$$ 
where $2^\ast= \frac{2n}{n+1}$.
Secondly, we need to rewrite \eqref{logtrh} to the following form by using integration by parts, i.e. $v=\log \Tr H$ is a $W^{1,2} $ sub-solution of the linear equation in the weak sense, i.e. for any $\eta\in C^{2,\a,\b}$,
\begin{align}\label{linear equ ge2}
\int_B(\p v, \p\eta)_{\om_B} \om_B^n\leq -\int_Bf\eta\om_B^n.
\end{align} 
This is achieved by using the approximation sequence and $v$ vanishes on the exhaustion domains.
So, outside the $\delta$ neighbourhood of the divisor $D$,
\begin{align}
\int_{B\setminus D_\delta}(\p v, \p\eta)_{\om_{B}} \om_B^n
&=\int_{B\setminus D_\delta}-\tri v \eta  \om_B^n
+\int_{\p D_\delta} v \cdot \p\eta d\nu
&\leq -\int_Bf\eta\om_B^n.
\end{align}
The boundary $\int_{\p D_\delta} v \cdot \p\eta d\nu \rightarrow 0$, as $\delta\rightarrow 0$.

Let $$\tilde v=v-\frac{1}{V}\int_M v\,\om_B^n,$$ then there exists from \cite[Proposition 4.8]{LZ} a constant $C$ depending on the Sobolev constant $C_S$ with respect to $\om_B$ such that for $p^\ast=\frac{2np}{2n+p}$,
\begin{align}\label{glob bound claim}
\sup_B \tilde v \leq C(\| f \|_{L^{p^\ast}(\om_B)} +\| \tilde v\|_{L^1(\om_B)}).
 \end{align}
We thus obtain the $L^\infty$ bound of $H$,
\begin{align}
\sup_B |\log \Tr H|\leq C(\| f \|_{\infty} +\|\log \Tr H\|_{L^1(\om_B)}).
\end{align}

From the monotonicity of the energy along the flow \eqref{monotonicity}, the right hand side is uniformly bounded.
Letting $\eta=v$ in \eqref{linear equ ge2}, we have
\begin{align}
\Vert \log \Tr H\Vert_{W^{1,2}(\om_B)} \leq C.
\end{align} 
Thus we are able to prove that $H_t$ converges in $C^0$ norm to some $H_\infty$ and then get $C^1$ norm of $H_t$ which is independent of $t$, as the gradient estimate above in {\it Step: Gradient estimate}. After applying the $L^p$ theory of the linear equation with respect to K\"ahler cone metrics developed in \cite{chenwang} to the curvature equation \eqref{curvatureequ}, we have $H_{\varsigma\bar\tau}\in W^{2,p}(\om_B)$. Furthermore, the $W^{2,p}$ norm of $H(t)$ is independent of $t$, as the proof in {\it Step: $W^{2,p}(\om_B)$ estimate.}

\textbf{Step: $W^{1,2}(\om_B)$ weak solution.}
Now we have a Hermitian-Einstein metric on the regular part, but we still need to verify that the limit metric satisfies the Hermitian-Einstein equation in $W^{1,2}(\om_B)$ sense,
\begin{align}\label{HEequ}
Const\cdot I_{\varsigma\bar\tau}=-\tri_{\om_B} H_{\varsigma\bar\tau}+g_B^{k\bar j}\sum_{\gamma,\nu=1}^r H^{\gamma\bar\nu}\p_k H_{\varsigma\bar\nu} \p_{\bar j} H_{\gamma\bar\tau}.
\end{align} 
We fix $H= H_{\varsigma\bar\tau}$ and denote the nonlinear term $$\mathsf{N}=g_B^{k\bar j}\sum_{\gamma,\nu=1}^r H^{\gamma\bar\nu}\p_k H_{\varsigma\bar\nu} \p_{\bar j} H_{\gamma\bar\tau}.$$
We need
\begin{align}
C \int_B {\rm Id} \eta\om^n_B=\int_B (\p H,\p \eta)_{\om_B}\om^n_B
+\int_B \mathsf{N} \eta\om^n_B.
\end{align}
It suffices to use the approximation sequence again and prove the boundary term 
$
\int_{\p D_\delta} \p H\cdot \eta \rightarrow 0,$
as $\delta\rightarrow 0$. Thus is true, since $H$ has uniform $C^1$ norm.

\textbf{Step: $C^{4,\a,\b}$ estimate.}
Once we have $H_{\varsigma\bar\tau}$ is a $W^{1,2}(\om_B)$ weak solution and lies in $W^{2,p}(\om_B)$, we can apply the Sobolev embedding theorem \cite{chenwang} to obtain $H_{\varsigma\bar\tau}\in C^{1,\a,\b}$, thus returning to \eqref{HEequ}, the nonlinear term $\mathsf{N}$ is $C^{,\a,\b}$. Then we have $H_{\varsigma\bar\tau}\in C^{2,\a,\b}$ by Donaldson's Schauder estimate and bootstrap to $H_{\varsigma\bar\tau}\in C^{4,\a,\b}$ similar to the proof of Proposition \ref{Schauder} in Section \ref{Weak solutions of bi-Laplacian equations: regularity}.

\end{proof}

\begin{rem}
From \cite{LiJ}, one gets that $\delta_0$ depends on the (difference of the) weights of the parabolic structure of $E$. So a priori, $\delta_0$ is fixed and we don't know its size, it can be $>$ or $<1/2$. But we consider here the theorem only for angles $\beta< \min(1/2,\delta_0)$. In general, we believe the asymptotic behaviour of Hermitian-Einstein metric $h_E$ could be well-understood with the method in \cite{YZ} and our angle restriction could be removed.
\end{rem}
\subsection{Parabolic stability and holomorphic vector fields}
It is well-known that Mumford stable vector bundles are simple. In the parabolic setting, we have the following result.

\begin{lem}\label{lem1}
Assume $E$ is parabolic stable with parabolic structure along a simple normal crossings divisor $D=\sum^m_{i=1} D_i$.
Then the holomorphic endomorphism are the homotheties, i.e $$H^0(End(E))=\mathbb{C}.$$
\end{lem}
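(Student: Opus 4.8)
The plan is to adapt the classical argument that a Mumford stable bundle is simple, working throughout with the parabolic slope $\mathrm{par}\mu$ and the canonical induced parabolic structures on subsheaves and quotients. Let $f\in H^0(End(E))$ be a holomorphic endomorphism of the underlying bundle; the goal is to show $f=c\cdot \mathrm{Id}$ for some $c\in\mathbb{C}$. First I would reduce to a vanishing statement: fix any point $x_0\in B$, let $\lambda\in\mathbb{C}$ be an eigenvalue of the fibrewise map $f_{x_0}$, and set $g:=f-\lambda\,\mathrm{Id}\in H^0(End(E))$. Then $\det g$ is a global holomorphic section of $\mathrm{Hom}(\det E,\det E)\cong\mathcal{O}_B$, hence constant since $B$ is compact and connected; as it vanishes at $x_0$ we get $\det g\equiv 0$. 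Thus it suffices to prove that a holomorphic endomorphism $g$ with $\det g\equiv 0$ must be identically zero, for then $f=\lambda\,\mathrm{Id}$.

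Assume for contradiction that $g\neq 0$, and let $s$ be the generic rank of $g$; then $0<s<r$ because $g\neq 0$ while $\det g\equiv 0$ forces $g$ to be nowhere invertible. The sheaf-theoretic kernel $K:=\ker g$ and image $S:=\mathrm{im}\,g$ are coherent subsheaves of $E$, torsion-free as subsheaves of the locally free $E$, of ranks $r-s$ and $s$ respectively, and both are proper and nonzero. Moreover there is a short exact sequence $0\to K\to E\xrightarrow{\,g\,} S\to 0$, so that $S\cong E/K$ as coherent sheaves. I would equip $K$, $S$ and the quotient $E/K$ with their induced parabolic structures in the sense of Definition \ref{defparab} (the maximal weights respecting the flags for the subsheaves $K,S\subset E$, and the corresponding quotient weights on $E/K$). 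By additivity of the parabolic degree for the canonical sub/quotient structures, $\mathrm{par}\deg(E)=\mathrm{par}\deg(K)+\mathrm{par}\deg(E/K)$, while the quotient weights on $E/K$ are dominated by the sub weights on the same underlying sheaf $S\subset E$, giving $\mathrm{par}\deg(E/K)\le \mathrm{par}\deg(S)$. Since $K$ and $S$ are proper nonzero subsheaves, parabolic stability yields $\mathrm{par}\deg(K)<(r-s)\,\mathrm{par}\mu(E)$ and $\mathrm{par}\deg(S)<s\,\mathrm{par}\mu(E)$. Adding these and combining with the two relations above produces $\mathrm{par}\deg(E)<r\,\mathrm{par}\mu(E)=\mathrm{par}\deg(E)$, a contradiction. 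Hence $g\equiv 0$ and $f=\lambda\,\mathrm{Id}$, proving $H^0(End(E))=\mathbb{C}$.

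The main obstacle is the parabolic bookkeeping in the middle step. Because $g$ need not preserve the flags $\mathcal{F}_i^\bullet$, I cannot assume $g$ is a morphism of parabolic sheaves, so I must verify \emph{directly from the weight conventions of Definition \ref{defparab}} that the additivity $\mathrm{par}\deg(E)=\mathrm{par}\deg(K)+\mathrm{par}\deg(E/K)$ holds for the canonical induced structures, and that the quotient-induced weights on $E/K$ are no larger than the sub-induced weights on $S$. Both are standard features of the canonical parabolic structures on sub- and quotient-sheaves, and each reduces to a weight comparison along the individual components $D_i$ (a computation on the filtration jumps), but they are precisely the points where the ``maximal weights respecting the flag'' prescription must be used carefully, and I expect this to be where the bulk of the verification lies.
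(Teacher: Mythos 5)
Your skeleton is the same as the paper's --- apply parabolic stability to the kernel and image of $g=f-\lambda\,\mathrm{Id}$ and play this off against additivity of the parabolic degree --- and your determinant reduction is a perfectly good substitute for the paper's ``zero or isomorphism'' dichotomy. The genuine gap is exactly the step you flagged and then waved through: the domination $\mathrm{par}\deg(E/K)_{\mathrm{quot}}\le\mathrm{par}\deg(S)_{\mathrm{sub}}$ is \emph{not} a standard feature of the canonical induced structures. Unwinding the definitions, the quotient filtration transported to $S$ is $\{g(\mathcal{F}_i^p)\}$, so the domination requires $g(\mathcal{F}_i^p)\subseteq\mathcal{F}_i^p$, i.e.\ that $g$ be a morphism of \emph{parabolic} bundles --- which, as you correctly observed, is not automatic. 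Moreover this cannot be repaired, because the lemma as literally stated is false. Take $B=\mathbb{P}^1$, $D=p_1+p_2+p_3$, $E=\mathcal{O}\oplus\mathcal{O}$, flags $E_{p_i}\supset L_i\supset\{0\}$ with $L_1,L_2,L_3$ pairwise distinct lines, all different from the two coordinate lines, and weights $(0,\tfrac12)$ at each $p_i$. Then $\mathrm{par}\mu(E)=\tfrac34$, while any sub-line bundle has degree $\le 0$ and can lie in at most one $L_i$, hence has parabolic slope $\le\tfrac12$; so $E$ is parabolic stable. Yet $H^0(End(E))=M_2(\mathbb{C})$, since every constant matrix is a holomorphic endomorphism of $\mathcal{O}\oplus\mathcal{O}$. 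For $g$ the projection onto the second coordinate factor one finds $\mathrm{par}\deg(K)_{\mathrm{sub}}=\mathrm{par}\deg(S)_{\mathrm{sub}}=0$ but $\mathrm{par}\deg(E/K)_{\mathrm{quot}}=\tfrac32$: additivity holds, your domination would read $\tfrac32\le 0$, and stability yields no contradiction (both slopes $0$ are $<\tfrac34$, consistently).

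You should also know that the paper's own proof contains the same flaw: its identity \eqref{ieq3} asserts precisely the additivity with both sub-induced structures that the example above violates ($\tfrac34\ne 0$ there). The statement that is true --- and the only one the paper actually uses, in Corollary \ref{cornovectorfield}, where the endomorphisms arising from $\pi_*TFibre(-\log \mathcal{D})$ preserve the parabolic structure by construction --- is that a parabolic stable bundle admits no \emph{parabolic} endomorphisms besides homotheties. Under that extra hypothesis your argument closes verbatim: $g=f-\lambda\,\mathrm{Id}$ is again parabolic, flag-preservation gives $g(\mathcal{F}_i^p)\subseteq S|_{D_i}\cap\mathcal{F}_i^p$, hence your domination inequality, hence the contradiction $\mathrm{par}\deg(E)<\mathrm{par}\deg(E)$. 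So the correct repair is not a finer weight computation but an added hypothesis on $f$ (and, strictly speaking, a corrected statement of the lemma).
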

\begin{proof} The proof is similar to the non parabolic case. Let $f$ be a holomorphic endomorphism which is not zero or an isomorphism. Then by holomorphicity of $f$, $\ker(f), \mathrm{Im}(f)$ have a coherent subsheaf of $E$ with quotient torsion free. One can obtain a parabolic structure for $F=\ker(f), \mathrm{Im}(f)$ by intersection $F_{\vert D_i}$ with the elements of the flag of $E_{\vert D_i}$, discarding the subspaces of $F_{\vert D_i}$ that coincide with another one, and considering the associated largest parabolic weights. Thanks to the parabolic stability of $E$, we have now the inequalities
\begin{align}
 \frac{\mathrm{par}\deg(\ker(f))}{\rk(\ker(f))} <  \mathrm{par}\mu(E), \label{ieq1} \\
  \frac{\mathrm{par}\deg(\mathrm{Im}(f))}{\rk(\mathrm{Im}(f))}  <  \mathrm{par}\mu(E). \label{ieq2}
\end{align}
But, the parabolic weights of $\ker(f)$ and $\mathrm{Im}(f)$ satisfy also
\begin{equation}
 \mathrm{par}\mu(E) = \frac{\mathrm{par}\deg(\ker(f)) +\mathrm{par}\deg(\mathrm{Im}(f))  }{\rk(\ker(f)) + \rk(\mathrm{Im}(f))}. \label{ieq3}
\end{equation}
Using inequalities \eqref{ieq1}, \eqref{ieq2} and Equation \eqref{ieq3}, one gets  a contradiction: $\mathrm{par}\mu(\ker(f))<\mathrm{par}\mu(\mathrm{Im}(f))$ and  $\mathrm{par}\mu(\mathrm{Im}(f))<\mathrm{par}\mu(\ker(f))$. Thus, $f$ is an isomorphism or trivial. In the first case, fix $x\in B$ and consider any eigenvalue of $f: E_x \rightarrow E_x$. Let's call $\lambda_0$ this eigenvalue. Then by the reasoning as above, $f-\lambda_0 Id_E$ is zero and consequently
$f$ is an homothety.
\end{proof}

We need the following classical definition of logarithmic tangent bundle.
\begin{defn}
 Consider $B$ a complex manifold of complex dimension $n$ and $D$ a divisor with simple normal crossings singularities. In local coordinates $D=\{z \text{ s.t } \prod_{i=1}^d z^i=0\}$. Then the logarithmic tangent bundle $TB(-\log D)$ is the locally free sheaf generated by the vector fields
 $z^i \frac{\partial}{\partial z^i}$ where $1\leq i \leq d$ and the vector fields $\frac{\partial}{\partial z^i}$ for $d<i\leq n$.\\
Let us consider the stratification of $B$ given by $B_0=B\setminus D$, $B_1=D\setminus Sing(D)$ and recursively $B_k$ is the non-singular part of $Sing(B_{k-1})$. $T_B(-\log D)$ can be seen as the sheaf of holomorphic vector fields $v$ on $B$ such that for every $k\geq 0$, every $x\in B_k$, $v_x$ is tangent to $B_k$.
\end{defn}

 We denote the $X$ the projectivised bundle and $\pi$ the associated projection map to $B$,
 $$X:=\mathbb{P}E^*, \hspace{1cm} \pi:X\to B.$$
 Next, we derive some information on the holomorphic automorphisms of $X$ when $E$ is parabolic stable and $B$ has no nontrivial holomorphic vector field.

\begin{cor}\label{cornovectorfield}
 Assume $E$ is a parabolic stable vector bundle with respect to the K\"ahler cone metric  $\omega_B$ and the base $B$ has no nontrivial holomorphic vector field. \\
 Then the Lie algebra $Lie(Aut_{\mathcal{D}}(X,[k\pi^*\omega_B+\hat{\omega}_E]))$ is actually trivial.
\end{cor}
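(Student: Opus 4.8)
The plan is to prove the slightly stronger fact that $X=\mathbb P E^*$ admits \emph{no} nonzero holomorphic vector field whatsoever, i.e. $H^0(X,T_X)=0$. Since $Lie(Aut_{\mathcal D}(X,[k\pi^*\omega_B+\hat{\omega}_E]))$ is, by definition, the space of holomorphic vector fields on $X$ that are tangent to $\mathcal D$ and admit a holomorphy potential, it is a subspace of $H^0(X,T_X)$, so its triviality follows at once.

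First I would separate the contributions of the base and the fibres by means of the relative tangent sequence
\begin{equation*}
0\longrightarrow T_{X/B}\longrightarrow T_X \xrightarrow{\;d\pi\;} \pi^*T_B\longrightarrow 0 .
\end{equation*}
Taking global sections yields the left-exact sequence
\begin{equation*}
0\longrightarrow H^0(X,T_{X/B})\longrightarrow H^0(X,T_X)\longrightarrow H^0(X,\pi^*T_B),
\end{equation*}
so it suffices to show that the two outer terms vanish. For the base term, the fibres of $\pi$ are projective spaces, hence $\pi_*\mathcal O_X=\mathcal O_B$, and the projection formula gives $\pi_*(\pi^*T_B)=T_B$; therefore $H^0(X,\pi^*T_B)=H^0(B,T_B)=0$ by the hypothesis that $B$ carries no nontrivial holomorphic vector field.

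For the fibre term I would use the relative Euler sequence, which after pushing forward (using $\pi_*\mathcal O_X=\mathcal O_B$ and $R^1\pi_*\mathcal O_X=0$) identifies $\pi_*T_{X/B}$ with $End(E)/(\mathcal O_B\cdot Id)$. The trace map provides a holomorphic splitting $End(E)=\mathcal O_B\cdot Id\oplus End_0(E)$ into scalar and trace-free parts, so $\pi_*T_{X/B}\cong End_0(E)$ and $H^0(X,T_{X/B})=H^0(B,End_0(E))$. Now parabolic stability of $E$ enters through Lemma \ref{lem1}: it gives $H^0(B,End(E))=\mathbb C\cdot Id$, and taking global sections of the trace splitting forces $H^0(B,End_0(E))=0$. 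Combining the two vanishings in the sequence above gives $H^0(X,T_X)=0$, which proves the corollary.

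The steps are individually standard; the point that needs the most care is the pushforward identification $\pi_*T_{X/B}\cong End_0(E)$ for the chosen convention $X=\mathbb P E^*$ (together with the harmless observation that $End(E^*)\cong End(E)$, so that stability of $E$ is exactly what is needed), and checking that Lemma \ref{lem1} indeed applies to the underlying holomorphic bundle of the parabolic bundle $E$. I expect no genuine obstacle beyond these bookkeeping points; in particular the tangency-to-$\mathcal D$ and holomorphy-potential conditions defining $Aut_{\mathcal D}$ are automatically subsumed once the ambient space $H^0(X,T_X)$ is shown to vanish.
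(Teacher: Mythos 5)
Your proposal is correct, but it takes a genuinely different route from the paper's. The paper stays in the logarithmic category: it uses the sequence $0\to TFibre(-\log \mathcal{D}) \to TX(-\log \mathcal{D}) \to \pi^*(TB(-\log D))\to 0$, kills the base term by hypothesis, identifies global sections of the vertical term with trace-free holomorphic endomorphisms of $E$, and invokes Lemma~\ref{lem1} to get $H^0(X,TX(-\log \mathcal{D}))=0$, i.e.\ precisely the vanishing of vector fields tangent to $\mathcal{D}$. You instead prove the stronger statement $H^0(X,T_X)=0$ via the ordinary relative tangent sequence and the pushforward of the relative Euler sequence, $\pi_*T_{X/B}\cong End(E)/(\mathcal{O}_B\cdot Id)$; this is legitimate because Lemma~\ref{lem1} applies to \emph{arbitrary} holomorphic endomorphisms of the underlying bundle (its proof equips $\ker(f)$ and $\mathrm{Im}(f)$ with induced parabolic structures, and never assumes $f$ preserves the flags), so trace-free global endomorphisms vanish, and because $Lie(Aut_{\mathcal{D}}(X,[k\pi^*\omega_B+\hat{\omega}_E]))$ sits inside $H^0(X,T_X)$. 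What each approach buys: yours is more elementary (no logarithmic sheaves, only the standard tangent and Euler sequences) and yields a stronger conclusion, namely that $X$ has no nonzero holomorphic vector fields at all; the paper's logarithmic formulation, on the other hand, would go through under the weaker hypothesis $H^0(B,TB(-\log D))=0$, whereas your argument genuinely needs $H^0(B,T_B)=0$ --- which is, however, exactly what the corollary assumes, so nothing is lost here. Your bookkeeping points (the convention $X=\mathbb{P}E^*$, the isomorphism $End(E^*)\cong End(E)$, and $R^1\pi_*\mathcal{O}_X=0$ for projective-space fibres) are all handled correctly.
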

\begin{proof}
Consider $TFibre(-\log \mathcal{D})$ the sheaf of logarithmic tangent vectors to the fibre of $\pi$ with respect to $\mathcal{D}$.
There is an exact sequence
$$0\rightarrow TFibre(-\log \mathcal{D}) \rightarrow TX(-\log \mathcal{D}) \rightarrow \pi^*( TB(-\log D))\rightarrow 0$$
which provides a long exact sequence
\begin{align*}
0\rightarrow H^0(X,TFibre(-\log \mathcal{D})) \rightarrow H^0&(X,TX(-\log \mathcal{D})) \\
 & \rightarrow H^0(X,\pi^*( TB(-\log D)))\rightarrow ...
\end{align*}
Now, $H^0(X,\pi^*( TB(-\log D)))=0$ by assumption, and thus we obtain that the space $H^0(X,TFibre(-\log \mathcal{D}))$ is isomorphic to  $H^0(X,TX(-\log \mathcal{D}))$. \\
On another hand, $H^0(B,\pi_*TFibre(-\log \mathcal{D}))$ can be identified with the holomorphic parabolic endomorphisms along $D$ that are trace free. By parabolic endomorphisms, we mean endomorphisms of $E$ that preserve the parabolic structure of $E$. Hence,
$$H^0(X,TFibre(-\log \mathcal{D}))\simeq H^0(B,\pi_*TFibre(-\log \mathcal{D})).$$ Eventually, using Lemma \ref{lem1}, we obtain $H^0(X,TFibre(-\log D))=0$ and thus, $H^0(X,TX(-\log \mathcal{D}))=0$. This means that there is no nontrivial holomorphic vector fields tangent to $D$.
\end{proof}


\section{Construction of cscK cone metrics over projective bundles}\label{construction}
Given a (hermitian) vector space $\Xi$, there is an isomorphism  between $\Xi$ and $H^0(\mathbb{P}\Xi^*,\mathcal{O}_{\mathbb{P}\Xi^*}(1))$.
This leads to define a metric on $\mathcal{O}_{\mathbb{P}\Xi^*}(1)$ by the following construction. For $v\in \Xi$, the element $\hat{v}\in H^0(\mathbb{P}\Xi^*,\mathcal{O}_{\mathbb{P}\Xi^*}(1))$ is such that
$\hat{v}(\xi)=\xi(v)$ for $\xi \in \Xi^*$. Then from any metric $h$ on $\Xi$, we get a metric $h^*$ on $\Xi^*$ and a Fubini-Study metric $\hat{h}$ on the line bundle $\mathcal{O}_{\mathbb{P}\Xi^*}(1)$
by the formula \begin{equation}\label{hat}\hat{h}(\hat{v},\hat{w})(\xi)=\frac{\xi(v)\overline{\xi(w)}}{\vert\xi\vert^2_{h^*}}\end{equation}
for $v,w\in \Xi$, and $\xi\in \Xi^*$.\\
Consequently, from the Hermitian metric $h_E$ on the holomorphic vector bundle $E$, we get a Hermitian metric $\hat h_E$ on the line bundle $\mathcal{O}_{\mathbb{P}E^*}(1)$.
 
\subsection{Construction of background metrics}\label{background}

Over $X$, we consider the K\"ahler metric $\hat{\omega}_E \in c_1(\mathcal{O}_{\mathbb{P}E^*}(1))$ outside $\mathcal{D}$ given by the formula
\begin{align}
\hat{\omega}_E=i\bar\p\p\log \hat h_E.
\end{align}
\begin{prop}\label{wE} Let $h_E$ the Hermitian-Einstein cone metric obtained in Theorem \ref{HEsol}. Then 
$\hat{\omega}_E$ is a (1,1)-form on $X={\mathbb{P}E^*}$ with conical singularity along $\mathcal{D}$ in $C^{2,\a,\b}$ topology.
\end{prop}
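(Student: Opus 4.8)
The plan is to reduce the statement to a local computation in cone charts adapted to $\mathcal D=\pi^{-1}(D)$ and then to track how the $C^{4,\a,\b}$ regularity of $h_E$ (furnished by \thmref{HEsol}) propagates to the curvature form. Fix a point $p\in\mathcal D$ and a holomorphic frame $\{e_1,\dots,e_N\}$, $N=\rk(E)$, of $E$ over a base cone chart with coordinates $(z^1,\dots,z^n)$ in which $D=\{z^1=0\}$. Writing the dual fibre coordinates as $[\xi_1:\cdots:\xi_N]$ and passing to the affine chart $\{\xi_1\neq0\}$ with $w_k=\xi_k/\xi_1$ and $w_1:=1$, the section $\hat e_1$ trivialises $\mathcal O_{\mathbb{P}E^*}(1)$ there, and from \eqref{hat} one computes
\begin{align*}
|\hat e_1|^2_{\hat h_E}=\frac{1}{\sum_{i,j=1}^N H^{i\bar j}(z)\,w_i\bar w_j},
\end{align*}
where $H^{i\bar j}$ denotes the inverse of the matrix $H_{i\bar j}$ of $h_E$. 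Hence, on this chart,
$$\hat\omega_E=i\p\bar\p\,\Phi,\qquad \Phi(z,w):=\log\Big(\sum_{i,j=1}^N H^{i\bar j}(z)\,w_i\bar w_j\Big),$$
so it suffices to show that the local potential $\Phi$ lies in $C^{4,\a,\b}$, the singular direction of the cone chart on $X$ being $z^1$ and the remaining directions $z^2,\dots,z^n,w_2,\dots,w_N$ being regular (tangential to $\mathcal D$).

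Next I would record the regularity of the ingredients. By \thmref{HEsol} the metric $h_E\in C^{4,\a,\b}(\mathcal H^+(E))$, so every entry $H_{i\bar j}$ is a $C^{4,\a,\b}$ function of the base variable $z$. Since the cone H\"older space $C^{,\a,\b}$ is a Banach algebra and $\det H$ is positive, continuous and hence bounded away from zero on compact sets, the cofactor formula shows that each inverse entry $H^{i\bar j}$ is again $C^{4,\a,\b}$. As the fibre dependence is polynomial in $(w,\bar w)$, the denominator
$$P(z,w):=\sum_{i,j=1}^N H^{i\bar j}(z)\,w_i\bar w_j$$
is $C^{4,\a,\b}$ in $z$, real-analytic in $(w,\bar w)$, and, by positive definiteness of the dual metric, bounded above and below by positive constants on compact subsets of the chart.

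The final step is to compose with $\log$ and conclude. Because the only singular direction of the cone chart on $X$ is $z^1$ (a base direction) while the fibre variables enter smoothly, $P$ belongs to the cone space $C^{4,\a,\b}$ of the chart on $X$; the covariant-derivative characterisations of Propositions \ref{3rdderivativeD} and \ref{4thderivativeD} apply verbatim with the extra regular variables $w_k$ treated like $z^2,\dots,z^n$. Composition of a $C^{4,\a,\b}$ function with the smooth map $\log\colon(0,\infty)\to\mathbb R$ preserves this class: iterating the chain rule produces only sums of products of derivatives of $P$ against derivatives of $\log$ evaluated at $P$, each factor being of class $C^{4,\a,\b}$ or lower by the algebra property, while the weights $|z^1|^{k(1-\b)}$ attached to the singular derivatives are respected because $\log$ acts only on the unweighted, bounded value $P$. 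Thus $\Phi\in C^{4,\a,\b}$, and therefore $\hat\omega_E=i\p\bar\p\Phi$ is a $(1,1)$-form whose coefficients, weighted as in \eqref{2ndderivativescone}, are $C^{2,\a,\b}$; patching over a finite cover by the partition of unity of \secref{holderspacebundle} and invoking the frame-independence of the norms yields the global statement.

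The step I expect to be the main obstacle is the bookkeeping of the mixed weighted derivatives in this last composition: one must verify that derivatives mixing the singular slot $\p_{z^1}$ (carrying the weights $|z^1|^{1-\b}$) with the fibre directions $\p_{w_k}$ satisfy the weighted $C^{,\a,\b}$ bounds of \eqref{4thderivativescone}. This amounts to checking that $\mathcal D$ is a genuine cone divisor with the single singular direction $z^1$, so that the fibre coordinates really behave as tangential, regular directions, and that the chain rule for $\log\circ P$ creates no $z^1$-derivatives of $P$ beyond those already controlled by $H^{i\bar j}\in C^{4,\a,\b}$; once this is granted the remaining estimates are routine.
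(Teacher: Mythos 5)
Your proposal is correct and takes essentially the same route as the paper: the paper's own proof is a compressed version of your computation, simply observing that the local expression of $\hat\omega_E$ coming from \eqref{hat} involves only the entries of the matrix $H$ of $h_E$ (and its inverse) together with their first and second derivatives, so that $h_E\in C^{4,\a,\b}(\mathcal{H}^+(E))$ from Theorem \ref{HEsol} gives the stated $C^{2,\a,\b}$ conical regularity along $\mathcal{D}$. Your extra details --- the explicit potential $\log P$, the $C^{4,\a,\b}$ regularity of the inverse matrix via the algebra property, the composition with $\log$, and the treatment of the fibre coordinates as tangential directions --- just flesh out what the paper leaves implicit.
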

\begin{proof}
The local computation of $\hat{\omega}_E$ using \eqref{hat} involves only terms of the form $\partial_k H_{\varsigma \bar{\tau}}$, $\partial_j\partial_{\bar k} H_{\varsigma \bar{\tau}}$
 where $H$ is the matrix representing $h$ in a local frame close to $D$. Thus, in order to have a cone metric we only need to have the entries of $H$ to be $C^{4,\a,\b}$, and $h_E \in C^{4,\a,\b}(\mathcal {H}^+(E)  )$, which is the case from Theorem \ref{HEsol}.
\end{proof}

Thus, from the metric on the base $B$, we obtain a K\"ahler cone metric in $[k\pi^*\omega_B+\hat{\omega}_E]$, that we denote by  \begin{align}\label{omegak}\omega_k=k\pi^*\omega_B+\hat{\omega}_E.\end{align}
\begin{lem} \label{regomegak}  Let $h_E$ the Hermitian-Einstein cone metric obtained in Theorem \ref{HEsol} and assume that condition \eqref{anglerestriction} holds.  Then we have 
$\omega_k\in C^{2,\a,\b}$ and $S(\om_k)\in C^{,\a,\b}$.
\end{lem}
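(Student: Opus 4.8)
The plan is to treat the two assertions separately, working throughout in a cone chart $(z^1,\dots,z^n,w)$ on $X$ adapted to the submersion $\pi$, where $z^1$ is the defining coordinate of $D$ on the base and $w$ denotes fibre coordinates on $\mathbb{P}E^*$; the only singular direction is the base direction $z^1$. For the first claim I would argue that each summand of $\omega_k=k\pi^*\omega_B+\hat\omega_E$ is a $C^{2,\a,\b}$ $(1,1)$-form. Since $\omega_B$ is a cscK cone metric with $C^{2,\a,\b}$ potential and Condition \eqref{anglerestriction} holds, the regularity Theorem \ref{cscKconeregularity} upgrades its potential to $C^{4,\a,\b}$, so that $\omega_B$, and hence its pullback $\pi^*\omega_B$ (which involves only base directions), is a $C^{2,\a,\b}$ cone metric along $\mathcal D$. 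By Theorem \ref{HEsol} we have $h_E\in C^{4,\a,\b}(\mathcal{H}^+(E))$, and Proposition \ref{wE} then gives $\hat\omega_E\in C^{2,\a,\b}$. The sum is therefore a $C^{2,\a,\b}$ form; it is positive and quasi-isometric to the model cone because, as recalled just before the statement, $\pi^*\omega_B$ supplies the conical behaviour and positivity along the base and cone directions while $\hat\omega_E$ is smooth and positive along the fibres, so $\omega_k$ is a genuine Kähler cone metric along $\mathcal D$ in the $C^{2,\a,\b}$ sense.

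For $S(\omega_k)\in C^{,\a,\b}$ I would compute the Ricci form on $M'=X\setminus\mathcal D$ via $\mathrm{Ric}(\omega_k)=-i\p\bar\p\log\det g_k$ and exploit the block structure of $g_k$ relative to the fibration. In the chosen frame $g_k$ splits into a base--base block $kg_B+P$, a fibre--fibre block $R$, and mixed blocks $Q$, where $P_{i\bar j}$, $Q_{i\bar b}$, $R_{a\bar b}$ are the corresponding components of $\hat\omega_E$. The fibre block $R$ is positive definite, smooth along the fibres, and its coefficients are built algebraically from $H$ and $H^{-1}$, hence $C^{4,\a,\b}$ in the base directions; consequently $\log\det R\in C^{4,\a,\b}\subset C^{2,\a,\b}$. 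Factoring $\det g_k=\det R\cdot\det\!\big(kg_B+P-QR^{-1}Q^{*}\big)$, the base Schur complement is a positive $C^{2,\a,\b}$ cone metric on the base with $C^{4,\a,\b}$ potential, so by the computation recorded after Proposition \ref{4thderivativeD} together with Lemma \ref{geometryg} its log-determinant relative to the model metric lies in $C^{2,\a,\b}$, the singular contribution $(2\b-2)\log|z^1|$ being pluriharmonic and hence invisible to $i\p\bar\p$ on $M'$. Adding the two pieces gives $\log\det g_k\in C^{2,\a,\b}$ modulo a pluriharmonic term, whence $\mathrm{Ric}(\omega_k)=-i\p\bar\p\log\det g_k\in C^{,\a,\b}$.

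Finally I would write $S(\omega_k)=\Lambda_{\omega_k}\mathrm{Ric}(\omega_k)=g_k^{a\bar b}R_{a\bar b}(\omega_k)$ and check that this contraction lands in $C^{,\a,\b}$. The weighted components of the inverse cone metric are controlled by Corollary \ref{cor4ab} (for instance $|z^1|^{2-2\b}g_k^{1\bar1}$ is $C^{,\a,\b}$), and the Ricci components obey the dual conical weights of \eqref{2ndderivativescone} (e.g. $|z^1|^{2\b-2}R_{1\bar1}(\omega_k)$ is $C^{,\a,\b}$), so each product $g_k^{a\bar b}R_{a\bar b}$ is a bounded $C^{,\a,\b}$ function and the full trace is $C^{,\a,\b}$. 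The main obstacle is the middle step: verifying that the base--fibre coupling does not lower the regularity of $\log\det g_k$ below $C^{2,\a,\b}$ --- in particular that the Schur complement $QR^{-1}Q^{*}$, built from first base-derivatives of $H$, together with the genuinely conical base block, still has a $C^{4,\a,\b}$ potential --- and correctly isolating the pluriharmonic $\log|z^1|$ singularity so that it disappears from the Ricci curvature on $M'$. Once this is in hand, both conclusions follow.
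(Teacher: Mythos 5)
Your first claim and its proof (write $\omega_k$ as the sum of two $C^{2,\a,\b}$ forms, using Theorem \ref{cscKconeregularity} for $\omega_B$ and Theorem \ref{HEsol} together with Proposition \ref{wE} for $\hat\omega_E$) coincide with the paper's argument. The problem is the second claim, where you take a genuinely different route --- factoring $\det g_k=\det R\cdot\det\bigl(kg_B+P-QR^{-1}Q^{*}\bigr)$ --- and this route contains the gap that you yourself flag as ``the main obstacle''. The gap is real, not merely a missing verification: the Schur complement $kg_B+P-QR^{-1}Q^{*}$ is not ``a cone metric on the base''. Its entries depend on the fibre coordinates as well as the base coordinates, and, more importantly, there is no reason for it to be a closed $(1,1)$-form, i.e.\ locally of the form $i\p\bar\p$ of anything; so it has no potential, let alone a $C^{4,\a,\b}$ one, and the machinery of Lemma \ref{geometryg} and Proposition \ref{4thderivativeD}, which is formulated for K\"ahler cone metrics with $C^{4,\a,\b}$ potentials, cannot be invoked for this factor. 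Since the regularity of $Ric(\omega_k)=-i\p\bar\p\log\det g_k$ in your argument rests precisely on this step, the proposed proof of $S(\om_k)\in C^{,\a,\b}$ does not go through as written.

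The paper's route avoids the factorization entirely and is the natural repair of your argument: by Theorem \ref{cscKconeregularity} the potential of $\omega_B$ is upgraded to $C^{4,\a,\b}$, and by Theorem \ref{HEsol} the local potential of $\hat\omega_E$ (built algebraically from $H$, $H^{-1}$ and the smooth fibre coordinates) has the same $C^{4,\a,\b}$ regularity; hence $\omega_k$ itself is a K\"ahler cone metric with $C^{4,\a,\b}$ potential. The general fact recorded just before Corollary \ref{cor4ab} (potential in $C^{4,\a,\b}$ plus Condition \eqref{anglerestriction} implies $\log\om^n\in C^{2,\a,\b}$ and $Ric=-i\p\bar\p\log\om^n\in C^{,\a,\b}$) then applies to $\omega_k$ directly, and your final trace step finishes the proof with no determinant splitting needed. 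On that last step, note also a slip in the weights: by \eqref{2ndderivativescone}, a $C^{,\a,\b}$ $(1,1)$-form satisfies $|z^1|^{2-2\b}R_{1\bar1}\in C^{,\a,\b}$ (not $|z^1|^{2\b-2}R_{1\bar1}$), while it is $|z^1|^{2\b-2}g_k^{1\bar1}$ that is bounded and H\"older; the two weights must be exchanged so that they cancel in the product $g_k^{1\bar1}R_{1\bar1}$.
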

\begin{proof}
This is an application of Theorem \ref{cscKconeregularity}. The cscK cone metric $\omega_B$ on the base is $C^{2,\a,\b}$, while $\hat{\omega}_E$ is also $C^{2,\a,\b}$ from the previous result.
\end{proof}


\subsection{Expansion of scalar curvature}
Let us remember that we know from Hong's techniques.
\begin{lem}\label{Hongt}
On the regular part of $\mathbb{P}E^*$,
\begin{align}\nonumber S(\omega_k)([v])=&r(r-1)+\frac{1}{k}\left(\pi^* S(\omega_B) + 2r\frac{\sqrt{-1}}{2\pi}\Lambda_{\omega_B} \mathrm{tr}\left([F_{h_E}]^0\frac{v\otimes v^{*_{h_E}}}{\Vert v\Vert^2}\right)\right)\\
&+ O\left(\frac{1}{k^2}\right) \label{eqS}
\end{align}
where $r=\rk(E)$, $[v]\in \mathbb{P}E^*$ and $[.]^0$ denotes the trace free part.
\end{lem}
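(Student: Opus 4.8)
The plan is to treat Lemma~\ref{Hongt} as a purely pointwise differential-geometric computation over the regular part $\pi^{-1}(B\setminus D)$, where by Theorem~\ref{HEsol} and Lemma~\ref{regomegak} all the relevant data ($\omega_B$, $h_E$, $\hat\omega_E$, and hence $\omega_k$ from \eqref{omegak}) are smooth. Over this locus the argument is identical to Hong's expansion for smooth ruled manifolds in \cites{Hong2,Hong3}, with $\omega_B$ and $h_E$ playing the roles of the base cscK metric and the bundle Hermitian--Einstein metric. Thus the conical nature of the problem does not enter the computation itself; it only enters through the regularity class of the coefficients, which is controlled away from $\mathcal D$.

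First I would fix a point $[v]\in\mathbb P E^*$ lying over $x\in B\setminus D$ and choose adapted coordinates: $\omega_B$-normal holomorphic coordinates $(z^1,\dots,z^n)$ centred at $x$, together with a local holomorphic frame $\{e_\alpha\}$ of $E$ that is normal for $h_E$ at $x$, so that $H_{\alpha\bar\beta}(x)=\delta_{\alpha\beta}$, $dH(x)=0$, and the second derivatives of $H$ at $x$ reproduce the curvature $F_{h_E}$. Completing this with affine fibre coordinates on $\mathbb P E^*$ identifies the fibre over $x$ with $\mathbb P^{r-1}$ carrying its Fubini--Study metric, where $r=\rk(E)$. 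In these coordinates $\omega_k=k\pi^*\omega_B+\hat\omega_E$ has a block form whose base--base block is $k(g_B)_{i\bar j}$ plus an $O(1)$ horizontal Fubini--Study correction, whose fibre--fibre block is the $O(1)$ Fubini--Study metric, and whose mixed blocks vanish at $x$ but feed into the curvature through $\partial\bar\partial H$.

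Next I would expand $\log\det g_k$ as a series in $1/k$ and compute $\mathrm{Ric}(\omega_k)=-\sqrt{-1}\partial\bar\partial\log\det g_k$, then take the trace $S(\omega_k)=g_k^{i\bar j}R_{i\bar j}(\omega_k)$. Since the inverse metric $g_k^{-1}$ has base block of size $O(1/k)$ and fibre block of size $O(1)$, grouping the trace by powers of $1/k$ produces the stated expansion. The $k^0$ term is the trace of the fibre Ricci against the fibre metric, i.e. the Fubini--Study scalar curvature of $\mathbb P^{r-1}$, which equals $r(r-1)$. At order $1/k$ two contributions appear: the horizontal Ricci term (essentially $\mathrm{Ric}(\omega_B)$, which is scale invariant) traced against $k^{-1}g_B^{-1}$ yields $\pi^*S(\omega_B)$, while the coupling between the fibre directions and the bundle curvature, mediated by the Hermitian--Einstein equation \eqref{HEeqn}, produces the term $2r\,\frac{\sqrt{-1}}{2\pi}\Lambda_{\omega_B}\tr\!\left([F_{h_E}]^0\frac{v\otimes v^{*_{h_E}}}{\Vert v\Vert^2}\right)$. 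Here the trace part of $\Lambda_{\omega_B}F_{h_E}$ is constant by \eqref{HEeqn} and drops out of the fibrewise average, leaving only the trace-free part $[F_{h_E}]^0$ paired with the rank-one projection determined by the point $[v]$ in the fibre.

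The hard part will be the precise bookkeeping that fixes the coefficient $2r$ and isolates exactly the trace-free part $[F_{h_E}]^0$: this requires carefully computing the fibrewise average of the off-diagonal curvature contributions and using that the Fubini--Study structure reduces the relevant integrals to the rank-one projector $v\otimes v^{*_{h_E}}/\Vert v\Vert^2$. One must also verify that the remainder is genuinely $O(1/k^2)$ uniformly, which follows from the fact that all expansion coefficients are smooth and bounded on the regular part and that the series in $1/k$ is geometric once the leading base--base block dominates.
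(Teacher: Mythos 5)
Your overall strategy is the one the paper itself uses: the paper gives no computation for Lemma~\ref{Hongt}, it simply invokes Hong's expansion from \cites{Hong2,Hong3}, and the only "conical" content is exactly the observation you make, namely that on $\pi^{-1}(B\setminus D)$ all the data $\omega_B$, $h_E$, $\hat\omega_E$, $\omega_k$ are smooth (by Theorem~\ref{HEsol}, Proposition~\ref{wE}, Lemma~\ref{regomegak}), so Hong's pointwise computation applies verbatim. Your coordinate setup, the block structure of $\omega_k$, and the identifications of $r(r-1)$ and $\pi^*S(\omega_B)$ are all fine.

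However, your treatment of the one genuinely nontrivial term — the curvature term — contains a real error, not just deferred bookkeeping. You claim the term $2r\frac{\sqrt{-1}}{2\pi}\Lambda_{\omega_B}\mathrm{tr}\bigl([F_{h_E}]^0\frac{v\otimes v^{*_{h_E}}}{\Vert v\Vert^2}\bigr)$ arises because, "by \eqref{HEeqn}, the trace part of $\Lambda_{\omega_B}F_{h_E}$ is constant and drops out of the fibrewise average, leaving only the trace-free part." This is circular and self-defeating: since contraction by $\Lambda_{\omega_B}$ commutes with the endomorphism trace, one has $\Lambda_{\omega_B}\mathrm{tr}\bigl([F_{h_E}]^0 P\bigr)=\mathrm{tr}\bigl([\Lambda_{\omega_B}F_{h_E}]^0 P\bigr)$ with $P=\frac{v\otimes v^{*_{h_E}}}{\Vert v\Vert^2}$, and the Hermitian--Einstein equation \eqref{HEeqn} says $\Lambda_{\omega_B}F_{h_E}$ is a constant multiple of $\mathrm{Id}_E$, so $[\Lambda_{\omega_B}F_{h_E}]^0=0$. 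Thus if you are allowed to use \eqref{HEeqn} in the derivation, the very term you are trying to produce vanishes identically — the same mechanism that "removes" the trace part removes the trace-free part. (Also, $S(\omega_k)([v])$ is a pointwise quantity; no fibrewise averaging occurs anywhere in the computation.) The point of the lemma is that the expansion holds for an \emph{arbitrary} sufficiently regular hermitian metric $h_E$, with no Einstein condition; this generality is essential later, since Proposition~\ref{prop1} linearizes the $k^{-1}$ coefficient in $(\eta,\Phi)$, i.e.\ evaluates the formula at the deformed, non-Einstein metrics $h_E(\mathrm{Id}_E+\Phi)$ and $\omega_B+\sqrt{-1}\p\bar\p\eta$. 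The correct reason only $[F_{h_E}]^0$ appears at order $k^{-1}$ is structural: $\mathrm{tr}(F_{h_E})$ is the curvature of the induced metric on $\det E$, a $(1,1)$-form pulled back from $B$, and adding an $O(1)$ pullback form to $k\pi^*\omega_B$ perturbs the base metric only at order $k^{-1}$, hence perturbs $S(\omega_k)$ only at order $k^{-2}$. Concretely, replacing $E$ by $E\otimes(\det E)^{-1/r}$ leaves $\mathbb{P}E^*$ unchanged, replaces $F_{h_E}$ by $[F_{h_E}]^0$, and changes $\hat\omega_E$ by $\frac{1}{r}\pi^*\bigl(\sqrt{-1}\,\mathrm{tr}F_{h_E}\bigr)$, which shows the two expansions agree through order $k^{-1}$; this, not \eqref{HEeqn}, is what isolates the trace-free part. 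Your proof sketch would need this step repaired to be a proof of the stated formula rather than of its degenerate (vanishing-term) specialization.
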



\subsection{Approximate cscK cone metrics}
In view of Theorem \ref{thm1}, we shall deform the metric $\omega_k$ to obtain K\"ahler metrics $$\omega_{k,p}=\om_k+\sqrt{-1}\p\bar\p\phi_{k,p}, (p>0, k>>0)$$ as in the following proposition, and then apply the contraction mapping theorem, following the main idea of \cites{Hong2,Hong3}. \\
\begin{prop}\label{prop1}
Assume $\omega_B$ is cscK with conical singularities along $D$ with angles $\b$ and that $Lie(Aut_D(B,[\omega_B]))$ is trivial. Assume the holomorphic vector bundle $E$ is parabolic stable and equipped with $h_E$ Hermitian-Einstein cone metric obtained via Theorem \ref{HEsol}. Fix $p>0$. There exist deformations of
\begin{itemize}
 \item the form $\omega_B$ as $\omega_B+\iddbar \eta_{k,p}$, with $$\eta_{k,p}=\eta_0 + \eta_1k^{-1}+...+\eta_{p-2}k^{-p+2}\in C^{4,\alpha,\beta}(B\setminus D),$$
 \item  the Hermitian-Einstein metric $h_E$ as $h_E(Id_E+ \Phi_{k,p})$ with $$\Phi_{k,p}=\Phi_0k^{-1} + ... +\Phi_{p-2}k^{-p+1}\in C^{2,\alpha,\b}(Herm(E,h_E)),$$
\end{itemize}
such that the induced K\"ahler form $\tilde{\omega}_k\in [k\pi^*\omega_B+\hat{\omega}_E]$ on $X$ can be deformed to obtain an almost cscK cone metric outside $\mathcal{D}$ of order $p+1$, i.e there exists real valued functions on $X\setminus \mathcal{D}$,
$$\phi_{k,p}=\phi_0k^{-2} + ... +\phi_{p-2}k^{-p}\in C^{4,\alpha,\beta}(X\setminus \mathcal{D}),$$ such that  over $X\setminus \mathcal{D} $, we have
$$S(\tilde{\omega}_k + \sqrt{-1}\partial\bar\partial \phi_{k,p})=\underline S_\beta+O(k^{-p-1})$$
where $\underline S_\beta$ is the topological constant.
\end{prop}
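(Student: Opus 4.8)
The plan is to run an inductive adiabatic construction in the spirit of Hong \cites{Hong2,Hong3}, correcting the scalar curvature order by order in powers of $k^{-1}$, while working throughout in the conic H\"older spaces and feeding in the conic linear theory of Theorem \ref{Fredholm} together with the regularity $h_E\in C^{4,\a,\b}$ from Theorem \ref{HEsol}. First I would record the base case. By \lemref{Hongt},
\begin{align*}
S(\omega_k)([v])=r(r-1)+\frac{1}{k}\left(\pi^* S(\omega_B) + 2r\frac{\sqrt{-1}}{2\pi}\Lambda_{\omega_B} \tr\left([F_{h_E}]^0\frac{v\otimes v^{*_{h_E}}}{\Vert v\Vert^2}\right)\right)+O(k^{-2}).
\end{align*}
The leading coefficient $r(r-1)$ is constant. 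Since $\omega_B$ is cscK, $\pi^*S(\omega_B)$ is constant; and since $h_E$ is a Hermitian--Einstein cone metric, \eqref{HEeqn} gives $\Lambda_{\omega_B}F_{h_E}=\lambda\,Id_E$, so the trace-free part $[F_{h_E}]^0$ vanishes identically. Hence the full $k^{-1}$ term is already constant, which is precisely the structural input needed to start the induction.

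Next I would set up the inductive step. Suppose that after incorporating the truncations of $\eta_{k,p}$, $\Phi_{k,p}$, $\phi_{k,p}$ up to some order we have produced a cone metric with $S=\underline S_\beta + k^{-m}\Psi_m+O(k^{-m-1})$, where $\Psi_m\in C^{,\a,\b}(X\setminus\mathcal D)$. The variation of $S$ under adding $\iddbar\phi$ is $-{{\mathbb{L}\mathrm{ic}}}_{\omega_k}(\phi)$ to leading order, and the adiabatic scaling of $\omega_k=k\pi^*\omega_B+\hat\omega_E$ is the essential point: on functions varying in the $\mathbb P^{r-1}$-fibre the operator ${{\mathbb{L}\mathrm{ic}}}_{\omega_k}$ has leading part the fibrewise Lichnerowicz operator ${{\mathbb{L}\mathrm{ic}}}_{FS}$ of the Fubini--Study fibre, while on fibre-constant functions this leading part is killed and the effective operator drops in order and becomes the base Lichnerowicz operator ${{\mathbb{L}\mathrm{ic}}}_{\omega_B}$ acting on functions pulled back from $B$. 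This dictates how the three deformation families are matched to the fibrewise spectral decomposition of $\Psi_m$.

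The heart of the step is therefore a three-fold splitting of $\Psi_m$ into its fibrewise harmonic components on $\mathbb P^{r-1}$. The degree $\geq 2$ part lies in the range of ${{\mathbb{L}\mathrm{ic}}}_{FS}$ and is removed by solving a fibrewise equation for $\phi_j$, using invertibility of ${{\mathbb{L}\mathrm{ic}}}_{FS}$ on the orthogonal complement of degree $\leq 1$ harmonics. The degree $1$ part corresponds to trace-free Hermitian endomorphisms of $E$ and is removed by the bundle deformation $h_E(Id_E+\Phi_j)$, whose linearized effect is governed by the linearized Hermitian--Einstein operator; solvability here is ensured because its kernel consists of holomorphic endomorphisms, which by parabolic stability are only homotheties (\lemref{lem1}). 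The fibre-average (degree $0$) part descends to a function $\bar\Psi_m$ on $B$, and is removed by the base deformation $\omega_B\mapsto\omega_B+\iddbar\eta_j$, which amounts to solving ${{\mathbb{L}\mathrm{ic}}}_{\omega_B}(\eta_j)=\bar\Psi_m-\mathrm{const}$ on $B$. By the hypothesis that $Lie(Aut_D(B,[\omega_B]))$ is trivial, the kernel of ${{\mathbb{L}\mathrm{ic}}}_{\omega_B}$ is trivial, so Theorem \ref{Fredholm} gives a unique $C^{4,\a,\b}$ solution modulo constants; the leftover constant is harmless since only constancy of $S$ is required.

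The main obstacle is not the formal bookkeeping of the $k^{-1}$-expansion, which follows Hong once the spectral splitting is in place, but rather ensuring at each stage that the corrections produced genuinely lie in the required conic classes, $\phi_j,\eta_j\in C^{4,\a,\b}$ and $\Phi_j\in C^{2,\a,\b}(\mathrm{Herm}(E,h_E))$, under Condition \eqref{anglerestriction}. For the base component this is exactly where the conic Fredholm theory of Theorem \ref{Fredholm} and the regularity of $\omega_B$ (via \thmref{cscKconeregularity}) are indispensable; for the bundle component one needs the $C^{4,\a,\b}$ regularity of $h_E$ and the compatibility of the parabolic structure from Theorem \ref{HEsol}; and the fibrewise inversion must be carried out with constants uniform in the base point, including across $\mathcal D$. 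Once all three pieces are solved in these spaces and reassembled, the error is pushed to order $k^{-m-1}$, and iterating the step up to order $p+1$ yields $\phi_{k,p}$, $\eta_{k,p}$, $\Phi_{k,p}$ with $S(\tilde\omega_k+\iddbar\phi_{k,p})=\underline S_\beta+O(k^{-p-1})$, as claimed.
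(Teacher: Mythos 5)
Your proposal is correct and follows essentially the same route as the paper's proof: the same base case via Lemma~\ref{Hongt}, the same three-fold fibrewise spectral splitting (fibre-average handled by the base Lichnerowicz equation through Theorem~\ref{Fredholm} and triviality of $Lie(Aut_D(B,[\omega_B]))$, first-eigenvalue components identified with trace-free hermitian endomorphisms and removed using parabolic stability as in Lemma~\ref{lem1}, and the remaining part removed by inverting the vertical operator $\Delta_V(\Delta_V-r)$ uniformly over the base, which is the content of the paper's Lemmas~\ref{isomorphismtangent}, \ref{associate} and Proposition~\ref{isomorphismvertical}), followed by the same induction in powers of $k^{-1}$. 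One minor wording slip: the Hermitian--Einstein condition gives $\Lambda_{\omega_B}[F_{h_E}]^0=0$ rather than $[F_{h_E}]^0=0$ identically, but since the $k^{-1}$ coefficient only involves the contraction $\Lambda_{\omega_B}$ of that term, your conclusion that it is constant stands.
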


We construct the approximation solution by the implicit function theorem inductively, using \lemref{isomorphismtangent}, \lemref{associate} and Proposition \ref{isomorphismvertical}. The proof is given at the end of this section, page \pageref{proofProp1}, and requires several preliminary results. 
The deformation of the scalar curvature is divided into three parts, the function on $B$, the section $\Gamma(B,W)$ and the function on $\mathbb{P}E^*$. Actually, we need to deform all the metrics $\omega_B,h_E$ and $\omega_k$.

Firstly, we need to understand the deformation of the cscK equation on $X$ with respect to $\omega_B$ and $h_E$, where $\omega_B\in \mathcal H_{\b}(B,D)$ is a K\"ahler cone metric on $B$  and $h_E$ a hermitian metric on $E$ compatible to the parabolic structure with respect to $\omega$.
In order to do so, we are going to study the maps
\begin{align*}A_1(\omega_B,h_E)&=S(\omega_B)Id_E +\frac{\sqrt{-1}}{2\pi}\Lambda_{\omega_B} [F_{h_E}]^0\in C^{,\a,\b}(End(E))\\
 S_1(\omega_B,h_E)&={\tr}\left(A_1(\omega_B,h_E) \frac{v\otimes v^{*_{h_E}}}{\Vert v\Vert^2}\right)\in C^{,\alpha,\b}.
\end{align*}
When $\omega_B$ has constant scalar curvature and $h_E$ is Hermitian-Einstein, then  the linearization of $A_1$ at $(\omega_B,h_E)$ is given by
\begin{equation}\label{DA1}
 DA_1(\eta,\Phi)=({\mathbb{L}\mathrm{ic}}_{\omega_B}\eta ) Id_E + \frac{\sqrt{-1}}{2\pi}[\Lambda_{\omega_B}\bar{\partial}\partial \Phi + 2\Lambda_{\omega_B}^2(F_{h_E}\wedge \sqrt{-1}\partial\bar{\partial}\eta)]^0
\end{equation}
where $\eta\in  C^{4,\alpha,\b}$ and $\Phi$ is a hermitian endomorphism with respect to $h_E$. 

\begin{lem}\label{isomorphismtangent}
 Suppose that $Lie(Aut_{{D}}(B,[\omega_B]))$ is trivial and $E$ is parabolic stable with respect to $\omega_B$. Set $C^{\infty}(End(E))^0$ the space of trace-free endomorphisms of $E$ and ${C^{4,\alpha,\b}_{0}}(B)$ the space of $C^{4,\alpha,\beta}$ real functions on $B$ with vanishing integral with respect to $\omega_B$. Then the map
$$ \begin{array}{rl}
 DA_1\hspace{-0.1cm}:\hspace{-0.1cm}{C^{4,\alpha,\beta}_{0}}(B)\oplus C^{2,\alpha,\beta}(B,End(E))^0 \hspace{-0.25cm} &\rightarrow {C^{,\alpha,\beta}_{0}}(B) \oplus C^{,\alpha,\beta}(B,End(E))^0 \\
     (\eta,\Phi) \hspace{-0.25cm} &\mapsto DA_1(\eta,\Phi)
\end{array}$$
defined by \eqref{DA1}, is an isomorphism.
\end{lem}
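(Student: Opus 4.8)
The plan is to exploit the block–triangular structure of $DA_1$. Decompose the target $C^{,\alpha,\beta}(B,End(E))$ into its trace part (the multiples of $Id_E$) and its trace-free part. Since the bracket $[\,\cdot\,]^0$ in \eqref{DA1} is trace-free by construction, the scalar component of $DA_1(\eta,\Phi)$ depends only on $\eta$ and equals ${\mathbb{L}\mathrm{ic}}_{\omega_B}\eta$ (which automatically has vanishing integral, matching the target $C^{,\alpha,\beta}_0(B)$). Writing
\begin{align*}
P\Phi:=\tfrac{\sqrt{-1}}{2\pi}\bigl[\Lambda_{\omega_B}\bar\partial\partial\Phi\bigr]^0,\qquad
Q\eta:=\tfrac{\sqrt{-1}}{2\pi}\bigl[2\Lambda_{\omega_B}^2\bigl(F_{h_E}\wedge\sqrt{-1}\partial\bar\partial\eta\bigr)\bigr]^0,
\end{align*}
the operator becomes $DA_1(\eta,\Phi)=\bigl({\mathbb{L}\mathrm{ic}}_{\omega_B}\eta,\;P\Phi+Q\eta\bigr)$, which is lower triangular. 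It therefore suffices to show that the two diagonal blocks ${\mathbb{L}\mathrm{ic}}_{\omega_B}$ and $P$ are isomorphisms and that $Q$ is bounded: given $(f,g)$ in the target one first solves ${\mathbb{L}\mathrm{ic}}_{\omega_B}\eta=f$ and then $P\Phi=g-Q\eta$, obtaining a unique preimage.

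The first diagonal block is settled by Theorem \ref{Fredholm}: since $Lie(Aut_D(B,[\omega_B]))$ is trivial there is no holomorphic vector field tangent to $D$, so the second alternative is excluded and ${\mathbb{L}\mathrm{ic}}_{\omega_B}\colon C^{4,\alpha,\beta}_0(B)\to C^{,\alpha,\beta}_0(B)$ is a bijection, hence an isomorphism of Banach spaces. Boundedness of $Q$ is immediate from $h_E\in C^{4,\alpha,\beta}$ (Theorem \ref{HEsol}), which yields $F_{h_E}\in C^{,\alpha,\beta}$; thus $Q$ maps $C^{4,\alpha,\beta}_0(B)$ continuously into $C^{,\alpha,\beta}(B,End(E))^0$, the $[\,\cdot\,]^0$ guaranteeing the trace-free target.

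The crux is to prove that $P$ is an isomorphism of the space of trace-free Hermitian endomorphisms. First, because $h_E$ is Hermitian-Einstein one checks $\sqrt{-1}\Lambda_{\omega_B}[F_{h_E},\Phi]=[\sqrt{-1}\Lambda_{\omega_B}F_{h_E},\Phi]=\lambda[Id_E,\Phi]=0$, so $P$ indeed sends Hermitian endomorphisms to Hermitian endomorphisms and is formally self-adjoint for the pairing $\langle\Phi,\Psi\rangle=\int_B\tr(\Phi\Psi)\,\omega_B^n$. Next, working in a local holomorphic frame and using that the connection of $h_E$ satisfies the bounds of Corollary \ref{cor4ab}, $P$ is entrywise a second-order elliptic operator with the same conical principal part as $\tri_{\omega_B}$; the Schauder theory of Proposition \ref{linearestimate} applies component by component and gives the a priori estimate $|\Phi|_{C^{2,\alpha,\beta}}\leq C(\|\Phi\|_{L^\infty}+|P\Phi|_{C^{,\alpha,\beta}})$, so $P\colon C^{2,\alpha,\beta}(B,End(E))^0\to C^{,\alpha,\beta}(B,End(E))^0$ is Fredholm of index $0$. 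To identify the kernel, note that $P\Phi=0$ forces $\Lambda_{\omega_B}\bar\partial\partial\Phi=0$ (the trace part already vanishes since $\tr\Phi=0$); pairing with $\Phi$ and integrating by parts with the cut-off $\chi_\eps$ exactly as in Lemma \ref{IbPRic}, the boundary contributions near $D$ vanish because the regularity $\Phi\in C^{2,\alpha,\beta}(End(E))$ together with the parabolic compatibility of $h_E$ gives $\bar\partial\Phi\in L^2(\omega_B)$, yielding $\int_B|\bar\partial\Phi|^2_{h_E}\,\omega_B^n=0$. Hence $\Phi$ is a holomorphic trace-free parabolic endomorphism, and Lemma \ref{lem1} forces $\Phi=0$.

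Since $P$ is self-adjoint and elliptic of index $0$ with trivial kernel, its cokernel is isomorphic to its kernel and therefore also trivial, so $P$ is an isomorphism; the triangular structure then gives the lemma. The main obstacle is precisely the analysis of $P$ across the divisor: one must simultaneously run the conical Schauder estimates entrywise and justify the cancellation of the boundary terms in the integration by parts, both of which rest on the regularity $h_E\in C^{4,\alpha,\beta}$ from Theorem \ref{HEsol} and on the parabolic compatibility of $h_E$ supplying the $L^2$ control near $D$.
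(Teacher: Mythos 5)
Your proposal is correct and takes essentially the same route as the paper: the paper likewise decouples the trace part (killed by Theorem \ref{Fredholm} and the triviality of $Lie(Aut_D(B,[\omega_B]))$, then $\eta=0$ by the normalisation) from the trace-free part (where the Hermitian--Einstein condition reduces $[\Lambda_{\omega_B}\bar\partial\partial\Phi]^0=0$ to $\bar\partial^*\bar\partial\Phi=0$, integration by parts gives $\bar\partial\Phi=0$, and Lemma \ref{lem1} forces $\Phi=0$), and then solves triangularly — first $\eta$ via the Lichnerowicz theory, then $\Phi$ via the conical second-order elliptic theory and the Fredholm alternative. Your write-up only makes explicit the bookkeeping the paper leaves implicit (boundedness of the off-diagonal term $Q$ and the index-zero/self-adjointness argument for $P$).
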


\begin{proof}

{ If $DA_{1}(\eta,\Phi)=0$, then we get the system of decoupled equations  on $B\setminus D$ by considering the trace part, 
\begin{eqnarray}
 {\mathbb{L}\mathrm{ic}}_{\omega_B}\eta &=&0,\\
 \frac{\sqrt{-1}}{2\pi}[\Lambda_{\omega_B}\bar{\partial}\partial \Phi + 2\Lambda_{\omega_B}^2(F_{h_E}\wedge \sqrt{-1}\partial\bar{\partial}\eta)]^0 &=& 0\label{solvePhi}.
\end{eqnarray}
Solutions of this system live on $B$. Using Theorem \ref{Fredholm},
the first equation has for solution only the constants by assumption on $Lie(Aut_{{D}}(B,[\omega_B]))$. But then $\eta=0$ since $\eta$ has vanishing integral. From this fact, this leads from the second equation of the system, to 
$$[\Lambda_{\omega_B}\bar{\partial}\partial \Phi]^0=0$$
and since $\Phi$ is trace-free and $C^{2,\alpha,\beta}(B,End(E))^0$, 
\begin{equation}\bar{\partial}^*\bar{\partial}\Phi=\sqrt{-1}\Lambda_{\omega_B}\bar{\partial}\partial \Phi =0\end{equation}
and then we get $\int_B \vert \bar{\partial}\Phi\vert^2\frac{\omega_B^n}{n!}=0$ and so $\Phi\in H^0(B,End(E))$, and thus constant from Lemma \ref{lem1}. Now, this constant vanishes since $\Phi$ is trace-free. 
Eventually we apply Fredholm alternative.
}
From the first equation, we are able to solve $\eta$ via the theory of the Lichnerowicz equation (\thmref{Fredholm}). Putting it into the second equation, we could solve each component of $\Phi$, i.e. \eqref{solvePhi}, by the theory of second order elliptic equations with conical singularities (see Section \ref{Second order elliptic equations with conical singularities}).
\end{proof}

Secondly, we deform the cscK equation on $X$ with respect to $\omega_k$.

\subsection{Decomposition of the holomorphic tangent bundle}

In the sequel of this section, we assume that we are under the setting of Proposition \ref{prop1}, and $h_E$ is a Hermitian-Einstein cone metric.
Given $\hat{\omega}_E$ a form induced by the $ h_E$ on $E$, one can define the following operator $\Delta_V$ on $C^{2,\alpha,\beta}$ functions on the ruled manifold $X\setminus \mathcal{D}$,
$$\Delta_V f \hat{\omega}_E^{r-1}\wedge \pi^* \omega_B^n = (r-1)\iddbar f \wedge \hat{\omega}_E^{r-2}\wedge \pi^* \omega_B^n $$
where $n$ is the dimension of the base $B$ and $\pi:X\to B$ is the projection onto $B$.
\begin{rem}
 It is not a Laplacian. Nevertheless, once restricted to the fibers, it is a Laplacian with respect to the Fubini-Study metric. 
\end{rem}

Now, we know that for a general K\"ahler form $\omega$, the linearization of the scalar curvature is given by
$$\widetilde{\mathbb{L}\mathrm{ic}}_{\omega}(\phi)=(\Delta^2 -S(\omega)\Delta)\phi + n(n-1)\frac{\iddbar \phi\wedge Ric(\omega)\wedge \omega^{n-2}}{\omega^n}.$$
Thus for a smooth function $\phi$ the linearization of the operator the scalar curvature operator at the metric $\omega_k$ is given by
$$ \widetilde{\mathbb{L}\mathrm{ic}}_{\omega_k} = \Delta_V(\Delta_V -r)+ O(k^{-1}),$$
when $k$ tends to $+\infty$ using \eqref{eqS}.
We deform $\omega_k$ and obtain
\begin{align}
S(\tilde{\omega}_k+k^{-2}\sqrt{-1}\partial\bar\partial \phi_{k,1})=&S(\tilde{\omega}_k) +k^{-2} \Delta_V (\Delta_V -r)\phi_{k,1}+O(k^{-3}), \nonumber \\
=&r(r-1)+k^{-1}S_{1}(\omega_B,h_E)+k^{-2}DS_1(\eta_0,\Phi_0) \label{order2a}\\
&+k^{-2}\Delta_V (\Delta_V -r)\phi_{k,1} +O(k^{-3}).  \label{order2b}
\end{align}

Consider the metric $\hat{\omega}_E$ and its Laplacian acting on functions on $\mathbb{P}E^*_x$. We denote $W_x$ as the space of all eigenfunctions associated to the first nonzero eigenvalue of this Laplacian, which is $r=\rk(E)$.

This defines a vector bundle $W$ over $B$.  We have the following result.

\begin{lem}\label{associate}
To any trace-free hermitian endomorphism $$\Phi\in C^{,\a,\b}(End(E))^0\cap Herm(E,h_E),$$ one can associate $$\tr(\Phi\frac{v\otimes v^{*_{h_E}}}{\Vert v\Vert^2})\in C^{,\a,\b}(B)$$ such that its restriction over $B\setminus D$ belongs to $C^{,\a,\b}(B\setminus D,W)$ (i.e is a eigenfunction). The converse is also true, i.e given a eigenfunction that belongs to 
$C^{,\a,\b}(B\setminus D,W)\cap C^{,\a,\b}(B)$, we obtain a trace free hermitian  endomorphism in $ C^{,\a,\b}(B,End(E))^0.$ 
\end{lem}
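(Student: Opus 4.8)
The plan is to split the statement into its pointwise (fiberwise) linear-algebraic content, which is classical and can be cited, and the regularity bookkeeping in the conical H\"older spaces, which is where the actual work lies. First I would fix $x\in B\setminus D$ and work on the fiber $\mathbb{P}E^*_x\cong \mathbb{P}^{r-1}$ equipped with the Fubini--Study metric induced by $h_E(x)$, whose first nonzero eigenvalue is $r=\rk(E)$. The key classical fact (the $SU(r)$-decomposition of $L^2(\mathbb{P}^{r-1})$) is that the assignment
$$
\Phi \longmapsto f_\Phi, \qquad f_\Phi([v]) = \tr\Big(\Phi\,\frac{v\otimes v^{*_{h_E}}}{\Vert v\Vert^2}\Big) = \frac{\langle \Phi v, v\rangle_{h_E(x)}}{\Vert v\Vert^2_{h_E(x)}},
$$
restricted to trace-free $h_E$-Hermitian $\Phi$, is a linear isomorphism onto $W_x$, the $r$-eigenspace. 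Here $f_\Phi$ is real because $\Phi$ is Hermitian, and for $\Phi=c\,\Id$ one recovers the constant $c$ (the $0$-eigenspace), which is exactly why the trace-free condition is the correct normalisation. This identifies the real bundle of trace-free $h_E$-Hermitian endomorphisms of $E$ with $W$ fiberwise.

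For the forward direction I would then note that in a local holomorphic frame near $D$ the function $f_\Phi$ is the explicit expression $\big(\sum \Phi_{\varsigma\bar\tau}v^\varsigma \bar v^\tau\big)/\big(\sum H_{\varsigma\bar\tau}v^\varsigma\bar v^\tau\big)$ in the entries of $\Phi$, the entries $H_{\varsigma\bar\tau}$ of $h_E$, and the fiber coordinates $v$. Since the denominator $\Vert v\Vert^2$ is smooth in the fiber and bounded away from zero on the compact fiber, $f_\Phi$ is a product and quotient of $C^{,\a,\b}$ quantities: $\Phi\in C^{,\a,\b}(End(E))$ by hypothesis and $H\in C^{4,\a,\b}\subset C^{,\a,\b}$ by Theorem \ref{HEsol}. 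By the definition of the bundle H\"older spaces in Section \ref{holderspacebundle}, $f_\Phi$ defines a $C^{,\a,\b}$ function on $X$ whose fiberwise restriction is the desired section in $C^{,\a,\b}(B\setminus D,W)$ (equivalently an element of $C^{,\a,\b}(B)$ under the identification above).

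For the converse, given $f\in C^{,\a,\b}(B\setminus D,W)\cap C^{,\a,\b}(B)$, I would recover $\Phi$ by inverting the fiberwise isomorphism through an explicit integral. Because $\Phi\mapsto f_\Phi$ intertwines the pairing $(\Phi_1,\Phi_2)\mapsto \tr(\Phi_1\Phi_2)$ with the $L^2(\mathbb{P}E^*_x,dV_{FS})$ inner product up to a universal constant, the resolution of the identity on Hermitian matrices yields
$$
\Phi(x) = c_r \int_{\mathbb{P}E^*_x} f([v])\Big(\frac{v\otimes v^{*_{h_E}}}{\Vert v\Vert^2} - \tfrac{1}{r}\Id\Big)\,dV_{FS}
$$
for a constant $c_r$ depending only on $r$ and the total fiber volume. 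This manifestly produces a trace-free $h_E$-Hermitian endomorphism, and since the integrand depends on $f$ and on $h_E(x)$ only through algebraic operations and fiber integration over a fixed compact model, $\Phi$ inherits the $C^{,\a,\b}$ regularity of $f$ and $h_E$.

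The representation-theoretic identification is standard and can be quoted, so the only genuine work is the regularity bookkeeping: confirming that the fiberwise isomorphism, which depends on $h_E(x)$, and especially its inverse, vary in the conical $C^{,\a,\b}$ class across the divisor. I expect this to be the main obstacle, but it reduces to two facts already at hand, namely $h_E\in C^{4,\a,\b}$ (Theorem \ref{HEsol}) and the uniform positivity of $\Vert v\Vert^2$ on the compact fibers, so that all the algebraic manipulations and fiber integrals are compatible with the weighted H\"older norms of Section \ref{holderspacebundle} and produce no negative powers of the defining function $z^1$.
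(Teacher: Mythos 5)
Your fiberwise algebra is correct, and your forward direction is essentially the paper's: both quote the classical identification (via \cite{MR704609} and \cite[Proposition 2.4]{MR3107685}) of the first-eigenvalue eigenfunctions on $\mathbb{CP}^{r-1}$ with trace-free Hermitian endomorphisms, and the paper is even terser than you about regularity there. The genuine gap is in your converse, at exactly the step you dismiss as ``two facts already at hand''. The second of those facts --- ``uniform positivity of $\Vert v\Vert^2$ on the compact fibers'' --- is false near $D$. The metric $h_E$ of Theorem \ref{HEsol} is compatible with the parabolic structure, hence mutually bounded with Li's model metric $h_0$ of \cite{LiJ}, whose norm on local sections of $\mathcal{F}_i^p$ decays like $\vert s\vert^{2\alpha_i^p}$ along $D$; an explicit instance is the diagonal matrix with entries $\zeta_j\Vert z^1\Vert_D^{2\gamma_j}$ appearing in the proof of Theorem \ref{KEHE}. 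So in a holomorphic frame the matrix $H$ of $h_E$ is singular on $D$: the duality $v\mapsto v^{*_{h_E}}$ and your integrand are not even defined on the fibers over $D$, and as $x\to D$ the frame entries of the projection $\frac{v\otimes v^{*_{h_E}}}{\Vert v\Vert^2}$ blow up for suitable $[v]$ (for $H=\mathrm{diag}(1,\epsilon)$ and $\vert v_2\vert\sim\sqrt{\epsilon}\,\vert v_1\vert$ one entry has size $\tfrac{1}{2}\epsilon^{-1/2}$). Consequently neither the convergence of your fiber integral up to $D$ nor the conical H\"older regularity of the recovered $\Phi$ follows from ``algebraic operations and fiber integration''; the actual content of the converse is to show that the endomorphism recovered over $B\setminus D$ respects the parabolic degeneration well enough to extend across $D$ with bounded $C^{,\alpha,\beta}$ entries, and that is not automatic.

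This is precisely the difficulty the paper's proof is built around and yours is not: the paper constructs $\Phi_1$ only over $B\setminus D$ by the fiberwise correspondence, then extends the subbundle $\mathrm{Im}(\Phi_1)$ across $D$ using Simpson's extension result \cite[Lemma 10.6]{Sim}, whose hypothesis --- integrability of the curvature of $End(E)$ --- is verified from $F_{End(E)}=F_{E,h_E}\otimes \mathrm{Id}_{E^*}+\mathrm{Id}_E\otimes F_{E^*,h_E^*}$ together with the Hermitian--Einstein equation. Note that your proposal never uses the Hermitian--Einstein property of $h_E$ at all; this is a warning sign, since the paper's remark following Proposition \ref{isomorphismvertical} deliberately excludes Lemma \ref{associate} from the list of statements that need only the regularity of $h_E$. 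To salvage your explicit-inversion route you would have to prove directly that the recovered $\Phi$ has the decay forced by $h_E$-Hermitian symmetry relative to the degenerating metric (its off-filtration entries must vanish along $D$ at rates dictated by the weights), which amounts to re-proving an extension statement of Simpson type rather than the bookkeeping you describe.
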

\begin{proof}
It is known the eigenfunctions of the Laplacian on $\mathbb{CP}^{r-1}$  with respect to the Fubini-Study metric that are associated to the first non zero eigenvalue of the Laplacian. These eigenfunctions are given by harmonic polynomials on $\mathbb{C}^r$ that correspond to certain hermitian endomorphisms, see \cite{MR704609}, \cite[Proposition 2.4]{MR3107685}. We apply this correspondence over $B\setminus D$. Consequently, we obtain that given $\Phi$ as above, $\tr(\Phi\frac{v\otimes v^{*_{h_E}}}{\Vert v\Vert^2})\in C^{,\a,\b}(B)$ is an eigenfunction over $B\setminus D$ and the regularity is clear. 
Now, for the converse, starting with an eigenfunction, we get an endomorphism $\Phi_1$ over $B\setminus D$ as before. We extend the subbundle $Im(\Phi_1)$ as a subbundle of $C^{,\a,\b}(B,End(E))$. For doing that, we apply \cite[Lemma 10.6]{Sim}. We just need to see that the curvature of $End(E)$ is $L^1$, but this is the case as $F_{End(E)}=F_{E,h_E}\otimes Id_{E^*} + Id_E\otimes F_{E^*,h_E^*}$ and $h_E$ is Hermitian-Einstein.
\end{proof}

\begin{rem}
 With the lemma in hands, we see why it is not sufficient to deform the metrics $\omega_B,h_E$ and that we need to also deform the metric $\omega_k$.
\end{rem}

Now, for $p=2$ or $p=4$, one can consider the orthogonal space to $C^{p,\alpha,\beta}(B\setminus D,W)$ of functions in $C^{p,\alpha,\beta}(X\setminus \mathcal{D})$ with vanishing integral, i.e
\begin{align*}\mathcal{Z}^{p,\a,\b}_0=\{ &\phi \in C^{p,\alpha,\beta}(X\setminus \mathcal{D}), \int_{\mathbb{P}E^*_x} \phi\hat{\omega}_E^{r-1}=0  \\
&\text{ and }  \int_{\mathbb{P}E^*_x} \phi \theta\hat{\omega}_E^{r-1}=0, \forall x\in B\setminus D,\theta \in C^{p,\alpha,\beta}(B\setminus D,W)\}.\end{align*} 

We consider the map
$$\begin{array}{rl}
 L_V=\Delta_V(\Delta_V-r) : \mathcal{Z}^{4,\a,\b}_0 &\longrightarrow \mathcal{Z}^{,\a,\b}_0 \\\phi &\longmapsto \Delta_V(\Delta_V-r)(\phi).
\end{array}$$

\begin{lem}\label{LVinjective}
$L_V$ is an injective.
\end{lem}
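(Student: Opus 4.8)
The plan is to reduce everything to a fibrewise spectral argument, exploiting that $\Delta_V$ differentiates only in the fibre directions and that $\mathcal{Z}^{4,\a,\b}_0$ is, by construction, fibrewise $L^2$-orthogonal to the kernel of $L_V$. First I would record the verticality of $\Delta_V$: in the defining identity
\[
\Delta_V f\,\hat\omega_E^{r-1}\wedge \pi^*\omega_B^n = (r-1)\iddbar f\wedge \hat\omega_E^{r-2}\wedge \pi^*\omega_B^n,
\]
the factor $\pi^*\omega_B^n$ already saturates all $n$ holomorphic and $n$ antiholomorphic base directions, so every component of $\iddbar f$ carrying a base index wedges to zero and only the pure fibre part of $\iddbar f$ survives. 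Hence, for each $x\in B\setminus D$, the restriction of $\Delta_V$ to the fibre $\mathbb{P}E^*_x\cong\mathbb{CP}^{r-1}$ coincides with the Fubini--Study Laplacian of $\hat\omega_E|_{\mathbb{P}E^*_x}$.

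Next I would use the standard spectral theory of $\mathbb{CP}^{r-1}$. The Fubini--Study Laplacian is self-adjoint with discrete spectrum $0=\lambda_0<\lambda_1<\lambda_2<\cdots$, where the eigenspace of $\lambda_0$ is the constants and $\lambda_1=r$ is the first nonzero eigenvalue with eigenspace $W_x$ (this is exactly the normalization fixed above). Since $(\lambda_k)$ is strictly increasing, $\lambda_k-r>0$ for all $k\ge 2$, so the fibrewise operator $\Delta_V(\Delta_V-r)$ acts as multiplication by $\lambda_k(\lambda_k-r)$ on the $k$-th eigenspace, and this scalar vanishes precisely when $k\in\{0,1\}$. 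Therefore, on each fibre,
\[
\ker\bigl(\Delta_V(\Delta_V-r)\bigr)=\mathbb{C}\cdot 1\ \oplus\ W_x .
\]

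Now suppose $\phi\in\mathcal{Z}^{4,\a,\b}_0$ satisfies $L_V\phi=0$. Fix $x\in B\setminus D$; the fibre $\mathbb{P}E^*_x$ lies in the regular locus $X\setminus\mathcal{D}$, so $\phi_x:=\phi|_{\mathbb{P}E^*_x}$ is a smooth function on $\mathbb{CP}^{r-1}$ solving $\Delta_V(\Delta_V-r)\phi_x=0$, whence $\phi_x\in\mathbb{C}\cdot 1\oplus W_x$ by the previous step. On the other hand, the two defining conditions of $\mathcal{Z}^{4,\a,\b}_0$ say precisely that $\int_{\mathbb{P}E^*_x}\phi_x\,\hat\omega_E^{r-1}=0$, i.e. $\phi_x\perp\mathbb{C}\cdot 1$, and $\int_{\mathbb{P}E^*_x}\phi_x\theta\,\hat\omega_E^{r-1}=0$ for every $\theta\in C^{4,\a,\b}(B\setminus D,W)$; as $\theta$ ranges over sections of $W$ its restrictions attain every element of $W_x$, so $\phi_x\perp W_x$ in the $L^2(\hat\omega_E^{r-1})$ pairing on the fibre (all functions involved being real-valued). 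Thus $\phi_x$ lies in, and is orthogonal to, $\mathbb{C}\cdot 1\oplus W_x$, forcing $\phi_x=0$. Since $x\in B\setminus D$ was arbitrary, $\phi\equiv 0$ on $X\setminus\mathcal{D}$, which is the asserted injectivity.

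I expect the only genuinely delicate points to be the verticality of $\Delta_V$ and the bookkeeping that the two integral conditions defining $\mathcal{Z}^{4,\a,\b}_0$ translate into fibrewise orthogonality to the \emph{full} kernel $\mathbb{C}\cdot 1\oplus W_x$; once these are in place the conclusion follows from the elementary spectral gap of $\mathbb{CP}^{r-1}$. Note that no estimate near $\mathcal{D}$ is required, since the whole argument is carried out fibre by fibre over $B\setminus D$.
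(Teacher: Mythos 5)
Your proof is correct and follows essentially the same route as the paper: restrict to each fibre over $B\setminus D$, use that $\Delta_V$ is there the Fubini--Study Laplacian whose first nonzero eigenvalue is $r$, identify the fibrewise kernel of $\Delta_V(\Delta_V-r)$ with $\mathbb{C}\cdot 1\oplus W_x$, and kill it by the two orthogonality conditions defining $\mathcal{Z}^{4,\a,\b}_0$. The paper merely compresses your explicit spectral decomposition into the observation that $(\Delta_V-r)\phi$ is constant on each compact fibre, hence zero by the vanishing-integral condition, so that $\phi_x$ is an eigenfunction for the eigenvalue $r$ that is orthogonal to $W_x$ and therefore vanishes.
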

\begin{proof}
For the injectivity, assume that $\Delta_V(\Delta_V-r)\phi =0$. Then over each fiber above $x\in B\setminus D$, $\Delta_V(\Delta_V-r) \phi_x=0$. By compactness of the fiber, it implies that 
$(\Delta_V-r)\phi$ is constant and since $\phi$ has vanishing integral, we get $(\Delta_V-r)\phi=0$ over $B\setminus D$. But then $\phi$ is an eigenfunction associated to the $r$-th eigenvalue, and thus by orthogonality, $\phi=0$.

\end{proof}

Next we prove that the map $L_V$ is also surjective. We adapt an idea from the theory of differential families of strongly elliptic differential operators, see Section 7 in Kodaira \cite{MR2109686} for instance.

We first cut out a small open neighbourhood $D_\delta$ containing $D$ with radius $\delta$. On each fibre at $x\in B\setminus D_\delta$, the ellipticity of the operator $\Delta_V(\Delta_V-r)(\phi)$ tells us that 
there is a complete orthonormal set of eigenfunctions $\{e_i(x)\}$. Each corresponding eigenvalues $\lambda_{i}(x)$ is a continuous function of $x\in B\setminus D_\delta$. 

Given $\psi\in \mathcal{Z}^{,\a,\b}_0$ we construct a solution $\phi_x$ for all $x\in B\setminus D$ of
$$L_{V,x}\phi_x=\Delta_{V,x}(\Delta_{V,x}- r)\phi_x=\psi_x$$
over $\mathbb{P}E^*_x$.
Along each fibre, we are able to solve the equation $L_{V,x}\phi_x=\psi_x$, since the operator is strongly elliptic and self-adjoint with respect to the inner product $ \int_{\mathbb{P}E^*_x}(\cdot,\cdot)\hat{\omega}_E^{r-1}$. 

We define the spaces $C_V^{k,\a}$, $k\geq 0$ as the H\"older spaces defined on the fibre $\mathbb{P}E^*_x$. Along each fibre, we then prove a priori estimates, with the help of the injectivity of the operator $L_V$.
\begin{lem}\label{familyestimate}
Suppose that $\phi\in \mathcal{Z}^{4,\a,\b}_0$ in $\mathbb{P}E^*$. There exists a constant $C>0$ depending on $\mathbb{P}E^*_x$ and the coefficients of $L_{V,x}$ such that
\begin{align*}
|\phi_x|_{C^{4,\a}_V}\leq C |L_{V,x}\phi_{x}|_{C_V^{\a}}.
\end{align*}
\end{lem}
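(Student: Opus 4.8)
The plan is to treat this as a standard a priori Schauder estimate for an elliptic operator on the fixed compact fibre $\mathbb{P}E^*_x$, upgraded by removing the lower-order term through the injectivity already established in Lemma~\ref{LVinjective}. Recall that, restricted to the fibre, $\Delta_{V,x}$ is the Laplacian of the Fubini--Study metric on $\mathbb{P}E^*_x\cong\mathbb{CP}^{r-1}$ (as noted in the remark following the definition of $\Delta_V$), so $L_{V,x}=\Delta_{V,x}(\Delta_{V,x}-r)$ is a fourth-order, strongly elliptic, self-adjoint operator with smooth coefficients on a closed manifold.

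First I would invoke the classical elliptic Schauder estimate for $L_{V,x}$ --- either by applying the second-order Schauder estimate for $\Delta_{V,x}$ twice, or directly through Agmon--Douglis--Nirenberg theory for the fourth-order operator --- to obtain a constant $C$, depending only on $\mathbb{P}E^*_x$ and the coefficients of $L_{V,x}$, with
$$|\phi_x|_{C^{4,\a}_V}\leq C\bigl(|L_{V,x}\phi_x|_{C^{\a}_V}+|\phi_x|_{C^0_V}\bigr).$$
It then remains to absorb the zeroth-order term $|\phi_x|_{C^0_V}$, and this is where the structure of $\mathcal{Z}^{4,\a,\b}_0$ enters.

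To remove the $C^0$ term I would argue by contradiction and compactness. Suppose the asserted estimate fails; then there is a sequence $\phi_j$, lying fibrewise in the orthogonal complement defining $\mathcal{Z}^{4,\a,\b}_0$ (that is, $\int_{\mathbb{P}E^*_x}\phi_j\,\hat{\omega}_E^{r-1}=0$ and $\phi_j$ orthogonal to the $r$-eigenspace $W_x$), normalised by $|\phi_j|_{C^{4,\a}_V}=1$ yet with $|L_{V,x}\phi_j|_{C^{\a}_V}\to 0$. By the compact embedding $C^{4,\a}_V\hookrightarrow C^{4,\a'}_V$ for $\a'<\a$ (Arzel\`a--Ascoli on the closed fibre), after passing to a subsequence $\phi_j$ converges in $C^{4,\a'}_V$ to a limit $\phi_\infty$ with $L_{V,x}\phi_\infty=0$; feeding $\phi_j-\phi_k$ back into the Schauder estimate shows the sequence is Cauchy in $C^{4,\a}_V$, so in fact $|\phi_\infty|_{C^{4,\a}_V}=1$. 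The two integral orthogonality conditions pass to the $C^0$ limit, so $\phi_\infty$ again lies in the fibrewise complement. Now the injectivity argument of Lemma~\ref{LVinjective} applies verbatim on the single fibre: $\Delta_{V,x}\bigl((\Delta_{V,x}-r)\phi_\infty\bigr)=0$ forces $(\Delta_{V,x}-r)\phi_\infty$ to be constant, the vanishing-integral condition forces that constant to be zero, and then $\phi_\infty$ is an $r$-eigenfunction orthogonal to $W_x$, hence $\phi_\infty=0$. This contradicts $|\phi_\infty|_{C^{4,\a}_V}=1$ and yields the estimate.

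The only genuinely delicate point, and the step I expect to be the main obstacle, is the compactness/closedness bookkeeping: one must check that the orthogonality constraints cutting out $\mathcal{Z}^{4,\a,\b}_0$ are preserved under $C^0$ convergence (so the limit stays in the space on which $L_{V,x}$ is injective) and that the limit is nontrivial, which is exactly what the two applications of the Schauder estimate guarantee. Everything else is standard fibrewise elliptic theory on the compact model $\mathbb{CP}^{r-1}$, and the fact that the constant is permitted to depend on $x$ means no uniformity in the base is required at this stage.
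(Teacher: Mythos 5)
Your proposal is correct and follows essentially the same route as the paper: a fibrewise Schauder estimate for the strongly elliptic operator $L_{V,x}$, followed by a compactness--contradiction argument that invokes the injectivity of Lemma~\ref{LVinjective} on $\mathcal{Z}^{4,\a,\b}_0$ to rule out a nontrivial kernel element. The only (harmless) difference is bookkeeping: you normalise by $|\phi_j|_{C^{4,\a}_V}=1$ and recover nontriviality of the limit via a Cauchy-sequence argument in $C^{4,\a}_V$, whereas the paper normalises by $|\phi_x(k)|_{C^{,\a}_V}=1$ so that the limit is nontrivial already in the weaker norm.
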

\begin{proof}
Since $L_{V,x}$ is strongly elliptic, we have
\begin{align*}
|\phi_x|_{C^{4,\a}_V}\leq C |L_{V,x}\phi_{x}|_{C_V^{\a}}+|\phi_x|_{C^{,\a}_V}.
\end{align*}
We now assume the conclusion is not true. Then there exists a sequence of $\phi_x(k)\in \mathcal{Z}^{4,\a,\b}_0$, $k\geq1$ such that 
\begin{align*}|\phi_x(k)|_{C^{,\a}_V}=1, \forall k\geq 1\text{ and } |L_{V,x}\phi_{x}(k)|_{C_V^{\a}}\rightarrow 0, \text{ as } k\rightarrow \infty.
\end{align*} 
Thus from compactness of $C^{4,\a}_V$, 
$|\phi_x(\infty)|_{C^{,\a}_V}=1$ and $L_{V,x}\phi_{x}(\infty)=0$. But $\mathcal{Z}^{4,\a,\b}_0$ is closed and then we use \lemref{LVinjective}, there is no nontrivial kernel in $\mathcal{Z}^{4,\a,\b}_0$. Thus $\phi_x(\infty)=0$, which is a contradiction.
\end{proof}

Let $x,y$ be two points in $B\setminus D_\delta$.
We let $\phi_x,\phi_y$ solve the equations $L_{V,x}\phi_x=\psi_x$ and $L_{V,y}\phi_y=\psi_y$ respectively. Now we are ready to prove in the next two lemma, the asymptotic behaviors of the solution $\phi_x$ with respect to the point $x$ in the base manifold $B\setminus D$.

\begin{lem}\label{familycontinuity}
The family of solution $\phi_x$ is continuous in $x$.
\end{lem}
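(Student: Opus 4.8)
The plan is to fix a point $x_0\in B\setminus D$ and to show that $\phi_x\to\phi_{x_0}$ in $C^{4,\a}_V$ as $x\to x_0$, working throughout in a small ball $U\ni x_0$ that does not meet $D$. First I would choose a local holomorphic trivialisation of $\mathbb{P}E^*$ over $U$, identifying every fibre $\mathbb{P}E^*_x$ with a fixed copy of $\mathbb{CP}^{r-1}$. Under this identification the family $\{L_{V,x}\}_{x\in U}$ becomes a family of strongly elliptic, self-adjoint operators on the fixed manifold $\mathbb{CP}^{r-1}$ whose coefficients depend continuously on $x$ (indeed in $C^{\a}_V$), since they are built from the fibrewise metric induced by $h_E$, which is $C^{4,\a,\b}$ and hence continuous in $x$ away from $D$. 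The same applies to the right-hand side, so $x\mapsto\psi_x$ is continuous into $C^{\a}_V$.

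Next I would compare the two solutions through the identity
\begin{align*}
L_{V,x_0}(\phi_x-\phi_{x_0})=(L_{V,x_0}-L_{V,x})\phi_x+(\psi_x-\psi_{x_0}),
\end{align*}
coming from $L_{V,x}\phi_x=\psi_x$ and $L_{V,x_0}\phi_{x_0}=\psi_{x_0}$. The estimate of \lemref{familyestimate} holds with a constant that can be taken independent of $x$ on $\overline{U}$: running the same contradiction argument along a sequence $x_k\to x_*\in\overline U$, a normalised family with $|L_{V,x_k}\phi_{x_k}|_{C^\a_V}\to 0$ would produce, by fibrewise compactness and continuity of the coefficients and of the constraints, a nontrivial element of $\ker L_{V,x_*}$ inside $\mathcal{Z}^{4,\a,\b}_0$, contradicting \lemref{LVinjective}. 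Since $|\psi_x|_{C^\a_V}$ is bounded on $U$, this uniform estimate bounds $|\phi_x|_{C^{4,\a}_V}$ uniformly; hence $(L_{V,x_0}-L_{V,x})\phi_x\to 0$ in $C^\a_V$ as $x\to x_0$, and $\psi_x-\psi_{x_0}\to 0$ by continuity of the data, so the right-hand side above tends to $0$ in $C^\a_V$.

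The one genuine subtlety is that $\phi_x$ and $\phi_{x_0}$ satisfy the mean-value and orthogonality constraints defining $\mathcal{Z}^{4,\a,\b}_0$ with respect to the \emph{different} fibrewise metrics at $x$ and at $x_0$, so $w:=\phi_x-\phi_{x_0}$ need not lie in the orthogonal complement of $\ker L_{V,x_0}=\mathbb{C}\oplus W_{x_0}$ on which the clean estimate applies. I would resolve this by splitting $w=w'+w''$ into its $L^2(\hat{\omega}_E|_{x_0})$-orthogonal projections onto $(\ker L_{V,x_0})^{\perp}$ and onto $\ker L_{V,x_0}$. Applying \lemref{familyestimate} at $x_0$ to $w'$ and using $L_{V,x_0}w'=L_{V,x_0}w$ gives $|w'|_{C^{4,\a}_V}\to 0$ by the previous paragraph. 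For the kernel part, since $\phi_{x_0}\in\mathcal{Z}^{4,\a,\b}_0$ its projection onto $\ker L_{V,x_0}$ vanishes, so $w''$ is exactly the $x_0$-orthogonal projection of $\phi_x$ onto $\mathbb{C}\oplus W_{x_0}$. The eigenvalues $0$ and $r$ are isolated and of constant multiplicity along the family, so the associated Riesz spectral projections depend continuously on $x$ (Kato perturbation theory; cf. \cite{MR2109686}); as $\phi_x$ is orthogonal to $\mathbb{C}\oplus W_x$ at $x$, the projection onto $\mathbb{C}\oplus W_{x_0}$ at $x_0$ differs from $0$ by a $o(1)$ operator applied to the uniformly bounded family $\phi_x$, whence $w''\to 0$. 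Combining, $\phi_x-\phi_{x_0}\to 0$ in $C^{4,\a}_V$, which is the asserted continuity. The main obstacle is precisely this control of the moving kernel $\mathbb{C}\oplus W_x$; everything else is the routine interplay of the uniform Schauder estimate with the continuity of the coefficients and the data.
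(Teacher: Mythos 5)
Your proposal is correct, and its core is the same as the paper's: the paper's proof consists precisely of the difference identity $L_{V,x}(\phi_y-\phi_x)+(L_{V,y}-L_{V,x})\phi_{y}=\psi_y-\psi_x$ (your identity with $x_0,x$ renamed $x,y$) followed by an application of \lemref{familyestimate} and the remark that the coefficients of $L_V$ and $\psi$ are $C^{,\a,\b}$. Where you genuinely go beyond the paper is in the two points you isolate, and both additions are correct and in fact needed for full rigor. First, the paper keeps the Schauder constant at the fixed point $x$, but to make $(L_{V,y}-L_{V,x})\phi_y\to 0$ one must bound $|\phi_y|_{C^{4,\a}_V}$ uniformly as $y\to x$, which requires the estimate of \lemref{familyestimate} with a constant uniform in the base point; your compactness/contradiction argument over $\overline U$, resting on \lemref{LVinjective}, supplies exactly this, while the paper leaves it implicit. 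Second, the constraint mismatch: $\phi_y$ and $\phi_x$ are orthogonal to $\mathbb{C}\oplus W_y$ and $\mathbb{C}\oplus W_x$ respectively, so their difference does not lie in the space on which the estimate at $x$ is stated; the paper applies the estimate to the difference without comment, whereas your splitting along the Riesz spectral projections of the isolated eigenvalues $0$ and $r$ (Kato continuity of the projections, with the kernel part finite-dimensional and spanned by smooth fibre functions, so $L^2$ smallness upgrades to $C^{4,\a}_V$ smallness) disposes of the moving kernel cleanly. In short, you buy a complete argument at the cost of two extra steps; the paper buys brevity by passing over exactly those two steps.
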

\begin{proof}
We have 
\begin{align*}
L_{V,x}(\phi_y-\phi_x)+(L_{V,y}-L_{V,x})\phi_{y}=\psi_y-\psi_x.
\end{align*}
From \lemref{familyestimate}, we have
\begin{align*}
|\phi_y-\phi_x|\leq C |(L_{V,y}-L_{V,x})\phi_{y}|_{C_V^{\a}}+|\psi_y-\psi_x|_{C_V^{\a}}.
\end{align*}
Here $C$ depends on the geometry of the fibre $\mathbb{P}E^*_x$ and the coefficients of $L_V$ at $x$. Since the coefficients of $L_V$ and $\psi$ are both $C^{,\a,\b}$, we see that $\phi_y\rightarrow\phi_x$ as $y\rightarrow x$.
\end{proof}

\begin{lem}\label{familyc1}
The family of solution $\phi_x$ is $C^{1,\b}$ in $x$.
\end{lem}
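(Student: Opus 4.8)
The plan is to differentiate the fibrewise equation $L_{V,x}\phi_x=\psi_x$ once in the base directions and to rerun the difference-quotient and a priori estimate scheme already used for Lemma \ref{familycontinuity}. Fix $x\in B\setminus D_\delta$ and a local real base coordinate $t^a$, and denote by $x_h$ the point obtained from $x$ by increasing $t^a$ by $h$. On $B\setminus D_\delta$ the form $\hat{\omega}_E$, the coefficients of $L_V$, the eigenbundle $W$ and the datum $\psi$ all depend smoothly on the base point, since $\omega_B$ is a genuine cscK metric and $h_E$ a genuine Hermitian--Einstein metric on $B\setminus D$, hence both are smooth there by elliptic regularity. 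For small $h$ set $q^h=h^{-1}(\phi_{x_h}-\phi_x)$; subtracting the two fibrewise equations exactly as in Lemma \ref{familycontinuity} gives
\begin{align*}
L_{V,x}q^h=\frac{\psi_{x_h}-\psi_x}{h}-\frac{L_{V,x_h}-L_{V,x}}{h}\,\phi_{x_h}.
\end{align*}

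First I would establish differentiability of $x\mapsto\phi_x$. As $h\to0$ the right-hand side converges in $C_V^{\a}$ on the fibre to $\Psi_x:=\p_{t^a}\psi_x-(\p_{t^a}L_{V,x})\phi_x$, using the smoothness in $x$ of $\psi$ and of the coefficients of $L_V$ together with the uniform fibre bound $|\phi_x|_{C^{4,\a}_V}\le C$ furnished by Lemma \ref{familyestimate} and the continuity of Lemma \ref{familycontinuity}. Applying the a priori estimate of Lemma \ref{familyestimate} to the differences $q^h-q^{h'}$ shows that $\{q^h\}$ is Cauchy in $C^{4,\a'}_V$ for every $\a'<\a$, so $q^h$ converges to a limit $\p_{t^a}\phi_x$ solving $L_{V,x}(\p_{t^a}\phi_x)=\Psi_x$ on the fibre. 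The one genuine subtlety is that the orthogonality constraints defining $\mathcal{Z}^{4,\a,\b}_0$ move with $x$: differentiating $\int_{\mathbb{P}E^*_x}\phi_x\,\hat{\omega}_E^{r-1}=0$ and $\int_{\mathbb{P}E^*_x}\phi_x\theta\,\hat{\omega}_E^{r-1}=0$ produces extra terms from $\p_{t^a}\hat{\omega}_E^{r-1}$ and $\p_{t^a}\theta$, so $\p_{t^a}\phi_x$ need not lie in the fibrewise orthogonal complement. I would correct this by subtracting the smoothly $x$-varying projection of $\p_{t^a}\phi_x$ onto the constants and onto $W_x$, which is harmless because $L_{V,x}$ restricted to $\mathcal{Z}_0$ is an isomorphism by Lemma \ref{LVinjective} together with the fibrewise self-adjoint solvability already used.

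Then I would upgrade continuity to Hölder continuity of the derivative. The family $y\mapsto\p_{t^a}\phi_y$ solves $L_{V,y}(\p_{t^a}\phi_y)=\Psi_y$, where the coefficients of $L_{V,y}$ and the right-hand side $\Psi_y$ are $C^{,\a,\b}$ in the base point. Inserting this into the argument of Lemma \ref{familycontinuity}, that is writing
\begin{align*}
L_{V,x}(\p_{t^a}\phi_y-\p_{t^a}\phi_x)=(\Psi_y-\Psi_x)-(L_{V,y}-L_{V,x})\,\p_{t^a}\phi_y
\end{align*}
and invoking Lemma \ref{familyestimate}, yields a Hölder estimate $|\p_{t^a}\phi_y-\p_{t^a}\phi_x|_{C_V^{\a}}\le C|x-y|^{\b}$ coming from the cone-Hölder modulus of continuity of the coefficients and of $\Psi$. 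Hence each first base derivative of $\phi_x$ is $C^{\b}$ in $x$, so $\phi_x$ is $C^{1,\b}$ in $x$, as claimed.

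The step I expect to be the main obstacle is keeping all constants uniform while taking the difference-quotient limit: the constant in Lemma \ref{familyestimate} depends on the geometry of the fibre $\mathbb{P}E^*_x$ and on the coefficients of $L_{V,x}$, both of which degenerate as $x\to D$. Controlling this dependence compatibly with the later limit $\delta\to0$, so that the function assembled on $X$ from the $\phi_x$ genuinely lies in $C^{4,\a,\b}$ across $\mathcal{D}$, is the delicate point, and it is exactly here that the cone structure of $\omega_B$ and the compatibility of $h_E$ with the parabolic structure must be used.
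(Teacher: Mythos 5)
Your proof takes essentially the same route as the paper's: differentiate the fibrewise equation in the base direction, take as candidate derivative the solution of $L_{V,x}\eta_x=\nabla_x\psi_x-(\nabla_x L_{V,x})\phi_x$, and use the a priori estimate of Lemma \ref{familyestimate} together with the continuity argument of Lemma \ref{familycontinuity} to show the base difference quotients converge to it. Your extra refinements --- the Cauchy-sequence formulation, the correction for the $x$-dependent orthogonality constraints defining $\mathcal{Z}^{4,\a,\b}_0$, and the H\"older upgrade of the derivative --- go beyond what the paper writes down (note the paper's proof, like yours, implicitly assumes $\psi$ and the coefficients of $L_V$ are differentiable in $x$, which is not automatic for an arbitrary $\psi\in\mathcal{Z}^{,\a,\b}_0$), but the core argument is identical.
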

\begin{proof}
We denote $\nabla_x$ be the covariant derivative with respect to the metric $\pi^\ast\omega_B$.
We let $\eta_x$ solves the equation 
\begin{align*}
L_{V,x}\eta_x=\nabla_x \psi_x -(\nabla_x L_{V,x})\phi_x.
\end{align*}
We also denote distance $\delta$ between $x,y$ are measured under the metric $\pi^\ast\om_B$. It is sufficient to prove that as $y\rightarrow x$,
\begin{align*}
\frac{\phi_y-\phi_x}{\delta}\rightarrow \eta_x.
\end{align*}
From \lemref{familycontinuity}, it suffices to prove that
as $y\rightarrow x$,
\begin{align*}
L_{V,y}[\frac{\phi_y-\phi_x}{\delta}- \eta_x]\rightarrow 0.
\end{align*}
Using the formulas above, it becomes
\begin{align*}
L_{V,y}[\frac{\phi_y-\phi_x}{\delta}- \eta_x]
=&\frac{\psi_y-\psi_x}{\delta}-\nabla_x\psi_x\\
&-[\frac{(L_{V,y}-L_{V,x})\phi_x}{\delta}-(\nabla_xL_{V,x})\phi_x]\\
&-(L_{V,y}-L_{V,x})\eta_x.
\end{align*}
The first two terms on the right hand side converge to zero by definition and the third term converges to zero, since the coefficients are in $C^{,\a,\b}$.

\end{proof}

\begin{prop}\label{isomorphismvertical}
$L_V$ is bijective.
\end{prop}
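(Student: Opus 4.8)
The plan is to combine the injectivity already established in Lemma \ref{LVinjective} with a surjectivity argument obtained by solving $L_V\phi=\psi$ fiberwise and then checking that the resulting family assembles into a section of the correct H\"older class.

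First I would fix $\psi\in\mathcal{Z}^{,\a,\b}_0$ and solve $L_{V,x}\phi_x=\psi_x$ on each fiber $\mathbb{P}E^*_x\cong\mathbb{CP}^{r-1}$ for $x\in B\setminus D$. Since $L_{V,x}=\Delta_{V,x}(\Delta_{V,x}-r)$ is strongly elliptic and self-adjoint for the fiberwise inner product $\int_{\mathbb{P}E^*_x}(\cdot,\cdot)\hat{\omega}_E^{r-1}$, Fredholm theory on the compact fiber solves the equation provided $\psi_x$ is $L^2$-orthogonal to $\ker L_{V,x}$. That kernel is spanned by the constants and by the $r$-th eigenfunctions of $\Delta_{V,x}$, and $\psi_x$ is orthogonal to both by the very definition of $\mathcal{Z}^{,\a,\b}_0$ (vanishing fiber-integral and orthogonality to $C^{,\a,\b}(B\setminus D,W)$). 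I would take the unique $\phi_x$ orthogonal to $\ker L_{V,x}$, so that the family $\phi=(\phi_x)$ lies fiberwise in the analogue of $\mathcal{Z}_0$.

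Next I would promote this fiberwise solution to a genuine element of $\mathcal{Z}^{4,\a,\b}_0$. The uniform fiberwise bound $|\phi_x|_{C^{4,\a}_V}\leq C|L_{V,x}\phi_x|_{C_V^{\a}}$ is Lemma \ref{familyestimate}, whose proof rests precisely on the fiberwise injectivity of $L_V$. Regularity in the base direction away from the divisor is then supplied by Lemmas \ref{familycontinuity} and \ref{familyc1}, which I would iterate — differentiating $L_{V,x}\phi_x=\psi_x$ successively in $x$ and re-solving with Lemma \ref{familyestimate}, exactly as in the proof of Lemma \ref{familyc1} — to bootstrap to the base regularity required for the $C^{4,\a,\b}$ norm over $B\setminus D_\delta$. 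Combined with the fiberwise $C^{4,\a}_V$ bounds, this yields $\phi\in C^{4,\a}(X\setminus\pi^{-1}(D_\delta))$ for every $\delta>0$.

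The main obstacle, and the only step beyond routine assembly, is the behaviour across $\mathcal{D}=\pi^{-1}(D)$. There I would rerun the continuity and differentiability arguments in the quasi-isometric coordinate $W$ of \eqref{wcoordiante}: because $h_E\in C^{4,\a,\b}(\mathcal {H}^+(E))$ by Theorem \ref{HEsol} and $\omega_B$ has $C^{2,\a,\b}$ potential, the coefficients of the family $L_{V,x}$ are $C^{,\a,\b}$ up to $D$, i.e. genuinely $C^{\a}$ across the image of $D$ after the change of variables. The estimate of Lemma \ref{familyestimate} is purely fiberwise and hence uniform in $x$ including on $D$, so the same contradiction-and-compactness scheme driving Lemmas \ref{familycontinuity} and \ref{familyc1} applies verbatim in the $W$-coordinate and produces the H\"older bounds in the directions tangent and normal to $\mathcal{D}$ that define the $C^{4,\a,\b}$ norm. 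This gives $\phi\in\mathcal{Z}^{4,\a,\b}_0$ with $L_V\phi=\psi$, proving surjectivity; together with Lemma \ref{LVinjective}, the operator $L_V$ is bijective.
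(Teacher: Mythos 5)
Your proposal is correct and follows essentially the same route as the paper: fiberwise solvability of $L_{V,x}\phi_x=\psi_x$ from strong ellipticity and self-adjointness (with orthogonality to the kernel built into $\mathcal{Z}^{,\a,\b}_0$), followed by assembling the family via Lemma \ref{familyestimate}, Lemma \ref{familycontinuity} and Lemma \ref{familyc1}, and bootstrapping base regularity by iterating the argument of Lemma \ref{familyc1}. Your explicit identification of $\ker L_{V,x}$ and your treatment of the conical regularity across $\mathcal{D}$ in the $W$-coordinate simply fill in details the paper leaves implicit, rather than constituting a different method.
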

\begin{proof}
Thanks to Lemma \ref{LVinjective}, we just need to prove that $L_V$ is surjective. 
The problem now is how to glue the family of pointwise solutions together to produce a solution on the whole ruled manifold. Combining \lemref{familycontinuity} and \lemref{familyc1}, we obtain $\phi\in C^{4,\a,\b}$ by using the induction argument and repeating the strategy of \lemref{familyc1}.
\end{proof}

\begin{rem}
 Note that the proofs of Lemmas \ref{LVinjective}, \ref{familyestimate}, \ref{familycontinuity}, \ref{familyc1} and Proposition \ref{isomorphismvertical} do not require $h_E$ to be Hermitian-Einstein but only to have the regularity obtained in Theorem \ref{HEsol}.
\end{rem}

\label{proofProp1}\begin{proof}[Proof of Proposition \ref{prop1}] The first two terms of the expansion of the scalar curvature $S(\tilde{\omega}_k+ \sqrt{-1}\partial\bar\partial\phi_{k,p})$ are constant since
 $S_1(\omega_B,h_E)$ is constant by definition of $\omega_B$ and $h_E$. Now, 
we wish to make constant the $k^{-2}$ term of the expansion of $S(\tilde{\omega}_k+ \sqrt{-1}\partial\bar\partial\phi_{k,p})$ constant. For that, writing the topological constant  $\underline{S}_\beta=\underline{S}_\beta^0+k^{-1}\underline{S}_\beta^1+ k^{-2}\underline{S}_\beta^2+...$ and considering  \eqref{order2a}, \eqref{order2b},  we need to find $(\eta_0,\Phi_0,\phi_0)$ solving the equation over $X\setminus \mathcal{D}$,
$$DS_1(\eta_0,\Phi_0)+\Delta_V (\Delta_V -r)\phi_{0} = \underline{S}_\beta^2$$
Since $\underline{S}_\beta^2$ is a constant, we are lead to solve both equations $\Delta_V (\Delta_V -r)\phi_{0}=0$ and $DS_1(\eta_0,\Phi_0)=\underline{S}_\beta^2$.
Note that first equation has an obvious solution.
Hence, we can apply Lemma \ref{isomorphismtangent} and Lemma \ref{associate} to obtain $(\eta_0,\Phi_0)$. \\
Next, we need to make constant the $k^{-3}$ term. As there is a contribution coming from expansion of
$S(\tilde{\omega}_k)$, we are lead this time to solve  
$$DS_1(\eta_1,\Phi_1)+\Delta_V (\Delta_V -r)\phi_{1} =  \underline{S}_\beta^3+\gamma_3$$
for a certain function $\gamma_3\in C^{,\alpha,\beta}(X\setminus \mathcal{D},\mathbb{R})$. Here we use the fact that we found $\eta_0$ and $ \phi_0$ in $C^{4,\a,\b}$ and $\Phi_0 \in C^{2,\a,\b}$. The term 
$\gamma_3$ appears as a combination of 4th order derivatives of $\eta_0,\phi_0$ and 2{nd} order derivatives in $\Phi_0$. Consequently, from the fact that $C^{,\a,\b}$ is an algebra, $\gamma_3$ is an element of $C^{,\alpha,\beta}(X,\mathbb{R})$.
Again, Lemma \ref{isomorphismtangent}, Lemma  \ref{associate} and Proposition \ref{isomorphismvertical} ensure that a solution $(\eta_1,\Phi_1,\phi_1)$ can be found with the right regularity.

Then Proposition \ref{prop1} is obtained by induction using the same reasoning for higher order terms, thanks to the fact that we have surjectivity from 
 $C^{4,\alpha,\beta}_0(B)\times C^{2,\alpha,\beta}(B,End(E))^0\times \mathcal{Z}^{p,\a,\b}_0$ onto $C^{,\alpha,\beta}(X\setminus \mathcal{D},\mathbb{R})$ and applying  Lemmas \ref{isomorphismtangent} and  \ref{associate} and Proposition \ref{isomorphismvertical}.

\end{proof}

\subsection{Proof of Theorem \ref{thm1}}\label{proofThm1}

\begin{proof}[Proof of Theorem \ref{thm1}]
Note that the proof is technically different to \cite{LS}, as in the smooth case it is used Sobolev spaces while we use H\"older spaces. Since $E$ is parabolic stable, we obtain a Hermitian-Einstein cone metric from Theorem \ref{HEsol}. We can apply Proposition \ref{prop1} to obtain an approximate cscK cone metric. To obtain a genuine cscK cone metric, we need to apply the implicit function theorem using Theorem \ref{Fredholm} on $X$ the projectivisation of the bundle. This requires to solve the Lichnerowicz equation as the kernel of the associated operator is trivial, thanks to Corollary \ref{cornovectorfield}.
\end{proof}


\section{K\"ahler-Einstein cone metrics and Tangent bundle}\label{KET}

It is well-known that if $X$ is a compact K\"ahler manifold endowed with a smooth K\"ahler-Einstein metric (with positive, negative or zero curvature) then its tangent bundle $TX$ admits a Hermitian-Einstein metric and thus is Mumford polystable (with respect to the anticanonical polarization, canonical polarization, or any polarization respectively). In this section, we study the case when $X$ is a compact K\"ahler manifold and admits a K\"ahler-Einstein metric with conical singularities along a divisor. Let $n$ be the complex dimension of $X$.\\
Let us consider $\omega_{KE}$ a K\"ahler-Einstein metric with conical singularities along $D$ smooth divisor for which the H\"older exponent $\alpha$ and the angle $2\pi\beta$ satisfy Condition \eqref{anglerestriction}. Define $\nabla_{KE}$ the  Chern connection associated to the induced hermitian metric $h_{KE}$ on the tangent bundle $TX$ and  $F_{KE}=F_{\nabla_{KE}}\in \Omega^{1,1}(End(T^{1,0}X))$ its curvature. We obtain an operator 
$$\widehat{\omega}_{KE}(\Lambda_{\omega_{KE}}F_{{KE}}):T^{1,0}X\to \Lambda^{0,1}X$$
by identifying $T^{1,0}X$ to $\Lambda^{0,1}X$ using $\omega_{KE}$. Consequently, $\widehat{\omega}_{KE}(\Lambda_{\omega_{KE}}F_{{KE}})$ can be seen as an element in $\Omega^{1,1}(X)$. A local computation that remains valid outside of the divisor $D$, shows that $$Ric(\omega_{KE})=\widehat{\omega}_{KE}(\Lambda_{\omega_{KE}}F_{{KE}}).$$ 
Using the K\"ahler-Einstein property, the previous equation provides a metric on $TX$ which is Hermitian-Einstein metric outside $D$ and has 
$C^{2,\a,\b}$ regularity as $Ric(\omega_{KE})$ is $C^{,\a,\b}$ from Theorem \ref{cscKconeregularity}. We now check that we obtain furthermore a parabolic structure for which this metric is compatible.
We consider the canonical section $\sigma$ of $D$ that vanishes precisely on $D$.
Now, as the K\"ahler form $\omega_{KE}$ has conical singularities along $D$, it is quasi-isometric to 
\begin{equation}\frac{\sqrt{-1}}{2} a_{1}\vert \sigma\vert^{2(\beta-1)}dz^1\wedge d\bar{z}^1+\tilde{\omega}\label{curvKE}\end{equation}
using local cone chart coordinates. Here the $z^1$ is the local defining function of the hypersurface $D=\{\sigma=0\}$ where $p$ locates, $a_1$ is a smooth function, $0<\beta<1/2$, and $\tilde{\omega}$ is a smooth form.
Consequently, from \eqref{curvKE}, the curvature $\vert F_{KE}\vert_{h_{KE}}$ of the metric $h_{KE}$ lies in $L^p(X)$ for $p< \frac{2}{1-\beta}$ as 
\begin{align}\label{ineq000}\vert \sigma\vert^{1-\beta}\vert F_{KE}\vert_{h_{KE}}\leq C,\end{align}
for a uniform constant $C>0$, see for instance the proof of \cite[Lemma 5.2]{MR1701135}.
This bound implies the following statement. 
\begin{lem}\label{Lpcurv}
 With above assumptions, there exists $C>0$ such that
 $$\Vert F_{{KE}} \Vert_{L^p(\omega_{KE})}<C,$$ 
with $p>2$. 
\end{lem}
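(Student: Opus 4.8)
The plan is to reduce the global $L^{p}(\omega_{KE})$–bound to an elementary radial integral in a cone chart. Away from $D$ the form $\omega_{KE}$ is a smooth Kähler metric and $F_{KE}$ a smooth curvature tensor, hence bounded on every compact subset of $M=X\setminus D$; covering $X$ by finitely many cone charts meeting $D$ together with ordinary charts away from $D$, and choosing a subordinate partition of unity, it suffices to estimate $\int_{U}|F_{KE}|_{\omega_{KE}}^{p}\,\omega_{KE}^{n}$ over a single cone chart $U$ intersecting $D$, the remaining pieces being manifestly finite.

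In $U$ I would write $F_{KE}=\sum_{k,l}F_{k\bar l}\,dz^{k}\wedge d\bar z^{l}$ with $F_{k\bar l}\in\mathrm{End}(TX)$, and recall from \eqref{curvKE} that $\omega_{KE}$ is quasi-isometric to the flat model $\om_{cone}$. Thus $\omega_{KE}^{n}$ is comparable to $|z^{1}|^{2(\beta-1)}\,dV$ (Euclidean volume), while the inverse metric satisfies $g^{1\bar 1}\sim|z^{1}|^{2(1-\beta)}$ and $g^{j\bar l}\sim\delta^{jl}$ for $j,l\ge 2$. The crucial observation is that the pointwise $\omega_{KE}$–norm raises the two form indices of each component $F_{k\bar l}$ with these inverse coefficients, so that every factor $dz^{1}$ or $d\bar z^{1}$ contributes a positive power $|z^{1}|^{1-\beta}$ to $|F_{KE}|_{\omega_{KE}}$. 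Since the singularity in \eqref{ineq000}, namely $|F_{KE}|_{h_{KE}}\le C|\sigma|^{\beta-1}$ with $|\sigma|\sim|z^{1}|$, is carried by the components normal to $D$ (those involving $z^{1}$), these weights absorb the blow-up: I expect to obtain $|F_{KE}|_{\omega_{KE}}\le C|z^{1}|^{-\gamma}$ near $D$ with $\gamma<\beta$ (indeed $\gamma\le 0$ if the singular part sits in the $F_{1\bar l}$, $F_{k\bar 1}$ and $F_{1\bar 1}$ entries).

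Substituting into the local integral yields $\int_{U}|z^{1}|^{-\gamma p}\,|z^{1}|^{2(\beta-1)}\,dV$, which over the $z^{1}$–plane converges precisely when $\gamma p<2\beta$, that is for all $p<2\beta/\gamma$; as $\gamma<\beta$ this range contains values $p>2$, and summing over the finite cover gives $\|F_{KE}\|_{L^{p}(\omega_{KE})}<C$ for such $p$. The main obstacle is exactly the bookkeeping of the powers of $|z^{1}|$ in the previous paragraph: one must verify, from the explicit form \eqref{curvKE} of $\omega_{KE}$ together with \eqref{ineq000}, that the curvature singularity genuinely lies in the normal directions suppressed by $g^{1\bar 1}$, so that the effective exponent $\gamma$ is strictly smaller than $\beta$. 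Once this directional control is secured, convergence of the radial integral—and hence the existence of some $p>2$ with the asserted bound—follows at once, the inequality $\frac{2}{1-\beta}>2$ (valid since $\beta>0$) reflecting the room that is available.
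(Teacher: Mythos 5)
Your chart reduction and the radial-integral arithmetic are fine, but the proof has a genuine gap at its central step, and the gap is created by a choice the paper never makes: you integrate against the cone volume $\omega_{KE}^n\sim|z^1|^{2(\beta-1)}\,dV$. The paper's argument is a one-line integration against the \emph{smooth} volume of the chart: from \eqref{ineq000} one gets $\int_U|F_{KE}|^p\,dV\lesssim\int_{|z^1|<1}|z^1|^{p(\beta-1)}\,dV<\infty$ exactly when $p<\frac{2}{1-\beta}$, and $\frac{2}{1-\beta}>2$; that is the whole proof, and it is precisely the computation behind the threshold $\frac{2}{1-\beta}$ stated in the text just before the lemma. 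With your cone volume, the only available estimate, namely \eqref{ineq000}, i.e. $|F_{KE}|\lesssim|z^1|^{\beta-1}$, gives convergence only for $(p+2)(1-\beta)<2$, i.e. $p<\frac{2\beta}{1-\beta}$, which is strictly smaller than $2$ whenever $\beta<\frac12$, as Condition \eqref{anglerestriction} requires. So everything rides on your refined claim $|F_{KE}|_{\omega_{KE}}\leq C|z^1|^{-\gamma}$ with $\gamma<\beta$ --- exactly the step you flag as ``the main obstacle'' and never prove. Note that \eqref{ineq000} only yields $\gamma=1-\beta$, and $1-\beta>\beta$ in the allowed range of angles, so the refinement you need is a strictly stronger statement, not bookkeeping.

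Moreover, the heuristic you offer for that refinement misreads \eqref{ineq000}. In that estimate (following the proof of Lemma 5.2 of Li--Narasimhan) the two form indices of $F_{KE}$ are already contracted with the cone metric; it is not a componentwise bound $|F_{k\bar l}|_{h_{KE}}\lesssim|z^1|^{\beta-1}$ with the singularity confined to the entries involving $z^1$. Such a componentwise reading is false in general: already in complex dimension one, a constant-curvature cone metric has coordinate component $|F_{1\bar1}|=|K|\,g_{1\bar1}\sim|z^1|^{2\beta-2}$, while its cone norm $g^{1\bar1}|F_{1\bar1}|=|K|$ is bounded. Consequently you cannot ``raise the indices'' a second time to harvest factors of $|z^1|^{1-\beta}$: that gain is already spent in \eqref{ineq000}. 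If you insist on reading $L^p(\omega_{KE})$ with the cone volume, you would have to prove boundedness (or near-boundedness) of $|F_{KE}|_{\omega_{KE}}$ near $D$ --- plausible in view of the $C^{4,\a,\b}$ regularity of Theorem \ref{cscKconeregularity}, but established nowhere in the paper and well beyond what \eqref{ineq000} gives. The intended proof is the paper's one-line integration.
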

\begin{thm}\label{KEHE}
 Assume $(X,\omega_{KE})$ is a compact K\"ahler manifold endowed with a K\"ahler-Einstein cone metric along a smooth divisor $D$, with H\"older exponent $\alpha$ and angle $2\pi\beta$ satisfying Condition \eqref{anglerestriction}. Then its tangent bundle $TX$ is parabolic polystable with respect to $\omega_{KE}$.
\end{thm}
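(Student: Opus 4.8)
The plan is to recognise $h_{KE}$ as a Hermitian-Einstein cone metric in the sense of Definition \ref{HEc-def} on $TX$ equipped with its natural parabolic structure, and then to deduce polystability from the converse of the parabolic Hermitian-Einstein correspondence (Remark \ref{remHE}) together with the standard splitting argument for Hermitian-Einstein bundles. The analytic input has already been assembled in the discussion preceding the theorem: $h_{KE}$ solves the Hermitian-Einstein equation \eqref{HEeqn} on $X\setminus D$ (via $Ric(\omega_{KE})=\widehat\omega_{KE}(\Lambda_{\omega_{KE}}F_{KE})$ and the Kähler-Einstein property), it is $C^{2,\a,\b}$, and $F_{KE}\in L^p(\omega_{KE})$ for some $p>2$ by Lemma \ref{Lpcurv}. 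The two remaining tasks are therefore to produce the correct parabolic structure together with a compatible metric, and to pass from ``admits a Hermitian-Einstein cone metric'' to ``polystable''.

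First I would fix the parabolic structure on $TX$ along the smooth divisor $D$, taking it to be the growth filtration of $h_{KE}$. Since $D$ is smooth the flag has a single step: in a cone chart with $D=\{z^1=0\}$ the tangent directions $\partial_2,\dots,\partial_n$ satisfy $|\partial_j|_{h_{KE}}\sim 1$, while the normal direction satisfies $|\partial_1|_{h_{KE}}\sim |z^1|^{\beta-1}$ by the quasi-isometry \eqref{curvKE}. These two distinct growth rates single out the subbundle $TD\subset TX|_D$ and fix the parabolic weights; by construction they match the growth rates of Li's model metric $h_0$ for the corresponding weights, so $h_{KE}$ and $h_0$ are mutually bounded. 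I would then verify the two compatibility conditions of Definition \ref{HEc-def}, namely that $h_0$ obeys \eqref{modelmetric} and that $|\bar\partial(h_0^{-1}h_{KE})|_{h_0}\in L^2(B,\omega_D)$; the latter is controlled by the curvatures of $h_{KE}$ and $h_0$, both of which lie in $L^p$ with $p>2$ by \eqref{ineq000} and Lemma \ref{Lpcurv}. Combined with the $C^{2,\a,\b}$ regularity of $h_{KE}$ (a consequence of $Ric(\omega_{KE})\in C^{,\a,\b}$, Theorem \ref{cscKconeregularity}), this shows that $h_{KE}\in C^{2,\a,\b}(\mathcal H^+(TX))$ is compatible with the parabolic structure, and hence exhibits $h_{KE}$ as a genuine Hermitian-Einstein cone metric on the parabolic bundle $TX$.

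To pass from existence of such a metric to polystability I would invoke the classical Hermitian-Einstein decomposition argument, valid in the present conical parabolic category by the formalism of Section \ref{Section 3}. Writing $\lambda=\tilde\mu(TX)/\Vol$ for the Einstein constant in \eqref{HEeqn}, the metric $h_{KE}$ induces an $h_{KE}$-orthogonal holomorphic splitting $TX=\bigoplus_j E_j$ into indecomposable parabolic subbundles, and $h_{KE}$ restricts to a Hermitian-Einstein cone metric on each $E_j$ with the \emph{same} constant $\lambda$. Tracing and integrating the restricted equation \eqref{HEeqn} then forces $\tilde\mu(E_j)=\lambda\,\Vol=\tilde\mu(TX)$ for every $j$, so all summands share the parabolic slope of $TX$. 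By Remark \ref{remHE}, each indecomposable factor $E_j$, carrying a compatible Hermitian-Einstein cone metric, is parabolic stable. Hence $TX$ is a direct sum of parabolic stable bundles of equal parabolic slope, that is, parabolic polystable.

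I expect the main obstacle to be the compatibility verification of the second step: pinning down the parabolic weights from the asymptotics of $h_{KE}$ and establishing $|\bar\partial(h_0^{-1}h_{KE})|_{h_0}\in L^2$ near $D$, where the cone geometry of $\omega_{KE}$ and the precise behaviour of Li's model metric $h_0$ must be matched. The decomposition step is then essentially formal, the only point needing care being to confirm that the orthogonal splitting produced by the Einstein metric is holomorphic and respects the parabolic filtration along $D$, so that Remark \ref{remHE} may be applied to each factor.
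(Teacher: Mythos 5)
Your overall architecture coincides with the paper's: extract a parabolic structure from the growth of $h_{KE}$ near $D$, verify compatibility in the sense of Definition \ref{HEc-def}, and deduce polystability by applying Remark \ref{remHE} to indecomposable summands (the paper additionally offers, for curves and surfaces, a shortcut via Biquard's extension theorem combined with the $L^p$ curvature bound of Lemma \ref{Lpcurv}). However, there is a genuine gap at the very first step, which is the crux of the proof. You read off the weights from the growth rates $|\partial_j|_{h_{KE}}\sim 1$ and $|\partial_1|_{h_{KE}}\sim |z^1|^{\beta-1}$, and claim that these ``fix the parabolic weights'' so that $h_{KE}$ is mutually bounded with Li's model metric $h_0$. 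This cannot work as stated: the exponent $\beta-1$ is negative, hence not an admissible parabolic weight (Definition \ref{defparab} requires weights in $[0,1)$), and Li's model metric for any admissible parabolic structure is bounded near $D$, while $h_{KE}$ blows up in the normal direction. The obstruction is frame-independent: in any holomorphic frame of $TX$ over a cone chart $U$, the determinant of the Gram matrix of $h_{KE}$ differs from $\det(g_{i\bar j})\sim |z^1|^{2\beta-2}$ only by the factor $|\det A|^2$ of an invertible holomorphic change of frame, so it blows up, whereas mutual boundedness with a bounded model metric would force it to stay bounded. Hence no parabolic structure on $TX$ itself --- whatever the filtration and admissible weights --- admits $h_{KE}$ as a compatible metric, so the claim that $h_{KE}$ is a Hermitian-Einstein cone metric on the parabolic bundle $TX$, and with it the application of Remark \ref{remHE}, collapses.

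The missing idea is an elementary transformation: one must pass to the locally free subsheaf $TX(-\log D)\subset TX$ of vector fields tangent to $D$, with local frame $\{z^1\partial_1,\partial_2,\dots,\partial_n\}$. In this frame the Gram matrix of $h_{KE}$ has entries decaying like $|z^1|^{2\beta},1,\dots,1$, so the weights become $\beta\in(0,1/2)$ and $0$, both admissible, and the one-step filtration is the residue line $\ker\bigl(TX(-\log D)|_D\to TD\bigr)$ rather than $TD$ itself. This is precisely what the paper's proof does, implicitly, when it diagonalizes $h_{KE}$ in a holomorphic basis with \emph{vanishing} diagonal entries $\zeta_j\Vert z^1\Vert^{2\gamma_j}_D$, uses \eqref{ineq000} to ensure $0\le\gamma_j<1$, and checks that the vanishing orders agree on overlapping charts so that the filtration globalizes along $D$. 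Two smaller points: the compatibility condition $|\bar\partial(h_0^{-1}h_{KE})|_{h_0}\in L^2$ is not a consequence of the $L^p$ curvature bounds, which control no first-order quantity of the endomorphism $h_0^{-1}h_{KE}$; the paper obtains compatibility by following the construction of \cite[Section 3]{LiJ}. Your final step (orthogonal holomorphic splitting into indecomposables, equal slopes by tracing the Einstein equation, Remark \ref{remHE} on each factor) does match the paper's conclusion and is fine once the parabolic structure has been correctly set up.
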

\begin{proof}
 Firstly, note that in the case of a curve or a surface, we can use a strong result of Biquard that shows an equivalence between the category of hermitian bundles on $X\setminus D$ with $L^p$ curvature and the category of holomorphic bundles on $X$ with parabolic structure over $D$, see \cites{MR1168354,MR1373062}. Applying Lemma \ref{Lpcurv}, we obtain that the  tangent bundle can be extended over $D$ together with a parabolic structure along the divisor and such that the metric $h_{KE}$ is compatible with this parabolic structure. Moreover this extension is essentially unique. Now, using the Hermitian-Einstein condition and \cite[Theorem 6.3]{LiJ} or \cite{Sim}, we obtain the parabolic stability of each component of the  tangent bundle if it is not indecomposable, i.e its polystability.

 In general, we can adapt the construction of \cite[Section 3]{LiJ} as the bundle we are interested in is already defined over the divisor. We choose local holomorphic coordinates in a neighbourhood $U=\{\vert z^i\vert<1, i=1,..,n\}$ of the point $p=(0,...,0)$ such that the intersection with the divisor can be written $D\cap U=\{z^1= 0\}$.
 We may choose a holomorphic basis $\{e_i\}_{i=1,..,r}$ of $E_{\vert U}$ such that the matrix of the metric $h_{KE}$ in this basis is diagonal. We write $D_{h_{KE}}$ this matrix.
 The matrix   $D_{h_{KE}}$ necessarily vanishes on $U\cap D$ as the curvature $F_{KE}$ is singular and thus, fixing $\Vert . \Vert_D$ a norm on $\mathcal{O}(D)$, we can write $D_{h_{KE}}$ as
 $$D_{h_{KE}}=\begin{pmatrix}
               \zeta_1\Vert z^1\Vert^{2\gamma_{1}}_{D}&  & \\ 
                & \ddots & \\
                & & \zeta_r\Vert z^1\Vert^{2\gamma_{r}}_{D} 
              \end{pmatrix}.$$
Here $\zeta_j$ are positive smooth functions, the $\gamma_j$ are non-negative real numbers and \eqref{ineq000} ensures that $\gamma_j<1$ for all $j$. Without loss of generality we can assume that the $\gamma_j$ is an increasing sequence, by doing 
a permutation of $\{e_i\}$. We denote $r_j$ the integers which count the numbers of equal $\gamma_j$, i.e that $r_1=\rk(TM)=n$ and $r_{j+1}$ is defined inductively by $\gamma_l=\gamma_{n-r_{j}+1}$ for $n-r_{j}+1\leq l\leq n-r_{j+1}$. Let $l_E$ be the number of different integers $r_i$. We define $\mathcal{F}_{\vert D\cap U}^i=Vect(e_{n-r_i+1},...,e_{n})$ for $i=2,..,l_E$ and $\alpha_i=\gamma_{n-r_{i+1}}$. Clearly, the data  $(\mathcal{F}^i,\alpha_i)$ defines a parabolic structure for $E_{\vert D \cap U}$.  If we consider another neighbourhood $U'$ that intersects $U \cap D$, then one can find as above a basis $\{e'_i\}_{i=1,..,r}$ of $E_{\vert U'}$ such that the matrix of $h_{KE}$ is diagonal with diagonal entries $\zeta'_i\Vert \sigma\Vert^{2\gamma'_{i}}_{D}$ for $i=1,..,r$ with $\gamma'_i$ increasing sequence and $\sigma$ the canonical section of $D$ which vanishes precisely on $D$. The vanishing order must be the same on $U\cap U'\cap D\neq \emptyset$ which forces $\gamma_i=\gamma_i'$ and consequently $\mathcal{F}_{\vert D\cap U\cap U'}$ extends to $\mathcal{F}_{\vert D\cap U'}$. Consequently we have defined a filtration by subbundles of $E_{\vert D}$ and thus a  parabolic structure for $E$ along $D$. Then, following \cite[Section 3]{LiJ}, the metric $h_{KE}$ is compatible with the structure and $TM$ is endowed with a Hermitian-Einstein cone metric. We conclude as above using Remark \eqref{remHE}.
 \end{proof}


\section{Further Applications and Remarks}\label{remarksection}

\subsection{Simple normal crossings divisors}

We expect that Theorems \ref{Fredholm}, \ref{thm1}, \ref{closedness}, \ref{HEsol}, \ref{KEHE} and Corollary \ref{cor1} could be generalized to the case of simple normal crossings divisors $D=\sum_{i=1}^m D_i$ where $D_i$ are irreducible. It is possible to define H\"older and Sobolev spaces for K\"ahler cone metrics $\omega$ with angles $2\pi\beta_i$ along $D_i$. The condition \eqref{anglerestriction} would be replaced by the condition \eqref{anglerestriction2}
\begin{align}\label{anglerestriction2}\tag{C'}
0<\beta_i<\frac{1}{2}; \quad \quad \a\b_i<1-2\b_i,\, \forall i=1,...,m.
\end{align}
The statements of our results would remain identical under above changes. The only missing step is a Schauder estimate that extends Proposition \ref{Schauder}. This has been announced recently in the preliminary work of Guo-Song \cite{GS}.

\subsection{Twisted conical path for cscK metric}
Donaldson introduced a continuity method for conical K\"ahler-Einstein metrics \cite[Equation (27)]{MR2975584} on Fano manifolds. A natural extension of this path for general polarizations is given by the following equation, that we call {\it scalar curvature twisted conical path}.
\begin{align}\label{Do-eqnvar}
S(\om_{\vphi(t)})=c_t+2\pi(1-t) \tr_{\om_{\vphi(t)}}[D].
\end{align}
where $c_t$ is a constant that depends on the time given in terms of topological invariants, $[\om_{\vphi(t)}]=2\pi c_1(L)$ for $L$ ample line bundle on the projective manifold $X$. Note that this is a variant of the continuity method introduced by Chen \cite[Equation (2.16)]{chen}. Scalar curvature twisted conical path is expected to be helpful for the construction of smooth cscK metric as $t\to 1$. It is natural to ask if the set of times $t$ for which \eqref{Do-eqnvar} admits a solution is open and non empty. As far as we know, the existence of a solution at an initial time $t_0>0$ is not known. Nevertheless, \cite[Corollary 5.10]{MR3403730} shows the existence of an invariant $\beta(X,D)$ such  for $0<t_0<\beta(X,D)$, $(X,D)$ is log K-stable for angle $t_0$. 
If an initial solution does exist at time $t_0<\min(1/2, \beta(X,D))$, Theorem \ref{Fredholm} applies and provides the openness property if $Lie(Aut_D(X,L))$ is trivial.
This is similar to the smooth case, see \cite[Theorems 1.5 and 1.8]{chen}.

\subsection{Other applications of Theorem \ref{Fredholm}}
The linear theory proved in this paper could be used to construct a large family of cscK cone metrics over blow-ups, extending the previous work of Arezzo-Pacard 
\cites{MR2275832}. We also expect a generalization of the work of Fine (see \cite{Fine} and subsequent works) for construction at the adiabatic limit of cscK cone metrics over a holomorphic submersion between compact K\"ahler manifolds $\pi:X\to B$ where the fibers and the base do not admit non trivial holomorphic vector fields and each fiber admits a cscK cone metric. 

\subsection{Generalizations of Theorem \ref{thm1}}
In view of \cites{Hong3,LS,Bro,MR2807093,LiH} it is natural to ask whether Theorem  \ref{thm1} admits a generalization in the case the base manifold $B$ admits nontrivial holomorphic vector fields. As in the smooth case, this will not be automatically happen,  and an extra condition on $(B,D)$ will be required.  A related question is about the existence of extremal K\"ahler cone metric when the bundle $E$ splits as sum of parabolic stable bundles of different slopes. These problems will be investigated in a forthcoming paper.

\subsection{Log K-stability}
We expect that a purely algebraic version of Theorem \ref{thm1} holds, i.e under the same assumptions, one gets also log-K-stability of $(X,\mathcal{D},[\omega_k]=[k\pi^*\omega_B+\hat{\omega}_E])$ for large $k$. We refer to \cite{MR2975584} and \cite{MR3313212} for the notion of log-K-stability and the logarithmic version of Yau-Tian-Donaldson conjecture. In the particular case of vector bundles over a curve, by analogy to the smooth case, we expect the following to be true.

\begin{conj}
Let $C$ be a complex curve endowed with a cscK cone metric $\omega_{cscK}$ along a divisor $D$.
 Let $E$ be a parabolic vector bundle over $C$, with parabolic structure along $D$. Let  $X=\mathbb{P}(E^*) \to C$ be its projectivisation and $\mathcal{D}=\pi^{-1}(D)$. The following three conditions  are equivalent:
\begin{enumerate}
\item[\rm (i)] $X$ admits a cscK cone metric along $\mathcal{D}$ in any ample class $2\pi c_1(\cL)$ on $X$;
\item[\rm (ii)] $(X,\mathcal{D})$ is log-K-polystable for any polarization $\cL$  on $X$;
\item[\rm (iii)] $E$ is parabolic polystable with respect to $\omega_{cscK}$, i.e it decomposes as the sum of stable parabolic bundles of same parabolic slopes.
\end{enumerate}
\end{conj}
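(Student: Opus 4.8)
The plan is to establish the cyclic chain of implications $\mathrm{(iii)}\Rightarrow\mathrm{(i)}\Rightarrow\mathrm{(ii)}\Rightarrow\mathrm{(iii)}$, with the differential-geometric existence arrow supplied essentially by the machinery already built in this paper, and the two remaining arrows requiring respectively the uniqueness theory of cscK cone metrics and an algebraic computation of the log-Donaldson-Futaki invariant.

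For $\mathrm{(iii)}\Rightarrow\mathrm{(i)}$, I would first reduce the polystable case to the stable one by writing $E=\bigoplus_j E_j$ as an orthogonal sum of parabolic stable bundles of equal parabolic slope with respect to $\omega_{cscK}$. By Theorem \ref{HEsol} each $E_j$ carries a Hermitian-Einstein cone metric, and their direct sum produces one on $E$; the inductive deformation of Proposition \ref{prop1} together with the Fredholm alternative of Theorem \ref{Fredholm} then yields cscK cone metrics on $\mathbb{P}E^*$ in the adiabatic classes $[k\pi^*\omega_{cscK}+\hat\omega_E]$ exactly as in Theorem \ref{thm1}, the relevant Lie algebra vanishing by Corollary \ref{cornovectorfield}. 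The genuinely new point demanded by the conjecture is existence in \emph{every} ample class, not merely in the regime $k\gg 0$. Over a curve the ample cone of $X=\mathbb{P}E^*$ is two-dimensional, spanned by the fiber class and the relative hyperplane class $c_1(\mathcal{O}_{\mathbb{P}E^*}(1))$, so I would run a continuity method in the ratio of these two parameters, using Theorem \ref{Fredholm} for openness and the a priori Schauder estimates of Section \ref{Section2} for closedness; alternatively one may invoke a momentum construction in the spirit of \cites{Hasfutaki,LiH}, adapted to the conical Mehta-Seshadri picture in which a parabolic stable bundle on a punctured curve is encoded by a unitary representation and the induced metric on $\mathcal{O}_{\mathbb{P}E^*}(1)$ is projectively flat.

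The arrow $\mathrm{(i)}\Rightarrow\mathrm{(ii)}$ is the obstruction direction. Semistability is immediate: Corollary \ref{logFutvanish} forces the log-Futaki invariant to vanish on any class carrying a cscK cone metric, and via the differential-geometric interpretation of the log-Donaldson-Futaki invariant this gives log-K-semistability. To upgrade to \emph{polystability} I would appeal to the uniqueness theory developed in \cites{LZ2,LZ3}: exploiting the convexity of a logarithmic Mabuchi-type energy along conical geodesics, any test configuration with vanishing log-Donaldson-Futaki invariant must degenerate to a product, which is precisely log-K-polystability. This reproduces in the conical setting the energy-functional strategy known in the smooth case.

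Finally, $\mathrm{(ii)}\Rightarrow\mathrm{(iii)}$ is the algebraic direction and is, I expect, the main obstacle. Given a destabilizing proper parabolic subsheaf $F\subsetneq E$, one constructs a test configuration for $(X,\mathcal{D})$ by deformation to the normal cone of $\mathbb{P}F^*\subset\mathbb{P}E^*$, carrying the pulled-back boundary, and must show that its log-Donaldson-Futaki invariant equals, up to a positive constant, $\mathrm{par}\mu(E)-\mathrm{par}\mu(F)$. The difficulty lies entirely in the bookkeeping of the parabolic weights: the divisor $\mathcal{D}=\pi^{-1}(D)$ contributes boundary terms to the log-Donaldson-Futaki invariant, and one must verify that these assemble precisely into the weighted degree terms $\sum_i\sum_p \alpha_i^p\deg(D_i)$ defining $\mathrm{par}\deg$. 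I would carry this out by an equivariant Riemann-Roch computation in the style of Ross-Thomas, treating the parabolic filtration as a $\mathbb{Q}$-divisor weighting on the central fibre, so that a strictly destabilizing $F$ produces a test configuration of negative log-Donaldson-Futaki invariant, contradicting $\mathrm{(ii)}$, while a subsheaf of equal parabolic slope forces a product degeneration and hence the polystable splitting. Matching the two normalizations, namely the analytic degree $\widetilde{\deg}$ used in Section \ref{coneHE} and the algebraic weights entering the log-Donaldson-Futaki invariant, is the technical crux of the whole equivalence.
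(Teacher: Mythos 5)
The statement you are proving is stated in the paper as a \emph{conjecture}: the paper offers no proof of the equivalence, only partial evidence, and your proposal does not close that gap. Of the three arrows, you only attempt a genuine argument for $\mathrm{(iii)}\Rightarrow\mathrm{(i)}$; for $\mathrm{(i)}\Rightarrow\mathrm{(ii)}$ you invoke convexity of a logarithmic Mabuchi energy along conical geodesics, and for $\mathrm{(ii)}\Rightarrow\mathrm{(iii)}$ an equivariant Riemann--Roch computation of the log-Donaldson--Futaki invariant with parabolic weights --- both are left as expectations (``I would appeal to\ldots'', ``I expect\ldots''), exactly the open problems the paper itself records: it proves only the vanishing of the log-Futaki invariant (Corollary \ref{logFutvanish}, which handles product test configurations), points to \cite{Hasfutaki} for a special case of $\mathrm{(i)}\Leftrightarrow\mathrm{(ii)}$, and to \cite{Ke1} for a weaker asymptotic-Chow version of $\mathrm{(ii)}\Rightarrow\mathrm{(iii)}$. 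Neither \cite{LZ2} nor \cite{LZ3} supplies the geodesic-convexity-to-polystability mechanism you assert, so these two arrows remain unproved in your write-up.

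Your treatment of $\mathrm{(iii)}\Rightarrow\mathrm{(i)}$ contains two concrete errors and misses the paper's key simplification. First, for strictly polystable $E=\bigoplus_j E_j$ you invoke Corollary \ref{cornovectorfield} to trivialize the relevant Lie algebra, but that corollary assumes $E$ parabolic \emph{stable}; a nontrivial splitting produces trace-free holomorphic parabolic endomorphisms (the weighted projections), hence nontrivial holomorphic vector fields on $X$ tangent to $\mathcal{D}$, so the kernel of the Lichnerowicz operator is nonzero and the perturbation scheme of Proposition \ref{prop1} and Theorem \ref{thm1} does not apply as stated. Second, your continuity method across the ample cone has no closedness: Theorem \ref{closedness} is an a priori estimate for the \emph{linear} continuity path \eqref{continuitypath} of the Lichnerowicz operator, not for the nonlinear cscK cone equation along a family of K\"ahler classes, and no such nonlinear estimates exist in the paper. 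The paper's own observation makes all of this unnecessary: when the base is a curve the expansion of Lemma \ref{Hongt} is exact (a complete expansion in base dimension one), so with $\omega_B=\omega_{cscK}$ and $h_E$ the Hermitian--Einstein cone metric of Theorem \ref{HEsol} (taken as a direct sum over the stable summands, which is legitimate since the slopes agree), the metric $\omega_k=k\pi^*\omega_B+\hat{\omega}_E$ of \eqref{omegak} \emph{already} has constant scalar curvature outside $\mathcal{D}$, has $C^{2,\a,\b}$ potential by Lemma \ref{regomegak}, and hence is a genuine cscK cone metric for every $k>0$ --- and over a curve every K\"ahler class on $X$ is of this form. Thus $\mathrm{(iii)}\Rightarrow\mathrm{(i)}$ holds under Condition \eqref{anglerestriction} with no adiabatic limit, no implicit function theorem, and no hypothesis on vector fields, whereas your route both overcomplicates this arrow and rests on an inapplicable corollary.
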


Before we provide some information on this conjecture, let us mention that from the work of Troyanov we have an algebraic characterization of complex curves that can be endowed with a cscK cone metric, see \cite{MR1005085} and \cite{MR3313212} for the relation with log K-stability. \\

{\bf \textrm{(iii)}$\Rightarrow$\textrm{(i)}}. At the boundary of the K\"ahler cone and when $E$ is irreducible, on can invoke Theorem \ref{thm1} which provides a more precise result than \cite{Ke1} in terms of regularity of the cscK cone metric. Nevertheless the implication \textrm{(iii)}$\Rightarrow$\textrm{(i)} is true in general as soon as condition \eqref{anglerestriction} holds. Actually, starting from  $\omega_B$ cscK cone metric and $h_E$ Hermitian-Einstein metric on $E$, the K\"ahler cone metric $\omega_k$ given by \eqref{omegak} has constant scalar curvature outside $\mathcal{D}$, like in the smooth case (to check this fact one can also refine Lemma \ref{Hongt} by obtaining a complete expansion when the base has dimension 1). Moreover $\omega_k$ has $C^{2,\a,\b}$ potential from Lemma \ref{regomegak} and thus is a genuine cscK cone metric. Note that for this reasoning we don't need to assume $k>>0$ and that when the base is a curve, any K\"ahler class on $X$ is of the form $[\omega_k]$.\\

{\bf \textrm{(i)}$\Leftrightarrow$\textrm{(ii)}}.  Hashimoto \cite{Hasfutaki} provides essentially the equivalence for a very particular bundle $E$ and $C=\mathbb{P}^1$. Remark that the considered bundle $E$ can be made parabolic polystable by using the techniques \cite{Ke1}. Moreover, Corollary \ref{logFutvanish} gives evidence that the implication \textrm{(i)}$\Rightarrow$\textrm{(ii)} holds as product test configurations correspond to holomorphic vector fields with a holomorphy potential.\\

{\bf \textrm{(ii)}$\Rightarrow$\textrm{(iii)}}. A weaker version is shown in terms of asymptotic Chow polystability with angle in \cite{Ke1}.

\bigskip

\bigskip

{\small
\textbf{Acknowledgments}. The work of J. Keller has been carried out in the framework of the Labex Archim\`ede (ANR-11-LABX-0033) and of the A*MIDEX project (ANR-11-IDEX-0001-02), funded by the ``Investissements d'Avenir" French Government programme managed by the French National Research Agency (ANR). J. Keller is also partially supported by supported by the ANR project EMARKS, decision No ANR-14-CE25-0010. J. Keller thanks O. Biquard and P. Eyssidieux. 

The work of K. Zheng has received funding from the European Union's Horizon 2020 research and innovation program under the Marie Sk{\l}odowska-Curie grant agreement No 703949, and was also partially supported by the Engineering and Physical Sciences Research Council (EPSRC) on a Program Grant entitled {\it Singularities of Geometric Partial Differential Equations} reference number EP/K00865X/1.
}

\begin{bibdiv}
\begin{biblist}

\bib{MR2807093}{article}{
   author={Apostolov, Vestislav},
   author={Calderbank, David M. J.},
   author={Gauduchon, Paul},
   author={T\o nnesen-Friedman, Christina W.},
   title={Extremal K\"ahler metrics on projective bundles over a curve},
   journal={Adv. Math.},
   volume={227},
   date={2011},
   number={6},
   pages={2385--2424},
}

\bib{MR2275832}{article}{
   author={Arezzo, Claudio},
   author={Pacard, Frank},
   title={Blowing up and desingularizing constant scalar curvature K\"ahler
   manifolds},
   journal={Acta Math.},
   volume={196},
   date={2006},
   number={2},
   pages={179--228},
}

\bib{MR1215276}{article}{
   author={Bando, Shigetoshi},
   title={Einstein-Hermitian metrics on noncompact K\"ahler manifolds},
   conference={
      title={Einstein metrics and Yang-Mills connections},
      address={Sanda},
      date={1990},
   },
   book={
      series={Lecture Notes in Pure and Appl. Math.},
      volume={145},
      publisher={Dekker, New York},
   },
   date={1993},
   pages={27--33},
}

\bib{MR3107540}{article}{
   author={Berman, Robert J.},
   title={A thermodynamical formalism for Monge-Amp\`ere equations,
   Moser-Trudinger inequalities and K\"ahler-Einstein metrics},
   journal={Adv. Math.},
   volume={248},
   date={2013},
   pages={1254--1297},
}

\bib{MR1168354}{article}{
    AUTHOR = {Biquard, Olivier},
     TITLE = {Prolongement d'un fibre holomorphe hermitien \`a\ courbure
              {$L^p$} sur une courbe ouverte},
   JOURNAL = {Internat. J. Math.},
    VOLUME = {3},
      YEAR = {1992},
    NUMBER = {4},
     PAGES = {441--453},
      ISSN = {0129-167X},
}

\bib{MR1373062}{article}{
    AUTHOR = {Biquard, Olivier},
     TITLE = {Sur les fibr\'es paraboliques sur une surface complexe},
   JOURNAL = {J. London Math. Soc. (2)},
    VOLUME = {53},
      YEAR = {1996},
    NUMBER = {2},
     PAGES = {302--316},
      ISSN = {0024-6107},
}

\bib{MR3144178}{article}{
   author={Brendle, Simon},
   title={Ricci flat K\"ahler metrics with edge singularities},
   journal={Int. Math. Res. Not. IMRN},
   date={2013},
   number={24},
   pages={5727--5766},
}

\bib{Bro}{article}{
    AUTHOR = {Br\"onnle, T},
     TITLE = {Extremal {K}\"ahler metrics on projectivized vector bundles},
   JOURNAL = {Duke Math. J.},
    VOLUME = {164},
      YEAR = {2015},
    NUMBER = {2},
     PAGES = {195--233},
      ISSN = {0012-7094},
}
  
\bib{MR2746347}{article}{
	author={Boucksom, S\'ebastien},
	author={Eyssidieux, Philippe},
	author={Guedj, Vincent},
	author={Zeriahi, Ahmed},
	title={Monge-Amp\`ere equations in big cohomology classes},
	journal={Acta Math.},
	volume={205},
	date={2010},
	number={2},
	pages={199--262},
}

\bib{MR3405866}{article}{
   author={Calamai, Simone},
   author={Zheng, Kai},
   title={Geodesics in the space of K\"ahler cone metrics, I},
   journal={Amer. J. Math.},
   volume={137},
   date={2015},
   number={5},
   pages={1149-1208},
}

\bib{MR2484031}{article}{
    AUTHOR = {Chel'tsov, I. A.},
    author = {Shramov, K. A.},
     TITLE = {Log-canonical thresholds for nonsingular {F}ano threefolds},
   JOURNAL = {Uspekhi Mat. Nauk},
    VOLUME = {63},
      YEAR = {2008},
    NUMBER = {5(383)},
     PAGES = {73--180},
      ISSN = {0042-1316},
}

\bib{chen}{article}{
    AUTHOR = {Chen, Xiuxiong},
    TITLE = {On the existence of constant scalar curvature K\"ahler metric: a new perspective},
      JOURNAL = {},
       VOLUME = {},
     PAGES= {ArXiv:1506.06423.},
       YEAR={},

}

\bib{chenwang}{article}{
    AUTHOR = {Chen, Xiuxiong},
    AUTHOR = {Wang, Yuanqi},
    TITLE = {On the regularity problem of complex Monge-Amp\`ere equations with conical singularities},
      JOURNAL = {},
       VOLUME = {},
     PAGES= {ArXiv:1405.1021.},
       YEAR={},

}

\bib{MR765366}{article}{
   author={Donaldson, S. K.},
   title={Anti self-dual Yang-Mills connections over complex algebraic
   surfaces and stable vector bundles},
   journal={Proc. London Math. Soc. (3)},
   volume={50},
   date={1985},
   number={1},
   pages={1--26},
}

\bib{MR885784}{article}{
   author={Donaldson, S. K.},
   title={Infinite determinants, stable bundles and curvature},
   journal={Duke Math. J.},
   volume={54},
   date={1987},
   number={1},
   pages={231--247},
}
\bib{MR1165874}{article}{
   author={Donaldson, S. K.},
   title={Boundary value problems for Yang-Mills fields},
   journal={J. Geom. Phys.},
   volume={8},
   date={1992},
   number={1-4},
   pages={89--122},
}

\bib{MR2039989}{article}{
    AUTHOR = {Donaldson, S. K.},
     TITLE = {Moment maps in differential geometry},
 BOOKTITLE = {Surveys in differential geometry, {V}ol.\ {VIII} ({B}oston,
              {MA}, 2002)},
    SERIES = {Surv. Differ. Geom.},
    VOLUME = {8},
     PAGES = {171--189},
 PUBLISHER = {Int. Press, Somerville, MA},
      YEAR = {2003},
}

\bib{MR2975584}{article}{
    AUTHOR = {Donaldson, S. K.},
     TITLE = {K\"ahler metrics with cone singularities along a divisor},
 BOOKTITLE = {Essays in mathematics and its applications},
     PAGES = {49--79},
 PUBLISHER = {Springer, Heidelberg},
      YEAR = {2012},
}

\bib{MR3522175}{article}{
    AUTHOR = {Eyssidieux, Philippe},
     TITLE = {M\'etriques de {K}\"ahler-{E}instein sur les vari\'et\'es de {F}ano
              [d'apr\`es {C}hen-{D}onaldson-{S}un et {T}ian]},
   JOURNAL = {Ast\'erisque, S\'eminaire Bourbaki. Vol. 2014/2015},
    NUMBER = {380},
      YEAR = {2016},
     PAGES = {Exp. No. 1095, 207--229},
      ISSN = {0303-1179},
      ISBN = {978-2-85629-836-7},
}

\bib{Fine}{article}{
    AUTHOR = {Fine, Joel},
     TITLE = {Constant scalar curvature {K}\"ahler metrics on fibred complex
              surfaces},
   JOURNAL = {J. Differential Geom.},
    VOLUME = {68},
      YEAR = {2004},
    NUMBER = {3},
     PAGES = {397--432},
}

\bib{MR1814364}{book}{
	author={Gilbarg, David},
	author={Trudinger, Neil S.},
	title={Elliptic partial differential equations of second order},
	series={Classics in Mathematics},
	note={Reprint of the 1998 edition},
	publisher={Springer-Verlag, Berlin},
	date={2001},
	pages={xiv+517},
	isbn={3-540-41160-7},%
}

\bib{MR704609}{article}{
    AUTHOR = {Grinberg, Eric L.},
     TITLE = {Spherical harmonics and integral geometry on projective
              spaces},
   JOURNAL = {Trans. Amer. Math. Soc.},
    VOLUME = {279},
      YEAR = {1983},
    NUMBER = {1},
     PAGES = {187--203},
      ISSN = {0002-9947},
}

\bib{GS}{article}{
    AUTHOR = {Guo, Bin},
    AUTHOR = {Song, Jian},
    TITLE = {Schauder estimates for equations with cone metrics, I},
      JOURNAL = {},
       VOLUME = {},
     PAGES= {ArXiv:1612.00075},
       YEAR={2016},

}

\bib{Hasfutaki}{article}{
    AUTHOR = {Hashimoto, Yoshinori},
    TITLE = {Scalar curvature and Futaki invariant of {K}\"ahler metrics with cone singularities along a divisor},
      JOURNAL = {},
       VOLUME = {},
     PAGES= {ArXiv:1508.02640},
       YEAR={2015},

}

\bib{Has}{article}{
    AUTHOR = {Hashimoto, Yoshinori},
    TITLE = {Existence of twisted constant scalar curvature {K}\"ahler metrics with a large twist},
      JOURNAL = {},
       VOLUME = {},
     PAGES= {ArXiv:1508.00513},
       YEAR={2015},

}

\bib{Hong2}{article}{
    AUTHOR = {Hong, Ying-Ji  },
     TITLE = {Constant {H}ermitian scalar curvature equations on ruled
              manifolds},
   JOURNAL = {J. Differential Geom.},
    VOLUME = {53},
      YEAR = {1999},
    NUMBER = {3},
     PAGES = {465--516},

}

\bib{Hong3}{article}{
    AUTHOR = {Hong, Ying-Ji       },
     TITLE = {Gauge-fixing constant scalar curvature equations on ruled
              manifolds and the {F}utaki invariants},
   JOURNAL = {J. Differential Geom.},
    VOLUME = {60},
      YEAR = {2002},
    NUMBER = {3},
     PAGES = {389--453},
}

\bib{Ke1}{article}{
 AUTHOR={Keller, Julien}, 
  TITLE= {About canonical {K}\"ahler metrics on {M}umford semistable projective bundles over a curve},
  JOURNAL= {J. London Math. Soc.},
    VOLUME = {93},
    YEAR = {2016},
    NUMBER = {1},
      PAGES = {159-174}
      }
      
\bib{Kob}{book}{
AUTHOR={S. Kobayashi},
TITLE={Differential geometry of complex vector bundles},
VOLUME={Princeton University Press},
YEAR={1987}
}

\bib{MR2109686}{book}{
	author={Kodaira, Kunihiko},
	title={Complex manifolds and deformation of complex structures},
	series={Classics in Mathematics},
	edition={Reprint of the 1986 English edition},
	note={Translated from the 1981 Japanese original by Kazuo Akao},
	publisher={Springer-Verlag, Berlin},
	date={2005},
	pages={x+465},
	isbn={3-540-22614-1},
}

\bib{MR1274118}{article}{
   author={LeBrun, C.},
   author={Simanca, S. R.},
   title={Extremal K\"ahler metrics and complex deformation theory},
   journal={Geom. Funct. Anal.},
   volume={4},
   date={1994},
   number={3},
   pages={298--336},
   issn={1016-443X},
}

\bib{MR3313212}{article}{
    AUTHOR = {Li, Chi},
     TITLE = {Remarks on logarithmic {K}-stability},
   JOURNAL = {Commun. Contemp. Math.},
    VOLUME = {17},
      YEAR = {2015},
    NUMBER = {2},
     PAGES = {1450020, 17},
      ISSN = {0219-1997},
}
\bib{MR3248054}{article}{
    AUTHOR = {Li, Chi},
    Author = {Sun, Song},
     TITLE = {Conical {K}\"ahler-{E}instein metrics revisited},
   JOURNAL = {Comm. Math. Phys.},
    VOLUME = {331},
      YEAR = {2014},
    NUMBER = {3},
     PAGES = {927--973},
      ISSN = {0010-3616},
}

\bib{LiH}{article}{
    AUTHOR = {Li, Haozhao},
     TITLE = {Extremal {K}\"ahler metrics and energy functionals on projective
              bundles},
   JOURNAL = {Ann. Global Anal. Geom.},
    VOLUME = {41},
      YEAR = {2012},
    NUMBER = {4},
     PAGES = {423--445},
      ISSN = {0232-704X},
}

\bib{MR1701135}{article}{
   author={Li, Jiayu},
   author={Narasimhan, M. S.},
   title={Hermitian-Einstein metrics on parabolic stable bundles},
   journal={Acta Math. Sin. (Engl. Ser.)},
   volume={15},
   date={1999},
   number={1},
   pages={93--114},
}
\bib{LiJ}{article}{
    AUTHOR = {Li, Jiayu},
     TITLE = {Hermitian-{E}instein metrics and {C}hern number inequalities
              on parabolic stable bundles over {K}\"ahler manifolds},
   JOURNAL = {Comm. Anal. Geom.},
    VOLUME = {8},
      YEAR = {2000},
    NUMBER = {3},
     PAGES = {445--475},
}

\bib{LiL}{article}{
    AUTHOR = {Li, Long},
    TITLE = {Subharmonicity of conic Mabuchi's functional, I},
      JOURNAL = {},
       VOLUME = {},
     PAGES= {ArXiv:1511.00178.},
       YEAR={},

}

\bib{LZ}{article}{
    AUTHOR = {Li, Long},
    AUTHOR = {Zheng, Kai},
    TITLE = {A continuity method approach to the uniqueness of K\"ahler-Einstein cone metrics},
      JOURNAL = {},
       VOLUME = {},
     PAGES= {ArXiv:1511.02410.},
       YEAR={},

}

\bib{LZ2}{article}{
    AUTHOR = {Li, Long},
    AUTHOR = {Zheng, Kai},
    TITLE = {Uniqueness of constant scalar curvature K\"ahler metrics with cone singularities, I: Reductivity},
      JOURNAL = {},
       VOLUME = {},
     PAGES= {ArXiv:1603.01743},
       YEAR={},

}

\bib{LZ3}{article}{
    AUTHOR = {Li, Long},
    AUTHOR = {Zheng, Kai},
    TITLE = {Uniqueness of constant scalar curvature K\"ahler metrics with cone singularities, II: Bifurcation},
      JOURNAL = {},
       VOLUME = {},
     PAGES= {Preprint},
       YEAR={},

}

\bib{LS}{article}{
    AUTHOR = {Lu, Zhiqin},
    AUTHOR = {Seyyedali, Reza},
    TITLE = {Extremal metrics on ruled manifolds},
      JOURNAL = {Adv. Math.},
       VOLUME = {258},
     PAGES= {127-153},
       YEAR={2014},
}

\bib{MR3403730}{article}{
    AUTHOR = {Odaka, Yuji},
    AUTHOR = {Sun, Song},
     TITLE = {Testing log {K}-stability by blowing up formalism},
   JOURNAL = {Ann. Fac. Sci. Toulouse Math. (6)},
    VOLUME = {24},
      YEAR = {2015},
    NUMBER = {3},
     PAGES = {505--522},
   
}

\bib{MR3107685}{article}{
    AUTHOR = {Seyyedali, Reza},
     TITLE = {Balanced metrics and {C}how stability of projective bundles
              over {K}\"ahler manifolds {II}},
   JOURNAL = {J. Geom. Anal.},
    VOLUME = {23},
      YEAR = {2013},
    NUMBER = {4},
     PAGES = {1944--1975},
      ISSN = {1050-6926},
}

\bib{Sim}{article}{
    AUTHOR = {Simpson, Carlos T.},
     TITLE = {Constructing variations of {H}odge structure using
              {Y}ang-{M}ills theory and applications to uniformization},
   JOURNAL = {J. Amer. Math. Soc.},
    VOLUME = {1},
      YEAR = {1988},
    NUMBER = {4},
     PAGES = {867--918},
}

\bib{MR1040197}{article}{
   author={Simpson, Carlos T.},
   title={Harmonic bundles on noncompact curves},
   journal={J. Amer. Math. Soc.},
   volume={3},
   date={1990},
   number={3},
   pages={713--770},
}

\bib{MR3470713}{article}{
    AUTHOR = {Song, J.},
    Author = {Wang, X.},
     TITLE = {The greatest {R}icci lower bound, conical {E}instein metrics
              and {C}hern number inequality},
   JOURNAL = {Geom. Topol.},
    VOLUME = {20},
      YEAR = {2016},
    NUMBER = {1},
     PAGES = {49--102},
      ISSN = {1465-3060},
}
 \bib{MR1005085}{article}{
    author={Troyanov, Marc}, 
    title={Prescribing curvature on compact surfaces with conical
    singularities},
    journal={Trans. Amer. Math. Soc.},
    volume={324},
    date={1991},
    number={2},
    pages={793--821},
 }

\bib{MR861491}{article}{
   author={Uhlenbeck, K.},
   author={Yau, S.-T.},
   title={On the existence of Hermitian-Yang-Mills connections in stable
   vector bundles},
   note={Frontiers of the mathematical sciences: 1985 (New York, 1985)},
   journal={Comm. Pure Appl. Math.},
   volume={39},
   date={1986},
   number={S, suppl.},
   pages={S257--S293},
}

\bib{YZ}{article}{
    AUTHOR = {Yin, Hao},
    AUTHOR = {Zheng, Kai},
    TITLE = {Expansion formula for complex Monge-Amp\`ere equation along cone
singularities},
      JOURNAL = {},
       VOLUME = {},
     PAGES= {ArXiv:1609.03111},
       YEAR={},

}	

\bib{zhengcscKcone}{article}{
   author={Zheng, Kai},
   title={K\"ahler metrics with cone singularities and uniqueness problem},
   conference={
      title={Proceedings of the 9th ISAAC Congress, Krak\'ow 2013},
   },
   book={
   title={Current Trends in Analysis and its Applications},
      series={Trends in Mathematics},
      publisher={Springer International Publishing},
   },
   date={2015},
   pages={395-408},
}

\end{biblist}
\end{bibdiv}

\end{document}